\documentclass[11pt]{article}

\usepackage[utf8]{inputenc}
\usepackage[T1]{fontenc}
\usepackage[english]{babel}
\usepackage{lmodern}

\usepackage{amsmath, amssymb, amsthm, mathtools}
\usepackage{bm}          %
\usepackage{mathrsfs}    %
\usepackage{bbm}         %
\usepackage{esint}       %
\usepackage{physics}

\usepackage{microtype}   %
\usepackage{enumitem}    %
\usepackage{xcolor}      %
\usepackage{graphicx}    %
\usepackage{tikz, tikz-cd} %
\usepackage{subcaption}

\usetikzlibrary{calc}

\usepackage{hyperref}
\hypersetup{
    colorlinks=true,
    linkcolor=blue,
    citecolor=blue,
    urlcolor=blue
}

\theoremstyle{plain}
\newtheorem{theorem}{Theorem}[section]
\newtheorem{lemma}[theorem]{Lemma}
\newtheorem{proposition}[theorem]{Proposition}
\newtheorem{corollary}[theorem]{Corollary}

\theoremstyle{definition}
\newtheorem{definition}[theorem]{Definition}

\theoremstyle{remark}
\newtheorem{remark}[theorem]{Remark}

\usepackage{cleveref}    %

\crefname{theorem}{Theorem}{Theorems}
\crefname{lemma}{Lemma}{Lemmas}
\crefname{proposition}{Proposition}{Propositions}
\crefname{corollary}{Corollary}{Corollaries}
\crefname{definition}{Definition}{Definitions}
\crefname{example}{Example}{Examples}
\crefname{remark}{Remark}{Remarks}

\newcommand{\R}{\mathbb{R}}
\newcommand{\N}{\mathbb{N}}

\newcommand{\1}{\mathbbm{1}}
\newcommand{\dist}[2]{\mathrm{dist}(#1, #2)}
\newcommand{\hausdBoundary}{\mathcal{H}^{n-1}(\partial\Omega)}
\newcommand{\hausdBoundarySet}[1]{\mathcal{H}^{n-1}(#1)}
\newcommand{\dOmega}{\mathrm{d}_\Omega}
\renewcommand{\d}[2][\Omega]{\mathrm{d}_{#1}(#2)} 

\numberwithin{equation}{section}

\usepackage{geometry}
\usepackage[backend=biber,
            style=numeric,   %
            citestyle=numeric,
            sorting=nyt,        %
            giveninits=true,    %
            maxbibnames=10,     %
            url=false,          %
            doi=true,
            isbn=false,
            eprint=true]{biblatex}

\title{Weighted heat traces of the Dirichlet Laplacian on Lipschitz domains}
\author{
Lucas Kersten\thanks{Department of Mathematical Sciences, University of Gothenburg and Chalmers University of Technology, Chalmers Tvärgata 3, 412 58 Gothenburg, Sweden.%
\newline Email: \texttt{lucaske@chalmers.se}.}
}
\date{}

\begin{document}

\maketitle
\vspace{-1em}

\begin{abstract}
We prove an asymptotic expansion of a weighted heat trace of the Dirichlet Laplacian on a bounded Lipschitz domain. The weight is given by a power of the distance to the boundary times a bounded function that is continuous near the boundary. Depending on the exponent of the weight, we obtain one-term or two-term asymptotics. We present two applications, namely a version of one-term asymptotics of weighted Riesz means for arbitrary orders, as well as convergence results for a family of measures that describes localization of Laplace eigenfunctions.

\end{abstract}

\tableofcontents

\section{Introduction}
\label{sec:introduction}

The goal of this paper is to analyze the boundary behavior of the Dirichlet heat kernel on Lipschitz domains. To do so, we find the short-time asymptotics of a weighted integral over the diagonal part of the heat kernel. We refer to this quantity as a weighted heat trace. The weight is chosen such that, depending on a parameter, the influence of the boundary increases or decreases.

Let $n\in\N$ and $\Omega \subset \R^n$ be an open and bounded set with Lipschitz boundary, that is, the boundary $\partial\Omega$ can be locally represented by the graph of a Lipschitz function. The Laplacian $-\Delta$, defined by the usual quadratic form with form domain $H_0^1(\Omega)$, is then called the Dirichlet Laplacian; see \cite[Section 3.1]{MR4496335} for a detailed introduction. It is well-known that its spectrum consists purely of positive eigenvalues $\{\lambda_k\}_{k\in\N}$ such that
\begin{align*}
    0 < \lambda_1 \leq \lambda_2 \leq \lambda_3 \leq \dots \nearrow \infty,
\end{align*}
and that the eigenvalues depend on the domain $\Omega$.

For $f\in L^2(\Omega)$, the solution $u(x;t)$ of the initial value problem
\begin{align}
\begin{cases}
    (\partial_t - \Delta) u(x;t) = 0, & \forall (x,t)\in \Omega\times(0,\infty) \\
    u(x;t) = 0, & \forall (x,t)\in \partial\Omega\times(0,\infty) \\
    u(x;0) = f(x), & \forall x \in \Omega
\end{cases}
\label{eq:def_HeatEq}
\end{align}
can be formally written as $u(x;t) = e^{t\Delta}f(x)$. This can be represented as an integral operator 
\begin{align*}
    u(x;t) = e^{t\Delta}f(x) = \int_\Omega p_\Omega(x,y;t)f(y) \dd{y},
\end{align*}
with its integral kernel $p_\Omega(x,y;t)$. We call $p_\Omega(x,y;t)$ the Dirichlet heat kernel on $\Omega$. By an expansion into a real orthonormal eigenbasis $\{u_k\}_{k\in\N}$, where each basis function solves $-\Delta u_k = \lambda_k u_k$, we find
\begin{align}
    p_\Omega(x,y;t) = \sum_{k=1}^\infty e^{-t\lambda_k} u_k(x) u_k(y). \label{eq:heatKernelRepInBasis}
\end{align}
Therefore, $p_\Omega$ encodes information about the spectrum $\{\lambda_k\}_{k\in\N}$ and the eigenfunctions $\{u_k\}_{k\in\N}$ simultaneously. Furthermore, the trace of the operator $e^{t\Delta}$ is given by
\begin{align}
    \mathrm{Tr}\Big(e^{t\Delta}\Big) = \int_\Omega p_\Omega(x,x;t)\dd{x} = \sum_{k=1}^\infty e^{-\lambda_k t}. \label{eq:heatTrace}
\end{align}
This trace can be recovered as the Laplace transform of the eigenvalue counting function or its Riesz means. A brief overview of these will be given later in this introduction. For a more thorough treatment, see for example \cite[Chapter 9.1]{MR4655924}.

Let
\begin{align*}
    \d{x} \coloneq \inf_{y\in\Omega^c} \abs{x-y},
\end{align*}
which is the distance of the point $x\in\Omega$ to the boundary of $\Omega$. It is well known that, for bounded $\Omega$ with smooth boundary, $p_\Omega(x,x;t) \asymp_{\Omega,t} \d{x}^2$ as $x\to \partial\Omega$ for $t>0$. Such a quadratic asymptotic is not necessarily true anymore for domains with merely Lipschitz boundary where the kernel might decay to $0$ at a slower rate as $x\to \partial\Omega$.
To analyze this, among other things, we are interested in the existence of an asymptotic expansion of the weighted heat trace
\begin{align}
    \int_\Omega f(x) p_{\Omega}(x,x;t)\dd{x} \label{eq:weightedHeatTrace}
\end{align}
as $t\to 0$ for suitable, possibly singular weight functions $f$. The similarity of \eqref{eq:weightedHeatTrace} to the integral expression in \eqref{eq:heatTrace} motivates the name weighted heat trace. In the following, we will choose $f(x) = g(x)\cdot \d{x}^\alpha$ for specific regular classes of functions $g$ and as wide a range of values of $\alpha$ as possible. We denote
\begin{align*}
    Z_{g,\alpha}(t) \coloneq (4\pi t)^\frac{n}{2}\int_\Omega g(x) \d{x}^\alpha p_\Omega(x,x;t)\dd{x}
\end{align*}
for the weighted heat trace. If $g=1$, we set $Z_{\alpha} = Z_{1,\alpha}$. Note that $Z_{g,\alpha}(t)$ might be infinite. In particular, for any sufficiently regular $\Omega$, this is the case for all $t > 0$ when $g = 1$ and $\alpha \leq -3$. However, for general Lipschitz sets, our method requires us to further restrict the range of possible values for $\alpha$ in order to ensure suitable control of the heat kernel near the boundary. The restriction depends on the value of the weak Hardy constant, that is, the infimum over all $c>0$ for which there exists a $\Lambda \geq 0$ satisfying
\begin{align*}
    \int_\Omega \frac{f(x)^2}{\d{x}^2}\dd{x} \leq c^2\int_\Omega \abs{\nabla f(x)}^2\dd{x} + \Lambda \int_\Omega f(x)^2\dd{x} \textrm{ for all } f\in C_c^\infty(\Omega).
\end{align*}
For domains with Lipschitz boundary, the weak Hardy constant is positive and finite, see Remark~\ref{rem:weakHardyConstForLipschitz}. A more detailed discussion is deferred to Definition \ref{def:weakHardyInequality} and the remarks thereafter.

Let $c_w$ denote the weak Hardy constant of $\Omega$, and define $\alpha^\ast(c_w) \coloneq \max\left(-2\big(1+\frac{1}{c_w}\big), -3\right)$. This value gives a lower bound on the possible exponents that we can treat by our estimates. The following is the first main result of the paper.

\begin{theorem}
\label{thm:mainThmWithG}
    Let $n\in \N$, $n\geq 2$, and $\Omega\subset\R^n$ be an open and bounded set with Lipschitz boundary. Suppose that $g\in L^\infty(\overline{\Omega})$ is continuous in a neighborhood of $\partial\Omega$. Let $c_w$ be the weak Hardy constant of $\Omega$ and $ \alpha > \alpha^\ast(c_w)$. Then $Z_{g,\alpha}(t)$ exists for all $t>0$ and satisfies the following asymptotics, as $t\to 0$,
        \begin{align*}
             Z_{g,\alpha}(t) = \begin{cases}
                \begin{aligned}[c]
                    \int_{\Omega} g(x)\d{x}^{\alpha}\dd{x}- t^{\frac{1+\alpha}{2}} \frac{1}{2}\Gamma\bigg(\frac{1+\alpha}{2}\bigg) \int_{\partial\Omega}g(x)\dd{\mathcal{H}^{n-1}(x)} + o\Big(t^\frac{1+\alpha}{2}\Big),
                \end{aligned}
                    &\qif \alpha > -1, \\[1.25em]
                \begin{aligned}
                     -\log(t)\cdot   \frac{1}{2} \int_{\partial\Omega} g(x)\dd{\mathcal{H}^{n-1}(x)}  + o\Big(\abs{\log t}\Big) ,
                \end{aligned}
                       &\qif \alpha = -1, \\[1.25em]
                \begin{aligned}[c]
                            -t^{\frac{1+\alpha}{2}} \frac{1}{2}\Gamma\bigg(\frac{1+\alpha}{2}\bigg) \int_{\partial\Omega} g(x) \dd{\mathcal{H}^{n-1}(x)} + o\Big(t^{\frac{1+\alpha}{2}}\Big)  ,
                \end{aligned}
                        &\qif \alpha < -1.
            \end{cases}
        \end{align*}
\end{theorem}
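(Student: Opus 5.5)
Write $(4\pi t)^{n/2}p_\Omega(x,x;t) = 1 - \big(1-(4\pi t)^{n/2}p_\Omega(x,x;t)\big)$ and compare the defect with the half-space model. For the half-space $H=\{x_n>0\}$ one has
\[
(4\pi t)^{n/2}p_H(x,x;t)=1-e^{-x_n^2/t},
\]
and
\[
\int_0^\infty s^\alpha e^{-s^2/t}\dd{s}=\tfrac12\Gamma\big(\tfrac{1+\alpha}{2}\big)t^{\frac{1+\alpha}{2}}
\]
for $\alpha>-1$. This is exactly the boundary coefficient in the statement. The plan is to localize near $\partial\Omega$, show that pointwise the defect behaves like $e^{-\d{x}^2/t}$ on most of a boundary layer, and control the rest with a priori bounds. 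The cases $\alpha\le -1$ then follow by the same analysis using $1-e^{-s^2/t}$, which keeps $\int_0^\infty s^\alpha(1-e^{-s^2/t})\dd{s}$ finite for $-3<\alpha<-1$ with the same analytic-continuation value $-\tfrac12\Gamma(\tfrac{1+\alpha}{2})t^{\frac{1+\alpha}2}$. For $\alpha=-1$ a logarithm appears from cutting off at $s\sim 1$.

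\textbf{Step 1 (a priori bounds and existence).} From the weak Hardy inequality, derive a ground-state type bound
\[
p_\Omega(x,x;t)\lesssim t^{-n/2}\big(\d{x}^2/t\big)^{\beta}
\]
for $\d{x}\le\sqrt t$ and any $\beta<\tfrac12(1+1/c_w)$, presumably via a weighted $L^2$ estimate on $e^{t\Delta}$ with weight $\d{x}^{-\gamma}$ for $\gamma$ below the Hardy threshold, plus ultracontractivity. Together with $(4\pi t)^{n/2}p_\Omega\le 1$, this bound gives integrability of $\d{x}^\alpha p_\Omega(x,x;t)$ exactly when $\alpha+2\beta>-1$, i.e.\ $\alpha>-2(1+1/c_w)$. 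The cap at $-3$ reflects the regular case $\beta\le 1$. Hence $Z_{g,\alpha}(t)<\infty$, and the region $\d{x}\le\epsilon\sqrt t$ contributes $O(\epsilon^{\alpha+2\beta+1}t^{\frac{1+\alpha}2})$, which is negligible as $\epsilon\to0$.

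\textbf{Step 2 (interior and far region).} By the principle of not feeling the boundary (domain monotonicity plus Gaussian bounds), $0\le 1-(4\pi t)^{n/2}p_\Omega(x,x;t)\lesssim e^{-c\,\d{x}^2/t}$. This bound handles $\d{x}\ge M\sqrt t$ with error $O(e^{-cM^2}t^{\frac{1+\alpha}2})$ after integrating in the distance variable. For $\alpha>-1$ the main term $\int_\Omega g\,\d{x}^\alpha$ is then finite, since a Lipschitz domain has $|\{\d{x}<s\}|\lesssim s$.

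\textbf{Step 3 (boundary layer $\epsilon\sqrt t\le\d{x}\le M\sqrt t$).} This step is the main obstacle. Cover $\partial\Omega$ by finitely many Lipschitz graph patches. At $\mathcal H^{n-1}$-a.e.\ boundary point $z$ the graph is differentiable (Rademacher), so after blow-up by $\sqrt t$ the rescaled domain $(\Omega-z)/\sqrt t$ converges locally to a half-space. Heat kernels are continuous under such convergence, e.g.\ via Mosco convergence of $H^1_0$ with uniform Lipschitz character. Hence the rescaled defect converges to $e^{-s^2}$ at height $s$. The integral over the layer becomes, via a coarea/graph parametrization $x=(y',\psi(y')+\sqrt t\,\sigma)$, an integral over $y'$ of a bounded quantity converging pointwise. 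The bound is uniform from Steps 1--2 plus the comparability of $\d{x}$ with the vertical distance. Dominated convergence then yields
\[
t^{\frac{1+\alpha}2}\int_{\partial\Omega}g\dd{\mathcal H^{n-1}}\int_\epsilon^M s^\alpha e^{-s^2}\dd{s}\,(1+o(1)),
\]
using continuity of $g$ near $\partial\Omega$. The delicate point is that the vertical height is not $\d{x}$; one must show that the ratio tends to $1$ at differentiability points, or carry out the blow-up directly in the $\d{x}$ variable, using that the distance function of the blow-up converges to $s$. Finally let $\epsilon\to0$, $M\to\infty$, and assemble the three regions. For $\alpha=-1$ the middle region $\sqrt t\lesssim\d{x}\lesssim 1$ contributes $\tfrac12|\log t|\int_{\partial\Omega}g\dd{\mathcal H^{n-1}}$ through $|\{\d{x}<s\}|\sim s\,\mathcal H^{n-1}(\partial\Omega)$, which holds by Minkowski content for Lipschitz boundaries. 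The sign in the statement comes from this $\log$ term of $\int\d{x}^{-1}$ truncated at $\sqrt t$.
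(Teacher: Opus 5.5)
\textbf{Gap: the near-boundary bound in Step~1.} This is exactly where the hypothesis $\alpha>\alpha^\ast(c_w)$ has to enter, and your argument does not establish it.

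A (weak) Hardy inequality controls weighted \emph{integrals}. It does not give a pointwise ground-state bound $p_\Omega(x,x;t)\lesssim t^{-n/2}(\mathrm{d}_\Omega(x)^2/t)^{\beta}$ with $\beta$ dictated by $c_w$. Ultracontractivity only yields $L^1\to L^\infty$ bounds, not boundary decay.

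Such a pointwise bound is in fact false in general. Take a planar polygon with a reentrant corner of opening $\Theta\in(\pi,\Theta_{cr})$, where $\Theta_{cr}\approx 4.86$ is Davies' critical angle below which the Hardy constant of the sector is still $1/4$. Then $c_w=2$ and $\alpha^\ast(c_w)=-3$. Yet on the bisector $\mathrm{d}_\Omega(x)$ is the distance to the vertex, and $p_\Omega(x,x;t)$ vanishes only like $\mathrm{d}_\Omega(x)^{2\pi/\Theta}$ as $x$ approaches the vertex. Here $\pi/\Theta<1$, and $\pi/\Theta<3/4$ once $\Theta>4\pi/3$. The theorem survives such corners only because they carry little measure, which an integrated estimate sees and a pointwise one cannot.

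Even granting your bound, the arithmetic does not reach the stated range. The condition $\alpha+2\beta>-1$ with $\beta<\tfrac12(1+1/c_w)$ gives $\alpha>-2-1/c_w$, not $\alpha>-2(1+1/c_w)$.

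\textbf{What replaces Step~1 in the paper (following van den Berg).}
\begin{enumerate}
\item The semigroup property and Cauchy--Schwarz give $p_\Omega(x,x;t)\le(3/(4\pi t))^{n/2}u(x;t/3)^2$, where $u(\cdot;t)=e^{t\Delta}1$.
\item Differentiate $\int_\Omega u^2\mathrm{d}_\Omega^{2-\gamma}$ in time, apply the weak Hardy inequality to truncations of $u\,\mathrm{d}_\Omega^{(2-\gamma)/2}$, and use $\int_\Omega(1-u)\lesssim\sqrt t$. For $2<\gamma<\min(2(1+1/c_w),3)$ this yields $\int_\Omega u(x;t)^2\mathrm{d}_\Omega(x)^{-\gamma}\,\mathrm{d}x\lesssim t^{(1-\gamma)/2}$ plus lower-order $\Lambda$-terms.
\item Choose $\alpha'\in(\alpha^\ast(c_w),\min(\alpha,-2))$ and pull $(\epsilon\sqrt t)^{\alpha-\alpha'}$ out of the integral over $\{\mathrm{d}_\Omega\le\epsilon\sqrt t\}$.
\end{enumerate}
This gives the $O(\epsilon^{\alpha-\alpha'}t^{(1+\alpha)/2})$ bound you need, and also the finiteness of $Z_{g,\alpha}(t)$.

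\textbf{Step~3 is a different but workable route.} Blowing up at Rademacher points and applying dominated convergence on the bounded rescaled layer replaces several pieces of the paper's argument. These are the $(\varepsilon,r)$-good boundary points with their sawtooth regions, the Brownian-loop sandwich between two half-spaces, and the Frank--Larson lemma that the bad part of a boundary layer has small measure. Your route needs only a.e.\ differentiability plus a uniform bound. Two corrections are needed there:
\begin{enumerate}
\item The ratio of the vertical height $x_n-\psi(y')$ to $\mathrm{d}_\Omega(x)$ tends to $\sqrt{1+|\nabla\psi(y')|^2}$, not to $1$. This factor is exactly absorbed by the area element $\sqrt{1+|\nabla\psi|^2}\,\mathrm{d}y'$ of $\mathcal{H}^{n-1}$, so the conclusion survives.
\item For $\alpha<-1$ the region $\mathrm{d}_\Omega(x)\ge M\sqrt t$ is not negligible. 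The term $\int_{\{\mathrm{d}_\Omega\ge M\sqrt t\}}g\,\mathrm{d}_\Omega^{\alpha}(1-e^{-\mathrm{d}_\Omega^2/t})$ is itself of order $t^{(1+\alpha)/2}$. To evaluate it you need the $g$-weighted Minkowski statement $r^{-1}\int_{\{\mathrm{d}_\Omega<r\}}g\to\int_{\partial\Omega}g\,\mathrm{d}\mathcal{H}^{n-1}$, not just the case $g=1$. The paper proves this in Section~2 and uses it in Section~3.
\end{enumerate}
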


Some remarks for this result are in order.
\begin{remark}
    For any integer $1\leq k\leq n$, we denote by $\mathcal{H}^{k}$ the $k$-dimensional Hausdorff measure, using the normalization in \cite[Definition 2.46]{MR1857292}. This normalization has the advantage that the $n$-dimensional and the $(n-1)$-dimensional Hausdorff measures coincide with the Lebesgue measure and the usual surface measure, respectively.
\end{remark}
\begin{remark}
    Choosing $g(x) = 1$ for $x\in\overline{\Omega}$, the boundary integral in the preceding theorem simplifies to
    \begin{align*}
        \int_{\partial\Omega} g(x)\dd{\mathcal{H}^{n-1}(x)} = \hausdBoundary,
    \end{align*}
    which means that the subleading term for $\alpha > -1$ and the leading term for $\alpha \leq -1$ of this expansion are proportional to the perimeter of $\Omega$.
\end{remark}

\begin{remark}
    If $\alpha = 0$ and $g = 1$, hence $f=1$, we have the simplification
    \begin{align*}
        \int_\Omega g(x) \d{x}^\alpha \dd{x} = \abs{\Omega},
    \end{align*}
    which is the usual volume term that arises in Weyl's law and asymptotic expansions of Riesz means. Thus, we obtain
    \begin{align*}
        Z_{1,0}(t) = \abs{\Omega} -\frac{\sqrt{\pi t}}{2} \hausdBoundary + o\Big(t^\frac12\Big), \qas t\to 0,
    \end{align*}
    which recovers the result of Brossard and Carmona \cite{MR834484} for domains with $C^1$ boundary and its extension by Brown \cite{MR1134755} to domains with Lipschitz boundary.
\end{remark}

\begin{remark}
    For bounded $\Omega$ with smooth boundary, our result reproves a particular case of the results of van den Berg, Gilkey, Kirsten and Seeley in \cite{MR2550208}. Namely, the authors show the existence of an infinite asymptotic expansion of a weighted heat trace as $t\to 0$ if $\partial\Omega$ is smooth. The constants in their asymptotic expansion agree with the constants given in our main result. A subsequent paper~\cite{MR2836585} by van den Berg, Gilkey and Kirsten generalizes the result for Dirichlet boundary conditions to mixed boundary conditions.
\end{remark}
\begin{remark}
    In the singular case $\alpha < -1$, there exists an order-sharp upper bound, proved by van den Berg in \cite{MR2480958}, for domains with finite volume and under the assumption that a strong Hardy inequality holds. Theorem \ref{thm:mainThmWithG} strengthens this, for bounded Lipschitz domains, to a leading-order asymptotic formula. Van den Berg's estimates for controlling the heat kernel are used to control its behavior near the boundary and constitute an important ingredient in the proof of Theorem~\ref{thm:mainThmWithG}. The restriction on the possible values of $\alpha$ in \cite{MR2480958} depends on the strong Hardy constant. By adapting his argument, we obtain a possibly larger range of admissible exponents $\alpha$, which instead depends on the weak Hardy constant.
\end{remark}

The idea of the proof of Theorem \ref{thm:mainThmWithG} is to write the integral $Z_{g,\alpha}(t)$ as
\begin{align}
\begin{aligned}
    Z_{g,\alpha}(t) &= \int_\Omega  g(x)\d{x}^{\alpha} \bigg(1-e^{-\frac{\d{x}^2}{t}}\bigg)\dd{x} \\
    &\hspace{4em}+ \int_\Omega g(x)\d{x}^{\alpha}\bigg((4\pi t)^{n/2}p_\Omega(x,x;t) - 1+e^{-\frac{\d{x}^2}{t}}\bigg)\dd{x}.
\end{aligned}
    \label{eq:decompInMainContrAndError}
\end{align}
We will show that the first integral is the main contribution for $\alpha < -1$, which turns out to be of order $t^{\frac{1+\alpha}{2}}$, and the second integral is a lower-order correction. That the second integral is of lower-order corresponds to the heat kernel of $\Omega$ locally being well-approximated by the heat kernel of suitably chosen half-spaces.
 
For $\alpha > -1$, we further split
\begin{align}
    \int_\Omega  g(x)\d{x}^{\alpha} \bigg(1-e^{-\frac{\d{x}^2}{t}}\bigg)\dd{x} = \int_\Omega g(x) \d{x}^{\alpha}\dd{x} - \int_\Omega g(x) \d{x}^{\alpha} e^{-\frac{\d{x}^2}{t}}\dd{x} \label{eq:splittingMainTerms}
\end{align}
and will show that the first term is finite and the second term has the attributed asymptotics. For these terms, the continuity of $g$ near the boundary is vital.

To analyze the second term in \eqref{eq:decompInMainContrAndError} we decompose $\Omega$, for all $\alpha > \alpha^\ast(c_w)$, into the following four regions and estimate the integral in these regions separately:
\begin{itemize}
    \item A bulk region, where $\d{x} \gg \sqrt{t}$.
    \item A good part of the boundary region, where $\d{x} \asymp \sqrt{t}$.
    \item A bad part of the boundary region, where $\d{x} \asymp \sqrt{t}$.
    \item A near-boundary region, where $\d{x} \ll \sqrt{t}$.
\end{itemize}

The good part of the boundary region will be constructed so that we can squeeze the corresponding part of the boundary locally between two hyperplanes. The heat kernel in that region will then be approximated by heat kernels of half-spaces corresponding to the two hyperplanes. This will result in the first term in Theorem \ref{thm:mainThmWithG} if $\alpha \leq-1$ and the second term for $\alpha > -1$. In the bulk region, the heat kernel is expected to behave as the free heat kernel. This follows from Kac's principle of not feeling the boundary, see Kac's published notes \cite{MR201237}. This is the first term for $\alpha > -1$. For $\alpha \leq -1$, the contribution of the bulk will be negligible.

The bad part is then given by all the points in the boundary region that do not belong to the good part. We will show that the measure of the bad part can be chosen small enough such that its contribution to the weighted heat trace will be insignificant.
Lastly, the heat kernel in the near-boundary region is small enough so that it does not contribute. We will ensure this by applying a weak Hardy inequality and relevant upper bounds for the heat kernel similar to what is done in \cite{MR2480958}. %

As we shall see in the upcoming sections, the continuity of $g$ will be used to derive the main terms in the asymptotic expansion, and the boundedness of $g$ to ensure that $g$ does not change the asymptotic order of the lower-order correction. Far away from the boundary, it is even possible to have a less regular weight function. There we only need an integrability condition for $g$, because the heat kernel in that part is already well approximated by the free heat kernel. This is summarized in the following extension.

\begin{corollary}
    \label{cor:introRelaxationOng}
    Let $n\in\N$, $n\geq 2$, and $\Omega \subset \R^n$ be an open and bounded set with Lipschitz boundary. Suppose $g\colon \overline{\Omega}\to \R$ can be written as $g = g_1 +g_2$ where $g_1$ satisfies the assumptions of Theorem~\ref{thm:mainThmWithG}, $g_2\in L^1(\overline{\Omega})$, and $\mathrm{supp}(g_2)$ is compactly contained in $\Omega$. Furthermore, let $c_w$ be the weak Hardy constant of $\Omega$ and $\alpha > \alpha^\ast(c_w)$. Then $Z_{g,\alpha}(t)$ exists for all $t>0$, and the same asymptotics as in Theorem \ref{thm:mainThmWithG} hold.
\end{corollary}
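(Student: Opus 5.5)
By linearity, $Z_{g,\alpha}(t) = Z_{g_1,\alpha}(t) + Z_{g_2,\alpha}(t)$. Theorem~\ref{thm:mainThmWithG} applies to $Z_{g_1,\alpha}$, so it exists for every $t>0$ and has the stated asymptotics. Since $g_2$ vanishes near $\partial\Omega$, we have $\int_{\partial\Omega} g\,\mathrm{d}\mathcal{H}^{n-1} = \int_{\partial\Omega} g_1\,\mathrm{d}\mathcal{H}^{n-1}$; this is understood as the boundary integral of the continuous representative $g_1$, which coincides with $g$ near the boundary. It therefore suffices to show:
\begin{itemize}
\item $Z_{g_2,\alpha}(t)$ is finite for all $t>0$;
\item $Z_{g_2,\alpha}(t) = \int_\Omega g_2(x)\,\d{x}^\alpha\,\mathrm{d}x + o(t^{\frac{1+\alpha}{2}})$ when $\alpha>-1$;
\item $Z_{g_2,\alpha}(t) = O(1)$ when $\alpha\le -1$.
\end{itemize}
For $\alpha\le-1$, $O(1)$ is $o(\abs{\log t})$ and $o(t^{\frac{1+\alpha}{2}})$. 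For $\alpha>-1$, $\int_\Omega g\,\d{x}^\alpha = \int g_1\,\d{x}^\alpha + \int g_2\,\d{x}^\alpha$ gives the leading term.

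Let $K = \mathrm{supp}(g_2)$, which is compact in $\Omega$, and set $\delta = \dist{K}{\partial\Omega}>0$. On $K$ we have $\delta \le \d{x} \le \mathrm{diam}(\Omega)$, so $\d{x}^\alpha$ is bounded above and below by positive constants there. Hence $g_2\,\d{x}^\alpha \in L^1(\Omega)$.

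Next, use domain monotonicity of the Dirichlet heat kernel, $0\le p_\Omega(x,x;t) \le (4\pi t)^{-n/2}$. This gives $0\le (4\pi t)^{n/2}p_\Omega(x,x;t)\le 1$, so $Z_{g_2,\alpha}(t)$ is finite for all $t>0$ and bounded uniformly in $t$. That already handles $\alpha\le -1$.

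For $\alpha>-1$ we need the sharper Kac-type statement that, for $x$ with $\d{x}\ge\delta$,
\begin{align*}
0\le 1-(4\pi t)^{n/2}p_\Omega(x,x;t) \le C e^{-c\delta^2/t}.
\end{align*}
This is the standard principle of not feeling the boundary, and is the same estimate used for the bulk region in the proof of Theorem~\ref{thm:mainThmWithG}. A self-contained route: write $(4\pi t)^{-n/2} - p_\Omega(x,x;t)$ as the expectation of the free kernel evaluated at the Brownian exit time through the boundary (strong Markov property). This is bounded by $\sup_{s\le t}(4\pi s)^{-n/2}e^{-\delta^2/(4s)}$, which is $\le C t^{-n/2} e^{-\delta^2/(4t)}$ for small $t$. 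Multiplying by $(4\pi t)^{n/2}$ gives the exponential bound.

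Then
\begin{align*}
\abs{Z_{g_2,\alpha}(t) - \int_\Omega g_2\,\d{x}^\alpha\,\mathrm{d}x} \le C e^{-c\delta^2/t} \int_K \abs{g_2}\,\d{x}^\alpha\,\mathrm{d}x,
\end{align*}
and this is $o(t^{\frac{1+\alpha}{2}})$, indeed $o(t^N)$ for every $N$.

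The only step needing care is this uniform interior bound on the heat kernel, and it is expected to be already available from the bulk-region analysis. Everything else is linearity and the fact that $g_2$ does not reach the boundary.
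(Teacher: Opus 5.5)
Your proposal is correct and follows essentially the same route as the paper. It splits $Z_{g,\alpha}=Z_{g_1,\alpha}+Z_{g_2,\alpha}$ and, for $\alpha\le -1$, bounds $Z_{g_2,\alpha}$ uniformly in $t$ via $p_\Omega(x,x;t)\le(4\pi t)^{-n/2}$; for $\alpha>-1$ it subtracts $\int_\Omega g_2\,\d{x}^\alpha\,\mathrm{d}x$ and uses the principle of not feeling the boundary (the paper's Proposition~\ref{prop:boundBulkKernel}, which you can cite directly rather than re-deriving it) to make the remainder exponentially small.
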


A separate class of functions $g$, which is relevant in the proof of Theorem \ref{thm:mainThmWithG}, consists of indicator functions of certain sets $A\subset\R^n$, that is, $g=\1_A$. We then write $Z_{A,\alpha} = Z_{g,\alpha}$. As this yields another interesting localization result, we state it as its own theorem. 

\begin{theorem}
\label{thm:mainThmWithIndicatorFunc}
    Let $n\in\N$, $n\geq 2$, $\Omega\subset\R^n$ be an open and bounded set with Lipschitz boundary, and $A\subset\R^n$ be open and bounded such that $\hausdBoundarySet{\partial\Omega \cap \partial A} = 0$. Let $c_w$ be the weak Hardy constant of $\Omega$ and $\alpha > \alpha^\ast(c_w)$. Then $Z_{A,\alpha}(t)$ exists for all $t > 0$ and satisfies the following asymptotics, as $t\to 0$, 
        \begin{align*}
             Z_{A,\alpha}(t) = \begin{cases}
                \begin{aligned}[c]
                    \int_{\Omega\cap A} \d{x}^{\alpha}\dd{x}- t^{\frac{1+\alpha}{2}} \frac{1}{2}\Gamma\bigg(\frac{1+\alpha}{2}\bigg) \hausdBoundarySet{\partial\Omega \cap A} + o\Big(t^\frac{1+\alpha}{2}\Big),
                \end{aligned}
                    &\qif \alpha > -1, \\[1.25em]
                \begin{aligned}
                     -\log (t)\cdot  \frac{1}{2} \hausdBoundarySet{\partial\Omega \cap A}  + o\Big(\abs{\log t}\Big) ,
                \end{aligned}
                       &\qif \alpha = -1, \\[1.25em]
                \begin{aligned}[c]
                            -t^{\frac{1+\alpha}{2}} \frac{1}{2}\Gamma\bigg(\frac{1+\alpha}{2}\bigg) \hausdBoundarySet{\partial\Omega \cap A} + o\Big(t^{\frac{1+\alpha}{2}}\Big)  ,
                \end{aligned}
                        &\qif \alpha < -1.
            \end{cases}
        \end{align*}
\end{theorem}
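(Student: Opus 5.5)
We follow the decomposition \eqref{eq:decompInMainContrAndError} with $g=\1_A$. This gives a main term
\[
M(t)=\int_{\Omega\cap A}\d{x}^\alpha\big(1-e^{-\d{x}^2/t}\big)\dd{x}
\]
and an error term
\[
E(t)=\int_{\Omega\cap A}\d{x}^\alpha\big((4\pi t)^{n/2}p_\Omega(x,x;t)-1+e^{-\d{x}^2/t}\big)\dd{x}.
\]
Since $|\1_A|\le 1$, the error term is dominated by the same integral with $|\cdot|$ inside and weight $\1_\Omega$. The plan is to show directly that this full-domain error is $o(t^{(1+\alpha)/2})$ for $\alpha\neq-1$ and $o(|\log t|)$ for $\alpha=-1$. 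For the bulk region ($\d{x}\gg\sqrt t$) we use Kac's principle: $|(4\pi t)^{n/2}p_\Omega(x,x;t)-1|\lesssim e^{-c\d{x}^2/t}$. For the near-boundary region ($\d{x}\ll\sqrt t$) we use van den Berg's weak-Hardy heat kernel bound $(4\pi t)^{n/2}p_\Omega(x,x;t)\lesssim (\d{x}^2/t)^{\beta}$ with $2\beta+\alpha>-1$. The choice $\alpha>\alpha^\ast(c_w)$ is exactly what makes this integrable, after using $|\{\d{x}<s\}|\asymp s$ for Lipschitz $\Omega$. In the boundary layer $\d{x}\asymp\sqrt t$ we split into good and bad parts. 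On the good part, comparison with half-space heat kernels (domain monotonicity, squeezing between two hyperplanes) gives smallness. The bad part has small relative measure, and its contribution is bounded using the uniform bounds $0\le(4\pi t)^{n/2}p_\Omega\le1$. This error control does not see $A$ at all, so $A$ only enters the main term.

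For the main term with $\alpha>-1$, we write $M(t)=\int_{\Omega\cap A}\d{x}^\alpha\dd{x}-\int_{\Omega\cap A}\d{x}^\alpha e^{-\d{x}^2/t}\dd{x}$. The first integral is finite because $\alpha>-1$ and Lipschitz sets satisfy $|\{0<\d{x}<s\}|\lesssim s$. For $\alpha\le-1$ we keep $M(t)$ intact. In every case, the key quantity is the distribution function $\mu_A(s)=|\{x\in\Omega\cap A:\d{x}<s\}|$. By the coarea formula (or layer-cake), each main-term integral equals $\int_0^\infty \phi_t(s)\,\mathrm d\mu_A(s)$ with $\phi_t(s)=s^\alpha e^{-s^2/t}$, respectively $s^\alpha(1-e^{-s^2/t})$. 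So the needed ingredient is
\begin{align*}
\mu_A(s)=s\,\hausdBoundarySet{\partial\Omega\cap A}+o(s),\qquad s\to0.
\end{align*}
Given this, a substitution $s=\sqrt t\,r$ and integration by parts produce
\[
\int_0^\infty r^\alpha e^{-r^2}\,\mathrm dr=\tfrac12\Gamma\big(\tfrac{1+\alpha}2\big),
\]
together with the analytic continuation $\int_0^\infty r^\alpha(1-e^{-r^2})\,\mathrm dr=-\tfrac12\Gamma\big(\tfrac{1+\alpha}{2}\big)$ for $-3<\alpha<-1$. The contribution of large $s$ is $O(1)$, which is negligible when $\alpha<-1$. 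When $\alpha=-1$, the integral $\int_{\sqrt t}^{1}s^{-1}\,\mathrm ds$ produces the $-\tfrac12\log t$ factor.

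The main obstacle is the Minkowski-content-type asymptotic for $\mu_A$. For $A=\R^n$ this is the known fact that the inner Minkowski content of a Lipschitz domain equals its perimeter. To localize it, we cover $\partial\Omega\cap A$ up to small $\mathcal H^{n-1}$-measure by finitely many Lipschitz graph patches. In each patch, we use Rademacher's theorem and the a.e.\ existence of approximate tangent planes to show that the strip $\{\d{x}<s\}$ above a patch $U$ has volume $s\,\mathcal H^{n-1}(U)+o(s)$. The hypothesis $\hausdBoundarySet{\partial\Omega\cap\partial A}=0$ handles the edge: points of $\Omega\cap A$ within distance $s$ of $\partial\Omega$ whose nearest boundary point lies outside $A$ must lie near $\partial A\cap\partial\Omega$. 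By outer regularity, this set is covered by patches of arbitrarily small $\mathcal H^{n-1}$-measure, contributing $\le\varepsilon s$. Since $A$ is open and bounded, $\mathcal H^{n-1}(\partial\Omega\cap A)=\mathcal H^{n-1}(\partial\Omega\cap\overline A)$, so upper and lower bounds match. Letting $\varepsilon\to0$ closes the argument.
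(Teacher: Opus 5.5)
Your overall architecture matches the paper's. You use the splitting \eqref{eq:decompInMainContrAndError} with $g=\1_A$, you control the error term independently of $A$ by a bulk/near/good/bad decomposition, and you evaluate the main term by layer-cake against a localized inner Minkowski content.

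\textbf{The gap: the near-boundary region.} You invoke a pointwise estimate $(4\pi t)^{n/2}p_\Omega(x,x;t)\lesssim(\d{x}^2/t)^{\beta}$ and assert that $\alpha>\alpha^\ast(c_w)$ is what makes it integrable. To reach every such $\alpha$ you would need this for $\beta$ arbitrarily close to $\min\bigl(\frac12+\frac1{c_w},1\bigr)$. Van den Berg's result in \cite{MR2480958} is not of this form: it is an integrated bound on $\int_\Omega u^2\,\d{x}^{-\gamma}\dd{x}$, under a strong Hardy inequality. A pointwise version with exponent governed by $c_w$ is in fact false.

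Take $n=2$ and a polygon whose only non-convex corner has interior angle $\theta$ slightly larger than $\pi$. All local Hardy constants still equal $\frac14$; for the reentrant sector this is Davies' computation. Hence $c_w=2$, and the theorem covers all $\alpha>-3$. Along the bisector near that vertex, $\d{x}$ is the distance to the vertex. Comparison with the sector heat kernel gives $(4\pi t)\,p_\Omega(x,x;t)\asymp(\d{x}^2/t)^{\pi/\theta}$ for small $t$ and $\d{x}\ll\sqrt t$. So any uniform pointwise bound has $\beta\le\pi/\theta<1$, and your argument fails for $\alpha\in(-3,-1-2\pi/\theta]$. The theorem remains true there only because the corner behaviour occupies a part of $\{\d{x}<s\}$ of measure $O(s^2)$ rather than $O(s)$. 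A pointwise bound integrated against $\abs{\{\d{x}<s\}}\asymp s$ cannot exploit this.

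\textbf{The missing idea is to work with integrated quantities.} The paper bounds $p_\Omega(x,x;t)\le(3/(4\pi t))^{n/2}u(x;t/3)^2$ (Proposition \ref{prop:heatTraceBoundByHeatContentBound}). Adapting van den Berg's argument to the weak Hardy inequality, it proves $\int_\Omega u(x;t)^2\d{x}^{\alpha'}\dd{x}\lesssim t^{\frac{1+\alpha'}{2}}+t^{\frac{\alpha'}{4}}+\Lambda$ for $\alpha'\in(\alpha^\ast(c_w),-2)$ (Theorem \ref{thm:weightedIntegralOverHeatContent}, Corollary \ref{cor:boundForWeightedIntegralHeatContent}). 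On $\{\d{x}\le\ell_1\}$ it then pulls out $\ell_1^{\alpha-\alpha'}$ with $\alpha'<\alpha$ (Proposition \ref{prop:boundForNearPartKernelOnly}). This is exactly where $\alpha>\alpha^\ast(c_w)$ is used. The same bound also gives finiteness of $Z_{A,\alpha}(t)$ for all $t>0$, which your sketch does not address.

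\textbf{Main term: a legitimate alternative.} You propose to prove $\abs{\{x\in\Omega\cap A:\d{x}<s\}}=s\,\hausdBoundarySet{\partial\Omega\cap A}+o(s)$ by hand via Lipschitz graph patches. That is plausible but needs real work on the nearest-point projection. The paper gets it in a few lines (Theorem \ref{thm:boundaryIntegralConvergenceForIndicator}). It views the inner tube of $\Omega$ as the outer tube of $E=\overline{B_{2R}(0)}\setminus\Omega$ and applies the localized outer Minkowski content theorem of \cite{3454532120081201}, using $P(E,\partial A)=\hausdBoundarySet{\partial\Omega\cap\partial A}=0$. One correction: your equality $\hausdBoundarySet{\partial\Omega\cap A}=\hausdBoundarySet{\partial\Omega\cap\overline A}$ follows from the hypothesis on $\partial\Omega\cap\partial A$, not from $A$ being open and bounded.
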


\begin{remark}
    The geometric condition on the set $A$, namely $\hausdBoundarySet{\partial\Omega \cap \partial A} = 0$, arises due to technical requirements needed to establish the convergence of certain integral terms to the localized Hausdorff measures. We will see in Remark \ref{rem:exForGeometricCondA}, that some kind of geometric condition is necessary. Nevertheless, in Remark \ref{rem:exForGeometricCondB} we present an example that illustrates that our geometric condition might not be optimal.
\end{remark}

Two applications of our main result are now presented. In the unweighted case, the short-time asymptotics of the heat trace are related to semiclassical asymptotics of the counting function and so-called Riesz means of order $\gamma\in [0,\infty)$, defined as
\begin{align*}
    N(\lambda) &\coloneq \#\{k\in\N \mid \lambda_k < \lambda\},\qand 
    \\
    \Tr(-\Delta-\lambda)_-^{\gamma} &\coloneq \sum_{k=1}^\infty (\lambda_k-\lambda)^\gamma_- = \sum_{k \colon \lambda_k < \lambda} (\lambda-\lambda_k)^\gamma,
\end{align*}
respectively. Here we use the convention that $x_\pm = \frac12(\abs{x}\pm x)$ and $x_-^0 = \1_{x<0}$, therefore the counting function can be interpreted as a Riesz mean with parameter $\gamma = 0$. As explained earlier, the heat trace and Riesz means are related through the Laplace transform. Karamata's Tauberian Theorem then relates the leading term in the short-time asymptotic expansion of the heat trace to the leading term in the semiclassical asymptotic expansion of Riesz means. For an overview of Tauberian Theorems we refer to \cite{MR2073637}.

Using our result for the weighted heat trace and Karamata's Tauberian Theorem, we find the following result for a weighted Riesz mean.

\begin{corollary}
\label{cor:weightedRieszMean}
    Suppose that the assumptions of Theorem \ref{thm:mainThmWithG} are satisfied. Let $\gamma \in [0,\infty)$. Then
    \begin{align*}
        R_{g,\alpha}^\gamma (\lambda) \coloneq \int_\Omega g(x)\d{x}^\alpha (-\Delta-\lambda)_-^\gamma(x,x) \dd{x}
    \end{align*}
    exists for all $\lambda > 0$. Furthermore, it satisfies the following asymptotics, as $\lambda\to \infty$,
    \begin{align*}
        R_{g,\alpha}^\gamma(\lambda) = \begin{cases}
                \begin{aligned}[c]
                    \lambda^{\gamma + \frac{n}{2}} \frac{\Gamma(\gamma+1)}{(4\pi)^{\frac{n}{2}} \Gamma(\gamma+1+\frac{n}{2})} \int_\Omega g(x) \d{x}^\alpha \dd{x} + o\Big(\lambda^{\gamma+\frac{n}{2}}\Big),
                \end{aligned}
                    &\qif \alpha > -1, \\[1.25em]
                \begin{aligned}
                     \lambda^{\gamma+\frac{n}{2}}\log (\lambda)  \frac{\Gamma(\gamma+1)}{2(4\pi)^\frac{n}{2} \Gamma(\gamma+1+\frac{n}{2})} \int_{\partial\Omega}g(x)\dd{\mathcal{H}^{n-1}(x)}  + o\Big(\lambda^{\gamma+\frac{n}{2}}\log \lambda\Big) ,
                \end{aligned}
                       &\qif \alpha = -1, \\[1.25em]
                \begin{aligned}[c]
                            -\lambda^{\gamma+\frac{n-1-\alpha}{2}}  \frac{\Gamma(\gamma+1)\Gamma(\frac{1+\alpha}{2})}{2(4\pi)^{\frac{n}{2}}\Gamma(\gamma+\frac{1+n-\alpha}{2})} \int_{\partial\Omega}g(x)\dd{\mathcal{H}^{n-1}(x)} + o\Big(\lambda^{\gamma+\frac{n-1-\alpha}{2}}\Big)  ,
                \end{aligned}
                        &\qif \alpha < -1.
            \end{cases}
    \end{align*}

\end{corollary}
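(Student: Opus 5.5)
We derive this from Theorem~\ref{thm:mainThmWithG} via the Laplace transform and Karamata's Tauberian Theorem. First we reduce to nonnegative weights. Write $g = g_+ - g_-$, and add a large constant $M$ to both parts so that $g_1 \coloneq g_+ + M$ and $g_2 \coloneq g_- + M$ are bounded, continuous near $\partial\Omega$ and bounded below by a positive constant. Since $g = g_1 - g_2$ and both $R^\gamma_{g,\alpha}$ and $Z_{g,\alpha}$ are linear in $g$, it suffices to treat $g\geq c>0$. Then the measure
\begin{align*}
    \mu_{g,\alpha} \coloneq \sum_{k} \Big(\int_\Omega g(x)\d{x}^\alpha u_k(x)^2\dd{x}\Big)\,\delta_{\lambda_k}
\end{align*}
is a positive measure on $(0,\infty)$. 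We also need each coefficient $w_k = \int_\Omega g\,\mathrm{d}_\Omega^\alpha u_k^2$ to be finite. This follows from \eqref{eq:heatKernelRepInBasis}: each term satisfies $e^{-t\lambda_k}w_k \le (4\pi t)^{-n/2}Z_{g,\alpha}(t)<\infty$ by monotone convergence, and the right-hand side is finite by Theorem~\ref{thm:mainThmWithG}.

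With this notation, $R^\gamma_{g,\alpha}(\lambda) = \sum_k (\lambda-\lambda_k)_+^\gamma w_k$, which is a finite sum of finite terms, so it exists for every $\lambda>0$. Moreover,
\begin{align*}
    (4\pi t)^{-n/2} Z_{g,\alpha}(t) = \int_0^\infty e^{-t\lambda}\dd{\mu_{g,\alpha}(\lambda)}
\end{align*}
is the Laplace--Stieltjes transform of the distribution function $N_{g,\alpha}(\lambda)=\mu_{g,\alpha}([0,\lambda))$. Theorem~\ref{thm:mainThmWithG} gives the leading behaviour of this transform as $t\to0$ in each case:
\begin{itemize}
    \item for $\alpha>-1$ it is $C t^{-n/2}$ with $C=(4\pi)^{-n/2}\int_\Omega g\,\mathrm{d}_\Omega^\alpha$;
    \item for $\alpha=-1$ it is $C t^{-n/2}\log(1/t)$ with $C = \tfrac12(4\pi)^{-n/2}\int_{\partial\Omega} g$;
    \item for $\alpha<-1$ it is $C t^{-(n-1-\alpha)/2}$ with $C=-\tfrac12\Gamma(\tfrac{1+\alpha}{2})(4\pi)^{-n/2}\int_{\partial\Omega} g$, which is positive since $\Gamma<0$ on $(-1,0)$.
\end{itemize}
These are regularly varying functions of index $\rho=n/2$ or $\rho=(n-1-\alpha)/2$, with slowly varying factor $\log(1/t)$ in the middle case. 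Karamata's Tauberian Theorem (in the form in \cite{MR2073637}, allowing slowly varying factors) then yields
\begin{align*}
    N_{g,\alpha}(\lambda)\sim \frac{C\lambda^\rho L(\lambda)}{\Gamma(\rho+1)}, \qquad L(\lambda)=1 \text{ or } \log\lambda .
\end{align*}

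For $\gamma>0$, we use $R^\gamma_{g,\alpha}(\lambda)=\gamma\int_0^\lambda(\lambda-s)^{\gamma-1}N_{g,\alpha}(s)\dd{s}$. Inserting the asymptotics of $N_{g,\alpha}$ is legitimate because $N_{g,\alpha}$ vanishes near $0$ (as $\lambda_1>0$) and the kernel is integrable, so dominated convergence applies after rescaling $s=\lambda\sigma$. The slowly varying factor satisfies $\log(\lambda\sigma)/\log\lambda\to1$ with an integrable bound. The Beta integral gives
\begin{align*}
    \gamma\, B(\gamma,\rho+1)/\Gamma(\rho+1)=\Gamma(\gamma+1)/\Gamma(\gamma+\rho+1).
\end{align*}
Checking the constants: for $\alpha<-1$, $\gamma+\rho+1=\gamma+\tfrac{1+n-\alpha}{2}$ and the exponent is $\gamma+\tfrac{n-1-\alpha}{2}$, matching the statement. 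Alternatively, one can apply Karamata directly to the measure $\mathrm{d}R^\gamma$, whose Laplace transform is $\Gamma(\gamma+1)t^{-\gamma}$ times that of $\mu$.

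The main obstacle is the sign reduction. Karamata's theorem needs a positive measure, and after splitting $g=g_1-g_2$ the two leading terms may cancel, for instance when $\int g\,\mathrm{d}_\Omega^\alpha=0$. This is harmless as long as each piece has a strictly positive leading coefficient, which is what adding $M$ guarantees. The difference of the two asymptotics then gives the claimed formula, with error $o(\lambda^{\gamma+\rho}L(\lambda))$ in every case, including the cancelling one. The remaining technical point is the summability justification in the first paragraph, which uses the finiteness of $Z_{g,\alpha}(t)$ for $|g|$ supplied by Theorem~\ref{thm:mainThmWithG}.
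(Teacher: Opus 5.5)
Your argument is correct and is essentially the paper's: reduce to nonnegative weights, read $\int_\Omega g\,\mathrm{d}_\Omega^\alpha\, p_\Omega(x,x;t)\,\mathrm{d}x$ as a Laplace--Stieltjes transform, and apply Karamata's Tauberian theorem (with $L=\log$ when $\alpha=-1$). The paper uses your ``alternative'' and applies Karamata directly to $R^\gamma_{g,\alpha}$ via $\int_0^\infty e^{-t\lambda}\,\mathrm{d}R^\gamma_{g,\alpha}(\lambda)=\Gamma(\gamma+1)\,t^{-\gamma}\int_\Omega g\,\mathrm{d}_\Omega^\alpha\, p_\Omega(x,x;t)\,\mathrm{d}x$, which avoids your Riesz-integration step; if you keep that step, the cleanest domination is simply $N_{g,\alpha}(\lambda\sigma)\le N_{g,\alpha}(\lambda)$, which also sidesteps the sign of $\log(\lambda\sigma)$ for small $\lambda\sigma$. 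The paper also splits plainly as $g=g_+-g_-$; your shift by $M$ is harmless but not needed, because for a positive measure a vanishing leading coefficient is covered by the trivial bound $\mu([0,x])\le e\int_0^\infty e^{-s/x}\,\mathrm{d}\mu(s)$.
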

\begin{remark}
    We write $(-\Delta-\lambda)_-^\gamma(x,y)$ for the integral kernel of the operator $(-\Delta-\lambda)_-^\gamma$, which is given in an eigenbasis $\{u_k\}_{k\in\N}$ of $-\Delta$ by
    \begin{align*}
        (-\Delta-\lambda)_-^\gamma(x,y) = \sum_{k=1}^\infty u_k(x)u_k(y) (\lambda_k-\lambda)_-^\gamma  .
    \end{align*}
    For fixed $\lambda$ this is just a finite sum, so there are no convergence issues. Therefore, the weighted Riesz mean can equivalently be formulated as
    \begin{align*}
        \int_\Omega g(x)\d{x}^\alpha (-\Delta-\lambda)_-^\gamma(x,x) \dd{x} &= \sum_{k=1}^\infty \int_\Omega g(x) \d{x}^\alpha u_k(x)^2 (\lambda_k-\lambda)^\gamma_- \dd{x} \\
        &= \sum_{k\colon \lambda_k < \lambda} (\lambda-\lambda_k)^\gamma  \int_\Omega g(x)\d{x}^\alpha u_k(x)^2 \dd{x}.
    \end{align*}
\end{remark}

\begin{remark}
    In the case of $\alpha = 0$ and $g=1$, we reproduce Weyl's law for Riesz means,
    \begin{align*}
        \Tr(-\Delta-\lambda)^\gamma_- = \int_\Omega (-\Delta-\lambda)_-^\gamma(x,x) \dd{x} = \lambda^{\gamma + \frac{n}{2}} \frac{\Gamma(\gamma+1)}{(4\pi)^{\frac{n}{2}} \Gamma(\gamma+1+\frac{n}{2})} \abs{\Omega}+ o\Big(\lambda^{\gamma+\frac{n}{2}}\Big).
    \end{align*}
\end{remark}

As a second application, we observe that the short-time asymptotics may be interpreted as convergence of measures. Recall that, for a family of (signed) measures $(\mu_t)_{t\in(0,\infty)}$, we say that the family converges weakly to the (signed) measure $\mu_0$ if 
\begin{align*}
    \int_{\R^n} g(x)\dd{\mu_t(x)} \overset{t\to 0}{\longrightarrow} \int_{\R^n} g(x) \dd{\mu_0(x)}, \text{ for all } g\in C_c(\R^n).
\end{align*}
Therefore, we deduce the following corollary of the main theorem.
\begin{corollary}
\label{cor:ConvergenceOfMeasures}
    Let $n\in \N$, $n\geq 2$, and $\Omega \subset \R^n$ be an open and bounded set with Lipschitz boundary. Let $c_w$ be the weak Hardy constant for $\Omega$ and $\alpha > \alpha^\ast(c_w)$. Then the family of measures
    \begin{align*}
        \dd\mu_t^\alpha(x) = \begin{cases}
         t^{\frac{n}{2}} p_\Omega(x,x;t) \d{x}^\alpha \1_\Omega\dd{x}, & \qif \alpha > -1, \\[1.25em]
        - t^{\frac{n}{2}} (\log t)^{-1} p_\Omega(x,x;t) \d{x}^\alpha \1_\Omega\dd{x}, & \qif \alpha = -1, \\[1.25em]
         t^{\frac{n-\alpha-1}{2}}p_\Omega(x,x;t) \d{x}^\alpha \1_\Omega\dd{x}, & \qif \alpha < -1,
        \end{cases}   
    \end{align*}
    converges weakly as $t\to 0$ to 
    \begin{align*}
        \mu_0^\alpha = \begin{cases}
            (4\pi)^{-\frac{n}{2}} \dOmega^\alpha \lambda\rvert_\Omega ,&\qif \alpha > -1, \\[1.25em]
            \frac{1}{2(4\pi)^{\frac{n}{2}}} \mathcal{H}^{n-1}\rvert_{\partial\Omega} ,&\qif \alpha = -1, \\[1.25em]
            -\frac{1}{2(4\pi)^{\frac{n}{2}}} \Gamma\big(\frac{1+\alpha}{2}\big) \mathcal{H}^{n-1}\rvert_{\partial\Omega}, &\qif \alpha < -1 ,
        \end{cases}
    \end{align*}
    respectively, where $\lambda$ is the Lebesgue measure.

    Furthermore, for $\alpha > -1$, the family 
    \begin{align*}
        \dd\Tilde{\mu}_t^\alpha(x) = t^{-\frac{\alpha+1}{2}} \bigg(t^{\frac{n}{2}}p_\Omega(x,x;t) - (4\pi)^{-\frac{n}{2}}\bigg)\d{x}^\alpha \1_\Omega \dd{x}
    \end{align*}
    of signed measures converges weakly as $t \to 0$ to
    \begin{align*}
        \Tilde{\mu}_0^\alpha = - \frac{1}{2(4\pi)^\frac{n}{2}} \Gamma\bigg(\frac{1+\alpha}{2}\bigg) \mathcal{H}^{n-1}\rvert_{\partial\Omega}.
    \end{align*}
\end{corollary}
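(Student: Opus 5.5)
The corollary should follow by testing the measures against a fixed $g\in C_c(\R^n)$ and reading off the asymptotics from Theorem~\ref{thm:mainThmWithG}. The restriction $g|_{\overline\Omega}$ is bounded and continuous on $\overline{\Omega}$, in particular in a neighborhood of $\partial\Omega$, so it satisfies the hypotheses of Theorem~\ref{thm:mainThmWithG}. By definition of $Z_{g,\alpha}$,
\begin{align*}
    \int_{\R^n} g\,\dd\mu_t^\alpha = (4\pi)^{-\frac n2} Z_{g,\alpha}(t)\cdot
    \begin{cases} 1, & \alpha>-1,\\ -(\log t)^{-1}, & \alpha=-1,\\ t^{-\frac{1+\alpha}{2}}, & \alpha<-1. \end{cases}
\end{align*}
Finiteness of $Z_{g,\alpha}(t)$ for every $t>0$, which Theorem~\ref{thm:mainThmWithG} also provides, ensures that $\mu_t^\alpha$ is a well-defined (locally finite) measure.

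The three cases then reduce to reading off the expansion.

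For $\alpha>-1$, we have $t^{\frac{1+\alpha}{2}}\to 0$, so $Z_{g,\alpha}(t)\to \int_\Omega g\,\dOmega^\alpha\dd x$. This integral is finite because $\alpha>-1$ and $\Omega$ is Lipschitz; the theorem in fact asserts finiteness of the leading term. Hence $\int g\,\dd\mu_t^\alpha$ converges to $\int g\,\dd\mu_0^\alpha$.

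For $\alpha=-1$, multiplying the expansion by $-(\log t)^{-1}$ gives $\tfrac12\int_{\partial\Omega}g\,\dd\mathcal H^{n-1}+o(1)$. After including the factor $(4\pi)^{-n/2}$, this is exactly the claimed limit.

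For $\alpha<-1$, multiplying by $t^{-\frac{1+\alpha}{2}}$ gives $-\tfrac12\Gamma(\tfrac{1+\alpha}{2})\int_{\partial\Omega} g\,\dd\mathcal H^{n-1}+o(1)$, as claimed.

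For the second family with $\alpha>-1$, I would write
\begin{align*}
    \int g\,\dd\tilde\mu_t^\alpha = (4\pi)^{-\frac n2}\, t^{-\frac{1+\alpha}{2}}\Big(Z_{g,\alpha}(t)-\int_\Omega g\,\dOmega^\alpha\dd x\Big).
\end{align*}
This splitting is legitimate because both integrals are finite. The first case of Theorem~\ref{thm:mainThmWithG} then yields the limit $-\tfrac{1}{2(4\pi)^{n/2}}\Gamma(\tfrac{1+\alpha}{2})\int_{\partial\Omega}g\,\dd\mathcal H^{n-1}$.

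There is no real obstacle here; everything is a direct rescaling. The only points that need checking are these. First, continuous compactly supported test functions restrict to admissible weights in Theorem~\ref{thm:mainThmWithG}; in particular, there is no sign requirement on $g$, and the theorem is linear in $g$. Second, the limit objects are genuine (signed) Radon measures. This holds since $\mathcal H^{n-1}(\partial\Omega)<\infty$ for Lipschitz $\Omega$, and $\dOmega^\alpha\in L^1(\Omega)$ for $\alpha>-1$, which is guaranteed by the finiteness of the leading term in the theorem.
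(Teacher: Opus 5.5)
Your proposal is correct and follows the paper's argument, which simply notes that the corollary is immediate from Theorem~\ref{thm:mainThmWithG} by testing against $g\in C_c(\R^n)$. You additionally carry out the rescalings explicitly; for example, $\int g\,\dd\mu_t^\alpha = (4\pi)^{-\frac n2}t^{-\frac{1+\alpha}{2}}Z_{g,\alpha}(t)$ for $\alpha<-1$, and for $\tilde\mu_t^\alpha$ you subtract the finite volume term. These details are all right.
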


The statement that the family of measures converges for $\alpha < -1$ to a multiple of the boundary surface measure can be interpreted as a localization result for the eigenfunctions of the Laplacian. By means of the representation \eqref{eq:heatKernelRepInBasis}, the measures $\mu_t^\alpha$ are an average of the $L^2$-mass densities $u_j(x)^2$ of the eigenfunctions with eigenvalues until the order $\lambda_j \sim t^{-1}$. The singular weight $\dOmega^\alpha$ then yields that the average is strongly localized near the boundary, so that only the boundary measure $\mathcal{H}^{n-1}\rvert_{\partial\Omega}$ remains in the limit $t\to 0$.

\begin{remark}
    A similar statement about convergence of measures can be obtained by using the asymptotics for weighted Riesz means in Corollary \ref{cor:weightedRieszMean}. Interpreted as weak convergence of measures, this yields, for $\alpha > -1$,
\begin{align*}
    \lambda^{-\gamma-\frac{n}{2}}\d{x}^\alpha (-\Delta-\lambda)_-^\gamma(x,x)\1_\Omega\dd{x} \rightharpoonup \frac{\Gamma(\gamma+1)}{(4\pi)^\frac{n}{2}\Gamma(\gamma+1+\frac{n}{2})} \d{x}^\alpha\1_\Omega\dd{x}    ,
\end{align*}
and for $\alpha \in (\alpha^\ast(c_w),-1)$,
\begin{align*}
     \lambda^{-\gamma-\frac{n-1-\alpha}{2}}\d{x}^\alpha (-\Delta-\lambda)_-^\gamma(x,x)\1_\Omega\dd{x} \rightharpoonup -\frac{\Gamma(\gamma+1)\Gamma(\frac{1+\alpha}{2})}{2(4\pi)^\frac{n}{2}\Gamma(\gamma+\frac{1+n-\alpha}{2})} \mathcal{H}^{n-1}\rvert_{\partial\Omega},
\end{align*}
as $\lambda \to \infty$.
The latter generalizes the result of Frank and Larson in \cite[Corollary 1.2]{MR4331812} to domains with Lipschitz boundary, general Riesz parameters $\gamma \geq 0$ and a wider range of singularities with exponent $\alpha > \alpha^\ast(c_w)$. We recover their result by choosing $\alpha = -2$ and $\gamma = 0$. 
\end{remark}

\subsection{Notation}
In the following, we fix a dimension $n\in\N$ with $n\geq 2$. For $x\in\R^n$ and $r>0$ we denote by $B_r(x)$ the ball of radius $r$ centered at $x$, that is, $$B_r(x) = \{y\in\R^n \mid \abs{x-y} < r\}.$$ For a set $A \subset \R^n$, $\abs{A}$ denotes the $n$-dimensional Lebesgue measure of $A$ and $\hausdBoundarySet{A}$ the $(n-1)$-dimensional Hausdorff measure of $A$. 

For $r> 0$, we abbreviate the set $\{x\in\Omega \mid \d{x} < r\}$ as $\{\d{x} < r\}$, if there is no confusion.

For non-negative functions $f$ and $g$, we write $f\lesssim g$ if there exists a constant $C > 0$ such that $f(x) \leq C g(x)$ for all $x$. The constant may depend on the relevant parameters. If we want to make the dependence of the constant $C$ on a parameter $c$ explicit, we write $f \lesssim_c g$. We write $f \asymp g$ if $f \lesssim g$ and $g \lesssim f$.

\subsection{Structure of the paper}
In Section \ref{sec:geomConstruction}, we first present various geometric constructions that are necessary for our estimates. These include the geometric construction of good points near the boundary and an introduction to inner and outer Minkowski contents, which yield a framework to analyze integrals over inner tubular regions around the boundary of Lipschitz sets. In Section \ref{sec:mainContribution} we find expressions for the main contribution according to the decomposition \eqref{eq:decompInMainContrAndError}, using the analysis of integrals over inner tubular regions developed before. Next, in Section \ref{sec:heatKernelConstructions}, we recall known identities and tools that are useful in analyzing heat kernels and prove an approximation result for the heat kernel at good points near the boundary. Subsequently, in Section \ref{sec:boundsForErrorTerms}, we analyze the second term in the decomposition \eqref{eq:decompInMainContrAndError} for different regions of the domain $\Omega$, as sketched before. With these calculations and estimates, we are able to prove our main theorems and their corollaries in Section \ref{sec:proofSection}.

\section{Geometric constructions for Lipschitz sets}
\label{sec:geomConstruction}

\subsection{Construction of good points near the boundary}
In the following, we will introduce the geometric construction of good points used by Brown in~\cite{MR1134755}. We follow the development of Frank and Larson in \cite{frank2026uniformboundsneumannheat}. Loosely speaking, good points on the boundary are points $p$ at which the boundary in the vicinity of $p$ can be enclosed in the complement of certain cones. Furthermore, the good points inside the set are points that lie in a narrower cone around the same symmetry axis pointing in the inward normal direction of a good boundary point.

\begin{definition}
    Let $\varepsilon \in \big(0,\frac12\big]$ and $r > 0$. A point $p\in\partial\Omega$ is called $(\varepsilon, r)$-good if the inner unit normal $\nu(p)$ exists at $p$ and
    \begin{align*}
        \partial\Omega\cap B_r(p) \subset \{x\in\R^n\mid \abs{(p-x)\cdot\nu(p)}< \varepsilon\abs{p-x}\}\cup\{p\}.
    \end{align*}
    A set $G \subset \partial\Omega$ is called $(\varepsilon, r)$-good if each point $p\in G$ is $(\varepsilon,r)$-good.
\end{definition}
This means that the boundary points in a ball of radius $r$ around $p$ are enclosed in the complement of cones with opening angle $\varphi = \arccos{\varepsilon}$ and symmetry axis $\nu(p)$.
\begin{definition}
\label{def:goodPointsOfDomain}
    For all $(\varepsilon,r)$-good $p\in \partial\Omega$, let 
    \begin{align*}
        \Gamma_{\varepsilon,r}(p) \coloneq \{x\in\Omega \mid (x-p)\cdot\nu(p)>\sqrt{1-\varepsilon^2}\abs{p-x}\}\cap B_{r/2}(p),
    \end{align*}
    and, if $G\subset \partial\Omega$ is $(\varepsilon,r)$-good, define
    \begin{align*}
        \mathcal{G} \coloneq \bigcup_{p\in G} \Gamma_{\varepsilon,r}(p).
    \end{align*}
    We call $\mathcal{G}$ the set of $(\varepsilon,r)$-good points of $\Omega$ associated to $G$ or the sawtooth-region associated to $G$. We may also just say good points if the dependence on $(\varepsilon,r)$ is clear.
\end{definition}

The geometrical interpretation is as follows: For a $(\varepsilon,r)$-good point $p\in \partial\Omega$, $\Gamma_{\varepsilon,r}(p)$ is a truncated cone with opening angle $\arccos{\sqrt{1-\varepsilon^2}} = \arcsin{\varepsilon}$ and the normal $\nu(p)$ as symmetry axis. We say that any point $x\in \Gamma_{\varepsilon,r}(p)$ is a good point near $p\in\partial\Omega$. Conversely, if $\mathcal{G} \subset \Omega$ is the $(\varepsilon,r)$-good subset associated to $G\subset \partial\Omega$, then for any good point $x\in \mathcal{G}$, there is a, not necessarily unique, associated good point $p\in G$ on the boundary so that $x\in\Gamma_{\varepsilon,r}(p)$. A visualization of the situation is given in Figure \ref{fig:basicConstrInclSawtooth}.

\begin{figure}[htbp]
    \centering
    \begin{tikzpicture}[
    xscale=1.5,
    yscale=1.5,
    boundary/.style={thick},
    rough/.style={thick},
    ray/.style={->, thick},
    guide/.style={dashed, thin},
    point/.style={circle, fill, inner sep=1.2pt}
]

\coordinate (p) at (0,0);
\def\R{2.9}
\def\a{27.5}
\def\L{3.3}

\fill[gray!25]
    (p) -- ++({-90-\a}:{\R/2})
    arc[start angle={-90-\a}, end angle={-90+\a}, radius={\R/2}]
    -- cycle;

\draw[boundary] (p) circle (\R);

\draw[line width=0.4pt] (p) circle ({\R/2});

\draw[rough]
    (-2.92, 1.78)
    -- (-2.74, 1.50)
    -- (-2.56, 1.12)
    -- (-2.35, 0.35)
    -- (-2.18,-0.72)
    -- (-1.88,-0.18)
    -- (-1.62, 0.61)
    -- (-1.31,-0.46)
    -- (-1.04,-0.08)
    -- (-0.79, 0.28)
    -- (-0.55,-0.22)
    -- (-0.31, 0.10)
    -- (-0.20, 0.00)
    -- (p)
    -- (0.18, 0.00)
    -- (0.37,-0.08)
    -- (0.63, 0.29)
    -- (0.96,-0.41)
    -- (1.18,-0.18)
    -- (1.46, 0.52)
    -- (1.76,-0.23)
    -- (2.02,-0.72)
    -- (2.24, 0.45)
    -- (2.43,-0.92)
    -- (2.58,-1.28)
    -- (2.70,-1.53)
    -- (2.80,-1.90);

\node at (-1.8,-2.0) {$\Omega$};
\node at (-2.95,1.2) {$\partial\Omega$};

\node[point] at (p) {};
\node[above=0.6pt] at (p) {$p$};

\coordinate (q) at ($(p)+(40:\R)$);
\draw[ray] (p) -- (q) node[pos=0.68, above left] {$r$};

\coordinate (qhalf) at ($(p)+(47:{\R/2})$);
\draw[ray] (p) -- (qhalf) node[pos=0.72, above left] {$r/2$};

\draw[ray] (p) -- ++(0,-0.85) node[below] {$\nu(p)$};

\draw[guide] ($(p)+({180+\a}:\L)$) -- ($(p)+(\a:\L)$);
\draw[guide] ($(p)+({180-\a}:\L)$) -- ($(p)+({-\a}:\L)$);

\draw[dashed] (p) ++(\a:0.88) arc[start angle=\a, end angle={180-\a}, radius=0.88];
\node at ($(p)+(100:0.55)$) {$\varphi$};

\draw[guide] (p) -- ++({-90-\a}:{\R/2});
\draw[guide] (p) -- ++({-90+\a}:{\R/2});

\end{tikzpicture}
    \caption{Sketch of the geometric construction of a $(\varepsilon,r)$-good point $p\in\partial\Omega$. The corresponding sawtooth $\Gamma_{\varepsilon,r}(p)$ is shaded. The opening angle of the conical regions defining good boundary points is given by $\varphi = \arccos\varepsilon$.}
    \label{fig:basicConstrInclSawtooth}
\end{figure}

Let $p\in\partial\Omega$ be $(\varepsilon,r)$-good and $x\in \Gamma_{\varepsilon,r}(p)$. By continuity of the distance function, there exists $p^\ast \in \partial\Omega$ such that $\d{x} = \abs{x-p^\ast}$. In general, $p\neq p^\ast$ and $p^\ast$ might not even be $(\varepsilon,r)$-good. In other words, the closest point to $x$ on the boundary might not be the point in whose cone $x$ is lying. A priori, it is also unclear what the distance of $x$ to its corresponding boundary point is, apart from the bound $\frac{r}{2}$. This issue is solved by the following known lemma, which we cite for convenience. We will use this in Section \ref{sec:heatKernelConstructions} to approximate the heat kernel in the good part of the boundary.

\begin{lemma}[Lemma 3.3 in \cite{MR4145207}]
\label{lem:distOfGoodPointToItsSaddle}
    Let $p\in\partial\Omega$ be $(\varepsilon,r)$-good with $r> 0$ and $\varepsilon \in \big(0,\frac12\big]$. Then, for any $x\in \Gamma_{\varepsilon,r}(p)$,
    \begin{align*}
        \abs{x-p} \leq 2\d{x}.
    \end{align*}
\end{lemma}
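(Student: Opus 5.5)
The plan is a direct geometric argument comparing two angles measured from the axis $\nu(p)$. Fix an $(\varepsilon,r)$-good point $p$ and $x\in\Gamma_{\varepsilon,r}(p)$. As noted before the lemma, we choose $q\in\partial\Omega$ with $\d{x}=\abs{x-q}$. If $q=p$ there is nothing to prove, so we assume $q\neq p$ and set $h\coloneq x-p$ and $w\coloneq q-p$. The goal is then the lower bound $\abs{h-w}\geq \frac12\abs{h}$, which is exactly $\abs{x-p}\leq 2\d{x}$.

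\emph{Step 1: $q$ is controlled by the good-point condition.} Since $p\in\partial\Omega$, we have $\d{x}\leq\abs{x-p}$. Hence
\begin{align*}
\abs{q-p}\leq\abs{q-x}+\abs{x-p}\leq 2\abs{x-p}<r,
\end{align*}
using that $x\in B_{r/2}(p)$. So $q\in\partial\Omega\cap B_r(p)\setminus\{p\}$, and the $(\varepsilon,r)$-goodness of $p$ gives $\abs{w\cdot\nu(p)}<\varepsilon\abs{w}$.

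\emph{Step 2: angle comparison.} Let $\theta\coloneq\arcsin\varepsilon$; since $\varepsilon\le\frac12$, we have $\theta\le\pi/6$. Because $x\in\Gamma_{\varepsilon,r}(p)$, the condition $h\cdot\nu(p)>\sqrt{1-\varepsilon^2}\abs{h}$ says that the angle between $h$ and $\nu(p)$ is less than $\theta$. The condition on $w$ says that the angle between $w$ and $\nu(p)$ lies strictly between $\pi/2-\theta$ and $\pi/2+\theta$. By the triangle inequality for angles on the sphere, the angle $\varphi$ between $h$ and $w$ therefore satisfies
\begin{align*}
\varphi> \pi/2-2\theta\geq\pi/6 .
\end{align*}

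\emph{Step 3: distance to a ray.} For any vector $w$ making angle $\varphi$ with $h$, the distance from $h$ to $w$ is at least the distance from $h$ to the ray $\R_{+}w$. That distance equals $\abs{h}\sin\varphi$ if $\varphi\leq\pi/2$, and $\abs{h}$ if $\varphi>\pi/2$. In either case, since $\varphi\geq\pi/6$, we get $\abs{h-w}\geq\abs{h}\sin(\pi/6)=\frac12\abs{h}$. This gives $\abs{x-p}\le 2\abs{x-q}=2\d{x}$.

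The only delicate point is Step 1, which ensures that the nearest boundary point actually falls in the ball $B_r(p)$ where the cone condition applies. This is handled by the factor $r/2$ in the definition of $\Gamma_{\varepsilon,r}(p)$. The remaining steps are elementary trigonometry, with $\varepsilon\le\frac12$ used precisely to guarantee that the angle gap is at least $\pi/6$.
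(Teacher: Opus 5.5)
Your proof is correct. The paper does not prove this lemma itself but quotes it from \cite{MR4145207}, and your argument is the natural elementary proof: the nearest boundary point $q$ lies in $B_r(p)$ because $|q-p|\le 2|x-p|<r$, and the angular gap of more than $\pi/2-2\arcsin\varepsilon\ge\pi/6$ between $x-p$ and $q-p$ then forces $|x-q|\ge\frac12|x-p|$. (It even gives the slightly sharper bound $|x-p|\le(1-2\varepsilon^2)^{-1}\mathrm{d}_\Omega(x)$, which reduces to the constant $2$ at $\varepsilon=\frac12$.)
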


Using the previous geometric construction, we find in the following proposition two half-spaces such that the boundary of the domain is locally squeezed between their respective boundaries.

\begin{proposition}
\label{prop:halfPlaneConstructionForGoodPart}
    Let $p\in \partial\Omega$ be $(\varepsilon,r)$-good and $s \in (0,r)$. Then, for all $\rho\geq s\varepsilon$, there exist two open parallel half-spaces $\mathbb{H}_+$ and $\mathbb{H}_-$ with $\rho = \dist{p}{ \partial\mathbb{H}_+} = \dist{p}{ \partial\mathbb{H}_-}$ and
\begin{align*}
    \mathbb{H}_- \cap B_s(p) \subset \Omega\cap B_s(p) \subset \mathbb{H}_+ \cap B_s(p).
\end{align*}
\end{proposition}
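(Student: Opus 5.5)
The plan is to take the two half-spaces orthogonal to the inner normal $\nu(p)$ and at distance $\rho$ on either side of $p$:
\begin{align*}
    \mathbb{H}_+ \coloneq \{x\in\R^n \mid (x-p)\cdot\nu(p) > -\rho\}, \qquad \mathbb{H}_- \coloneq \{x\in\R^n \mid (x-p)\cdot\nu(p) > \rho\}.
\end{align*}
These are open and parallel, and both boundaries are at distance $\rho$ from $p$. The key observation is that $(\varepsilon,r)$-goodness confines the boundary to a slab. Indeed, for $x\in\partial\Omega\cap B_s(p)\subset \partial\Omega\cap B_r(p)$ with $x\neq p$ we have
\begin{align*}
    \abs{(x-p)\cdot\nu(p)} < \varepsilon\abs{x-p} < \varepsilon s \leq \rho .
\end{align*}
Hence $\partial\Omega\cap B_s(p)$ lies in the slab $S\coloneq\{\abs{(x-p)\cdot\nu(p)}<\rho\}$. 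Consequently the two caps $C_+\coloneq B_s(p)\cap\{(x-p)\cdot\nu(p)\geq\rho\}$ and $C_-\coloneq B_s(p)\cap\{(x-p)\cdot\nu(p)\leq-\rho\}$ do not meet $\partial\Omega$.

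If $\rho\geq s$ both caps are empty, so $\mathbb{H}_-\cap B_s(p)=\emptyset$ and $B_s(p)\subset\mathbb{H}_+$, and the statement is trivial. Assume therefore $\rho<s$. Each cap is convex, hence connected. A connected set disjoint from $\partial\Omega$ lies entirely in $\Omega$ or entirely in $\R^n\setminus\overline{\Omega}$. Once we know $C_+\subset\Omega$ and $C_-\cap\Omega=\emptyset$, both inclusions follow:
\begin{itemize}
    \item $\mathbb{H}_-\cap B_s(p)\subset C_+\subset \Omega\cap B_s(p)$;
    \item every point of $\Omega\cap B_s(p)$ avoids $C_-$, so it satisfies $(x-p)\cdot\nu(p)>-\rho$, i.e.\ it lies in $\mathbb{H}_+$.
\end{itemize}

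To decide which side each cap lies on, consider the normal segment $\sigma_\pm\coloneq\{p\pm t\nu(p)\mid t\in(0,s)\}\subset B_s(p)$. Its points satisfy $\abs{(x-p)\cdot\nu(p)}=\abs{x-p}$, which is not $<\varepsilon\abs{x-p}$ since $\varepsilon\leq\frac12$. So $\sigma_\pm$ avoids $\partial\Omega$, and being connected it lies entirely in $\Omega$ or entirely in the exterior. Moreover $\sigma_+$ meets $C_+$, at $t\in[\rho,s)$, so $C_+\cup\sigma_+$ is connected and disjoint from $\partial\Omega$; likewise for $C_-\cup\sigma_-$. It therefore suffices to know that $p+t\nu(p)\in\Omega$ and $p-t\nu(p)\notin\overline\Omega$ for some small $t>0$.

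This last point is where the meaning of ``inner unit normal'' enters, and I expect it to be the only delicate step. Locally $\Omega$ is the supergraph of a Lipschitz function $\phi$ in suitable coordinates, and existence of $\nu(p)$ means $\phi$ is differentiable at $p$ with $\nu(p)$ the normalized upward normal of the graph. Differentiability gives $\phi(p'+h)=\phi(p')+\nabla\phi(p')\cdot h+o(\abs h)$. Then a direct computation shows that $p+t\nu(p)$ lies strictly above the graph and $p-t\nu(p)$ strictly below it for all small $t>0$. If instead one takes the defining property of the inner normal directly, namely that points $p+t\nu$ enter $\Omega$ and $p-t\nu$ leave $\overline\Omega$ for small $t$, this step is immediate.

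Combining the three steps gives $C_+\subset\Omega$ and $C_-\subset\R^n\setminus\overline\Omega$, and hence the claimed sandwich $\mathbb{H}_-\cap B_s(p)\subset\Omega\cap B_s(p)\subset\mathbb{H}_+\cap B_s(p)$.
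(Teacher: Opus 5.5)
Your proof is correct and follows the same route as the paper. You use the same half-spaces $\mathbb{H}_\pm=\{(x-p)\cdot\nu(p)>\mp\rho\}$ and the same key estimate $\abs{(x-p)\cdot\nu(p)}<\varepsilon\abs{x-p}<\varepsilon s\leq\rho$ on $B_s(p)$. The one difference is in how the sides are identified: the paper simply asserts that goodness yields the cone inclusions $\Omega\cap B_r(p)\subset\{(x-p)\cdot\nu(p)>-\varepsilon\abs{x-p}\}$ and $\Omega^c\cap B_r(p)\subset\{(x-p)\cdot\nu(p)<\varepsilon\abs{x-p}\}\cup\{p\}$, whereas your connectedness argument (caps $C_\pm$, normal segments $\sigma_\pm$, and the meaning of the inner normal) supplies the justification for which side lies in $\Omega$ that the paper leaves implicit.
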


\begin{proof}%
    Since $p$ is $(\varepsilon,r)$-good, there exists an inner unit normal $\nu(p)$. Consider the open half-space 
    \begin{align*}
        \mathbb{H} = \{x\in\R^n\mid (x-p)\cdot\nu(p) > 0\}.
    \end{align*}
    Let $\rho \geq s\varepsilon$. Then
    \begin{align*}
        \mathbb{H}_+ = \mathbb{H} - \rho \cdot \nu(p) \qand 
        \mathbb{H}_- = \mathbb{H} + \rho \cdot \nu(p)
    \end{align*}
    are the desired half-spaces. This follows essentially from the fact that $p$ is $(\varepsilon,r)$-good and therefore
    \begin{align}
    \begin{aligned}
        \Omega \cap B_r(p) &\subset \{x\in \R^n \mid (x-p)\cdot \nu(p) > -\varepsilon \abs{x-p}\}, \qand \\
        \Omega^c \cap B_r(p) &\subset \{x\in \R^n \mid (x-p)\cdot \nu(p) <\varepsilon \abs{x-p}\}\cup \{p\}.
    \end{aligned}
    \label{eq:condForBallsAroundPointRegions}
    \end{align}
    
    For the first inclusion, let $x\in \mathbb{H}_- \cap B_s(p)$. Therefore, we have $x-\rho \nu(p) \in \mathbb{H}$, and hence,
    \begin{align*}
        0< (x-\rho \nu(p)-p)\cdot \nu(p) = (x-p)\cdot \nu(p) -\rho.
    \end{align*}
    Thus, by the assumption on $\rho$, we see
    \begin{align*}
        (x-p)\cdot \nu(p) > \rho \geq \varepsilon\cdot s > \varepsilon \abs{x-p},
    \end{align*}
    which implies $x \notin \Omega^c \cap B_r(p)$. But as $x\in B_s(p) \subset B_r(p)$ we have $x\in \Omega\cap B_r(p)$ and therefore $x\in \Omega\cap B_s(p)$.

    For the second inclusion, let $x\in \Omega\cap B_s(p)\subset \Omega\cap B_r(p)$. Then, we have with \eqref{eq:condForBallsAroundPointRegions}, that
    \begin{align*}
        (x+\rho \nu(p)-p)\cdot \nu(p) = (x-p)\cdot \nu(p) + \rho > - \varepsilon \abs{x-p} + \rho > -\varepsilon s+\rho \geq 0,
    \end{align*}
    and thus $x + \rho \nu(p) \in \mathbb{H}$.
\end{proof} 

In Section \ref{sec:heatKernelConstructions}, we use the construction and inclusions in Proposition \ref{prop:halfPlaneConstructionForGoodPart} to compare the heat kernel of $\Omega$ at certain good points within $\Omega$ with the heat kernel of the constructed half-spaces.

\subsection{Inner and outer Minkowski content}
Based on the notion of Minkowski content, we define the inner and outer Minkowski content and other relevant quantities. Afterwards, we prove that, under assumptions for the underlying set, these are equal to the $(n-1)$-dimensional Hausdorff measure of the boundary of the corresponding sets.

\begin{definition}
    For $\Omega\subset\R^n$, let
    \begin{align*}
        \Tilde{\mathrm{d}}_\Omega(x) = \dist{x}{\Omega} - \dist{x}{\Omega^c}
    \end{align*}
    be the signed distance of $x\in\R^n$ to the boundary of $\Omega$. Let 
    \begin{align*}
        \Omega_r = \{x\in\R^n\mid \Tilde{\mathrm{d}}_\Omega(x)<r\} \qand \Omega_{-r} = \{x \in\R^n \mid \Tilde{\mathrm{d}}_\Omega(x) \leq -r\}
    \end{align*}
    for $r >0$. 
\end{definition}
These definitions are chosen such that, if $\Omega$ is open, $\Omega_r$ and $\Omega\setminus \Omega_{-r}$ are open for $r>0$. Similarly to the usual definition of the Minkowski content, see \cite[Definition 2.100]{MR1857292}, we define the inner and outer Minkowski content in the following way.
\begin{definition}
    Let $\Omega\subset\R^n$ be open. We call
    \begin{align*}
        \mathcal{SM}_*^{out} (\Omega) = \liminf_{r\to 0^+}\frac{\abs{\Omega_r\setminus \Omega}}{r} \qand 
        {\mathcal{SM}^{out}}^* (\Omega) = \limsup_{r\to 0^+}\frac{\abs{\Omega_r\setminus \Omega}}{r} 
    \end{align*}
    the lower and upper outer Minkowski content. If $\mathcal{SM}_*^{out} (\Omega)$ = ${\mathcal{SM}^{out}}^* (\Omega)<\infty$ we denote this common value by $\mathcal{SM}^{out}(\Omega)$ and say that $\Omega$ admits an outer Minkowski content.
    Similarly, we call 
    \begin{align*}
        \mathcal{SM}_*^{in} (\Omega) = \liminf_{r\to 0^+}\frac{\abs{\Omega\setminus \Omega_{-r}}}{r} \qand 
        {\mathcal{SM}^{in}}^* (\Omega) = \limsup_{r\to 0^+}\frac{\abs{\Omega\setminus \Omega_{-r}}}{r} 
    \end{align*}
    the lower and upper inner Minkowski content. If $\mathcal{SM}_*^{in} (\Omega)$ = ${\mathcal{SM}^{in}}^* (\Omega)<\infty$ we denote this common value by $\mathcal{SM}^{in}(\Omega)$ and say that $\Omega$ admits an inner Minkowski content.
\end{definition}
The above definition of the outer Minkowski content comes from \cite{3454532120081201}. The inner Minkowski content is defined analogously, and a similar result for this are shown in the following theorem.

\begin{theorem}
\label{thm:innerMinkowskiContentConverges}
    For any open and bounded $\Omega \subset \R^n$ with Lipschitz boundary the inner and outer Minkowski contents exist and are equal to
    \begin{align*}
        \mathcal{SM}^{out}(\Omega) = \mathcal{SM}^{in}(\Omega) =
        \lim_{r\to 0}\frac{\abs{\{x\in\Omega \mid \d{x} < r\}}}{r} = \hausdBoundary.
    \end{align*}
\end{theorem}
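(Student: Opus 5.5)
We will prove the statement by reducing everything to the two-sided (classical) Minkowski content, which is known for Lipschitz sets, and then splitting it into its inner and outer halves. Recall the result of Ambrosio, Colesanti and Villa \cite{3454532120081201}. For a closed $(n-1)$-rectifiable set admitting a suitable lower density bound, the Minkowski content of $\partial\Omega$ exists and equals $\hausdBoundary$:
\begin{align*}
    \lim_{r\to 0}\frac{\abs{\{x\in\R^n \mid \dist{x}{\partial\Omega}<r\}}}{2r} = \hausdBoundary .
\end{align*}
The boundary of a bounded Lipschitz set satisfies the hypotheses (\cite[Theorem 2.106]{MR1857292}). The same paper shows that the outer Minkowski content equals the perimeter $P(\Omega)$ whenever $\Omega$ has finite perimeter and $\partial\Omega$ satisfies such a density condition. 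For Lipschitz sets, $P(\Omega)=\hausdBoundary$. Hence $\mathcal{SM}^{out}(\Omega)=\hausdBoundary$, which is the first of the two halves.

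Next we identify the set in the statement with the inner shell. For $x\in\Omega$ we have $\tilde{\mathrm{d}}_\Omega(x) = -\d{x}$. Therefore
\begin{align*}
    \Omega\setminus\Omega_{-r} = \{x\in\Omega \mid \d{x}<r\}.
\end{align*}
Moreover, since $\Omega$ is open with Lipschitz boundary, $\abs{\partial\Omega}=0$. This gives the decomposition
\begin{align*}
    \abs{\{\dist{x}{\partial\Omega}<r\}} = \abs{\Omega_r\setminus\Omega} + \abs{\Omega\setminus\Omega_{-r}}
\end{align*}
up to null sets; the points outside $\overline\Omega$ at distance exactly $r$ need a little care, but they are handled by monotonicity in $r$. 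Dividing by $r$ and subtracting the outer content, we get that the inner quotient converges to $2\hausdBoundary-\hausdBoundary=\hausdBoundary$. This yields existence of $\mathcal{SM}^{in}(\Omega)$ and its value.

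If one prefers not to rely on the outer-content result, there is an alternative. The key observation is that the complement $\R^n\setminus\overline\Omega$ is locally a Lipschitz set with the same boundary. The plan is to localize with a finite cover by Lipschitz charts and a partition of unity. In each chart, $\partial\Omega$ is the graph of a Lipschitz function, and the interior and exterior sides play symmetric roles after reflection. One can then apply the perimeter statement to the bounded set $B\setminus\overline\Omega$, where $B$ is a large ball. This gives the outer contents of $\Omega$ and of $B\setminus\overline\Omega$, whose inner shells near $\partial\Omega$ coincide with those of $\Omega$ up to sign conventions.

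The main obstacle is verifying the hypotheses of \cite{3454532120081201}, namely the lower density bound $\hausdBoundarySet{\partial\Omega\cap B_\rho(x)}\gtrsim \rho^{n-1}$ for $x\in\partial\Omega$ and small $\rho$. We will obtain it from the bi-Lipschitz graph parametrization: the projection onto the base hyperplane maps $\partial\Omega\cap B_\rho(x)$ onto a set containing a disc of radius $\rho/\sqrt{1+L^2}$, where $L$ is the Lipschitz constant. This gives the bound uniformly over the finitely many charts.
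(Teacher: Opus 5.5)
Your proposal is correct and is essentially the paper's proof: both take $\mathcal{SM}^{out}(\Omega)=\hausdBoundary$ and the two-sided Minkowski content $\lim_{r\to 0}\abs{(\partial\Omega)_r}/(2r)=\hausdBoundary$ from Ambrosio--Colesanti--Villa, and then obtain the inner content by subtracting in $\abs{(\partial\Omega)_r}=\abs{\Omega_r\setminus\Omega}+\abs{\Omega\setminus\Omega_{-r}}$. This splitting is in fact an exact disjoint union: $\partial\Omega\subset\Omega_r\setminus\Omega$, $\dist{x}{\partial\Omega}$ equals $\dist{x}{\Omega}$ off $\Omega$ and $\d{x}$ on $\Omega$, and all three sets are defined by strict inequalities, so your null-set and monotonicity caveat is unnecessary.
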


\begin{proof}
    First, note that $\abs{\Omega\setminus \Omega_{-r}} =  \abs{(\partial \Omega)_r} - \abs{\Omega_r\setminus \Omega}$, where $(\partial\Omega)_r = \{x\in\R^n\mid \dist{x}{\partial\Omega} < r\}$. Comparing this identity with the definitions of the inner, outer, and usual Minkowski contents of $\partial \Omega$, we see that the existence of two of these implies the existence of the third. We use this idea in the following.

    By combining Proposition 1 and Theorem 5 in \cite{3454532120081201}, we see that $\Omega$ admits outer Minkowski content and $\mathcal{SM}^{out}(\Omega) = \hausdBoundary$, as $\Omega$ has a Lipschitz boundary and is bounded. Furthermore, we have
    \begin{align*}
        \frac{\abs{\Omega\setminus \Omega_{-r}}}{r} &= \frac{\abs{(\partial \Omega)_r}- \abs{\Omega_r\setminus \Omega}}{r} \\
        &= 2\frac{\abs{(\partial \Omega)_r}}{2r} - \frac{\abs{\Omega_r\setminus \Omega}}{r} \overset{r\to 0}{\longrightarrow} 2\hausdBoundary - \mathcal{SM}^{out}(\Omega) = \hausdBoundary,
    \end{align*}
    where the convergence of the first term follows from the definition of the Minkowski content and Proposition 1 in \cite{3454532120081201}. 
    Thus, as the right hand-side converges as $r\to 0$, the term on the left-hand side also converges and $\mathcal{SM}^{in}(\Omega)$ exists. Furthermore, $\mathcal{SM}^{in}(\Omega) = \hausdBoundary$.
\end{proof}

\begin{corollary}
\label{cor:behaviourOfThetaR}
    For any open and bounded $\Omega \subset \R^n$ with Lipschitz boundary, the quantity
    \begin{align*}
        \vartheta_\Omega (r ) = \frac{\abs{\{x\in\Omega \mid \d{x} < r\}}}{r\hausdBoundary} - 1 , \qfor r > 0,
    \end{align*}
    satisfies
    \begin{align*}
        \lim_{r\to 0} \vartheta_\Omega (r ) = 0 \qand \sup_{r>0}\abs{\vartheta_\Omega(r)} < \infty.
    \end{align*} 
\end{corollary}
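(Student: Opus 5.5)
The limit statement follows directly from Theorem~\ref{thm:innerMinkowskiContentConverges}. Since $\Omega$ is bounded with Lipschitz boundary, we have $\hausdBoundary\in(0,\infty)$. Positivity holds because $\partial\Omega$ is nonempty and locally a Lipschitz graph over an open subset of $\R^{n-1}$; finiteness holds by compactness and a finite cover by such graphs. Hence $\vartheta_\Omega(r)$ is well defined. The theorem gives
\begin{align*}
\lim_{r\to0}\frac{\abs{\{x\in\Omega\mid \d{x}<r\}}}{r}=\hausdBoundary,
\end{align*}
and dividing by $\hausdBoundary$ and subtracting $1$ yields $\vartheta_\Omega(r)\to0$.

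For the uniform bound, I would split $r>0$ into two ranges. By the limit just established, there is $r_0>0$ such that $\abs{\vartheta_\Omega(r)}\leq 1$ for all $r\in(0,r_0)$. For $r\geq r_0$, I would use the trivial bounds
\begin{align*}
0\leq \abs{\{x\in\Omega\mid \d{x}<r\}}\leq\abs{\Omega}<\infty,
\end{align*}
which hold because $\Omega$ is bounded. These give
\begin{align*}
-1\leq\vartheta_\Omega(r)\leq \frac{\abs{\Omega}}{r_0\hausdBoundary}-1
\qquad\text{for all } r\geq r_0.
\end{align*}
Combining the two ranges gives $\sup_{r>0}\abs{\vartheta_\Omega(r)}\leq \max\bigl(1,\abs{\Omega}/(r_0\hausdBoundary)\bigr)<\infty$.

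There is no real obstacle in this argument. The only point that needs care is checking that $0<\hausdBoundary<\infty$, so that the normalization in the definition of $\vartheta_\Omega$ makes sense. Finiteness is implicit in Theorem~\ref{thm:innerMinkowskiContentConverges}, since the limit there is finite. Positivity follows because otherwise the inner tube volumes would be $o(r)$, whereas a Lipschitz graph patch forces $\abs{\{\d{x}<r\}}\gtrsim r$ for small $r$.
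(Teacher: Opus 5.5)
Your proof is correct and follows the same route as the paper: the limit comes directly from Theorem~\ref{thm:innerMinkowskiContentConverges}, and the uniform bound comes from that limit combined with the boundedness of $\Omega$. Your split into $r<r_0$ and $r\ge r_0$, together with the check that $0<\hausdBoundary<\infty$, just spells out details the paper leaves implicit.
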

\begin{proof}
    The first assertion follows directly from Theorem \ref{thm:innerMinkowskiContentConverges}. The second assertion also follows from that theorem combined with the boundedness of $\Omega$.
\end{proof}
\begin{remark}
    In Section \ref{sec:boundsForErrorTerms}, we will derive upper bounds for the weighted heat trace in different regimes that involve the function $\vartheta_\Omega$. Corollary \ref{cor:behaviourOfThetaR} often ensures that we obtain the correct asymptotic behavior. For general Lipschitz domains $\Omega$, there is no hope for a more quantitative behavior of $\vartheta_\Omega(r)$ as $r\to 0$. Thus, using our techniques, the little-$o$ term in Theorem \ref{thm:mainThmWithG} cannot be improved immediately. We believe that there are possible improvements for convex domains, similar to studies in e.g. \cite{MR4946250, frank2026uniformboundsneumannheat}, where more explicit bounds for $\vartheta_\Omega$ are known.
\end{remark}

The preceding statements are applied to show an upper bound for the size of an inner tubular region.
\begin{corollary}
\label{cor:boundednessOfTubularRegions}
    For any open and bounded $\Omega\subset \R^n$ with Lipschitz boundary, there exists $\theta_\Omega > 0$ such that for all $r> 0$, we have
    \begin{align*}
        \abs{\{x\in \Omega \mid \d{x} < r\}} \leq r \theta_\Omega \hausdBoundary.
    \end{align*}
\end{corollary}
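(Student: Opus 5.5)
The bound follows directly from Corollary~\ref{cor:behaviourOfThetaR}. Since $\Omega$ is a bounded Lipschitz set, $\hausdBoundary$ is finite and positive. Positivity holds because $\Omega$ is nonempty and bounded, so $\partial\Omega\neq\emptyset$ and it is locally a Lipschitz graph. Hence $\vartheta_\Omega(r)$ is well defined for every $r>0$. By Corollary~\ref{cor:behaviourOfThetaR}, the number
\begin{align*}
    M \coloneq \sup_{r>0}\abs{\vartheta_\Omega(r)}
\end{align*}
is finite. I set $\theta_\Omega \coloneq 1 + M > 0$.

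Unwinding the definition of $\vartheta_\Omega$ then gives, for every $r>0$,
\begin{align*}
    \frac{\abs{\{x\in\Omega \mid \d{x} < r\}}}{r\hausdBoundary} = 1 + \vartheta_\Omega(r) \leq 1 + M = \theta_\Omega .
\end{align*}
Multiplying by $r\hausdBoundary$ yields the claimed inequality.

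There is no real obstacle in this step. The only point worth a comment is the uniformity for large $r$, which is already contained in the supremum bound of Corollary~\ref{cor:behaviourOfThetaR}. That bound comes from two facts. For small $r$, the limit in Theorem~\ref{thm:innerMinkowskiContentConverges} makes $\vartheta_\Omega$ bounded on some interval $(0,r_0]$. For $r\geq r_0$, we have $\abs{\{\d{x}<r\}}/r \leq \abs{\Omega}/r_0 < \infty$, using the boundedness of $\Omega$.
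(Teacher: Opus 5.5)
Your proof is correct and is the paper's argument: the paper also sets $\theta_\Omega = \sup_{r>0}\abs{\vartheta_\Omega(r)} + 1$ and unwinds the definition of $\vartheta_\Omega$ from Corollary~\ref{cor:behaviourOfThetaR}. Your remark on uniformity for large $r$ only spells out why that supremum is finite, which the paper attributes to Theorem~\ref{thm:innerMinkowskiContentConverges} together with the boundedness of $\Omega$.
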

\begin{proof}
    This corollary follows directly from the definition of $\vartheta_\Omega$ in Corollary \ref{cor:behaviourOfThetaR} and the boundedness of $\vartheta_\Omega$, with $\theta_\Omega = \norm{\vartheta_\Omega}_\infty + 1$.
\end{proof}

\subsection{Integrals over inner tubular regions}

In this subsection, we are interested in integrals of the form
\begin{align*}
    \int_{\{\d{x} < r\}} g(x) \dd{x}
\end{align*}
for suitable functions $g\colon \overline\Omega \to \R$ and their limit as $r\to 0$. These arise later in the analysis of the main contribution to the weighted heat trace. We call the sets $\{x\in\Omega \mid \d{x}<r\}$ for $r > 0$ inner tubular regions. The main result is summarized in the following theorem, which is proved at the end of this subsection. 
\begin{theorem}
\label{thm:boundaryIntegralConvergence}
    Let $\Omega \subset \R^n$ be an open and bounded set with Lipschitz boundary. Let $g\colon \overline{\Omega}\to \R$ be such that $g$ is continuous in a neighborhood of $\partial\Omega$. Then 
    \begin{align*}
        \frac{1}{r}\int_{\{\d{x} < r\}} g(x) \dd{x} \to \int_{\partial\Omega} g(x) \dd{\mathcal{H}^{n-1}(x)}, \qas r\to 0.
    \end{align*}
\end{theorem}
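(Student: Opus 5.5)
The plan is to reduce the claim to a localized version of Theorem~\ref{thm:innerMinkowskiContentConverges}, namely
\begin{align*}
    \frac{1}{r}\abs{\{x\in\Omega\cap U \mid \d{x}<r\}} \to \hausdBoundarySet{\partial\Omega\cap U}, \qas r\to 0,
\end{align*}
for suitable open sets $U$. Once this is available, we approximate $g$ uniformly near $\partial\Omega$ by simple functions adapted to a partition of a neighborhood of $\partial\Omega$.

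\textbf{Step 1: reduction to a compact neighborhood.} Since $g$ is continuous in a neighborhood $V$ of $\partial\Omega$, we may replace $V$ by a closed neighborhood $K = \{x\in\overline\Omega\mid \d{x}\le \delta\}$ on which $g$ is uniformly continuous. This uses that $\partial\Omega$ is compact. For $r<\delta$, only the values of $g$ on $K$ enter the integral, so we may assume $g$ is uniformly continuous on $K$. Fix $\eta>0$ and choose $\kappa>0$ with $\abs{g(x)-g(y)}<\eta$ whenever $x,y\in K$ and $\abs{x-y}<2\kappa$.

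\textbf{Step 2: a good partition.} Cover $\R^n$ by a grid of half-open cubes $Q_j$ of diameter $<\kappa$. Translating the grid generically, we can ensure $\hausdBoundarySet{\partial\Omega\cap\partial Q_j}=0$ for all $j$. This holds because, for each coordinate direction, the slices $\partial\Omega\cap\{x_i=c\}$ have positive $\mathcal{H}^{n-1}$ measure for at most countably many $c$, since $\mathcal{H}^{n-1}(\partial\Omega)<\infty$. Choose points $p_j\in\partial\Omega\cap \overline{Q_j}$ whenever this intersection is nonempty. For $r<\kappa$, every point $x$ with $\d{x}<r$ in $Q_j$ lies within $2\kappa$ of $p_j$, so $\abs{g(x)-g(p_j)}<\eta$. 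The same bound holds on $\partial\Omega\cap Q_j$. Since $\abs{\{\d{x}<r\}}\le r\theta_\Omega\hausdBoundary$ by Corollary~\ref{cor:boundednessOfTubularRegions}, replacing $g$ by the simple function $\sum_j g(p_j)\1_{Q_j}$ costs at most $\eta(\theta_\Omega+1)\hausdBoundary$ on both sides. Only finitely many cubes meet $K$.

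\textbf{Step 3: the localized Minkowski content (main obstacle).} It remains to prove
\begin{align*}
    \frac1r\abs{\{x\in\Omega\cap Q\mid \d{x}<r\}}\to \hausdBoundarySet{\partial\Omega\cap Q}
\end{align*}
for a cube $Q$ with $\hausdBoundarySet{\partial\Omega\cap\partial Q}=0$.

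For the lower bound, I would first try the following. Apply the localized outer and usual Minkowski content results of \cite{3454532120081201}, or argue directly. In a Lipschitz graph chart, $\{\d{x}<r\}$ over a region of the boundary contains the set $\{x_n-\phi(x')<r/\sqrt{1+L^2}\}$. The sharper statement
\begin{align*}
    \liminf_{r\to 0} \frac1r\abs{\{\d{x}<r\}\cap U}\ge \hausdBoundarySet{\partial\Omega\cap U} \quad \text{for open } U
\end{align*}
should follow from a blow-up/density argument at differentiability points of the chart function. At such points, the normal exists and the boundary is locally close to a hyperplane, and Fatou's lemma then applies.

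For the upper bound, apply the lower bound to the finitely many open cubes $Q_i^\circ$ and subtract. The global limit from Theorem~\ref{thm:innerMinkowskiContentConverges} gives
\begin{align*}
    \limsup_{r\to 0} \frac1r\abs{\{\d{x}<r\}\cap Q}\le \hausdBoundary-\sum_{i\ne j}\hausdBoundarySet{\partial\Omega\cap Q_i} = \hausdBoundarySet{\partial\Omega\cap Q}.
\end{align*}
The boundaries of the cubes, and the part of the tube near them, must contribute nothing. This follows because $\partial\Omega\cap\partial Q_i$ is $\mathcal{H}^{n-1}$-null, since the lower bound was stated for open sets and the cube faces carry no mass. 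Combining the three steps and letting $\eta\to0$ gives the theorem.
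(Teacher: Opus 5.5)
Your proof is correct. Its skeleton matches the paper's: uniform continuity of $g$ near $\partial\Omega$, replacing $g$ by a simple function on a finite partition whose piece boundaries meet $\partial\Omega$ in an $\mathcal{H}^{n-1}$-null set, and a localized inner Minkowski content for the pieces.

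The choice of partition is immaterial: you use generically translated cubes, the paper uses balls $B_\eta(y_i)$ with radii chosen via Corollary \ref{cor:IntegralBallResult}. The real difference is how the localized limit is obtained. The paper takes it from Theorem \ref{thm:boundaryIntegralConvergenceForIndicator}: it views the inner tube of $\Omega$ as the outer tube of $E=\overline{B_{2R}(0)}\setminus\Omega$ and cites the localized outer Minkowski content of \cite{3454532120081201}. You derive it from only two inputs, a $\liminf$ bound on open sets and the global limit of Theorem \ref{thm:innerMinkowskiContentConverges}. Your subtraction step is sound; it is the Portmanteau argument. Applied with $U=A$ and $U=\R^n\setminus\overline{A}$, it also recovers Theorem \ref{thm:boundaryIntegralConvergenceForIndicator} for any open $A$ with $\mathcal{H}^{n-1}(\partial\Omega\cap\partial A)=0$. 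This route is more self-contained.

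The cost is that all the geometric content now sits in the lower bound, which you only sketch. One way to finish it is as follows.
\begin{itemize}
\item At a differentiability point $x'$ of a chart function $\phi$, you have $\mathrm{d}_\Omega\big(x',\phi(x')+h\big)=h/\sqrt{1+\abs{\nabla\phi(x')}^2}+o(h)$.
\item Hence the vertical fibre of $\{\d{x}<r\}\cap U$ over $x'$ has length at least $(1-o(1))\,r\sqrt{1+\abs{\nabla\phi(x')}^2}$ as $r\to0$.
\item Rademacher, Fatou and the area formula then give the bound on one chart.
\item For a general open $U$, use superadditivity of $\liminf$ over finitely many disjoint open chart pieces that exhaust $\partial\Omega\cap U$ up to an $\mathcal{H}^{n-1}$-null set.
\end{itemize}
Alternatively, use the coarea formula with $\abs{\nabla\dOmega}=1$ a.e.\ together with lower semicontinuity of $P(\{\dOmega>s\},U)$ as $s\to0$.

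Two small remarks. First, cubes whose closure misses $\partial\Omega$ have no $p_j$, but they lie at positive distance from $\partial\Omega$, so they are irrelevant once $r$ is small. Second, your Step 3 actually shows that $\frac1r\1_{\{\d{x}<r\}}\dd{x}$ converges weakly to $\mathcal{H}^{n-1}\rvert_{\partial\Omega}$. After a Tietze extension of $g$ from $\{\d{x}\le\delta/2\}$ to a bounded continuous function on $\R^n$, the theorem then follows directly, and the cube partition of Step 2 is unnecessary.
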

Observe that the case $g=1$ follows directly from Theorem \ref{thm:innerMinkowskiContentConverges}. To prove Theorem \ref{thm:boundaryIntegralConvergence}, we proceed in two steps. First, we show the statement for indicator functions of certain sets and then for functions that are continuous near the boundary. 

Before proving the theorem, we need to briefly introduce the notion of relative perimeter, following \cite[Section 3.3]{MR1857292}.
\begin{definition}
    Let $E\subset \R^n$ be measurable and $\Omega \subset \R^n$ be open. We denote by
    \begin{align*}
        P(E,\Omega) \coloneq \sup \bigg\{\int_E \text{div} \phi \dd{x} \mid \phi \in [C_c^1(\Omega)]^n, \norm{\phi}_\infty \leq 1\bigg\} %
    \end{align*}
    the perimeter of $E$ in $\Omega$. %
    If $\Omega=\R^n$, we write $P(E) = P(E,\R^n)$.
\end{definition}
\begin{remark}
    By Theorem 3.36 in \cite{MR1857292}, if $P(E)<\infty$, then the distributional derivative $D\1_E$ exists as a Radon measure, and $\abs{D\1_E}(\Omega) = P(E,\Omega)$ for every open $\Omega \subset \R^n$. As in \cite{3454532120081201}, this allows us to extend the notion of relative perimeter to Borel-measurable $A \subset \R^n$ by
    \begin{align*}
        P(E, A) \coloneq \abs{D\1_E}(A).
    \end{align*}
\end{remark}
In the following proposition, we provide a useful property of the perimeter.

\begin{proposition}
\label{prop:factsAboutPerimeter}
    Let $E \subset \R^n$ be compact with Lipschitz boundary and $A \subset \R^n$ be Borel-measurable. Then $P(E,A) = \hausdBoundarySet{\partial E \cap A}$.
\end{proposition}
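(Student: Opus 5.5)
The statement follows from the structure theory of sets of finite perimeter (De Giorgi's theorem together with Federer's theorem) once we know that, for a Lipschitz set, the reduced boundary exhausts $\partial E$ up to an $\mathcal{H}^{n-1}$-null set.

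We first check that $E$ has finite perimeter. Since $\partial E$ is compact and locally a Lipschitz graph, we cover it by finitely many cylinders in which $E$ is the subgraph of a Lipschitz function $f$. In each cylinder, the Gauss--Green formula for Lipschitz graphs, obtained by approximating $f$ with smooth functions or by the area formula, gives $|D\1_E| = \mathcal{H}^{n-1}\llcorner\partial E$ there. A partition of unity then yields $P(E) \le \sum_j \mathcal{H}^{n-1}(\partial E\cap C_j) < \infty$, where finiteness of the right-hand side comes from the area formula, $\mathcal{H}^{n-1}(\mathrm{graph} f) = \int \sqrt{1+|\nabla f|^2} < \infty$. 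Hence, by Theorem 3.36 in \cite{MR1857292}, $D\1_E$ is a Radon measure and $P(E,\cdot)=|D\1_E|$ is a Borel measure.

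Next we identify $|D\1_E|$ with $\mathcal{H}^{n-1}\llcorner\partial E$. By De Giorgi's structure theorem, $|D\1_E| = \mathcal{H}^{n-1}\llcorner \mathcal{F}E$, where $\mathcal{F}E$ is the reduced boundary. We have $\mathcal{F}E\subset\partial E$. For the reverse inclusion up to a null set, we use that a Lipschitz graph is differentiable almost everywhere by Rademacher's theorem. At every point where $f$ is differentiable, the blow-ups of $E$ converge to a half-space, so the point lies in $E^{1/2}$ and, with the measure-theoretic normal, in $\mathcal{F}E$. Since the non-differentiability points project onto a Lebesgue-null set, they form an $\mathcal{H}^{n-1}$-null subset of the graph by the area formula, and therefore $\mathcal{H}^{n-1}(\partial E\setminus\mathcal{F}E)=0$. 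Alternatively, one can avoid blow-ups altogether by using the local computation from the first step directly: in each cylinder $|D\1_E|$ coincides with $\mathcal{H}^{n-1}\llcorner\partial E$ as measures, and the finitely many local identities patch together.

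Finally, since both sides are Borel measures that agree, evaluating at the Borel set $A$ gives $P(E,A)=|D\1_E|(A)=\mathcal{H}^{n-1}(\partial E\cap A)$.

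The main obstacle is the local identity $|D\1_E| = \mathcal{H}^{n-1}\llcorner\mathrm{graph}(f)$ for a Lipschitz subgraph. I would prove it by computing $\int_E \operatorname{div}\phi$ via Fubini in the vertical variable, which gives
\[
\int_E \operatorname{div}\phi\,\mathrm{d}x = \int \phi(x',f(x'))\cdot(-\nabla f(x'),1)\,\mathrm{d}x'
\]
using the a.e. differentiability of $f$. This identifies $D\1_E$ as the pushforward of $(\nabla f,-1)\,\mathrm{d}x'$, and its total variation is $\sqrt{1+|\nabla f|^2}\,\mathrm{d}x'$, which is $\mathcal{H}^{n-1}$ on the graph by the area formula.
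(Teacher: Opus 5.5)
Your argument is correct, but it takes a different route from the paper. The paper computes nothing itself. It observes that $\1_E=\1_{E^\circ}$ almost everywhere, so $D\1_E=D\1_{E^\circ}$ and $P(E,A)=P(E^\circ,A)$. It then cites Proposition~3.62 in \cite{MR1857292} for open sets with Lipschitz boundary, together with $\partial(E^\circ)=\partial E$. Your proposal in effect reproves that cited proposition, in three steps:
\begin{itemize}
\item the Fubini/fundamental-theorem-of-calculus computation in graph coordinates identifies $D\1_E$ locally with the push-forward of $(\nabla f,-1)\,\mathrm{d}x'$ under the injective graph map;
\item the area formula turns its total variation into $\mathcal{H}^{n-1}$ on the graph;
\item locality of measures glues the finitely many cylinders, together with $E^\circ$ and $\R^n\setminus E$, where $|D\1_E|$ vanishes trivially.
\end{itemize}
The paper's route is two lines, but it leans entirely on the reference and on the facts $|\partial E|=0$ and $\partial(E^\circ)=\partial E$, which it asserts without proof. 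Yours is self-contained and works directly with the closed set; closed versus open subgraph is irrelevant because the graph is Lebesgue-null. It also shows where the Lipschitz hypothesis enters: Rademacher's theorem, absolute continuity on lines, and the area formula.

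The De Giorgi/Federer detour is unnecessary, and one step in it is false as a pointwise claim. Differentiability of $f$ at $x'$ gives half-space blow-ups and hence $(x',f(x'))\in E^{1/2}$, but it does not give membership in $\mathcal{F}E$. For example, take $f(y)=y^2\sin(1/y)$ at $0$: the blow-ups are half-spaces, but the normals $(f',-1)/\sqrt{1+(f')^2}$ oscillate, and $|D\1_E(B_r(0))|/|D\1_E|(B_r(0))$ stays bounded away from $1$. To conclude $\hausdBoundarySet{\partial E\setminus\mathcal{F}E}=0$ on that route, you need Federer's theorem $\hausdBoundarySet{E^{1/2}\setminus\mathcal{F}E}=0$, or you must restrict to Lebesgue points of $\nabla f$. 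Your alternative patching argument is complete on its own, so this does not affect the validity of the proof.
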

\begin{proof}
    Note that $\1_E = \1_{E^\circ}$ almost everywhere and therefore $D\1_E = D\1_{E^\circ}$ as Radon measures. Thus, we have
    \begin{align*}
        P(E,A) = \abs{D\1_E}(A) = \abs{D\1_{E^\circ}}(A) = P(E^\circ,A).
    \end{align*}
    By Proposition 3.62 in \cite{MR1857292}, $P(E^\circ,A) = \hausdBoundarySet{\partial (E^\circ) \cap A} = \hausdBoundarySet{\partial E \cap A}$, which implies the claim.
\end{proof}

The preceding proposition allows us to interpret the perimeter of $E$ in $A$ as the $(n-1)$-dimensional measure of the parts of $\partial E$ that lie inside $A$.

The following theorem summarizes the first case of the boundary integral, which heuristically states that the boundary integral is only influenced by parts close to the boundary in a normal direction.

\begin{theorem}
\label{thm:boundaryIntegralConvergenceForIndicator}
    Let $\Omega\subset\R^n$ be an open and bounded set with Lipschitz boundary. Let $A\subset\R^n$ be open and bounded, and assume that $\hausdBoundarySet{\partial\Omega\cap\partial A} = 0$. Then we have
    \begin{align}
        \lim_{r\to 0} \frac{1}{r}\int_{\{\d{x} < r\}} \1_{A}(x) \dd{x} = \hausdBoundarySet{\partial\Omega \cap A}. \label{eq:boundaryIntegralConvergenceForIndicator}
    \end{align}
\end{theorem}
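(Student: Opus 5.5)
Since the final line of the proof of Theorem \ref{thm:innerMinkowskiContentConverges} already reduces everything to the outer Minkowski content via the identity $\abs{\Omega\setminus\Omega_{-r}} = \abs{(\partial\Omega)_r} - \abs{\Omega_r\setminus\Omega}$, I would prove \eqref{eq:boundaryIntegralConvergenceForIndicator} in the same way, now localized to $A$. Note first that $\{x\in\Omega\mid \d{x}<r\} = \Omega\setminus\Omega_{-r}$ and that it is contained in $(\partial\Omega)_r$. Since $\partial\Omega$ has Lebesgue measure zero for a Lipschitz domain, we get
\begin{align*}
    \int_{\{\d{x}<r\}}\1_A(x)\dd{x} = \abs{(\partial\Omega)_r\cap A} - \abs{(\Omega_r\setminus\Omega)\cap A}.
\end{align*}
It therefore suffices to establish two localized limits. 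The first is $\abs{(\partial\Omega)_r\cap A}/(2r)\to\hausdBoundarySet{\partial\Omega\cap A}$, a localized Minkowski content of the boundary. The second is $\abs{(\Omega_r\setminus\Omega)\cap A}/r\to\hausdBoundarySet{\partial\Omega\cap A}$, a localized outer Minkowski content. Subtracting them gives $2\hausdBoundarySet{\partial\Omega\cap A}-\hausdBoundarySet{\partial\Omega\cap A}$, which is the claim.

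For both localized limits I would use the results of \cite{3454532120081201} in their localized form. That paper treats the outer Minkowski content relative to a set $A$ and identifies the limit with the relative perimeter $P(E,A)$, under a condition of the type $P(E,\partial A)=0$ or $\mathcal{H}^{n-1}(\partial E\cap\partial A)=0$. I would apply this with $E=\overline\Omega$, which is compact with Lipschitz boundary and satisfies $(\overline\Omega)_r\setminus\overline\Omega = \Omega_r\setminus\Omega$ up to a null set. Proposition \ref{prop:factsAboutPerimeter} then converts $P(\overline\Omega,A)$ into $\hausdBoundarySet{\partial\Omega\cap A}$. For the two-sided content of $\partial\Omega$ localized to $A$, I would appeal to the $(n-1)$-rectifiability of $\partial\Omega$ together with the localized Minkowski content statement (Proposition 1 in \cite{3454532120081201}, or \cite[Theorem 2.106]{MR1857292}). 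Our hypothesis $\hausdBoundarySet{\partial\Omega\cap\partial A}=0$ is exactly what is needed there.

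If the cited localized statements are not available verbatim, I would prove them by a sandwich argument. I would set $A_\delta^- = \{x\in A\mid \dist{x}{A^c}>\delta\}$ and $A_\delta^+ = \{x\mid\dist{x}{A}<\delta\}$, both open. Then the global measure-theoretic convergence (the measures $\mu_r = r^{-1}\1_{\Omega\setminus\Omega_{-r}}\dd{x}$ converge weakly to $\mathcal{H}^{n-1}\rvert_{\partial\Omega}$) would give a lower bound by the liminf on open sets and an upper bound by the limsup on closed sets. Weak convergence itself would follow from the total mass convergence in Theorem \ref{thm:innerMinkowskiContentConverges} together with the local lower bound $\liminf_r \mu_r(U)\ge \hausdBoundarySet{\partial\Omega\cap U}$ for open $U$. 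This lower bound would come from a local graph representation of $\partial\Omega$: in a chart, the inner tube contains the set of points at vertical distance less than $r\sqrt{1+L^2}^{-1}$-type scaling from the graph. Actually it is cleaner to use the coarea formula with $\abs{\nabla \dOmega}=1$ a.e., which gives $\mu_r(U) = r^{-1}\int_0^r\mathcal{H}^{n-1}(\{\dOmega=s\}\cap U)\dd{s}$, combined with lower semicontinuity of perimeter applied to $\{\dOmega>s\}\to\Omega$ in $L^1$. Finally, the portmanteau theorem applied to $A$ (bounded, with $\mu_0(\partial A)=\hausdBoundarySet{\partial\Omega\cap\partial A}=0$) yields $\mu_r(A)\to\mu_0(A)$, which is \eqref{eq:boundaryIntegralConvergenceForIndicator}.

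The main obstacle is the local lower bound $\liminf_r\mu_r(U)\ge\hausdBoundarySet{\partial\Omega\cap U}$. Once that is in hand, matching it with the total mass from Theorem \ref{thm:innerMinkowskiContentConverges} forces weak convergence. To get it, I would first try the coarea and lower-semicontinuity route, since $P(\{\dOmega>s\},U)\le\mathcal{H}^{n-1}(\{\dOmega=s\}\cap U)$ and $\liminf_{s\to0}P(\{\dOmega>s\},U)\ge P(\Omega,U)=\hausdBoundarySet{\partial\Omega\cap U}$.
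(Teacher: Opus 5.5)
Your proposal is correct, but the argument that actually carries it is not the paper's.

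The paper never needs a two-sided content. Taking $E=\overline{B_{2R}(0)}\setminus\Omega$, the inner tube $\Omega\setminus\Omega_{-r}$ coincides inside $A$ with the outer tube $E_r\setminus E$. A single application of \cite[Corollary~1, Proposition~2]{3454532120081201} together with Proposition~\ref{prop:factsAboutPerimeter} then finishes.

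Your first route instead requires the localized two-sided content $\abs{(\partial\Omega)_r\cap A}/(2r)\to\hausdBoundarySet{\partial\Omega\cap A}$. That is not what \cite[Theorem~2.106]{MR1857292} states, since it is a global result, and the paper invokes Proposition~1 of \cite{3454532120081201} only in global form. So that step is not available verbatim. The complement trick is exactly what removes this need.

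Your fallback, however, is a complete proof by itself. For your measures $\mu_r$, three ingredients give $\liminf_{r\to0}\mu_r(U)\ge\hausdBoundarySet{\partial\Omega\cap U}$ for open $U$:
the coarea formula with $\abs{\nabla\dOmega}=1$ a.e.\ in $\Omega$;
the inclusion $\partial\{\dOmega>s\}\subset\{\dOmega=s\}$ combined with $P(F,U)\le\hausdBoundarySet{\partial F\cap U}$ \cite[Proposition~3.62]{MR1857292};
lower semicontinuity of $P(\cdot,U)$.
The mass convergence of Theorem~\ref{thm:innerMinkowskiContentConverges} turns this into $\limsup_{r\to0}\mu_r(F)\le\hausdBoundarySet{\partial\Omega\cap F}$ for closed $F$. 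The hypothesis $\hausdBoundarySet{\partial\Omega\cap\partial A}=0$ then gives the claim. This makes both the two-sided/outer decomposition and the $A_\delta^\pm$ sandwich unnecessary. In effect you prove directly, for inner tubes, the weak convergence that the paper imports from \cite{3454532120081201} via the complement.

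The trade-off is as follows. The paper's route is shorter for this statement. Yours is self-contained given Theorem~\ref{thm:innerMinkowskiContentConverges} and standard BV facts, and it produces the narrow convergence $\mu_r\rightharpoonup\mathcal{H}^{n-1}\rvert_{\partial\Omega}$. From that, Theorem~\ref{thm:boundaryIntegralConvergence} follows at once: extend $g$ from a compact set $\overline\Omega\cap\{\dOmega\le\eta\}$ on which it is continuous to a bounded continuous function on $\R^n$. This bypasses the ball-cover argument the paper uses there.
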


\begin{proof}
    First, note that
\begin{align*}
    \frac{1}{r}\int_{\{\d{x} <r\}} \1_A(x)\dd{x} = \frac{\abs{\{\d{x} < r\}\cap A}}{r} = \frac{\abs{\left(\Omega\setminus\Omega_{-r}\right)\cap A}}{r}.
\end{align*}

To analyze this expression, let $R> 0$ be so that $\Omega\cup A \subset B_R(0)$ and set $E = \overline{B_{2R}(0)}\setminus \Omega$. Then clearly $\partial E \cap A = \partial\Omega\cap A$ and
\begin{align*}
    \left(\Omega\setminus \Omega_{-r}\right) \cap A = \left(E_r\setminus E\right) \cap A
\end{align*}
for $r$ small enough. In other words, the inner parallel set of $\Omega$ can be seen as an outer parallel set of $E$. %

Note that, by construction, $E$ is closed and bounded with Lipschitz boundary. By \cite[Corollary~1]{3454532120081201}, $\mathcal{SM}^{out}(E) = P(E)$, and therefore by \cite[Proposition 2]{3454532120081201} we have that
\begin{align*}
    \lim_{r\to 0} \frac{\abs{(E_r\setminus E)\cap A}}{r} = P(E,A),
\end{align*}
if $P(E, \partial A) = 0$. By Proposition \ref{prop:factsAboutPerimeter}, we see that indeed
\begin{align*}
    P(E, \partial A) = \hausdBoundarySet{\partial E \cap \partial A}= \hausdBoundarySet{\partial \Omega \cap \partial A} = 0.
\end{align*}
Thus, we have
\begin{align*}
    \lim_{r\to 0} \frac{\abs{\left(\Omega\setminus\Omega_{-r}\right)\cap A}}{r} = \lim_{r\to 0} \frac{\abs{(E_r\setminus E)\cap A}}{r} = P(E,A) = \hausdBoundarySet{\partial E\cap A} = \hausdBoundarySet{\partial \Omega\cap A},
\end{align*}
where we used again Proposition \ref{prop:factsAboutPerimeter} in the second to last step. 
\end{proof}

\begin{figure}[ht]
\centering

\begin{subfigure}[t]{0.48\textwidth}
\centering
\begin{tikzpicture}[x=4cm,y=4cm,line cap=round,line join=round]
  \path (-0.18,-0.62) rectangle (1.17,1.12);

  \draw[->,gray!55] (-0.10,0) -- (1.12,0) node[right] {$x$};
  \draw[->,gray!55] (0,-0.56) -- (0,1.08) node[above] {$y$};

  \fill[gray!12] (0,0) rectangle (1,1);

\fill[blue!25,opacity=.45] (0,-.5) rectangle (1,.5);

\fill[blue!55,opacity=.35] (0,0) rectangle (1,.5);

  \draw[blue!70!black,dashed,thick] (0,-.5) rectangle (1,.5);

  \draw[black,very thick] (0,0) rectangle (1,1);

  \draw[red!80!black,very thick] (0,0) -- (1,0);

  \fill[white] (0,0) circle (.012);
  \draw[red!80!black,thick] (0,0) circle (.012);
  \fill[white] (1,0) circle (.012);
  \draw[red!80!black,thick] (1,0) circle (.012);

  \foreach \x/\lab in {0/0,1/1}{
    \draw[gray!70] (\x,.012) -- (\x,-.012);
    \node[gray!80] at ([xshift=-5pt,yshift=-10pt]\x,0) {$\lab$};
  }

  \foreach \y/\lab in {-.5/-\frac12,.5/\frac12,1/1}{
    \draw[gray!70] (.012,\y) -- (-.012,\y);
    \node[left,gray!80] at (-.025,\y) {$\lab$};
  }

\node[black] at (.5,.75) {$\Omega=(0,1)^2$};
\node[blue!70!black] at (.5,.25) {$A\cap\Omega$};
\node[blue!70!black] at (.5,-.32) {$A$};
\node[red!80!black] at (.5,-.1) {$A\cap\partial\Omega$};
\end{tikzpicture}
\caption{Configuration according to Remark \ref{rem:exForGeometricCondA}.}
\label{fig:configuration-a-geomCond}
\end{subfigure}
\hfill
\begin{subfigure}[t]{0.48\textwidth}
\centering
\begin{tikzpicture}[x=4cm,y=4cm,line cap=round,line join=round]
  \path (-0.18,-0.62) rectangle (1.17,1.12);

  \draw[->,gray!55] (-0.10,0) -- (1.12,0) node[right] {$x$};
  \draw[->,gray!55] (0,-0.56) -- (0,1.08) node[above] {$y$};

  \fill[gray!12] (0,0) rectangle (1,1);

\fill[blue!25,opacity=.45]
  (0,-.5) -- (1,-.5) -- (1,0) -- (.75,0) --
  (.75,.5) -- (.25,.5) -- (.25,0) -- (0,0) -- cycle;

\fill[blue!55,opacity=.35] (.25,0) rectangle (.75,.5);

  \draw[blue!70!black,dashed,thick]
    (0,-.5) -- (1,-.5) -- (1,0) -- (.75,0) --
    (.75,.5) -- (.25,.5) -- (.25,0) -- (0,0) -- cycle;

  \draw[black,very thick] (0,0) rectangle (1,1);

  \draw[red!80!black,very thick] (.25,0) -- (.75,0);

  \fill[white] (.25,0) circle (.012);
  \draw[red!80!black,thick] (.25,0) circle (.012);
  \fill[white] (.75,0) circle (.012);
  \draw[red!80!black,thick] (.75,0) circle (.012);

  \foreach \x/\lab in {0/0,.25/\frac14,.75/\frac34,1/1}{
    \draw[gray!70] (\x,.012) -- (\x,-.012);
    \node[gray!80] at ([xshift=-5pt,yshift=-10pt]\x,0) {$\lab$};
  }

  \foreach \y/\lab in {-.5/-\frac12,.5/\frac12,1/1}{
    \draw[gray!70] (.012,\y) -- (-.012,\y);
    \node[left,gray!80] at (-.025,\y) {$\lab$};
  }

\node[black] at (.5,.75) {$\Omega=(0,1)^2$};
\node[blue!70!black] at (.5,.25) {$A\cap\Omega$};
\node[blue!70!black] at (.5,-.32) {$A$};
\node[red!80!black] at (.5,-.1) {$A\cap\partial\Omega$};
\end{tikzpicture}
\caption{Configuration according to Remark \ref{rem:exForGeometricCondB}.}
\label{fig:configuration-b-geomCond}
\end{subfigure}

\caption{Illustration of two configurations of \(A\) relative to \(\Omega=(0,1)^2\).}
\label{fig:two-configurations}
\end{figure}

\begin{remark}
\label{rem:exForGeometricCondA}
    The geometric condition $\hausdBoundarySet{\partial\Omega \cap \partial A} = 0$ in the preceding theorem ensures that $\partial A$ does not have too much overlap with $\partial\Omega$. As an illustration that some geometric control is necessary, we give an explicit example where \eqref{eq:boundaryIntegralConvergenceForIndicator} fails if the geometric condition is not satisfied. A sketch of this situation is given in Figure \ref{fig:configuration-a-geomCond}. Consider $\Omega = (0,1)^2 \subset \R^2$ and $A = (0,1)\times \big(-\frac12,\frac12\big)$. Then $\partial\Omega\cap A = (0,1)\times \{0\}\neq \emptyset$ and $\partial\Omega \cap \partial A = \Big(\{0\}\times \Big[0,\frac12\Big] \Big)\cup \Big(\{1\}\times\Big[0,\frac12\Big]\Big)$, which implies $\mathcal{H}^1(\partial\Omega \cap \partial A) = 1$. Hence, the geometric condition of Theorem \ref{thm:boundaryIntegralConvergenceForIndicator} is not fulfilled. Furthermore, 
    \begin{align*}
        \int_{\{\d{x}<r\}} \1_{A} \dd{x} = \frac{r}{2}\cdot 2 + r\cdot (1-2r) = 2r-2r^2,
    \end{align*}
    if $r < \frac{1}{2}$. Hence,
    \begin{align*}
         \lim_{r\to0} \frac1r \int_{\{\d{x}<r\}} \1_A(x)\dd{x} = 2.
    \end{align*}
    On the other hand,
    \begin{align*}
        \hausdBoundarySet{\partial\Omega \cap A} = 1.
    \end{align*}
    Therefore, \eqref{eq:boundaryIntegralConvergenceForIndicator} fails in this example.
\end{remark}
\begin{remark}
\label{rem:exForGeometricCondB}
    However, the condition $\hausdBoundarySet{\partial A\cap\partial\Omega} = 0$ in Theorem \ref{thm:boundaryIntegralConvergenceForIndicator} is not necessary. To see this, consider again $\Omega = (0,1)^2 \subset \R^2$ and $A = \Big(\Big(\frac14,\frac34\Big)\times\Big(-\frac12,\frac12\Big)\Big)\cup \Big((0,1)\times \Big(-\frac12, 0\Big)\Big)$, see also the sketch in Figure \ref{fig:configuration-b-geomCond}. Then we have $\partial A\cap\partial\Omega = \Big(\Big[0,\frac14\Big]\times \{0\}\Big)\cup \Big( \Big[\frac34,1\Big]\times\{0\}\Big)$, and therefore
    \begin{align*}
         \hausdBoundarySet{\partial A\cap \partial\Omega} = \frac12 > 0.
    \end{align*}
    Hence, the geometric assumptions of Theorem \ref{thm:boundaryIntegralConvergenceForIndicator} are not fulfilled. Nevertheless, it holds that $A\cap\partial\Omega = \Big(\frac14,\frac34\Big)\times\{0\}$ and $A\cap\{\d{x}<r\} = \Big(\frac14,\frac34\Big)\times (0,r)$ for $r< \frac14$, which implies 
     \begin{align*}
        \lim_{r\to 0}\frac1r \int_{\{\d{x}<r\}}\1_A(x)\dd{x} = \frac{1}{2} = \hausdBoundarySet{A\cap \partial\Omega} .
     \end{align*}
\end{remark}

Due to the following corollary, Theorem \ref{thm:boundaryIntegralConvergenceForIndicator} can be applied to balls of almost every radius centered at a boundary point of $\Omega$.
\begin{corollary}
\label{cor:IntegralBallResult}
    Let $\Omega \subset \R^n$ be an open and bounded set with Lipschitz boundary. Let $y\in\partial\Omega$. Then, for a.e. $r>0$, the ball $B_r(y)$ satisfies
    \begin{align*}
        B_r(y)\cap \partial\Omega \neq \emptyset \qand \hausdBoundarySet{\partial\Omega\cap\partial B_r(y)} = 0.
    \end{align*}
\end{corollary}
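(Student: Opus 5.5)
The statement contains two claims, and I would prove them separately.

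\emph{First claim: $B_r(y)\cap\partial\Omega\neq\emptyset$ for every $r>0$.} This is immediate, since $y\in\partial\Omega$ and $y\in B_r(y)$. So the real content is the second claim.

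\emph{Second claim: the spheres $\partial B_r(y)$, $r>0$, meet $\partial\Omega$ in $\mathcal{H}^{n-1}$-null sets for a.e.\ $r$.} The spheres are pairwise disjoint, so the sets
\begin{align*}
S_r \coloneq \partial\Omega\cap\partial B_r(y), \qquad r>0,
\end{align*}
form a disjoint family of Borel subsets of $\partial\Omega$. Let $\mu \coloneq \mathcal{H}^{n-1}\rvert_{\partial\Omega}$. Since $\Omega$ is bounded with Lipschitz boundary, $\mu$ is a finite measure: $\partial\Omega$ is compact and is covered by finitely many Lipschitz graphs, each of finite $\mathcal{H}^{n-1}$-measure. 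Alternatively, finiteness follows from Theorem~\ref{thm:innerMinkowskiContentConverges}.

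A finite measure can assign positive mass to at most countably many members of a disjoint family. Indeed, for each $k\in\N$ the number of $r$ with $\mu(S_r)>1/k$ is at most $k\,\mu(\partial\Omega)<\infty$. Taking the union over $k$, the set
\begin{align*}
\{r>0 \mid \hausdBoundarySet{\partial\Omega\cap\partial B_r(y)}>0\}
\end{align*}
is countable, hence Lebesgue-null. This gives the second claim, in fact for all but countably many $r$.

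The only point needing care is measurability of each $S_r$, and it is fine: $S_r$ is compact, being the intersection of two closed sets one of which is bounded. The Fubini-type alternative, integrating $\mathbb{1}_{|x-y|=r}$ over $\mu$ and $r$, also works, but the countability argument avoids it. I do not expect a genuine obstacle here.
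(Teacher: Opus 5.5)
Your proof is correct, but it takes a different route from the paper. The paper applies a coarea-type (Eilenberg) inequality, Lemma 2.95 of Ambrosio--Fusco--Pallara, to the Lipschitz function $x\mapsto |x-y|$ (written there as the signed distance to $\{y\}$). It uses only the fact that $\partial\Omega$ has Lebesgue measure zero. Since the level sets of this function are exactly the spheres $\partial B_r(y)$, the slices $\partial\Omega\cap\partial B_r(y)$ are $\mathcal{H}^{n-1}$-null for a.e.\ $r$. Your argument instead combines the finiteness of $\mathcal{H}^{n-1}(\partial\Omega)$ with the disjointness of the spheres. This is more elementary and gives a sharper conclusion: the exceptional set of radii is countable, not merely Lebesgue-null. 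It also needs less than finiteness, since $\sigma$-finiteness of $\mathcal{H}^{n-1}\rvert_{\partial\Omega}$ would suffice. The paper's route, by contrast, works for any Lebesgue-null set, even one whose $\mathcal{H}^{n-1}$-measure is not $\sigma$-finite, where your countability argument would break down. For a bounded Lipschitz boundary both apply, and either is enough for the later use, which needs only finitely many radius conditions to hold simultaneously. The first claim is trivial in both, because $y\in B_r(y)\cap\partial\Omega$; the paper does not even comment on it.
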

\begin{proof}
    Note that $\Tilde{\mathrm{d}}_{\{y\}}(\cdot) \colon \R^n \to \R$ is a Lipschitz function and $\mathcal{H}^n(\partial\Omega) = 0$. Thus, applying \cite[Lemma 2.95]{MR1857292},
    \begin{align*}
        \hausdBoundarySet{\partial\Omega \cap \Tilde{\mathrm{d}}_{\{y\}}^{-1}(r)} = 0, \qfor \text{a.e. } r>0.
    \end{align*}
    Now, the claim follows by noting that $\partial B_r(y) = \{x\in\R^n \mid \Tilde{\mathrm{d}}_{\{y\}}(x) = r\}$.
\end{proof}

Finally, we are able to prove the main theorem of this subsection by approximating continuous functions by specific simple functions.

\begin{proof}[Proof of Theorem \ref{thm:boundaryIntegralConvergence}]
    For $\eta > 0$, let $N_\eta = \{x\in\overline{\Omega} \mid \d{x}<\eta\}$. As $g$ is continuous in a neighborhood of the boundary, there exists an $\eta_0 > 0$ such that $g$ is continuous on $N_{2\eta_0}$.
    As $\Omega$ is bounded, so is $\overline{N}_{\eta_0}$, and $g$ is therefore uniformly continuous on $\overline{N}_{\eta_0}$. Fix $\varepsilon > 0$. Then there exists $\eta \in (0,\eta_0)$ such that for $x,y\in N_{\eta_0}$ with $\abs{x-y}<\eta$ we have $\abs{g(x)-g(y)}< \varepsilon$. As $\partial \Omega$ is compact, we can find a finite set of points $\{y_i\}_{i=1}^N\subset\partial\Omega$ such that the balls $B_i = B_{\eta/2}(y_i)$ are a finite cover of $\partial \Omega$, that is,
\begin{align*}
    \partial\Omega\subset\bigcup_{i=1}^N B_i,  
\end{align*}
and 
\begin{align}
    \mathcal{H}^{n-1}\bigg(\partial\Omega\cap\partial B_\eta(y_i)\bigg) = 0 \label{eq:choosingOfBallsNearBoundary}
\end{align}
for all $i\in\{1,\dots,N\}$ by Corollary \ref{cor:IntegralBallResult}.
For $r < \frac{\eta}{2}$, we have by the triangle inequality that $\{x \in \overline{\Omega} \mid \d{x} < r\} \subset \bigcup_{i=1}^N \Tilde{B}_i$ with $\Tilde{B}_i = B_\eta(y_i)$.

We define a locally constant version $\Tilde{g}$ of $g$ around the boundary by 
\begin{align*}
    \Tilde{g}(x) = \sum_{i=1}^N g(y_i) \1_{\Tilde{B}_i\setminus \bigcup_{j=1}^{i-1} \Tilde{B}_j}(x),
\end{align*}
for $x\in \R^n$. Note that $\Tilde{g}$ has support also outside of $\Omega$. The function $\Tilde{g}$ is constructed so that it is a good approximation for $g$ on $\{x\in\overline{\Omega}\mid\d{x}<r\}$. Indeed, let $x\in \overline{\Omega}$ with $\d{x}<r$. Then there exists $i\in\{1,\dots,N\}$ such that $x\in \Tilde{B}_i$, but $x\notin \Tilde{B}_j$ for all $j< i$. Thus,
\begin{align*}
    \abs{g(x)-\Tilde{g}(x)} = \abs{g(x)-g(y_i)}<\varepsilon
\end{align*}
by the uniform continuity of $g$ established above. Then, we have
\begin{align}
\begin{aligned}
    \abs{\frac{1}{r}\int_{\{\d{x} < r\}}g(x)\dd x - \int_{\partial\Omega}g(x)\dd\mathcal{H}^{n-1}(x)} &\leq \abs{\frac{1}{r}\int_{\{\d{x} < r\}}g(x)\dd x - \frac{1}{r}\int_{\{\d{x} < r\}}\Tilde{g}(x)\dd x} \\
    &\hspace{2em}+ \abs{\frac{1}{r}\int_{\{\d{x} < r\}}\Tilde{g}(x)\dd x - \int_{\partial\Omega}\Tilde{g}(x)\dd\mathcal{H}^{n-1}(x)}  \\
    &\hspace{2em}+ \abs{\int_{\partial\Omega}\Tilde{g}(x)\dd\mathcal{H}^{n-1}(x) - \int_{\partial\Omega}g(x)\dd\mathcal{H}^{n-1}(x)}.
\end{aligned}
\label{eq:mainTermInLocalizationResultLevelSetIntegral}
\end{align}
The first and third terms are small due to $\Tilde{g}$ being an approximation of $g$, which can be seen by the following estimates where we apply Corollary \ref{cor:behaviourOfThetaR},
\begin{align*}
    \abs{\frac{1}{r}\int_{\{\d{x} < r\}}g(x)\dd x - \frac{1}{r}\int_{\{\d{x} < r\}}\Tilde{g}(x)\dd x}  &\leq \frac{1}{r}\int_{\{\d{x} < r\}}\abs{g(x)-\Tilde{g}(x)}\dd x \\
    &\leq \varepsilon \frac{\abs{\{\d{x}<r\}}}{r} \\
    & = \varepsilon \hausdBoundary\big(1+\vartheta_\Omega(r)\big), \qand\\
    \abs{\int_{\partial\Omega}\Tilde{g}(x)\dd\mathcal{H}^{n-1}(x) - \int_{\partial\Omega}g(x)\dd\mathcal{H}^{n-1}(x)} &\leq \int_{\partial\Omega} \abs{\Tilde{g}(x)-g(x)}\dd\mathcal{H}^{n-1}(x) \\
    &\leq \varepsilon \hausdBoundary.
\end{align*}

The middle term in \eqref{eq:mainTermInLocalizationResultLevelSetIntegral} is handled by the considerations for integrals over indicator functions, that is,
\begin{align*}
    \frac{1}{r}\int_{\{\d{x} < r\}}\Tilde{g}(x)\dd x &= \sum_{i=1}^N \frac{1}{r}g(y_i)\int_{\{\d{x}<r\}} \1_{\Tilde{B}_i\setminus \bigcup_{j=1}^{i-1} \Tilde{B}_j} \dd{x} \\
    &= \sum_{i=1}^Ng(y_i) \frac{\abs{\Big(\Tilde{B}_i\setminus \bigcup_{j=1}^{i-1} \Tilde{B}_j\Big)\cap \{\d{x}<r\}}}{r} \\
    &= \sum_{i=1}^N g(y_i) \mathcal{H}^{n-1}\Bigg(\partial\Omega\cap \bigg(\Tilde{B}_i\setminus \bigcup_{j=1}^{i-1} \Tilde{B}_j\bigg)\Bigg) \\
    &+ \sum_{i=1}^N g(y_i) \Bigg[\frac{\abs{\Big(\Tilde{B}_i\setminus \bigcup_{j=1}^{i-1} \Tilde{B}_j\Big)\cap \{\d{x}<r\}}}{r}-\mathcal{H}^{n-1}\Bigg(\partial\Omega\cap \bigg(\Tilde{B}_i\setminus \bigcup_{j=1}^{i-1} \Tilde{B}_j\bigg)\Bigg)\Bigg] .
    \end{align*}
The first sum can now be rewritten as
    \begin{align*}
    \sum_{i=1}^N g(y_i) \mathcal{H}^{n-1}\Bigg(\partial\Omega\cap \big(\Tilde{B}_i\setminus \bigcup_{j=1}^{i-1} \Tilde{B}_j\big)\Bigg)&= \sum_{i=1}^N g(y_i) \int_{\partial\Omega} \1_{\big(\Tilde{B}_i\setminus \bigcup_{j=1}^{i-1} \Tilde{B}_j\big)}\dd\mathcal{H}^{n-1} \\
    &= \int_{\partial\Omega} \Tilde{g} \dd\mathcal{H}^{n-1}.
\end{align*}
For the second sum we apply Theorem \ref{thm:boundaryIntegralConvergenceForIndicator} with $A = \Tilde{B}_i\setminus \bigcup_{j=1}^{i-1} \overline{\Tilde{B}_j}$ to obtain 
\begin{align*}
    &\lim_{r\to 0}\sum_{i=1}^N g(y_i) \left[\frac{\abs{\Big(\Tilde{B}_i\setminus \bigcup_{j=1}^{i-1} \Tilde{B}_j\Big)\cap \{\d{x}<r\}}}{r}-\mathcal{H}^{n-1}\Bigg(\partial\Omega\cap \Big(\Tilde{B}_i\setminus \bigcup_{j=1}^{i-1} \Tilde{B}_j\Big)\Bigg)\right] \\
    &\hspace{1em}= \lim_{r\to 0}\sum_{i=1}^N g(y_i) \left[\frac{\abs{\Big(\Tilde{B}_i\setminus \bigcup_{j=1}^{i-1} \overline{\Tilde{B}_j}\Big)\cap \{\d{x}<r\}}}{r}-\mathcal{H}^{n-1}\Bigg(\partial\Omega\cap \Big(\Tilde{B}_i\setminus \bigcup_{j=1}^{i-1} \overline{\Tilde{B}_j}\Big)\Bigg)\right]  = 0,
\end{align*}
if $\mathcal{H}^{n-1}\bigg(\partial\Omega \cap \partial\Big(\Tilde{B}_i\setminus \bigcup_{j=1}^{i-1} \overline{\Tilde{B}_j}\Big)\bigg) = 0$. This follows by noting that
\begin{align*}
    \partial\bigg(\Tilde{B}_i\setminus \bigcup_{j=1}^{i-1} \overline{\Tilde{B}_j}\bigg) \subset \bigcup_{j=1}^i \partial \Tilde{B_j} \subset \bigcup_{j=1}^N \partial \Tilde{B_j}
\end{align*}
and therefore
\begin{align*}
    \mathcal{H}^{n-1}\Bigg(\partial\Omega \cap \partial\bigg(\Tilde{B}_i\setminus \bigcup_{j=1}^{i-1} \overline{\Tilde{B}_j}\bigg)\Bigg) \leq \sum_{j=1}^N\mathcal{H}^{n-1}\bigg(\partial\Omega\cap\partial\Tilde{B}_j\bigg) =0,
\end{align*}
where the last equality follows from \eqref{eq:choosingOfBallsNearBoundary}.

Hence,
\begin{align*}
    \limsup_{r\to 0} \abs{\frac{1}{r}\int_{\{\d{x} < r\}}g(x)\dd x - \int_{\partial\Omega}g(x)\dd\mathcal{H}^{n-1}(x)} \leq 2\varepsilon \hausdBoundary.
\end{align*}
Noting that $ \hausdBoundary < \infty$ and letting $\varepsilon \to 0$ yields the result.

\end{proof}

\section{Main contribution to the weighted heat trace}
\label{sec:mainContribution}

This section is devoted to the analysis of the first integral in the decomposition \eqref{eq:decompInMainContrAndError}. This yields precisely the main contribution in Theorems \ref{thm:mainThmWithG} and \ref{thm:mainThmWithIndicatorFunc}. We distinguish three different cases for the exponent $\alpha$, namely we have $\alpha \in (-3,-1)$, $\alpha \in (-1, \infty)$ and $\alpha = -1$. Note that in the case of $\alpha \in (-1, \infty)$, there are two relevant terms. In the proofs, we rely heavily on the analysis of the integrals over inner tubular regions established in the previous section.

\begin{theorem}
\label{thm:mainTermAlphaLarger1}
    Let $\Omega \subset \R^n$ be an open and bounded set with Lipschitz boundary. Let 
    \begin{itemize}
        \item $g\colon\overline{\Omega}\to \R$ be a bounded function that is continuous in a neighborhood of the boundary, or
        \item $g = \1_A$ for an open and bounded $A\subset \R^n$ such that $\hausdBoundarySet{\partial\Omega \cap \partial A} = 0$.
    \end{itemize}
    Then, for $\alpha \in (-3,-1)$, we have
    \begin{align*}
        \int_\Omega g(x)\d{x}^{\alpha}\bigg(1-e^{-\frac{\d{x}^2}{t}}\bigg)\dd{x} &= -t^{\frac{1+\alpha}{2}} \frac{1}{2}\Gamma\bigg(\frac{1+\alpha}{2}\bigg) \int_{\partial\Omega} g(x) \dd\mathcal{H}^{n-1}(x) + o\Big(t^{\frac{1+\alpha}{2}}\Big),
    \end{align*}
    as $t\to 0$.
\end{theorem}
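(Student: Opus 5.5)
The plan is to reduce the integral to a one-dimensional Stieltjes integral in the distance variable and then use Theorem~\ref{thm:boundaryIntegralConvergence} (or Theorem~\ref{thm:boundaryIntegralConvergenceForIndicator} when $g=\1_A$). Write $h_t(s) = s^\alpha(1-e^{-s^2/t})$ for $s>0$, and define the cumulative function
\begin{align*}
    F(r) \coloneq \int_{\{\d{x}<r\}} g(x)\dd{x}, \qquad F(r) = r\,\big(I_g + \omega(r)\big),
\end{align*}
where $I_g = \int_{\partial\Omega} g \dd{\mathcal{H}^{n-1}}$ and $\omega(r)\to 0$ as $r\to 0$ by the cited theorems. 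Moreover, $|F(r)|\le \|g\|_\infty \theta_\Omega \hausdBoundary\, r$ by Corollary~\ref{cor:boundednessOfTubularRegions}, so $\omega$ is bounded on $(0,\infty)$. For $\alpha\in(-3,-1)$ we have $h_t(s)\sim s^{\alpha+2}/t$ near $0$ with $\alpha+2>-1$, so the integral converges. By the layer-cake (coarea-type) formula for the pushforward of $g\,\mathrm{d}x$ under $\dOmega$, the quantity of interest equals $\int_0^{D} h_t(s)\dd{F(s)}$, where $D=\sup \dOmega<\infty$.

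Next integrate by parts:
\begin{align*}
    \int_0^D h_t(s)\dd{F(s)} = h_t(D)F(D) - \int_0^D h_t'(s)F(s)\dd{s}.
\end{align*}
The boundary term at $0$ vanishes because $h_t(s)F(s)=O(s^{\alpha+3}/t)\to 0$. The term at $D$ is $O(D^\alpha)=O(1)$, which is $o(t^{(1+\alpha)/2})$ since $(1+\alpha)/2<0$. Substituting $F(s)=s(I_g+\omega(s))$ and $s=\sqrt{t}\,u$ gives
\begin{align*}
    -\int_0^D h_t'(s)\,s\,(I_g+\omega(s))\dd{s} = -t^{\frac{1+\alpha}{2}}\int_0^{D/\sqrt t} k'(u)\,u\,\big(I_g+\omega(\sqrt t u)\big)\dd{u},
\end{align*}
with $k(u)=u^\alpha(1-e^{-u^2})$. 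The main term is $-I_g\int_0^\infty u k'(u)\dd{u} = I_g\int_0^\infty k(u)\dd{u}$, after integrating by parts once more; this step uses $uk(u)\to0$ at $0$ and at $\infty$, which holds since $\alpha<-1$. One more integration by parts, or the standard formula $\int_0^\infty u^{\alpha}(1-e^{-u^2})\dd{u} = -\frac12\Gamma\big(\frac{1+\alpha}{2}\big)$ valid for $\alpha\in(-3,-1)$, gives exactly the constant in the statement.

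The main obstacle is justifying that the error term $t^{\frac{1+\alpha}{2}}\int_0^{D/\sqrt t} |u k'(u)|\,|\omega(\sqrt t u)|\dd{u}$ is $o(t^{\frac{1+\alpha}{2}})$, together with the truncation $\int_{D/\sqrt t}^\infty$. Since $|uk'(u)|\lesssim u^{\alpha+2}$ near $0$ and $\lesssim u^{\alpha}$ at infinity, $uk'$ is integrable on $(0,\infty)$ for $\alpha\in(-3,-1)$. Because $\omega$ is bounded and $\omega(\sqrt t u)\to 0$ pointwise, dominated convergence finishes the argument. The points needing care are the rigorous Stieltjes formulation (the measure $\dd F$ may be signed, but $F$ is of bounded variation as the difference of the cumulative functions of $g_\pm$) and the uniform bound on $\omega$ from Corollary~\ref{cor:boundednessOfTubularRegions}. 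Both cases of $g$ are treated identically once the limit $F(r)/r\to I_g$ (respectively $\hausdBoundarySet{\partial\Omega\cap A}$) is available.
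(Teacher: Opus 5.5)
Your proposal is correct and follows essentially the same route as the paper. The paper's identity $f(s) = -\int_s^\infty f'(w)\,\mathrm{d}w$ combined with Fubini is exactly your Stieltjes integration by parts; the paper integrates over all of $(0,\infty)$ instead of truncating at $D$, so it has no boundary term. Both arguments then split $G(w\sqrt{t}) = G + \big(G(w\sqrt{t})-G\big)$, evaluate $\int_0^\infty w f'(w)\,\mathrm{d}w = \frac12\Gamma\big(\frac{1+\alpha}{2}\big)$, and conclude by dominated convergence using the uniform bound from Corollary~\ref{cor:boundednessOfTubularRegions}. The only difference is cosmetic: you verify integrability of $u k'(u)$ from its behavior at $0$ and $\infty$, whereas the paper uses the monotonicity of $f$.
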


\begin{proof}
    Note that
    \begin{align*}
        \int_\Omega g(x)\d{x}^{\alpha}\bigg(1-e^{-\frac{\d{x}^2}{t}}\bigg)\dd{x} = t^{\frac{\alpha}{2}} \int_\Omega g(x)f\bigg(\frac{\d{x}}{\sqrt{t}}\bigg)\dd{x}
    \end{align*}
    with $f(s) = s^\alpha\big(1-e^{-s^2}\big)$.
    Using the fact that $f(s)$ is differentiable on $(0,\infty)$ and tends to $0$ as $s\to\infty$, we have
    \begin{align*}
        f(s) = -\int_s^\infty f^\prime(w)\dd{w},
    \end{align*}
    and therefore
    \begin{align}
    \begin{aligned}
        t^{\frac{\alpha}{2}} \int_\Omega g(x)f\bigg(\frac{\d{x}}{\sqrt{t}}\bigg)\dd{x} &= -t^{\frac{\alpha}{2}}\int_\Omega g(x) \int_{\frac{\d{x}}{\sqrt{t}}}^\infty f^\prime(w)\dd{w}\dd{x} \\
        &= -t^{\frac{\alpha}{2}}\int_0^\infty f^\prime(w) \int_\Omega g(x) \1_{\{w > \frac{\d{x}}{\sqrt{t}}\}} \dd{x}\dd{w} \\
        &= -t^{\frac{1+\alpha}{2}}\int_0^\infty f^\prime(w)w \frac{1}{w\sqrt{t}}\int_\Omega g(x) \1_{\{w\sqrt{t} > \d{x}\}} \dd{x}\dd{w} .
    \end{aligned}
    \label{eq:localizationRewrittenIntegralMainContribution}
    \end{align}
    Note that we can interchange the integrals due to Fubini's theorem as 
    \begin{align}
        \int_0^\infty\int_\Omega \abs{g(x)f^\prime(w)\1_{\{\d{x}<w\sqrt{t}\}}}\dd{x}\dd{w} &\leq \norm{g}_\infty \int_0^\infty \abs{f^\prime(w)}\cdot \abs{\{\d{x}<w\sqrt{t}\}}\dd{w} \nonumber \\
        &\leq \norm{g}_\infty \sqrt{t}\theta_\Omega \hausdBoundary \int_0^\infty \abs{f^\prime(w)}\cdot w\dd{w} < \infty, \label{eq:interchangeIntegralsDueToFubini}
    \end{align}
    where we used Corollary \ref{cor:boundednessOfTubularRegions}. The finiteness of the integral in the last step will be shown at the end of the proof.
    We denote
    \begin{align*}
        G(s) \coloneq \frac{1}{s} \int_\Omega g(x) \1_{\{\d{x} < s\}}\dd{x}
    \end{align*}
    for $s>0$.
    We saw in Theorems \ref{thm:boundaryIntegralConvergence} and \ref{thm:boundaryIntegralConvergenceForIndicator} that for each of the two assumptions on $g$ we have
    \begin{align}
        \lim_{s\to 0} G(s) = \int_{\partial \Omega} g(x) \dd\mathcal{H}^{n-1}(x) \eqqcolon G, \label{eq:convOfGsToG}
    \end{align}
    so we write
    \begin{align}
        \int_0^\infty f^\prime(w)w G(w\sqrt{t})\dd{w} = 
        G\int_0^\infty f^\prime(w)w\dd{w} + \int_0^\infty f^\prime(w)w \Big(G(w\sqrt{t})- G\Big)\dd{w}. \label{eq:proofBoundaryIntegralDecomposition}
    \end{align} 
    The first integral is now independent of $\Omega$. To evaluate it, we integrate by parts and substitute $u=w^2$ to obtain
    \begin{align*}
        \int_0^\infty  f^\prime(w)w\dd{w} = f(w)w\Big\rvert_0^\infty - \int_0^\infty f(w)\dd{w} &= -\int_0^\infty w^{\alpha}(1-e^{-w^2})\dd{w} \\
        &= -\frac{1}{2}\int_0^\infty u^{\frac{\alpha-1}{2}}(1-e^{-u}) \dd{u},
    \end{align*}
    where the boundary term vanishes because of the decay behavior of $f$ at both endpoints.
    Furthermore, we use $1-e^{-u} = u\int_0^1e^{-vu}\dd{v}$ and interchange the integrals due to Fubini's theorem to get
    \begin{align*}
        -\frac{1}{2}\int_0^\infty u^{\frac{\alpha-1}{2}}(1-e^{-u}) \dd{u} &= -\frac{1}{2}\int_0^\infty u^{\frac{\alpha+1}{2}}\int_0^1e^{-vu}\dd{v} \dd{u} \\
        &= -\frac{1}{2} \int_0^1\int_0^\infty u^{\frac{\alpha+1}{2}}e^{-vu}\dd{u}\dd{v}.
    \end{align*}
    The inner integral is evaluated due to a final substitution $z = vu$ to
    \begin{align*}
        \int_0^\infty u^{\frac{\alpha+1}{2}}e^{-vu}\dd{u} = v^{-\frac{\alpha+1}{2}-1}\int_0^\infty z^{\frac{\alpha+1}{2}}e^{-z}\dd{z} = v^{-\frac{\alpha+3}{2}}\Gamma\bigg(\frac{\alpha+3}{2}\bigg).
    \end{align*}
    Thus, we obtain
    \begin{align}
        \int_0^\infty f^\prime(w)w \dd{w}  &= -\frac{1}{2}\Gamma\bigg(\frac{\alpha+3}{2}\bigg)\int_0^1 v^{-\frac{\alpha+3}{2}}\dd{v} \nonumber\\
        &= \frac{1}{1+\alpha}\Gamma\bigg(\frac{3+\alpha}{2}\bigg) \nonumber\\
        &= \frac{1}{2}\Gamma\bigg(\frac{1+\alpha}{2}\bigg). \label{eq:finitenessOfGammaIntegral1}
    \end{align}

    It remains to show that the latter integral in \eqref{eq:proofBoundaryIntegralDecomposition} is $o(1)$ as $t\to 0$ and that \eqref{eq:interchangeIntegralsDueToFubini} holds. Regarding the first claim, due to \eqref{eq:convOfGsToG}, the integrand is $o(1)$ as $t\to 0$. Note that 
    \begin{align*}
        \abs{G(s)} \leq \frac{1}{s} \norm{g}_\infty \abs{\{\d{x}<s\}} \leq \norm{g}_\infty \theta_\Omega\hausdBoundary,
    \end{align*}
    by Corollary \ref{cor:boundednessOfTubularRegions}, and $\abs{G} \leq \norm{g}_\infty \hausdBoundary$. Thus, the integrand can, for any $w\in(0,\infty)$, be bounded by $\abs{f^\prime(w)}w\norm{g}_\infty \hausdBoundary\big(1+\theta_\Omega\big)$. We now show that this bound is integrable.

    If $\alpha \in (-2,-1)$, $f(s)$ has a maximum at some $s^\ast \in (0,\infty)$, is monotonically increasing until $s^\ast$ and monotonically decreasing after $s^\ast$. Thus, we can write
    \begin{align*}
        \int_0^\infty \abs{f^\prime(w)} w \dd{w} = \int_0^{s^\ast} f^\prime(w) w\dd{w} - \int_{s^\ast}^\infty f^\prime(w) w\dd{w} = 2\int_0^{s^\ast} f^\prime(w) w\dd{w} - \int_0^\infty f^\prime(w) w\dd{w},
    \end{align*}
    and both integrals are finite. The first integral is finite due to the integrability near $0$ and the second integral is finite due to \eqref{eq:finitenessOfGammaIntegral1}.
    If $\alpha \in (-3,-2]$, $f(s)$ is monotonically decreasing and therefore
    \begin{align*}
        \int_0^\infty \abs{f^\prime(w)} w \dd{w} = -\int_0^\infty f^\prime(w) w \dd{w},
    \end{align*}
    which is finite by \eqref{eq:finitenessOfGammaIntegral1}.

    Thus, in both cases $\abs{f^\prime(w)}w\norm{g}_\infty \hausdBoundary\big(1+\theta_\Omega\big)$ is an integrable dominant. The second term in \eqref{eq:proofBoundaryIntegralDecomposition} is therefore $o(1)$ as $t\to 0$ by dominated convergence. Furthermore, we have shown that \eqref{eq:interchangeIntegralsDueToFubini} holds.
\end{proof}

In the case $\alpha \in (-1,\infty)$, we need to understand the two integrals on the right-hand side of \eqref{eq:splittingMainTerms}. First, we show that the first one is indeed finite. Afterwards, we prove the asymptotics for the second integral.

\begin{lemma}
\label{lem:mainTermAlphaLess1ExtraTerm}
    Let $\Omega \subset \R^n$ be an open and bounded set with Lipschitz boundary and $g\in L^\infty(\overline{\Omega})$. If $\alpha > -1$, then
    \begin{align*}
        \int_\Omega g(x) \d{x}^\alpha \dd{x}
    \end{align*}
    is absolutely convergent.
\end{lemma}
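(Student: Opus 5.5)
The plan is to bound $\abs{g}$ by $\norm{g}_\infty$ and reduce everything to the finiteness of $\int_\Omega \d{x}^\alpha \dd{x}$. For $\alpha \geq 0$ this is immediate, since $\d{x} \leq \operatorname{diam}(\Omega)$ and $\Omega$ is bounded, so $\int_\Omega \abs{g}\,\d{x}^\alpha \dd{x} \leq \norm{g}_\infty \operatorname{diam}(\Omega)^\alpha \abs{\Omega}$. The real work is therefore the case $\alpha \in (-1,0)$, where the weight is singular at $\partial\Omega$.

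For $\alpha\in(-1,0)$ I will use the layer-cake representation. For $x\in\Omega$ write
\begin{align*}
    \d{x}^\alpha = -\alpha\int_{\d{x}}^\infty s^{\alpha-1}\dd{s} = \abs{\alpha}\int_0^\infty s^{\alpha-1}\1_{\{\d{x}<s\}}\dd{s}.
\end{align*}
Since the integrand is nonnegative, Tonelli's theorem gives
\begin{align*}
    \int_\Omega \d{x}^\alpha\dd{x} = \abs{\alpha}\int_0^\infty s^{\alpha-1}\abs{\{\d{x}<s\}}\dd{s}.
\end{align*}

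Now split the $s$-integral at $R=\sup_\Omega \dOmega<\infty$. For $s\geq R$ the measure $\abs{\{\d{x}<s\}}$ equals $\abs{\Omega}$, and $\int_R^\infty s^{\alpha-1}\dd{s}<\infty$ because $\alpha<0$. For $s<R$, Corollary~\ref{cor:boundednessOfTubularRegions} gives $\abs{\{\d{x}<s\}}\leq s\,\theta_\Omega\hausdBoundary$. The integrand is then bounded by $\theta_\Omega\hausdBoundary\, s^{\alpha}$, which is integrable near $0$ precisely because $\alpha>-1$. Combining the two ranges yields $\int_\Omega\abs{g}\,\d{x}^\alpha\dd{x}<\infty$, which is the claimed absolute convergence.

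The only point needing care is the linear tubular bound near the boundary. It is exactly where the Lipschitz assumption enters and why the threshold is $\alpha>-1$. This bound is supplied by Corollary~\ref{cor:boundednessOfTubularRegions}, so no genuine obstacle remains.
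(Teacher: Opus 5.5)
Your proposal is correct and matches the paper's proof essentially step for step. Both reduce to $g=1$ via $\norm{g}_\infty$ and treat $\alpha\geq 0$ by boundedness of $\Omega$. For $\alpha\in(-1,0)$, both use the layer-cake formula with Tonelli, split the $s$-integral at $\sup_\Omega \dOmega$, and apply Corollary~\ref{cor:boundednessOfTubularRegions} near $s=0$.
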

\begin{proof}
    First, note that 
    \begin{align*}
        \int_\Omega \abs{g(x) \d{x}^\alpha} \dd{x} \leq \norm{g}_\infty \int_\Omega \d{x}^\alpha \dd{x}.
    \end{align*}
    Hence, we can restrict ourselves to the case $g = 1$.

    If $\alpha \geq 0$, the integral is finite by the boundedness of $\Omega$, as
    \begin{align*}
        \abs{\int_\Omega  \d{x}^{\alpha}\dd{x}} \leq  \int_\Omega \d{x}^{\alpha}\dd{x} &\leq  \varrho^\alpha \abs{\Omega}<\infty,
    \end{align*}
    with $\varrho = \sup_{x\in\Omega}\d{x} < \infty$. 
    
    If $\alpha\in(-1,0)$, we write
    \begin{align*}
        \d{x}^\alpha = -\alpha\int_{\d{x}}^\infty s^{\alpha-1}\dd{s}
    \end{align*}
    and therefore
    \begin{align*}
        \int_\Omega \d{x}^\alpha \dd{x} &= -\alpha \int_\Omega \int_0^\infty \1_{\{\d{x}<s\}} s^{\alpha-1}\dd{s}\dd{x}  \\
        &= -\alpha\int_0^\infty s^{\alpha-1}\abs{\{\d{x}< s\}}\dd{s} \\
        &= -\alpha\int_0^\varrho s^{\alpha-1}\abs{\{\d{x}< s\}}\dd{s} -\alpha\abs{\Omega}\int_\varrho^\infty s^{\alpha-1}\dd{s}.
    \end{align*}
    For the first term, we apply Corollary \ref{cor:boundednessOfTubularRegions} to obtain
    \begin{align*}
        0 \leq -\alpha\int_0^\varrho s^{\alpha-1}\abs{\{\d{x}< s\}}\dd{s} \leq - \alpha\theta_\Omega\hausdBoundary \int_0^\varrho s^\alpha \dd{s} = \theta_\Omega\hausdBoundary\frac{-\alpha}{\alpha+1} \varrho^{\alpha +1},
    \end{align*}
    and the second term evaluates to $\abs{\Omega}\varrho^{\alpha}$. This shows that, for $\alpha\in(-1,0)$,
    \begin{align*}
        \abs{\int_\Omega \d{x}^\alpha \dd{x}} \leq \frac{-\alpha}{1+\alpha}\theta_\Omega\hausdBoundary \varrho^{\alpha +1} + \abs{\Omega}\varrho^\alpha < \infty.
    \end{align*}
\end{proof}

\begin{theorem}
\label{thm:mainTermAlphaLess1}
    Let $\Omega \subset \R^n$ be an open and bounded set with Lipschitz boundary. Let 
    \begin{itemize}
        \item $g\colon\overline{\Omega}\to \R$ be a bounded function that is continuous in a neighborhood of the boundary, or
        \item $g = \1_A$ for an open and bounded $A\subset \R^n$ such that $\hausdBoundarySet{\partial\Omega \cap \partial A} = 0$.
    \end{itemize}
    Then, for $\alpha \in (-1,\infty)$, we have
    \begin{align*}
        \int_\Omega g(x)\d{x}^{\alpha}e^{-\frac{\d{x}^2}{t}}\dd{x} &= t^{\frac{1+\alpha}{2}} \frac{1}{2}\Gamma\bigg(\frac{1+\alpha}{2}\bigg) \int_{\partial\Omega} g(x) \dd\mathcal{H}^{n-1}(x) + o\Big(t^\frac{1+\alpha}{2}\Big), 
    \end{align*}
    as $t\to 0$.
\end{theorem}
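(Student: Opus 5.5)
We mirror the proof of Theorem~\ref{thm:mainTermAlphaLarger1}, now with the profile $h(s) = s^\alpha e^{-s^2}$ in place of $f$. First we rescale:
\begin{align*}
    \int_\Omega g(x)\d{x}^{\alpha}e^{-\frac{\d{x}^2}{t}}\dd{x} = t^{\frac{\alpha}{2}}\int_\Omega g(x) h\bigg(\frac{\d{x}}{\sqrt t}\bigg)\dd{x}.
\end{align*}
Since $\alpha>-1$, the function $h$ is differentiable on $(0,\infty)$ and decays at infinity, so $h(s) = -\int_s^\infty h'(w)\dd{w}$. Inserting this and interchanging the integrals, exactly as in \eqref{eq:localizationRewrittenIntegralMainContribution}, gives
\begin{align*}
    -t^{\frac{1+\alpha}{2}}\int_0^\infty h'(w)\,w\,G(w\sqrt t)\dd{w},
\end{align*}
where $G(s)=\frac1s\int_\Omega g\1_{\{\d{x}<s\}}\dd{x}$ is as before. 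The interchange is justified by Fubini's theorem, using Corollary~\ref{cor:boundednessOfTubularRegions} as in \eqref{eq:interchangeIntegralsDueToFubini}. For this we need $\int_0^\infty |h'(w)|\,w\dd{w}<\infty$.

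This integrability condition is the one point requiring care, since for $\alpha\in(-1,0)$ the function $h$ itself is singular at $0$. However, $h'(w)w = (\alpha w^\alpha - 2w^{\alpha+2})e^{-w^2}$, and this is $O(w^\alpha)$ near $0$, which is integrable because $\alpha>-1$. It also decays like a Gaussian at infinity. So the condition holds, and directly so, without the monotonicity case distinction used in Theorem~\ref{thm:mainTermAlphaLarger1}.

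Next we split as in \eqref{eq:proofBoundaryIntegralDecomposition}:
\begin{align*}
    \int_0^\infty h'(w)wG(w\sqrt t)\dd{w} = G\int_0^\infty h'(w)w\dd{w} + \int_0^\infty h'(w)w\big(G(w\sqrt t)-G\big)\dd{w}.
\end{align*}
The limit $G=\int_{\partial\Omega}g\dd\mathcal{H}^{n-1}$ is supplied by Theorem~\ref{thm:boundaryIntegralConvergence} in the first case of hypotheses, and by Theorem~\ref{thm:boundaryIntegralConvergenceForIndicator} in the indicator case. For the second integral we use the uniform bound $|G(s)|\le \norm{g}_\infty\theta_\Omega\hausdBoundary$ together with $|G|\le\norm{g}_\infty\hausdBoundary$. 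Thus $|h'(w)|w\norm{g}_\infty\hausdBoundary(1+\theta_\Omega)$ is an integrable dominant. Since the integrand tends to $0$ pointwise as $t\to0$, dominated convergence shows that the second integral is $o(1)$.

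It remains to compute the constant. Integrating by parts, the boundary term $h(w)w = w^{1+\alpha}e^{-w^2}$ vanishes at both ends, again because $\alpha>-1$. Hence
\begin{align*}
    \int_0^\infty h'(w)w\dd{w} = -\int_0^\infty w^\alpha e^{-w^2}\dd{w} = -\frac12\Gamma\bigg(\frac{1+\alpha}{2}\bigg),
\end{align*}
where the last equality follows from the substitution $u=w^2$. Multiplying by $-t^{\frac{1+\alpha}{2}}G$ yields the claimed leading term, with error $o(t^{\frac{1+\alpha}{2}})$.

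No step is genuinely hard. The only thing to watch is the behaviour of $h$ and $h'w$ at $0$ for negative $\alpha$, and the condition $\alpha>-1$ handles it throughout.
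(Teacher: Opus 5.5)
Your proposal is correct and follows essentially the same route as the paper: the same rescaling, the rewriting via $h(s)=-\int_s^\infty h'(w)\,\mathrm{d}w$ with Fubini, splitting off $G\int_0^\infty h'(w)w\,\mathrm{d}w=-\frac12\Gamma\big(\frac{1+\alpha}{2}\big)G$, and dominated convergence with the same dominant $|h'(w)|w\|g\|_\infty\mathcal{H}^{n-1}(\partial\Omega)(1+\theta_\Omega)$. The only difference is that you check $\int_0^\infty |h'(w)|w\,\mathrm{d}w<\infty$ directly from the explicit form $(\alpha w^\alpha-2w^{\alpha+2})e^{-w^2}$, whereas the paper uses a monotonicity case distinction ($\alpha>0$ versus $\alpha\in(-1,0]$); your check is slightly cleaner and gives the same fact.
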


\begin{proof}
    Let $G(s)$ and $G$ be as in the proof of Theorem \ref{thm:mainTermAlphaLarger1}. We proceed similarly to that proof and approximate $G(\sqrt{t}w)$ by $G$ in the resulting integral. Using \eqref{eq:localizationRewrittenIntegralMainContribution}, now with $f(s)=s^\alpha e^{-s^2}$, we obtain
    \begin{align}
        \int_\Omega g(x) \d{x}^{\alpha} e^{-\frac{\d{x}^2}{t}}\dd{x} &= t^{\frac{\alpha}{2}} \int_\Omega g(x)f\bigg(\frac{\d{x}}{\sqrt{t}}\bigg)\dd{x} \nonumber \\
        &=-t^{\frac{1+\alpha}{2}}\int_0^\infty f^\prime(w)w G\big(w\sqrt{t}\big)\dd{w} \nonumber\\
        &= -t^{\frac{1+\alpha}{2}}G\int_0^\infty f^\prime(w)w \dd{w} \nonumber\\
        &\hspace{5em}-t^{\frac{1+\alpha}{2}}\int_0^\infty f^\prime(w)w \Big(G\big(w\sqrt{t}\big)-G\Big)\dd{w}. \label{eq:proofBoundaryIntegralDecompositionAlphaLarger-1}
    \end{align}
    Again, the interchange of the integrals is justified by
    \begin{align}
        \int_0^\infty \int_\Omega \abs{g(x)f^\prime(w)\1_{\{\d{x}<w\sqrt{t}\}}}\dd{x}\dd{w} &\leq \norm{g}_\infty \int_0^\infty \abs{f^\prime(w)}\cdot \abs{\{\d{x}<w\sqrt{t}\}}\dd{w} \nonumber\\
        &\leq \norm{g}_\infty \sqrt{t}\theta_\Omega \hausdBoundary \int_0^\infty \abs{f^\prime(w)}w\dd{w} < \infty, \label{eq:interchangeOfIntsFubiniAlphaLargerMinusOne}
    \end{align}
    where the last assertion is again postponed until the end of the proof.
    We calculate the remaining integral explicitly by using integration by parts and the substitution $u = w^2$, to obtain
    \begin{align}
        \int_0^\infty  f^\prime(w) w\dd{w} = f(w)w\bigg\rvert_0^\infty -\int_0^\infty w^{\alpha} e^{-w^2}\dd{w} = -\frac{1}{2}\int_0^\infty u^{\frac{\alpha-1}{2}} e^{-u} \dd{u} = -\frac{1}{2}\Gamma\bigg(\frac{1+\alpha}{2}\bigg). \label{eq:finitenessOfGammaIntegral2}
    \end{align}
    
    Note that, as in the proof of Theorem \ref{thm:mainTermAlphaLarger1}, we have
    \begin{align*}
        \abs{f^\prime(w)w\Big(G(w\sqrt{t})- G\Big)} \leq \abs{f^\prime(w)}w\norm{g}_\infty \hausdBoundary (1+\theta_\Omega),
    \end{align*}
    for all $w\in (0,\infty)$. If $\alpha \in (0,\infty)$, $f(s)$ has a maximum at some $s^\ast \in (0,\infty)$, is monotonically increasing until $s^\ast$ and monotonically decreasing after $s^\ast$. Therefore,
    \begin{align*}
        \int_0^\infty \abs{f^\prime(w)} w\dd{w} = \int_0^{s^\ast} f^\prime(w)w\dd{w} - \int_{s^\ast}^\infty f^\prime(w) w\dd{w} = 2\int_0^{s^\ast} f^\prime(w)w\dd{w} - \int_0^\infty f^\prime(w) w\dd{w},
    \end{align*} 
    and both integrals are finite due to integrability around $0$ and \eqref{eq:finitenessOfGammaIntegral2}. If $\alpha \in (-1,0]$, $f(s)$ is monotonically decreasing, and therefore
    \begin{align*}
        \int_0^\infty \abs{f^\prime(w)}w \dd{w} = - \int_0^\infty f^\prime(w)w\dd{w},
    \end{align*}
    which is finite by \eqref{eq:finitenessOfGammaIntegral2}.
    Thus, in both cases, $\abs{f^\prime(w)}w\norm{g}_\infty \hausdBoundary (1+\theta_\Omega)$ is an integrable dominant.
    The second term in \eqref{eq:proofBoundaryIntegralDecompositionAlphaLarger-1} is therefore $o\Big(t^{\frac{1+\alpha}{2}}\Big)$ as $t\to 0$ by dominated convergence. This also proves the assertion in \eqref{eq:interchangeOfIntsFubiniAlphaLargerMinusOne}.
\end{proof}

\begin{theorem}
\label{thm:mainTermAlphaEquals1}
    Let $\Omega \subset \R^n$ be an open and bounded set with Lipschitz boundary. Let 
    \begin{itemize}
        \item $g\colon\overline{\Omega}\to \R$ be a bounded function that is continuous in a neighborhood of the boundary, or
        \item $g = \1_A$ for an open and bounded $A\subset \R^n$ such that $\hausdBoundarySet{\partial\Omega \cap \partial A} = 0$.
    \end{itemize}
    Then we have
    \begin{align*}
        \int_\Omega g(x)\d{x}^{-1}\bigg(1-e^{-\frac{\d{x}^2}{t}}\bigg)\dd{x} =  -\frac{\log{t}}{2} \int_{\partial\Omega} g(x) \dd\mathcal{H}^{n-1}(x) + o\Big(\abs{\log t}\Big), \qas t\to 0.
    \end{align*}
\end{theorem}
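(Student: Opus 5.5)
We follow the same layer-cake strategy as in the proofs of Theorems \ref{thm:mainTermAlphaLarger1} and \ref{thm:mainTermAlphaLess1}, now with $f(s)=s^{-1}(1-e^{-s^2})$. Writing the integrand as $t^{-1/2} g(x) f(\d{x}/\sqrt t)$ and using $f(s)=-\int_s^\infty f'(w)\dd{w}$, Fubini turns the integral into
\begin{align*}
    -\int_0^\infty f'(w)\, w\, G(w\sqrt t)\dd{w},
\end{align*}
with $G(s)=\frac1s\int_\Omega g\1_{\{\d{x}<s\}}\dd{x}$ as before. Fubini is justified by $\abs{G(s)}\le\norm{g}_\infty\theta_\Omega\hausdBoundary$ from Corollary \ref{cor:boundednessOfTubularRegions}. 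This holds as long as $\int_0^T\abs{f'(w)}w\dd{w}<\infty$ for finite $T$, which is true since $f'(w)w=O(w^2)$ near $0$.

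The new feature is that $f'(w)w\sim -w^{-1}$ as $w\to\infty$, so $\int_0^\infty f'(w)w\dd{w}$ diverges logarithmically. This divergence is the source of the $\log t$. Boundedness of $\Omega$ is what cures it for fixed $t$: since $G(s)=\frac1s\int_\Omega g\dd{x}$ once $s>\varrho=\sup\d{x}$, the integrand is $O(w^{-2})$ for $w>\varrho/\sqrt t$. Accordingly, we split the $w$-integral at $w=1$, at $w=\delta/\sqrt t$ for a small fixed $\delta$, and at $w=\varrho/\sqrt t$.

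On $[0,1]$ the contribution is $O(1)$. On $[\varrho/\sqrt t,\infty)$ it is bounded by $\norm{g}_\infty\abs{\Omega}\int_{\varrho/\sqrt t}^\infty \abs{f'(w)}\dd{w}/\sqrt t=O(1)$. On $[\delta/\sqrt t,\varrho/\sqrt t]$, $G$ is bounded and $\int\abs{f'(w)}w\dd{w}\approx\log(\varrho/\delta)$, which is again $O_\delta(1)$. The main piece is $[1,\delta/\sqrt t]$. Here we write $f'(w)w=-w^{-1}+r(w)$ with $r(w)=O(e^{-w^2}(1+w))$ integrable. The remainder contributes $O(1)$. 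The leading part gives
\begin{align*}
    \int_1^{\delta/\sqrt t} \frac{G(w\sqrt t)}{w}\dd{w}.
\end{align*}
By Theorem \ref{thm:boundaryIntegralConvergence} (or Theorem \ref{thm:boundaryIntegralConvergenceForIndicator} in the indicator case), for any $\epsilon>0$ we can pick $\delta$ with $\abs{G(s)-G}<\epsilon$ for $s<\delta$. The integral then equals $G\log(\delta/\sqrt t)+O(\epsilon\abs{\log t})$, that is, $-\frac{\log t}{2}G+O_\delta(1)+O(\epsilon\abs{\log t})$.

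Collecting the pieces, the whole expression is $-\frac{\log t}{2}G+O_\delta(1)+\epsilon\,O(\abs{\log t})$. Dividing by $\abs{\log t}$, letting $t\to0$ and then $\epsilon\to0$ gives the claim. The main care point is organizing the cutoffs so that the non-integrable tail of $f'(w)w$ is handled separately from the convergence $G(s)\to G$. Dominated convergence as used in the earlier proofs is no longer available, and it is replaced by this explicit $\epsilon$–$\delta$ splitting. The integration by parts needed for the explicit constant is now replaced by the elementary asymptotics $f'(w)w=-w^{-1}+O(we^{-w^2})$ as $w\to\infty$.
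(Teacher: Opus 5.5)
Your proposal is correct and follows essentially the paper's argument: the same layer-cake representation with $f(s)=s^{-1}(1-e^{-s^2})$, the same cutoff at $w=\varrho/\sqrt{t}$ using boundedness of $\Omega$ to tame the $w^{-1}$ tail, and the same $\epsilon$--$\delta$ control of $\int \frac{1}{u}\lvert G(u)-G\rvert\,\mathrm{d}u$ in place of dominated convergence. The differences are cosmetic: the paper evaluates $-G\int_0^{\varrho/\sqrt{t}}f'(w)w\,\mathrm{d}w$ exactly via the exponential integral $\mathrm{Ein}$, whereas you subtract the asymptotic $-w^{-1}$ directly. You also have a harmless slip: since $f'(0)=1$, one has $f'(w)w=O(w)$ near $0$, not $O(w^2)$.
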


\begin{proof}
    As in the proof of Theorem \ref{thm:mainTermAlphaLarger1}, note that
    \begin{align*}
        \int_\Omega g(x)\d{x}^{-1}\bigg(1-e^{-\frac{\d{x}^2}{t}}\bigg)\dd{x} = t^{-\frac{1}{2}} \int_\Omega g(x)f\bigg(\frac{\d{x}}{\sqrt{t}}\bigg)\dd{x},
    \end{align*}
    with $f(s) = s^{-1}\big(1-e^{-s^2}\big)$.
    We write
    \begin{align}
        t^{-\frac{1}{2}} \int_\Omega g(x)f\bigg(\frac{\d{x}}{\sqrt{t}}\bigg)\dd{x} &= -t^{-\frac{1}{2}}\int_\Omega g(x) \int_0^\infty f^\prime(w) \1_{\{w>\frac{\d{x}}{\sqrt{t}}\}}\dd{w}\dd{x}. \label{eq:eq:proofAlpha1DoubleIntegral}
    \end{align}
    Let $\varrho = \sup_{x\in \Omega}\d{x} < \infty$, by the boundedness of $\Omega$. We will split the inner integral at $w = \varrho/\sqrt{t}$, that is,
    \begin{align*}
        \int_0^\infty f^\prime(w) \1_{\{w>\frac{\d{x}}{\sqrt{t}}\}}\dd{w} = \int_0^{\varrho/\sqrt{t}} f^\prime(w) \1_{\{w>\frac{\d{x}}{\sqrt{t}}\}}\dd{w} + \int_{\varrho/\sqrt{t}}^\infty f^\prime(w) \1_{\{w>\frac{\d{x}}{\sqrt{t}}\}}\dd{w}.
    \end{align*}
    
    Observe that for $w > \varrho/\sqrt{t}$ we have
    \begin{align*}
        \{x\in\Omega \mid \d{x} < w\sqrt t \} = \Omega,
    \end{align*}
    and therefore the expression \eqref{eq:eq:proofAlpha1DoubleIntegral} simplifies to
    \begin{align*}
    -t^{-\frac{1}{2}}\int_\Omega g(x) \int_{\varrho/\sqrt{t}}^\infty f^\prime(w) \1_{\{w>\frac{\d{x}}{\sqrt{t}}\}}\dd{w}\dd{x}  &= -t^{-\frac12}\int_\Omega g(x) \dd{x} \int_{\varrho/\sqrt{t}}^\infty f^\prime(w) \dd{w} \\
        &= t^{-\frac12}\int_\Omega g(x) \dd{x} f\bigg(\frac{\varrho}{\sqrt{t}}\bigg) \\
        &= \int_\Omega g(x) \dd{x} \frac{1}{\varrho} \bigg(1-e^{-\frac{\varrho^2}{t}}\bigg).
    \end{align*}
    Note that this term is bounded as $t\to 0$.

    For $0<w < \varrho/\sqrt{t}$, we let $G(s)$ and $G$ be as in the proof of Theorem \ref{thm:mainTermAlphaLarger1} and write
    \begin{align}
    &-t^{-\frac{1}{2}}\int_\Omega g(x) \int_0^{\varrho/\sqrt{t}} f^\prime(w) \1_{\{w>\frac{\d{x}}{\sqrt{t}}\}}\dd{w}\dd{x} \nonumber\\
        &\hspace{7em}=-t^{-\frac{1}{2}}\int_0^{\varrho/\sqrt{t}} f^\prime(w) \int_\Omega g(x) \1_{\{w > \frac{\d{x}}{\sqrt{t}}\}} \dd{x}\dd{w} \nonumber\\
        &\hspace{7em}= -\int_0^{\varrho/\sqrt{t}} f^\prime(w)w G(w\sqrt{t})\dd{w} \nonumber\\
        &\hspace{7em}= -G\int_0^{\varrho/\sqrt{t}} f^\prime(w)w\dd{w} - \int_0^{\varrho/\sqrt{t}} f^\prime(w)w \Big(G(w\sqrt{t})- G\Big)\dd{w}. \label{eq:proofAlpha1SplitGsLowerPart}
    \end{align}
    As in the previous proofs, we will justify the interchange of the integrals at the end of the proof.
    By integration by parts, the first integral is
    \begin{align*}
        -\int_0^{\varrho/\sqrt{t}} f^\prime(w)w\dd{w} = -\bigg(1-e^{-\frac{\varrho^2}{t}}\bigg) + \int_0^{\varrho/\sqrt{t}} \frac{1-e^{-w^2}}{w}\dd{w},
    \end{align*}
    and by the substitution $u = w^2$, we see
    \begin{align*}
        \int_0^{\varrho/\sqrt{t}} \frac{1-e^{-w^2}}{w}\dd{w} = \frac12 \int_0^{\varrho^2/t}\frac{1-e^{-u}}{u}\dd{u} = \frac12 \text{Ein}\bigg(\frac{\varrho^2}{t}\bigg),
    \end{align*}
    where $\text{Ein}$ is an exponential integral, see \cite[Chapter 5.1]{MR167642}. Note that $\text{Ein}(s) = \log{s} + \order{1}$ as $s\to \infty$. Thus, we obtain
    \begin{align*}
        -\int_0^{\varrho/\sqrt{t}} f^\prime(w)w\dd{w} = -\frac{1}{2}\log{t} + \order{1}, \qas t\to 0.
    \end{align*}

    To show that the second integral in \eqref{eq:proofAlpha1SplitGsLowerPart} is $o\big(\abs{\log t}\big)$, observe that
    \begin{align*}
        f^\prime(s) = - \frac{1-e^{-s^2}}{s^2} + 2e^{-s^2}. 
    \end{align*}
    Thus, there exists $R> 0$ such that $\abs{f^\prime(s)} \leq 2s^{-2}$ for $s\in (R,\infty)$. Take $t$ small enough such that $R< \varrho/\sqrt{t}$. Then, by splitting the second integral in \eqref{eq:proofAlpha1SplitGsLowerPart} at $R$, we obtain
    \begin{align*}
        \abs{\int_0^{\varrho/\sqrt{t}} f^\prime(w)w \Big(G(w\sqrt{t})- G\Big)\dd{w}} &\leq \int_0^R \abs{f^\prime(w)}w \abs{G(w\sqrt{t}) -G}\dd{w} \\
        &\hspace{5em}+ \int_R^{\varrho/\sqrt{t}} \abs{f^\prime(w)}w \abs{G(w\sqrt{t}) -G}\dd{w}.
    \end{align*}
    Since $f^\prime(s)s$ is bounded on $(0,R)$, $\abs{f^\prime(w)}w \norm{g}_\infty \hausdBoundary(1+\theta_\Omega)$ is an integrable dominant and by dominated convergence we have
    \begin{align*}
        \int_0^R \abs{f^\prime(w)}w \abs{G(w\sqrt{t}) -G}\dd{w} \to 0, \qas t\to 0.
    \end{align*}
    For the second term, we obtain, with the substitution $u=w\sqrt{t}$, and $\abs{f^\prime(s)} \leq 2s^{-2}$ for $s > R$ that
    \begin{align*}
        \int_R^{\varrho/\sqrt{t}} \abs{f^\prime(w)}w \abs{G(w\sqrt{t}) -G}\dd{w} \leq 2\int_R^{\varrho/\sqrt{t}}\frac{1}{w}\abs{G(w\sqrt{t}) -G}\dd{w} = 2 \int_{R\sqrt{t}}^{\varrho}\frac{1}{u}\abs{G(u) -G}\dd{u}.
    \end{align*}
    Let $\varepsilon > 0$. By the definition of $G(u)$ and $G$, and \eqref{eq:convOfGsToG}, there exists $\delta \in (R\sqrt{t}, \varrho)$ such that $\abs{G(u) - G} < \varepsilon$ for $u \in (0,\delta)$, by possibly taking a smaller $t$. Hence, we obtain
    \begin{align*}
        \int_{R\sqrt{t}}^{\varrho}\frac{1}{u}\abs{G(u) -G}\dd{u} &= \int_{R\sqrt{t}}^{\delta}\frac{1}{u}\abs{G(u) -G}\dd{u} + \int_{\delta}^{\varrho}\frac{1}{u}\abs{G(u) -G}\dd{u} \\
        &\leq \varepsilon \int_{R\sqrt{t}}^\delta \frac{\dd{u}}{u} + \int_\delta^\varrho\frac{1}{u}\abs{G(u) -G} \dd{u} \\
        &= -\frac{\varepsilon}{2} \log t + \order{1}, \qas t \to 0.
    \end{align*}
    As $\varepsilon> 0$ is chosen arbitrarily, we obtain that the second integral in \eqref{eq:proofAlpha1SplitGsLowerPart} is $o\big(\abs{\log t}\big)$.

    Lastly, to show that the interchange of the integrals is valid, we need to prove that
    \begin{align*}
        \int_0^{\varrho/\sqrt{t}} \int_\Omega \abs{g(x) f^\prime(w)}\1_{\{\d{x}<w\sqrt{t}\}}\dd{x}\dd{w} &\leq \norm{g}_\infty \int_0^{\varrho/\sqrt{t}} \abs{f^\prime(w)}\abs{\{\d{x}<w\sqrt{t}\}}\dd{w} < \infty.
    \end{align*}
    By Corollary \ref{cor:boundednessOfTubularRegions}, we have
    \begin{align*}
        \int_0^{\varrho/\sqrt{t}}\abs{f^\prime(w)}\abs{\{\d{x}<w\sqrt{t}\}}\dd{w} \leq \sqrt{t}\theta_\Omega \hausdBoundary \int_0^{\varrho/\sqrt{t}}\abs{f^\prime(w)}w\dd{w}.
    \end{align*}
    Furthermore, $\abs{f^\prime(s)} \leq 1$ for all $s>0$. Therefore, we obtain
    \begin{align*}
        \int_0^{\varrho/\sqrt{t}}\abs{f^\prime(w)}\abs{\{\d{x}<w\sqrt{t}\}}\dd{w} \leq \frac{1}{2}\theta_\Omega \hausdBoundary  \frac{\varrho^2}{\sqrt{t}} < \infty,
    \end{align*}
    which justifies the use of Fubini's theorem.
\end{proof}

\begin{remark}
    In the case of $g=1$, the boundary integral in the preceding theorems simplifies to
    \begin{align*}
        G = \int_{\partial\Omega} g(x)\dd\mathcal{H}^{n-1}(x) = \hausdBoundary.
    \end{align*}
    Furthermore, due to Corollary \ref{cor:behaviourOfThetaR}, we have 
        \begin{align*}
        G(s) - G = \frac{\abs{\{\d{x} < s\}}}{s} - \hausdBoundary = \vartheta_\Omega(s) \hausdBoundary,
    \end{align*}
    for $s>0$, and the lower-order terms in Theorems \ref{thm:mainTermAlphaLarger1} and \ref{thm:mainTermAlphaLess1} can be written explicitly in terms only depending on the domain $\Omega$, specifically as,
    \begin{align*}
        -t^{\frac{1+\alpha}{2}}\int_0^\infty f^\prime(w)w \Big(G(w\sqrt{t})- G\Big)\dd{w} = -t^{\frac{1+\alpha}{2}}\hausdBoundary \int_0^\infty \vartheta_\Omega(\sqrt{t} w) wf^\prime(w)\dd{w},
    \end{align*}
    for $\alpha\in (-3,\infty)\setminus\{-1\}$.
\end{remark}

\section{Heat kernel constructions}
\label{sec:heatKernelConstructions}

In this section, we will briefly summarize some well-known properties and inequalities regarding the heat kernel. Afterwards, we prove a specific approximation of the diagonal heat kernel for certain points near the boundary that will be used for estimates of the weighted heat trace in the good boundary region.

\subsection{Known inequalities for the heat kernel}

We will use the following results as basic ingredients to derive bounds in the rest of the paper. For proofs, we refer to the corresponding cited literature.

\begin{proposition}[Eq. (1.9.1) in \cite{MR1103113}]
\label{prop:boundednessOfHeatKernel}
    Let $\Omega \subset \R^n$ be open. Then
    \begin{align*}
        0 \leq p_\Omega(x,x;t) \leq (4\pi t)^{-n/2}, \quad &\forall (x,t) \in \Omega\times(0,\infty).
    \end{align*}
\end{proposition}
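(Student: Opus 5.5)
The bound $0 \le p_\Omega(x,x;t) \le (4\pi t)^{-n/2}$ follows from two facts: positivity of the Dirichlet heat semigroup, and domain monotonicity, which lets us compare $p_\Omega$ with the free heat kernel $p_{\R^n}(x,y;t)=(4\pi t)^{-n/2}e^{-|x-y|^2/(4t)}$.

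For the lower bound, the spectral representation \eqref{eq:heatKernelRepInBasis} on the diagonal gives
\begin{align*}
p_\Omega(x,x;t)=\sum_k e^{-t\lambda_k}u_k(x)^2\ge 0
\end{align*}
whenever the spectrum is discrete. For a general open set we avoid this representation and argue as follows. The semigroup $e^{t\Delta}$ is positivity preserving, since the Dirichlet form $\int|\nabla u|^2$ on $H_0^1(\Omega)$ is a Dirichlet form and so satisfies the Beurling--Deny criteria: $u\in H_0^1(\Omega)$ implies $|u|\in H_0^1(\Omega)$ with $|\nabla|u||=|\nabla u|$. Hence the kernel is nonnegative almost everywhere, and by its continuity (smoothness in $x,y$ for $t>0$, from interior parabolic regularity) it is nonnegative everywhere, including on the diagonal.

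For the upper bound, I would show $p_\Omega(x,y;t)\le p_{\R^n}(x,y;t)$ by domain monotonicity. I would first take an exhaustion of $\Omega$ by bounded smooth open sets $\Omega_j\nearrow\Omega$. On each $\Omega_j$, fix $y$ and set $w(x,t)=p_{\R^n}(x,y;t)-p_{\Omega_j}(x,y;t)$. Then $w$ solves the heat equation in $\Omega_j$, has zero initial data in the distributional sense (both kernels tend to $\delta_y$), and satisfies $w\ge0$ on $\partial\Omega_j$, because $p_{\Omega_j}$ vanishes there and the Gaussian is positive. The parabolic maximum principle then gives $w\ge0$.

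Alternatively, one can stay at the level of semigroups. The form domain $H_0^1(\Omega)$ is an ideal in $H^1(\R^n)$, so by the Ouhabib/Ouhabib--Arendt domination criterion $|e^{t\Delta_\Omega}f|\le e^{t\Delta_{\R^n}}|f|$ for $f\in L^2(\Omega)$, which translates into the same kernel inequality. Next, $p_{\Omega_j}\nearrow p_\Omega$ as $j\to\infty$ (monotone convergence of the forms, or Davies' Theorem 2.1.6), so the bound passes to $\Omega$. Evaluating at $y=x$ gives $p_\Omega(x,x;t)\le p_{\R^n}(x,x;t)=(4\pi t)^{-n/2}$.

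The main technical point is making domain monotonicity rigorous for an arbitrary open set with no boundary regularity. I would handle it through the semigroup domination or exhaustion argument above rather than a pointwise maximum principle on $\Omega$ itself. Since this is the cited standard result (Davies, eq. (1.9.1)), it is enough to sketch this chain and refer to the literature.
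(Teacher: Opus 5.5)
Your proof is correct, but it takes a different route from the paper. The paper gives no proof of its own beyond the citation of Davies. After introducing the Brownian-loop representation \eqref{eq:interpretationOfHeatKernelAsProbability}, which it justifies via \cite{SIMON1978268}, it remarks that the proposition is immediate. The reason is that $(4\pi t)^{n/2}p_\Omega(x,x;t)$ is the probability that a Brownian loop based at $x$ stays in $\Omega$ up to time $t$, so it lies in $[0,1]$. That one line gives both inequalities at once, and domain monotonicity (Proposition \ref{prop:heatkernelOfSubsetsAreOrdered}) comes equally for free. The real work is hidden in identifying the $H_0^1(\Omega)$ Dirichlet heat kernel with the killed Brownian bridge for an arbitrary open set.

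Your route is purely analytic. You use the first Beurling--Deny criterion for positivity and domination by the Gaussian semigroup for the upper bound. This stays within form methods and yields the stronger off-diagonal bound $p_\Omega(x,y;t)\le (4\pi t)^{-n/2}e^{-\abs{x-y}^2/(4t)}$. The same domination criterion applied to $H_0^1(\Omega_1)\subset H_0^1(\Omega_2)$ also proves Proposition \ref{prop:heatkernelOfSubsetsAreOrdered}.

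Two remarks on your write-up.

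First, the semigroup-domination branch (the criterion is due to Ouhabaz) works directly on an arbitrary open $\Omega$, so no exhaustion is needed there. The reason is that $H_0^1(\Omega)$ is an ideal in $H^1(\R^n)$ for every open set. Suppose $0\le v\le u$ with $u\in H_0^1(\Omega)$ and $v\in H^1(\R^n)$, and take $u_k\in C_c^\infty(\Omega)$ with $u_k\to u$ in $H^1$. Then $\min(v,u_k^+)$ has compact support in $\Omega$, hence lies in $H_0^1(\Omega)$, and it converges to $v$ in $H^1$.

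Second, in the maximum-principle branch, saying the initial data vanish ``in the distributional sense'' does not suffice for the weak maximum principle. You would need $\liminf w\ge 0$ at the parabolic boundary point $(y,0)$, where both kernels blow up. The clean fix is to apply the maximum principle to $e^{t\Delta_{\R^n}}f-e^{t\Delta_{\Omega_j}}f$ for $0\le f\in C_c(\Omega_j)$. This function is continuous on $\overline{\Omega_j}\times[0,\infty)$, vanishes at $t=0$, and is nonnegative on $\partial\Omega_j$. You then pass to the kernels by continuity.
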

The following proposition is a quantitative version of the principle of not feeling the boundary and justifies the heuristic interpretation that far away from the boundary the heat kernel may be approximated by the heat kernel of the whole space. 
\begin{proposition}[Eq. (11) in \cite{MR640650}]
\label{prop:boundBulkKernel}
    Let $\Omega \subset \R^n$ be open. Then, for all $x \in \Omega$, 
    \begin{align}
        \abs{(4\pi t)^{-n/2}- p_\Omega(x,x;t)}\leq (4\pi t)^{-n/2} 2ne^{-\frac{\d{x}^2}{nt}}, \quad \forall t\in (0,\infty). \label{eq:notFeelTheBoundary}
    \end{align}
\end{proposition}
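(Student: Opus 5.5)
The plan is a domain monotonicity argument combined with the Gaussian tail of the Brownian exit time. We write the Dirichlet heat kernel probabilistically as
\begin{align*}
    p_\Omega(x,x;t) = (4\pi t)^{-n/2} - \mathbb{E}\big[\text{contribution of Brownian bridges from } x \text{ to } x \text{ in time } t \text{ that leave } \Omega\big],
\end{align*}
that is, $(4\pi t)^{-n/2} - p_\Omega(x,x;t) = (4\pi t)^{-n/2}\,\mathbb{P}^{t}_{x,x}(\tau_\Omega \le t)$, where $\mathbb{P}^{t}_{x,x}$ is the law of the Brownian bridge of duration $t$ from $x$ to $x$ (with generator $\Delta$). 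Since the difference is nonnegative by Proposition~\ref{prop:boundednessOfHeatKernel} (or by domain monotonicity $p_\Omega\le p_{\R^n}$), it suffices to bound the bridge exit probability.

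The key step is domain monotonicity. Since $B_{\d{x}}(x)\subset\Omega$, exiting $\Omega$ forces exiting the ball, and hence
\begin{align*}
    \mathbb{P}^{t}_{x,x}(\tau_\Omega\le t)\le \mathbb{P}^{t}_{0,0}\Big(\sup_{s\le t}\abs{b_s}\ge \d{x}\Big).
\end{align*}
We then reduce to one dimension. If $\abs{b_s}\ge r$, then some coordinate satisfies $\abs{b^{(i)}_s}\ge r/\sqrt n$. Each coordinate is a one-dimensional Brownian bridge with variance parameter $2$, and for such a bridge the reflection principle gives $\mathbb{P}(\sup_s b^{(i)}_s\ge a)=e^{-a^2/t}$. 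Applying this with $a=r/\sqrt n$ on both sides and taking a union bound over the $n$ coordinates and the two signs gives the bound $2n\,e^{-\d{x}^2/(nt)}$. This is exactly the constant in \eqref{eq:notFeelTheBoundary}.

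The main obstacle is getting the normalization of the bridge exactly right. The diffusion is run at rate $2$, since the generator is $\Delta$ rather than $\tfrac12\Delta$, and this must match the claimed exponent $e^{-\d{x}^2/(nt)}$. The one-dimensional bridge formula $\mathbb{P}(\max b\ge a)=e^{-2a^2/(\sigma^2 t)}$ with $\sigma^2=2$ yields $e^{-a^2/t}$, as required. For a purely analytic alternative, one can compare with the Dirichlet heat kernel of the cube $\prod_i (x_i-\d{x}/\sqrt n,\,x_i+\d{x}/\sqrt n)\subset\Omega$. This kernel factorizes into one-dimensional interval kernels, each of which is bounded below by the free kernel minus two image terms via the method of images.
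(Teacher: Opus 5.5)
Your proof is correct, and it gives exactly the stated constants. The Brownian-bridge representation of $p_\Omega$ holds for every open $\Omega$. Exiting $\Omega$ forces $\sup_{s\le t}\abs{b_s-x}\ge \mathrm{d}_\Omega(x)$, so some coordinate must leave $(x_i-a,x_i+a)$ with $a=\mathrm{d}_\Omega(x)/\sqrt n$. The rate-$2$ bridge reflection formula $\mathbb{P}(\max_{s\le t} b^{(i)}_s\ge a)=e^{-a^2/t}$, combined with the union bound over $n$ coordinates and two signs, then yields $2n e^{-\mathrm{d}_\Omega(x)^2/(nt)}$. The paper gives no argument of its own here. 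It imports the bound from the cited reference and later reads it in the reverse direction, as the exit-probability estimate \eqref{eq:quantNotFeelingBoundary} for Brownian loops. Your proposal therefore supplies a self-contained proof where the paper only has a citation.

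In substance, your argument is the inscribed-cube comparison in probabilistic form. The ball step can be skipped: what you actually bound is exit from the cube $x+(-a,a)^n\subset\Omega$, and the constants $2n$ and $1/n$ are exactly what that comparison produces. In the analytic version you sketch, Proposition~\ref{prop:heatkernelOfSubsetsAreOrdered} and factorization reduce everything to the midpoint value of the one-dimensional interval kernel:
\begin{align*}
(4\pi t)^{-1/2}\bigl(1-2e^{-a^2/t}+2e^{-4a^2/t}-2e^{-9a^2/t}+\cdots\bigr)\ge(4\pi t)^{-1/2}\bigl(1-2e^{-a^2/t}\bigr).
\end{align*}
This lower bound follows from the alternating, decreasing structure of the image series. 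The first image term $e^{-(2a)^2/(4t)}$ is precisely your reflection probability. Bernoulli's inequality $(1-2e^{-a^2/t})^n\ge 1-2ne^{-a^2/t}$ then plays the role of your union bound. It requires $2e^{-a^2/t}\le 1$; the other case is trivial, because then the right-hand side of \eqref{eq:notFeelTheBoundary} exceeds $(4\pi t)^{-n/2}$.

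Your probabilistic route avoids both the full image series and this case split, and it uses only the marginal law of each coordinate. In exchange, it relies on the bridge representation of $p_\Omega$ for general open sets and on the bridge reflection principle. The analytic route needs only domain monotonicity and explicit kernels.
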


\begin{proposition}[Cor. 2.2 in \cite{MR1014655}]
\label{prop:heatkernelOfSubsetsAreOrdered}
    Let $\Omega_1,\, \Omega_2 \subset \R^n$ be open sets such that $\Omega_1 \subset \Omega_2$. Then
    \begin{align*}
        p_{\Omega_1}(x,x;t) \leq p_{\Omega_2}(x,x;t), \quad \forall (x,t) \in \Omega_1\times(0,\infty).
    \end{align*}
\end{proposition}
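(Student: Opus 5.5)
The statement is domain monotonicity of the Dirichlet heat kernel on the diagonal: for $\Omega_1\subset\Omega_2$ we want $p_{\Omega_1}(x,x;t)\le p_{\Omega_2}(x,x;t)$ for all $x\in\Omega_1$ and $t>0$. I would deduce this from the parabolic (weak) maximum principle, or equivalently from positivity preservation of the semigroups, without using the eigenfunction expansion. Fix $y\in\Omega_1$ and set
\begin{align*}
    w(x;t) = p_{\Omega_2}(x,y;t) - p_{\Omega_1}(x,y;t), \qquad x\in\Omega_1,\ t>0.
\end{align*}
Then $w$ solves the heat equation in $\Omega_1\times(0,\infty)$ with the following data:
\begin{itemize}
    \item initial datum $0$, since both kernels tend to $\delta_y$ as $t\to 0$;
    \item on $\partial\Omega_1$ (in the $H^1$ sense), $w=p_{\Omega_2}(\cdot,y;t)\ge 0$.
\end{itemize}
The maximum principle then gives $w\ge 0$, and setting $x=y$ yields the claim.

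Since $\Omega_1,\Omega_2$ are arbitrary open sets, I would carry this out at the level of forms rather than pointwise boundary values. For $0\le f\in L^2(\Omega_1)$, extended by zero to $\Omega_2$, let
\begin{align*}
    u_1 = e^{t\Delta_{\Omega_1}}f, \qquad u_2 = \big(e^{t\Delta_{\Omega_2}}f\big)\big\rvert_{\Omega_1}, \qquad v = u_1-u_2.
\end{align*}
Both $u_1$ and $u_2$ are weak solutions of the heat equation in $\Omega_1$, and $u_2\ge 0$ by positivity of the Dirichlet semigroup (Beurling–Deny, using $|u|\in H_0^1$ whenever $u\in H_0^1$). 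I would then test the equation for $v$ with $v_+$. The key point is that $v_+\in H_0^1(\Omega_1)$: indeed $v\le u_1$ with $u_1\in H_0^1(\Omega_1)$, so $0\le v_+\le (u_1)_+$, and a lattice property of $H_0^1$ gives the membership. With this in hand,
\begin{align*}
    \frac12\frac{\mathrm d}{\mathrm dt}\|v_+\|_2^2 = -\|\nabla v_+\|_2^2 \le 0,
\end{align*}
and since $v_+(0)=0$ we get $v_+\equiv 0$. Hence $e^{t\Delta_{\Omega_1}}f\le e^{t\Delta_{\Omega_2}}f$ on $\Omega_1$ for every nonnegative $f$. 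Taking $f$ to be approximate identities at $y$ and using continuity of the kernels (both are smooth in the interior by parabolic regularity) gives $p_{\Omega_1}(x,y;t)\le p_{\Omega_2}(x,y;t)$, and in particular the diagonal statement.

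The main obstacle is the membership $v_+\in H_0^1(\Omega_1)$. The function $u_2$ need not vanish on $\partial\Omega_1$, so $v$ itself is not in $H_0^1(\Omega_1)$. The fix I would use is to approximate $u_1$ in $H^1$ by $\phi_k\in C_c^\infty(\Omega_1)$; then $(\phi_k-u_2)_+$ has compact support in $\Omega_1$, and these functions converge to $v_+$ in $H^1$ by continuity of $u\mapsto u_+$ on $H^1$. Alternatively, one can avoid the issue altogether with the Trotter/Feynman–Kac representation $p_\Omega(x,y;t)=p_{\R^n}(x,y;t)\,\mathbb P(\text{Brownian bridge stays in }\Omega)$, where monotonicity in $\Omega$ is immediate because the bridge event for $\Omega_1$ is contained in the one for $\Omega_2$. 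I would mention this as the cleanest route, since it is the one the cited reference essentially uses.
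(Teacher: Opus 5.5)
Your proof is correct, but its main line differs from the paper's. The paper gives no proof of its own. It cites the result and remarks that it is immediate from the Brownian-loop representation \eqref{eq:interpretationOfHeatKernelAsProbability}: the event that a loop from $x$ stays in $\Omega_1$ is contained in the event that it stays in $\Omega_2$. That is exactly the alternative you mention at the end.

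Your primary argument is instead a form-level weak maximum principle, and it holds up:
\begin{itemize}
\item Positivity of $e^{t\Delta_{\Omega_2}}$ (Beurling--Deny) gives $v\le u_1$.
\item Your approximation by $(\phi_k-u_2)_+$ gives $v_+\in H_0^1(\Omega_1)$. These functions have compact support precisely because $u_2\ge 0$, and weak $H^1$-convergence plus weak closedness of $H_0^1(\Omega_1)$ already suffices, so you do not even need continuity of $w\mapsto w_+$.
\item The energy identity then forces $v_+\equiv 0$. Two details should be stated explicitly: $t\mapsto \|v_+(t)\|_2^2$ is $C^1$ on $(0,\infty)$ by analyticity of the semigroups, and you should use $\|v_+(t)\|_2\to 0$ as $t\to 0$ (strong continuity) rather than ``$v_+(0)=0$''.
\end{itemize}

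What your route buys is that it is self-contained within the quadratic-form definition of the Dirichlet Laplacian the paper uses. It works for arbitrary open sets without the eigenfunction expansion or any path-integral machinery. It also yields the stronger operator domination $0\le e^{t\Delta_{\Omega_1}}f\le e^{t\Delta_{\Omega_2}}f$ for $f\ge 0$, hence the off-diagonal kernel inequality. The cost is the bookkeeping above plus the passage from operators to pointwise kernels via interior continuity.

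The probabilistic route is a one-line inclusion of events, but only after accepting Simon's Feynman--Kac representation for arbitrary open sets. In the paper that representation is needed anyway, for Proposition \ref{prop:boundednessOfHeatKernel} and Theorem \ref{thm:goodPartOfBoundary}, which is why it is the economical choice there.
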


\subsection{Approximation of the heat kernel near good points of the boundary}

In the following, we will locally compare the heat kernel of $\Omega$ for points in the good part with the heat kernel of a half-space. For a half-space $\mathbb{H}$, the heat kernel satisfies
\begin{align}
    p_{\mathbb{H}}(x,x;t) = (4\pi t)^{-n/2} \bigg(1-e^{-\frac{\d[\mathbb{H}]{x}^2}{t}}\bigg),\quad \text{for all } (x,t)\in \mathbb{H}\times(0,\infty),\label{eq:heatKernelHalfPlane}
\end{align}
which can be obtained by using the heat kernel of $\R^n$ and applying the principle of mirror images.

For the proof of the relevant theorem of this section, we will use a probabilistic interpretation of the heat kernel, which is justified by \cite[Theorem 3]{SIMON1978268}, and is described as follows. For fixed $x\in \Omega$, let $A\colon [0, t] \to \R^n$ be a Brownian loop starting at $x$, that is, a Brownian motion such that $A(0) = A(t) = x$. Then 
\begin{align}
    (4\pi t)^{n/2}p_\Omega(x,x;t) = \Pr(A(\tau)\in \Omega ,\forall  \tau \in [0,t] \mid A(0) = A(t) = x). \label{eq:interpretationOfHeatKernelAsProbability}
\end{align}
That is, $p_\Omega(x,x;t)$ is the probability that a Brownian motion starting at time $0$ at $x$ and ending at time $t$ at $x$ does not leave the set $\Omega$, up to some known factor. From this representation, the Propositions \ref{prop:boundednessOfHeatKernel} and \ref{prop:heatkernelOfSubsetsAreOrdered} are immediate. 

Proposition \ref{prop:boundBulkKernel} can, using the above characterization, be formulated from a point of view, where we are interested in whether the Brownian loop leaves the domain or, equivalently, hits the boundary. In particular, for any open $\Omega$, Proposition \ref{prop:boundBulkKernel} implies that
\begin{align}
\begin{aligned}
    &\Pr(\exists \tau \in [0,t] \colon A(\tau)\notin \Omega \mid A(0) = A(t) = x) \\
    &\hspace{8em}= \Pr(\exists \tau \in [0,t]\colon A(\tau)\in \partial\Omega \mid A(0) = A(t) = x) \\
    &\hspace{8em}\leq 2ne^{-\frac{\d{x}^2}{nt}}, 
\end{aligned}
\label{eq:quantNotFeelingBoundary}        
\end{align}
for all $(x,t)\in \Omega\times(0,\infty)$. Note that this uses path continuity of the Brownian motion.

Using the probabilistic interpretation, we are able to obtain in the following theorem an upper and lower bound for the heat kernel for points where the nearby boundary can locally be squeezed between two hyperplanes. Afterwards, we apply the geometric constructions from Section \ref{sec:geomConstruction} to prove that this is indeed possible for good points in the domain.

\begin{theorem}
\label{thm:goodPartOfBoundary}
    Let $\Omega \subset \R^n$ be an open and bounded set with Lipschitz boundary. Let $\ell> 0$ and $x\in\Omega$ be such that $\d{x} < \ell$. Suppose that there exist $p\in\partial\Omega$, $\rho > 0$ and two open parallel half-spaces $\mathbb{H}_- \subset\mathbb{H}_+$ such that $\abs{x-p} < \ell$, $\rho = \frac{1}{2}\dist{\partial\mathbb{H}_+}{\partial\mathbb{H}_-}$ and
\begin{align*}
    \mathbb{H}_- \cap B_{2\ell}(p) \subset \Omega\cap B_{2\ell}(p) \subset \mathbb{H}_+ \cap B_{2\ell}(p).
\end{align*}
Then
\begin{align}
    1-e^{-\frac{(\d{x} - 2\rho)_+^2}{t}} - 2ne^{-\frac{\ell^2}{nt}} \leq (4\pi t)^{n/2}p_\Omega(x,x;t) \leq 1-e^{-\frac{(\d{x} + 2\rho)^2}{t}} + 2ne^{-\frac{\ell^2}{nt}} \label{eq:ineqForKernelInGoodPartGeneral}
\end{align}
for all $t > 0$.
\end{theorem}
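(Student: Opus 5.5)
The proof will use the Brownian-loop representation \eqref{eq:interpretationOfHeatKernelAsProbability}. The idea is to compare the event that the loop stays in $\Omega$ with the events that it stays in $\mathbb{H}_\pm$, up to the event that it leaves the ball $B_{2\ell}(p)$. Inside the ball, $\Omega$ is sandwiched between the half-spaces, so leaving the ball is the only source of discrepancy. Let $E$ be the event that the loop (started and ended at $x$) exits $B_{2\ell}(p)$ during $[0,t]$. Since $\abs{x-p}<\ell$, we have $B_\ell(x)\subset B_{2\ell}(p)$, so $E$ forces the loop to leave $B_\ell(x)$. Applying \eqref{eq:quantNotFeelingBoundary} to the open set $B_\ell(x)$, where $\mathrm{d}_{B_\ell(x)}(x)=\ell$, gives $\Pr(E)\le 2n e^{-\ell^2/(nt)}$.

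For the upper bound, on $E^c$ the loop stays in $B_{2\ell}(p)$. If it also stays in $\Omega$, it stays in $\Omega\cap B_{2\ell}(p)\subset\mathbb{H}_+$. Hence
\begin{align*}
(4\pi t)^{n/2}p_\Omega(x,x;t)\le \Pr(\text{stay in }\mathbb{H}_+)+\Pr(E).
\end{align*}
Note that $x\in\mathbb{H}_+$, since $x\in\Omega\cap B_{2\ell}(p)$. By \eqref{eq:heatKernelHalfPlane}, the first term equals $1-e^{-\mathrm{d}_{\mathbb{H}_+}(x)^2/t}$. For the lower bound, symmetrically,
\begin{align*}
\Pr(\text{stay in }\Omega)\ge\Pr(\text{stay in }\mathbb{H}_-\cap B_{2\ell}(p))\ge \Pr(\text{stay in }\mathbb{H}_-)-\Pr(E).
\end{align*}
If $x\notin\mathbb{H}_-$, we instead use that the left-hand side of \eqref{eq:ineqForKernelInGoodPartGeneral} is $\le 0$ whenever $(\d{x}-2\rho)_+=0$. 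The case $x\notin\mathbb{H}_-$ but $\d{x}>2\rho$ is excluded by the distance comparison below.

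The remaining step, which I expect to be the main (though elementary) obstacle, is comparing distances. I will show $\mathrm{d}_{\mathbb{H}_+}(x)\le \d{x}+2\rho$ and $\mathrm{d}_{\mathbb{H}_-}(x)\ge \d{x}-2\rho$ when $x\in\mathbb{H}_-$. Since $\d{x}<\ell$, the nearest boundary point $p^\ast$ satisfies $\abs{p^\ast-p}\le\abs{p^\ast-x}+\abs{x-p}<2\ell$, so $p^\ast\in\partial\Omega\cap B_{2\ell}(p)$. Taking closures of the inclusions, points of $\partial\Omega\cap B_{2\ell}(p)$ lie in $\overline{\mathbb{H}_+}\setminus\mathbb{H}_-$, the closed slab of width $2\rho$. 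Hence $\mathrm{d}_{\mathbb{H}_+}(x)\le \mathrm{dist}(x,\partial\mathbb{H}_-)+2\rho\le \abs{x-p^\ast}+2\rho$ when $x\in\mathbb{H}_-$; the projection of $x$ onto the slab is controlled because $p^\ast$ lies in the slab. If $x\notin\mathbb{H}_-$, then $x$ is itself in the slab and $\mathrm{d}_{\mathbb{H}_+}(x)\le 2\rho$.

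For the other inequality, take $x\in\mathbb{H}_-$ and the closest point $q\in\partial\mathbb{H}_-$. If $\abs{x-q}<\d{x}-2\rho$ failed to give the bound, we could move from $q$ a further $2\rho$ outward along the normal to a point $q'\in\partial\mathbb{H}_+$ outside $\Omega$. This requires $q'\in B_{2\ell}(p)$, which should follow from $\abs{q'-x}\le\d{x}<\ell$ when $\mathrm{d}_{\mathbb{H}_-}(x)+2\rho\le\d{x}$. Then $q'\notin\Omega$ gives $\d{x}\le \mathrm{d}_{\mathbb{H}_-}(x)+2\rho$, as desired. Finally, since $s\mapsto 1-e^{-s^2/t}$ is monotone, substituting both distance bounds yields \eqref{eq:ineqForKernelInGoodPartGeneral}. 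Some care is needed with the borderline case $x\notin\mathbb{H}_-$ and the requirement that the relevant points lie in $B_{2\ell}(p)$.
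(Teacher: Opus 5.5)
Your proposal is correct and follows essentially the same route as the paper: the Brownian-loop representation, trapping the loop between $\mathbb{H}_-$ and $\mathbb{H}_+$ inside $B_{2\ell}(p)$ with the exit event bounded via \eqref{eq:quantNotFeelingBoundary}, then the two distance comparisons. The differences are minor: you apply that bound to $B_\ell(x)$ rather than $B_{2\ell}(p)$, which gives the same estimate. Your nearest-point argument with $q'\in\partial\mathbb{H}_+$ also supplies the justification for the paper's unproved claim $\dist{x}{\partial\mathbb{H}_+}\ge\d{x}$; run directly on $\partial\mathbb{H}_+$ rather than only for $x\in\mathbb{H}_-$, the same argument rules out the case $x\notin\mathbb{H}_-$ with $\d{x}>2\rho$.
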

\begin{remark}
    As $a_+ = \max\{0,a\}$ is the positive part of a quantity $a$, the lower bound in the previous theorem is trivial if $\rho \geq \frac{1}{2}\d{x}$.
\end{remark}

\begin{proof}%

We show the lower and upper bound separately through the analogy with a Brownian loop. Therefore, let $A\colon [0,t]\to \R^n$ be a Brownian loop centered at $x$. For simplicity, we write throughout the proof
\begin{align*}
    \Pr\nolimits_x(\cdot) = \Pr(\cdot \mid A(0) = A(t) = x)
\end{align*}
for the conditional probability.

For the lower bound, we can assume without loss of generality that $\d{x}> 2\rho$. This implies
\begin{align*}
    \dist{x}{\partial\mathbb{H}_+} \geq \d{x}> 2\rho,
\end{align*}
and therefore $x\in \mathbb{H}_-$. Thus, we obtain, using the inclusion $\mathbb{H}_-\cap B_{2\ell}(p) \subset \Omega\cap B_{2\ell}(p) \subset \Omega$, that
\begin{align*}
    (4\pi t)^{n/2} p_\Omega(x,x;t) &\geq \Pr\nolimits_x(A(\tau)\in \mathbb{H}_-\cap B_{2\ell}(p) ,\forall \tau\in[0,t]) \\
    &\geq \Pr\nolimits_x(A(\tau)\notin \partial\mathbb{H}_- ,\forall \tau\in[0,t]) \\
    &\hspace{1cm} - \Pr\nolimits_x(\exists\tau\in[0,t]\colon A(\tau)\in\mathbb{H}_-\cap \partial B_{2\ell}(p)).
\end{align*}
The first probability is now given by \eqref{eq:heatKernelHalfPlane} as
\begin{align*}
    \Pr\nolimits_x(A(\tau) \notin \partial\mathbb{H}_- ,\forall \tau\in[0,t]) = (4\pi t)^{n/2} p_{\mathbb{H}_-}(x,x;t) = 1-e^{-\frac{\dist{\partial\mathbb{H}_-}{x}^2}{t}},
\end{align*}
and for the second term, we find
\begin{align*}
    \Pr\nolimits_x(\exists\tau\in[0,t]\colon A(\tau)\in\mathbb{H}_-\cap \partial B_{2\ell}(p))\leq \Pr\nolimits_x(\exists\tau\in[0,t]\colon A(\tau)\in\partial B_{2\ell}(p)) \leq 2ne^{-\frac{\ell^2}{nt}},
\end{align*}
as $ \dist{x}{\partial B_{2\ell}(p)} > \ell$ and by using the estimate in \eqref{eq:quantNotFeelingBoundary} with domain $B_{2\ell}(p)$. By the geometric assumption, we find as before
\begin{align*}
    \dist{x}{\partial \mathbb{H}_-} = \dist{x}{\partial \mathbb{H}_+} - 2\rho \geq \d{x} - 2\rho
\end{align*}
from which the lower bound in \eqref{eq:ineqForKernelInGoodPartGeneral}, that is,
\begin{align*}
    (4\pi t)^{n/2} p_\Omega(x,x;t) &\geq 1-e^{-\frac{(\d{x} - 2\rho)^2_+}{t}} - 2ne^{-\frac{\ell^2}{nt}}
\end{align*}
follows.

For the upper bound, observe that
\begin{align*}
    \Pr\nolimits_x(A(\tau)\in \Omega,\forall \tau\in[0,t]) &= \Pr\nolimits_x(A(\tau)\notin \partial\Omega,\forall\tau\in[0,t]) \\
    &\leq \Pr\nolimits_x(A(\tau)\notin\partial\Omega\cap B_{2\ell}(p),\forall \tau\in[0,t]) .
\end{align*}
To obtain an upper bound for the preceding probability, we define the following events, which are all conditioned on the case that $A$ is a Brownian loop starting at $x$:
\begin{align*}
    A_1 &= \{A(\tau)\notin\partial\Omega\cap B_{2\ell}(p), \forall \tau\in[0,t]\}, \qand \\
    A_2 &= \{A(\tau)\notin\partial B_{2\ell}(p), \forall \tau\in[0,t]\}.
\end{align*}
Then we find
\begin{align*}
    A_1\cap A_2 &= \{A(\tau)\notin \partial(\Omega\cap B_{2\ell}(p)), \forall \tau\in[0,t]\} \\
    &= \{A(\tau) \in \Omega\cap B_{2\ell}(p), \forall \tau\in[0,t]\} \\
    &\subset \{A(\tau) \in \mathbb{H}_+\cap B_{2\ell}(p), \forall \tau\in[0,t]\}, \qand\\
    A_1\setminus A_2 &\subset A_2^c =  \{\exists \tau \in[0,t]\colon A(\tau)\in \partial B_{2\ell}(p)\}.
\end{align*}
Thus, using $\Pr\nolimits_x(A_1)=\Pr\nolimits_x(A_1\cap A_2) + \Pr\nolimits_x(A_1\setminus A_2)$, we can see
\begin{align}
    \Pr\nolimits_x(A_1) &\leq \Pr\nolimits_x(A(\tau)\in \mathbb{H}_+\cap B_{2\ell}(p), \forall \tau\in[0,t]) + \Pr\nolimits_x(\exists \tau \in[0,t]\colon A(\tau)\in \partial B_{2\ell}(p)). \label{eq:decompositionOfPrA1}
\end{align}
The first term can be bounded from above by the heat kernel of a half-space, that is,
\begin{align*}
    \Pr\nolimits_x(A(\tau)\in \mathbb{H}_+\cap B_{2\ell}(p), \forall \tau\in[0,t]) \leq \Pr\nolimits_x(A(\tau)\in \mathbb{H}_+, \forall \tau\in[0,t]) = 1-e^{-\frac{\dist{x}{\partial \mathbb{H}_+}^2}{t}}.
\end{align*}
If $x\notin \mathbb{H}_-$, then $\dist{x}{\partial\mathbb{H}_+} \leq 2\rho \leq \d{x} + 2\rho $. If $x\in\mathbb{H}_-$, then $\dist{x}{\partial\mathbb{H}_+} \leq \dist{x}{\partial\mathbb{H}_-} + 2\rho \leq \d{x} + 2\rho$. Hence, in both cases, we obtain the second term on the right-hand side of \eqref{eq:ineqForKernelInGoodPartGeneral}.
Finally, we can bound the second term in \eqref{eq:decompositionOfPrA1} as before by
\begin{align*}
    \Pr\nolimits_x(\exists \tau\in[0,t]\colon A(\tau)\in \partial B_{2\ell}(p)) \leq 2ne^{-\frac{\ell^2}{nt}},
\end{align*}
which implies the claim.
\end{proof}

\begin{figure}[htbp]
    \centering
    \begin{tikzpicture}[
    xscale=1.5,
    yscale=1.5,
    boundary/.style={thick},
    rough/.style={thick},
    ray/.style={->, thick},
    guide/.style={dashed, thin},
    point/.style={circle, fill, inner sep=1.2pt}
]

\coordinate (p) at (0,0);
\def\R{2.9}
\def\a{27.5}
\def\L{4.23}
\def\s{2.0}
\pgfmathsetmacro{\dH}{\s*sin(\a)}
\pgfmathsetmacro{\dHplus}{\dH+0.18}
\pgfmathsetmacro{\dHminus}{-\dH-0.18}

\fill[gray!25]
    (p) -- ++({-90-\a}:{\s/4})
    arc[start angle={-90-\a}, end angle={-90+\a}, radius={\s/4}]
    -- cycle;

\draw[boundary] (p) circle (\R);

\draw[line width=0.4pt] (p) circle ({\s});

\draw[thin] ($(p)+(-\L,0)$) -- ($(p)+(\L,0)$);
\draw[thin] ($(p)+(-\L,\dH)$) -- ($(p)+(\L,\dH)$);
\draw[thin] ($(p)+(-\L,-\dH)$) -- ($(p)+(\L,-\dH)$);

\draw[<->, thin] ($(p)+(3.25,0)$) -- ($(p)+(3.25,\dH)$)
    node[midway,right] {$\rho$};

\draw[<->, thin] ($(p)+(3.25,0)$) -- ($(p)+(3.25,-\dH)$)
    node[midway,right] {$\rho$};

\node at ($(p)+(3.75,0.22)$) {$\partial\mathbb{H}_{\,\,}$};
\node at ($(p)+(3.75,\dHplus)$) {$\partial\mathbb{H}_{+}$};
\node at ($(p)+(3.75,\dHminus)$) {$\partial\mathbb{H}_{-}$};

\draw[rough]
    (-2.92, 1.78)
    -- (-2.74, 1.50)
    -- (-2.56, 1.12)
    -- (-2.35, 0.35)
    -- (-2.18,-0.72)
    -- (-1.88,-0.18)
    -- (-1.62, 0.61)
    -- (-1.31,-0.46)
    -- (-1.04,-0.08)
    -- (-0.79, 0.28)
    -- (-0.55,-0.22)
    -- (-0.31, 0.10)
    -- (-0.20, 0.00)
    -- (p)
    -- (0.18, 0.00)
    -- (0.37,-0.08)
    -- (0.63, 0.29)
    -- (0.96,-0.41)
    -- (1.18,-0.18)
    -- (1.46, 0.52)
    -- (1.76,-0.23)
    -- (2.02,-0.72)
    -- (2.24, 0.45)
    -- (2.43,-0.92)
    -- (2.58,-1.28)
    -- (2.70,-1.53)
    -- (2.80,-1.90);

\node at (-1.8,-2.0) {$\Omega$};
\node at (-2.95,1.2) {$\partial\Omega$};

\node[point] at (p) {};
\node[above left=0.7pt] at (p) {$p$};

\coordinate (q) at ($(p)+(40:\R)$);
\draw[ray] (p) -- (q) node[pos=0.80, above left] {$r$};

\coordinate (qhalf) at ($(p)+(47:{\s})$);
\draw[ray] (p) -- (qhalf) node[pos=0.72, above left] {$2\ell$};

\draw[ray] (p) -- ++(0,-0.85) node[below] {$\nu(p)$};

\draw[guide] ($(p)+({180+\a}:\L)$) -- ($(p)+(\a:\L)$);
\draw[guide] ($(p)+({180-\a}:\L)$) -- ($(p)+({-\a}:\L)$);

\draw[guide] (p) -- ++({-90-\a}:{\s/4});
\draw[guide] (p) -- ++({-90+\a}:{\s/4});

\end{tikzpicture}
    \caption{Sketch of the half-space construction for points in $(\varepsilon,r)$-good regions of the domain. The shaded area represents all points near a fixed $p\in\partial\Omega$ for which Corollary \ref{cor:boundsForGoodPartOfBoundaryKernelOnly} is applicable.}
    \label{fig:halfPlanesWithSawtooth}
\end{figure}

The preceding theorem is applied in the following corollary to the good part of the domain to obtain a uniform upper and lower bound for the heat kernel. Furthermore, a sketch of the geometric configuration is given in Figure \ref{fig:halfPlanesWithSawtooth}.
\begin{corollary}
\label{cor:boundsForGoodPartOfBoundaryKernelOnly}
    Let $\Omega \subset \R^n$ be an open and bounded set with Lipschitz boundary. Let $\varepsilon\in\big(0,\frac12\big], r > 0$ and $\ell \in \big(0, \frac{r}{2}\big)$. Then for all $(\varepsilon,r)$-good regions $\mathcal{G} \subset \Omega$ and for all $x\in \mathcal{G}$ with $\d{x} < \frac{\ell}{2}$, we have
        \begin{align}
    1-e^{-\frac{(\d{x} - 2\rho)_+^2}{t}} - 2ne^{-\frac{\ell^2}{nt}} \leq (4\pi t)^{n/2}p_\Omega(x,x;t) \leq 1-e^{-\frac{(\d{x} + 2\rho)^2}{t}} + 2ne^{-\frac{\ell^2}{nt}} \label{eq:ineqForKernelInGoodPart}
\end{align}
for all $t > 0$, with $\rho = 2\ell \varepsilon$.
\end{corollary}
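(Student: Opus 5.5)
The plan is to reduce Corollary~\ref{cor:boundsForGoodPartOfBoundaryKernelOnly} to Theorem~\ref{thm:goodPartOfBoundary}, using Proposition~\ref{prop:halfPlaneConstructionForGoodPart} to produce the two half-spaces and Lemma~\ref{lem:distOfGoodPointToItsSaddle} to control $\abs{x-p}$. We fix an $(\varepsilon,r)$-good region $\mathcal{G}$ associated to an $(\varepsilon,r)$-good set $G\subset\partial\Omega$, and a point $x\in\mathcal{G}$ with $\d{x}<\frac{\ell}{2}$.

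First, by Definition~\ref{def:goodPointsOfDomain} there is $p\in G$ with $x\in\Gamma_{\varepsilon,r}(p)$. Lemma~\ref{lem:distOfGoodPointToItsSaddle} then gives
\begin{align*}
\abs{x-p}\le 2\d{x}<\ell .
\end{align*}
Moreover $\d{x}<\frac{\ell}{2}<\ell$. So the hypotheses $\d{x}<\ell$ and $\abs{x-p}<\ell$ of Theorem~\ref{thm:goodPartOfBoundary} hold.

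Second, we apply Proposition~\ref{prop:halfPlaneConstructionForGoodPart} with $s=2\ell$. This is admissible because $\ell<\frac r2$ gives $s<r$. We choose the shift in that proposition to be $\rho=2\ell\varepsilon=s\varepsilon$, which satisfies the requirement $\rho\ge s\varepsilon$. The proposition yields parallel open half-spaces $\mathbb{H}_\pm=\mathbb{H}\mp\rho\,\nu(p)$ with
\begin{align*}
\mathbb{H}_-\cap B_{2\ell}(p)\subset\Omega\cap B_{2\ell}(p)\subset\mathbb{H}_+\cap B_{2\ell}(p).
\end{align*}
By construction $\mathbb{H}_-\subset\mathbb{H}_+$. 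Their boundaries are the hyperplanes through $p\mp\rho\nu(p)$ orthogonal to $\nu(p)$, so $\dist{\partial\mathbb{H}_+}{\partial\mathbb{H}_-}=2\rho$. Hence half of this distance is exactly $\rho$, matching the normalization $\rho=\frac12\dist{\partial\mathbb{H}_+}{\partial\mathbb{H}_-}$ in Theorem~\ref{thm:goodPartOfBoundary}.

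Third, we invoke Theorem~\ref{thm:goodPartOfBoundary} with this $\ell$, $p$, $\rho$ and $\mathbb{H}_\pm$. Its conclusion \eqref{eq:ineqForKernelInGoodPartGeneral} is exactly \eqref{eq:ineqForKernelInGoodPart} with $\rho=2\ell\varepsilon$, and it holds for all $t>0$. The constants $\rho$ and $\ell$ do not depend on $x$ or on the choice of $p$, so the bound is uniform over $\mathcal{G}\cap\{\d{x}<\frac\ell2\}$.

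The main point needing care is the normalization of $\rho$. Proposition~\ref{prop:halfPlaneConstructionForGoodPart} measures $\rho$ as the distance from $p$ to each hyperplane, whereas Theorem~\ref{thm:goodPartOfBoundary} defines $\rho$ as half the gap between the two hyperplanes. These coincide only because the two half-spaces are placed symmetrically about $p$, so this should be stated explicitly. The other check is that the factor $2$ from Lemma~\ref{lem:distOfGoodPointToItsSaddle} is what forces the hypothesis $\d{x}<\ell/2$.
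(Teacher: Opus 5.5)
Your proposal is correct and follows the paper's proof step for step: you pick $p\in G$ with $x\in\Gamma_{\varepsilon,r}(p)$, use Lemma~\ref{lem:distOfGoodPointToItsSaddle} to get $\abs{x-p}<\ell$, apply Proposition~\ref{prop:halfPlaneConstructionForGoodPart} with $s=2\ell<r$ and $\rho=2\ell\varepsilon$, and conclude via Theorem~\ref{thm:goodPartOfBoundary}. You also check explicitly that the symmetric placement of $\mathbb{H}_\pm$ about $p$ makes $\rho$ equal to half the gap between the hyperplanes, a detail the paper leaves implicit.
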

\begin{proof}
    Let $x\in\mathcal{G}$ such that $\d{x}<\frac{\ell}{2}$. Then, there exists an $(\varepsilon,r)$-good $p\in \partial\Omega$ such that $x\in\Gamma_{\varepsilon,r}(p)$ and by Lemma \ref{lem:distOfGoodPointToItsSaddle} we have $\abs{x-p}<\ell$. According to Proposition \ref{prop:halfPlaneConstructionForGoodPart} with $s=2\ell\in(0,r)$, there exist two half-spaces $\mathbb{H}_+$ and $\mathbb{H}_-$ that satisfy the assumptions of Theorem \ref{thm:goodPartOfBoundary}, and hence \eqref{eq:ineqForKernelInGoodPartGeneral} holds for $x$ with $\rho = 2\ell\varepsilon$. 
\end{proof}

\section{Approximations for the weighted heat trace in different parts of the domain}
\label{sec:boundsForErrorTerms}
In this section, we analyze the integral
\begin{align}
    \int_D \d{x}^{\alpha}\abs{(4\pi t)^{n/2}p_\Omega(x,x;t) - 1+e^{-\frac{\d{x}^2}{t}}}\dd{x}
    \label{eq:weightedIntegralErrorTerm}
\end{align}
in the asymptotic limit $t\to 0$ for appropriate subsets $D\subset\Omega$. 
These subsets are chosen according to the following disjoint decomposition of $\Omega$.

\begin{definition}
\label{def:decompositionOfDomain}
    Let $r, \ell_1,\ell_2 > 0$ and $\varepsilon \in \Big(0,\frac14\Big]$ be such that $\ell_1 < \ell_2$. Let $\mathcal{G}_{\varepsilon,r}$ be a $(\varepsilon,r)$-good region according to Definition \ref{def:goodPointsOfDomain}. We define the following sets:
    \begin{itemize}
        \item The bulk of the domain: $\Omega_{bulk} = \{x\in\Omega \mid \d{x}\geq \ell_2\}$.
        \item The near-boundary part of the domain: $\Omega_{near} = \{x\in\Omega \mid \d{x}\leq \ell_1\}$.
        \item The good part of the domain: $\Omega_{good} = \{x\in\Omega \mid \ell_2 > \d{x} > \ell_1\}\cap \mathcal{G}_{\varepsilon,r}$.
        \item The bad part of the domain: $\Omega_{bad} = \{x\in\Omega \mid \ell_2 > \d{x} > \ell_1\}\setminus \mathcal{G}_{\varepsilon,r}$.
    \end{itemize} 
\end{definition}
\begin{remark}
    The sets in the preceding definition are chosen so that they form a disjoint decomposition of $\Omega$.
    In Section \ref{sec:badRegion} we will choose a family of sets $\{\mathcal{G}_{\varepsilon,r}\}_{\varepsilon,r}$ in a specific way. The interpretation of the different parameters is as follows:
    \begin{itemize}
        \item $\varepsilon$ corresponds to the angle of the cones in the approximation of the good boundary points,
        \item $r$ is the radius of the balls in which the cones centered at good boundary points enclose the boundary,
        \item $\ell_1$ denotes the size of the near-boundary region, and
        \item $\ell_2$ denotes the size of the bulk region.
    \end{itemize}
    We will later use $\ell_1 = \varepsilon_1\sqrt{t}$ and $\ell_2 = \frac{\sqrt{t}}{\varepsilon_2}$, so that $\varepsilon, \varepsilon_1, \varepsilon_2$ and $r$ are independent of $t$. Thus, the length scales of the sizes of the sets are given in terms of $\sqrt{t}$, which is the natural length scale for the heat equation.
\end{remark}

In the following sections, we will determine bounds for the integrals over the different parts of the domain. In fact, if $\alpha \in (\alpha^\ast(c_w),0)$, so that the weight near the boundary is singular, we will use Proposition \ref{prop:boundForBulkPartNegAlpha} for the bulk region, Propositions \ref{prop:boundForNearPartKernelOnly} and \ref{prop:boundForNearPartExpTerm} for the near boundary region, and Proposition \ref{prop:boundForBadPart} for the bad region to prove that each of the integrals in the following decompositions is already small enough to be insignificant for our claimed asymptotic expansions:
\begin{align}
&\begin{aligned}
    &\int_{\Omega_{bulk}} \d{x}^{\alpha}\abs{(4\pi t)^{n/2}p_\Omega(x,x;t) - 1+e^{-\frac{\d{x}^2}{t}}}\dd{x} \\
    &\hspace{4em}\leq \int_{\Omega_{bulk}} \d{x}^{\alpha}\abs{(4\pi t)^{n/2}p_\Omega(x,x;t) - 1}\dd{x} + \int_{\Omega_{bulk}} \d{x}^{\alpha}e^{-\frac{\d{x}^2}{t}}\dd{x}, \label{eq:errorDecompBulkPart}
\end{aligned}\\
&\begin{aligned}
    &\int_{\Omega_{near}} \d{x}^{\alpha}\abs{(4\pi t)^{n/2}p_\Omega(x,x;t) - 1+e^{-\frac{\d{x}^2}{t}}}\dd{x} \\
    &\hspace{4em}\leq (4\pi t)^{n/2}\int_{\Omega_{near}} \d{x}^{\alpha}p_\Omega(x,x;t)\dd{x} + \int_{\Omega_{near}} \d{x}^{\alpha}\abs{e^{-\frac{\d{x}^2}{t}}-1}\dd{x}, \qand \label{eq:errorDecompNearPart2}
\end{aligned}\\
     &\begin{aligned}
         &\int_{\Omega_{bad}} \d{x}^{\alpha}\abs{(4\pi t)^{n/2}p_\Omega(x,x;t) - 1+e^{-\frac{\d{x}^2}{t}}}\dd{x} \\
    &\hspace{4em}\leq (4\pi t)^{n/2}\int_{\Omega_{bad}} \d{x}^{\alpha}p_\Omega(x,x;t)\dd{x} + \int_{\Omega_{bad}} \d{x}^{\alpha}e^{-\frac{\d{x}^2}{t}}\dd{x} + \int_{\Omega_{bad}} \d{x}^{\alpha}\dd{x}. \nonumber
     \end{aligned}
\end{align}
This means that the heat kernel in the bulk is well approximated by the free heat kernel and in the near region we have a good boundary behavior. Only for the integral over the good region do all three terms in the integrand need to be considered together in order to show that the integral \eqref{eq:weightedIntegralErrorTerm} is a lower-order contribution. This is established in Proposition \ref{prop:boundForGoodPart}.

In the case of $\alpha \geq 0$, we will instead apply Proposition \ref{prop:boundForBulkPartPositiveAlpha} for the bulk and Proposition \ref{prop:nearBoundaryRegionIsSmallForPosAlpha} for the near boundary region, to conclude that each of the integrals in the decomposition \eqref{eq:errorDecompBulkPart} and 
\begin{align*}
\begin{aligned}
    &\int_{\Omega_{near}} \d{x}^{\alpha}\abs{(4\pi t)^{n/2}p_\Omega(x,x;t) - 1+e^{-\frac{\d{x}^2}{t}}}\dd{x} \\
    &\hspace{4em}\leq (4\pi t)^{n/2}\int_{\Omega_{near}} \d{x}^{\alpha}p_\Omega(x,x;t)\dd{x} + \int_{\Omega_{near}} \d{x}^{\alpha}e^{-\frac{\d{x}^2}{t}}\dd{x} + \int_{\Omega_{near}} \d{x}^{\alpha}\dd{x},
\end{aligned}
\end{align*}
respectively, are small enough as $t\to 0$.
For the good and bad regions, the same estimates as before apply when $\alpha \geq 0$.

\begin{remark}
    Throughout this section, we assume that $\Omega \subset \R^n$ is an open and bounded set with Lipschitz boundary, unless explicitly stated otherwise. Additionally, we assume that the parameters in Definition \ref{def:decompositionOfDomain} are fixed and we apply the decomposition of the domain according to Definition \ref{def:decompositionOfDomain} without restating it in the following theorems, propositions, and corollaries. 
\end{remark}

\subsection{The contribution of the bulk region}
In the bulk region, the Dirichlet heat kernel behaves like the free heat kernel, according to Kac's principle of not feeling the boundary. A quantitative version of this principle was stated earlier in Proposition \ref{prop:boundBulkKernel}.
Using that result, we find a bound for the integral in \eqref{eq:weightedIntegralErrorTerm} over the bulk region.

\begin{proposition}
\label{prop:boundForBulkPartNegAlpha}
    Let $\alpha < 0$. Then, for all $t > 0$,
    \begin{align*}
        &\int_{\Omega_{bulk}}  \d{x}^{\alpha}\abs{(4\pi t)^{n/2}p_\Omega(x,x;t) - 1 }\dd{x} \\
        &\hspace{5em} +\int_{\Omega_{bulk}}\d{x}^{\alpha}e^{-\frac{\d{x}^2}{t}}\dd{x} \leq \frac{(2n^{\frac{3}{2}}+1)\sqrt{\pi}}{2} \ell_2^{\alpha}\sqrt{t} \theta_\Omega \hausdBoundary.
    \end{align*}
\end{proposition}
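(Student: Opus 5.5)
Both integrals will be bounded by a single integral of the form $\int_{\Omega_{bulk}} \d{x}^\alpha e^{-c\,\d{x}^2/t}\dd{x}$ and then treated with the layer-cake argument already used in Lemma \ref{lem:mainTermAlphaLess1ExtraTerm}. For the first integral, Proposition \ref{prop:boundBulkKernel} gives pointwise
\begin{align*}
    \abs{(4\pi t)^{n/2}p_\Omega(x,x;t) - 1} \leq 2n e^{-\frac{\d{x}^2}{nt}}.
\end{align*}
Hence the first integral is at most $2n\int_{\Omega_{bulk}}\d{x}^\alpha e^{-\d{x}^2/(nt)}\dd{x}$. In the second integral the exponential $e^{-\d{x}^2/t}$ is even smaller. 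Since $\alpha<0$ and $\d{x}\geq \ell_2$ on $\Omega_{bulk}$, we may bound $\d{x}^\alpha \leq \ell_2^\alpha$ and pull it out. This leaves two pure Gaussian tail integrals, $\int_{\Omega_{bulk}} e^{-\d{x}^2/(ct)}\dd{x}$ with $c=n$ and $c=1$.

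To handle these, write $e^{-s^2/(ct)} = \int_s^\infty \frac{2u}{ct}e^{-u^2/(ct)}\dd{u}$ and apply Fubini, exactly as in Section \ref{sec:mainContribution}. Enlarging the domain from $\Omega_{bulk}$ to all of $\Omega$, which is allowed since the integrand is nonnegative, this gives
\begin{align*}
    \int_{\Omega} e^{-\frac{\d{x}^2}{ct}}\dd{x} = \int_0^\infty \frac{2u}{ct}e^{-\frac{u^2}{ct}}\abs{\{\d{x}<u\}}\dd{u}.
\end{align*}
Corollary \ref{cor:boundednessOfTubularRegions} then bounds the tube volume by $\abs{\{\d{x}<u\}} \leq u\,\theta_\Omega\hausdBoundary$. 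The remaining integral is explicit,
\begin{align*}
    \int_0^\infty \frac{2u^2}{ct}e^{-u^2/(ct)}\dd{u} = \frac{\sqrt{\pi}}{2}\sqrt{ct}.
\end{align*}
Thus the first integral is bounded by $2n\cdot\frac{\sqrt{\pi}}{2}\sqrt{nt}\,\ell_2^\alpha\theta_\Omega\hausdBoundary = n^{3/2}\sqrt{\pi}\,\ell_2^\alpha\sqrt{t}\,\theta_\Omega\hausdBoundary$. The second integral is bounded by $\frac{\sqrt\pi}{2}\ell_2^\alpha\sqrt t\,\theta_\Omega\hausdBoundary$. Their sum is exactly $\frac{(2n^{3/2}+1)\sqrt\pi}{2}\ell_2^\alpha\sqrt t\,\theta_\Omega\hausdBoundary$.

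There is no genuine obstacle here. The only care needed is the Fubini/layer-cake step, which is justified because the integrand is nonnegative. One should also confirm that dropping the restriction to $\Omega_{bulk}$ after extracting $\ell_2^\alpha$ loses nothing needed for the claimed constant. It loses nothing because that constant involves only $\theta_\Omega$ and never the lower cutoff $\ell_2$ inside the Gaussian integral.
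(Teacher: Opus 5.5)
Your proof is correct and is essentially the paper's argument: the pointwise bound from Proposition~\ref{prop:boundBulkKernel}, pulling out $\ell_2^{\alpha}$, the layer-cake/Fubini rewriting, the tube-volume bound of Corollary~\ref{cor:boundednessOfTubularRegions}, and the explicit Gaussian moment, giving the same constant. The only cosmetic difference is that the paper carries the indicator $\1_{\{\ell_2<\sqrt{t}u\}}$ through the layer-cake step before discarding it, whereas you enlarge the domain to all of $\Omega$ immediately.
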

\begin{remark}
    Since $\alpha <0$, the right-hand side in the preceding proposition is $o(t^{\frac{1+\alpha}{2}})$ if $\ell_2 \gg \sqrt{t}$. Hence, we see quantitatively that the heat kernel in the bulk region is sufficiently well approximated for our purposes by the free heat kernel. This is also commonly used in estimates for the unweighted heat trace. It meets the expectation that this approximation is also sufficient here, as the factor $\d{x}^{\alpha}$ for $\alpha < 0$ puts more weight on the region close to the boundary and less weight on the bulk region.
\end{remark}

\begin{proof}
    Applying Proposition \ref{prop:boundBulkKernel}, we immediately see that 
    \begin{align*}
        \int_{\Omega_{bulk}}  \d{x}^{\alpha}\abs{(4\pi t)^{n/2}p_\Omega(x,x;t) - 1 }\dd{x} \leq  2n\int_{\{\d{x}>\ell_2\}}\d{x}^{\alpha}e^{-\frac{\d{x}^2}{nt}}\dd{x}.
    \end{align*}
    Therefore, it suffices to consider exponential terms in the integral, that is, the integral
    \begin{align*}
        \int_{\{\d{x} > \ell_2\}} e^{-\d{x}^2/t} \dd{x}.
    \end{align*}
    Rewriting the exponential term as a Gaussian-type integral, we obtain
    \begin{align*}
     \int_{\{\d{x} > \ell_2\}} e^{-\d{x}^2/t} \dd{x} &= 2  \int_{\{\d{x} > \ell_2\}} \int_{\d{x}/\sqrt{t}}^\infty ue^{-u^2} \dd{u}\dd{x} \\
     &= 2\int_{\{\d{x} > \ell_2\}} \int_0^\infty \1_{\{u > \d{x}/\sqrt{t}\}}ue^{-u^2} \dd{u} \dd{x} \\
     &= 2\int_0^\infty \int_\Omega \1_{\{\ell_2 < \d{x} < \sqrt{t}u\}} \dd{x} ue^{-u^2} \dd{u}.
    \end{align*}
    Note that 
    \begin{align*}
        \{\ell_2 < \d{x}<\sqrt{t}u\} \subset \{\d{x} < \sqrt{t}u\}\cap \{\ell_2 < \sqrt{t}u\}
    \end{align*}
    and thus
    \begin{align*}
        \1_{\{\ell_2 < \d{x}<\sqrt{t}u\}} \leq \1_{\{\d{x} < \sqrt{t}u\}} \1_{\{\ell_2 < \sqrt{t}u\}}.
    \end{align*}
    By Corollary \ref{cor:boundednessOfTubularRegions}, we have the bound 
    \begin{align*}
        \abs{\{\d{x} < \sqrt{t}u\}} \leq  \sqrt{t}u \theta_\Omega\hausdBoundary.
    \end{align*}
    Using this bound, we find
    \begin{align*}
        2\int_0^\infty \int_\Omega \1_{\{\ell_2 < \d{x} < \sqrt{t}u\}} \dd{x} ue^{-u^2} \dd{u} &\leq  2\int_0^\infty  \abs{\{\d{x} < \sqrt{t}u\}} ue^{-u^2} \1_{\{\ell_2<\sqrt{t}u\}} \dd{u} \\
        &\leq 2 \sqrt{t} \theta_\Omega\hausdBoundary \int_0^\infty u^2 e^{-u^2}\1_{\{\ell_2<\sqrt{t}u\}} \dd{u} \\
        &\leq \frac{\sqrt{\pi t}}{2}\theta_\Omega  \hausdBoundary. %
    \end{align*}
    Finally, we get
    \begin{align*}
        &\int_{\Omega_{bulk}}  \d{x}^{\alpha}\abs{(4\pi t)^{n/2}p_\Omega(x,x;t) - 1 }\dd{x} +\int_{\Omega_{bulk}}\d{x}^{\alpha}e^{-\frac{\d{x}^2}{t}}\dd{x} \\
        &\hspace{5em}\leq 2n \ell_2^{\alpha}\int_{\Omega_{bulk}}e^{-\frac{\d{x}^2}{nt}}\dd{x} + \ell_2^{\alpha}\int_{\Omega_{bulk}}e^{-\frac{\d{x}^2}{t}}\dd{x} \\
        &\hspace{5em}\leq 2n\sqrt{n} \ell_2^{\alpha}\frac{\sqrt{\pi t}}{2}\theta_\Omega \hausdBoundary + \ell_2^{\alpha}\frac{\sqrt{\pi t}}{2}\theta_\Omega \hausdBoundary.%
    \end{align*}
\end{proof}

For nonnegative values of $\alpha$, we cannot simply pull out a factor of $\ell_2^\alpha$ as in the preceding proof. We have to be more careful in estimating the integrals. This is done in the following proposition.
\begin{proposition}
\label{prop:boundForBulkPartPositiveAlpha}
    Let $\alpha \geq 0$. Then, for all $t >0$, 
    \begin{align*}
        &\int_{\Omega_{bulk}}  \d{x}^{\alpha}\abs{(4\pi t)^{n/2}p_\Omega(x,x;t) - 1}\dd{x} \\
        &\hspace{4em} + \int_{\Omega_{bulk}}\d{x}^{\alpha}e^{-\frac{\d{x}^2}{t}}\dd{x} \leq C(\alpha)(2n^\frac{\alpha+3}{2}+1) t^{\frac{\alpha}{2}} \bigg(\sqrt{\frac{\pi t}{2}} + \ell_2\bigg) e^{-\frac{\ell_2^2}{2nt}} \theta_\Omega \hausdBoundary,
    \end{align*}
    where $C(\alpha) = \alpha^{\alpha/2}e^{-\alpha/2}$ if $\alpha >0$ and $C(0) = 1$.
\end{proposition}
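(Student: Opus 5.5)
The plan is to follow the proof of Proposition \ref{prop:boundForBulkPartNegAlpha}. The difference is that $\d{x}^\alpha$ can no longer be bounded by $\ell_2^\alpha$; instead it is absorbed into the Gaussian factor.

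First, Proposition \ref{prop:boundBulkKernel} reduces the left-hand side to
\begin{align*}
2n\int_{\Omega_{bulk}}\d{x}^\alpha e^{-\frac{\d{x}^2}{nt}}\dd{x} + \int_{\Omega_{bulk}}\d{x}^\alpha e^{-\frac{\d{x}^2}{t}}\dd{x}.
\end{align*}
Both terms are of the form $\int_{\{\d{x}\ge \ell_2\}} \d{x}^\alpha e^{-\d{x}^2/(mt)}\dd{x}$ with $m\in\{1,n\}$.

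Next, we split the exponential into two halves and bound one of them by its maximum. Maximizing $s\mapsto s^\alpha e^{-s^2/(2mt)}$ over $s>0$ gives the maximizer $s^2=\alpha m t$. This yields
\begin{align*}
\d{x}^\alpha e^{-\frac{\d{x}^2}{2mt}} \le (\alpha m t)^{\alpha/2}e^{-\alpha/2} = C(\alpha)\, m^{\alpha/2} t^{\alpha/2},
\end{align*}
which also holds trivially when $\alpha=0$. After this step it remains to estimate $\int_{\{\d{x}\ge\ell_2\}} e^{-\d{x}^2/(2mt)}\dd{x}$.

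This integral is treated by the layer-cake/Fubini argument from the previous proof, keeping track of the cutoff this time. We write
\begin{align*}
e^{-\frac{\d{x}^2}{2mt}} = \int_{\d{x}}^\infty \frac{s}{mt}e^{-\frac{s^2}{2mt}}\dd{s}
\end{align*}
and interchange the integrals. The resulting inner measure is bounded by $|\{\d{x}<s\}|\le s\,\theta_\Omega\hausdBoundary$ (Corollary \ref{cor:boundednessOfTubularRegions}), restricted to $s>\ell_2$. This gives the bound
\begin{align*}
\theta_\Omega\hausdBoundary\int_{\ell_2}^\infty \frac{s^2}{mt}e^{-\frac{s^2}{2mt}}\dd{s}.
\end{align*}
Integrating by parts, this equals
\begin{align*}
\ell_2 e^{-\ell_2^2/(2mt)} + \int_{\ell_2}^\infty e^{-s^2/(2mt)}\dd{s}.
\end{align*}
For the remaining Gaussian tail we use the standard estimate $\int_a^\infty e^{-s^2/(2\sigma^2)}\dd{s}\le \sqrt{\pi\sigma^2/2}\,e^{-a^2/(2\sigma^2)}$. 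In total this gives $\big(\sqrt{\pi m t/2}+\ell_2\big)e^{-\ell_2^2/(2mt)}\theta_\Omega\hausdBoundary$.

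Finally, the two cases are combined. For $m=n$ we use $\sqrt{n}\,\sqrt{\pi t/2}+\ell_2\le\sqrt n(\sqrt{\pi t/2}+\ell_2)$ and $e^{-\ell_2^2/(2nt)}\ge e^{-\ell_2^2/(2t)}$. Collecting the factors $2n\cdot n^{\alpha/2}\cdot n^{1/2} = 2n^{(\alpha+3)/2}$ for $m=n$ and $1$ for $m=1$ yields the stated constant $C(\alpha)(2n^{\frac{\alpha+3}{2}}+1)$.

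The main point of care is the Gaussian-tail step: the cutoff $s>\ell_2$ must be kept so that the decaying factor $e^{-\ell_2^2/(2nt)}$ survives. The rest is bookkeeping of constants.
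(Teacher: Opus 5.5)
Your proposal is correct and follows essentially the same route as the paper. You reduce via Proposition \ref{prop:boundBulkKernel}, absorb $\d{x}^\alpha$ into half of the Gaussian using the maximum $C(\alpha)$, and then apply the layer-cake representation with the cutoff at $\ell_2$, Corollary \ref{cor:boundednessOfTubularRegions}, integration by parts and $\mathrm{erfc}(x)\le e^{-x^2}$. The only cosmetic difference is that you handle $e^{-\d{x}^2/(mt)}$ for $m\in\{1,n\}$ at once, whereas the paper treats $m=1$ and rescales $t\mapsto nt$; your constants match the stated bound.
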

\begin{proof}
    Applying Proposition \ref{prop:boundBulkKernel} and rescaling, it again suffices to consider an exponential term of the form
    \begin{align*}
        \int_{\Omega_{bulk}} \d{x}^\alpha e^{-\frac{\d{x}^2}{t}} \dd{x}.
    \end{align*}
    Let $f(s) = s^\alpha e^{-s^2}$ and note that $s^\alpha e^{-\frac{s^2}{2}} \leq C(\alpha)$ for all $s\in (0,\infty)$ with $C(\alpha) = \alpha^{\alpha/2}e^{-\alpha/2}$ if $\alpha > 0$ and $C(0)=1$. Thus, $f(s)\leq C(\alpha) e^{-s^2/2}$ for all $s\in(0,\infty)$ and we obtain
    \begin{align*}
        \int_{\Omega_{bulk}} \d{x}^\alpha e^{-\frac{\d{x}^2}{t}} \dd{x} &= t^\frac{\alpha}{2} \int_{\Omega_{bulk}}f\bigg(\frac{\d{x}}{\sqrt{t}}\bigg)\dd{x} \\
        &\leq C(\alpha)t^\frac{\alpha}{2}  \int_{\Omega_{bulk}} e^{-\frac{\d{x}^2}{2t}}\dd{x}.
    \end{align*}
    For the remaining integral, let $h(s)= e^{-s^2}$. Then we have
    \begin{align*}
        \int_{\Omega_{bulk}} e^{-\frac{\d{x}^2}{2t}} \dd{x} &= -\int_{\Omega_{bulk}} \int_{\frac{\d{x}}{\sqrt{2t}}}^\infty h^\prime(w)\dd{w}\dd{x} \\
        &= - \int_{0}^\infty \int_{\Omega}\1_{\{\ell_2 < \d{x} < w\sqrt{2t}\}}\dd{x}h^\prime(w)\dd{w},
    \end{align*}
    Note that the indicator function is zero if $w < \ell_2/\sqrt{2t}$. Thus, we can start the outer integral at $\ell_2/\sqrt{2t}$. Therefore, using Corollary \ref{cor:boundednessOfTubularRegions} and the fact that $h^\prime$ is negative on $(0,\infty)$, we obtain
    \begin{align*}
        - \int_{0}^\infty \int_{\Omega}\1_{\{\ell_2 < \d{x} < w\sqrt{2t}\}}\dd{x}h^\prime(w)\dd{w} \leq -\sqrt{2t}\theta_\Omega \hausdBoundary \int_{\frac{\ell_2}{\sqrt{2t}}}^\infty wh^\prime(w)\dd{w}.
    \end{align*}
    We get, after integration by parts,
    \begin{align*}
        -\int_{\frac{\ell_2}{\sqrt{2t}}}^\infty wh^\prime(w)\dd{w} = \frac{\ell_2}{\sqrt{2t}}e^{-\frac{\ell_2^2}{2t}} + \int_{\frac{\ell_2}{\sqrt{2t}}}^\infty e^{-w^2}\dd{w} = \frac{\ell_2}{\sqrt{2t}}e^{-\frac{\ell_2^2}{2t}} + \frac{\sqrt{\pi}}{2}\text{erfc}\bigg(\frac{\ell_2}{\sqrt{2t}}\bigg),
    \end{align*}
    where $\text{erfc}$ denotes the complementary error function. Using the elementary inequality $\text{erfc}(x) \leq e^{-x^2}$ for all $x\in (0,\infty)$, see \cite[7.1.13]{MR167642}, we obtain
    \begin{align*}
        -\int_{\frac{\ell_2}{\sqrt{2t}}}^\infty wh^\prime(w)\dd{w} \leq \bigg(\frac{\sqrt{\pi}}{2}+\frac{\ell_2}{\sqrt{2t}}\bigg) e^{-\frac{\ell_2^2}{2t}}.
    \end{align*}
    This implies
    \begin{align*}
        \int_{\Omega_{bulk}} \d{x}^\alpha e^{-\frac{\d{x}^2}{t}} \dd{x} &\leq C(\alpha) t^{\frac{\alpha+1}{2}} \bigg(\sqrt{\frac{\pi}{2}} + \frac{\ell_2}{\sqrt{t}}\bigg)e^{-\frac{\ell_2^2}{2t}} \theta_\Omega \hausdBoundary \\
        &\leq C(\alpha) t^{\frac{\alpha+1}{2}} \bigg(\sqrt{\frac{\pi}{2}} + \frac{\ell_2}{\sqrt{t}}\bigg)e^{-\frac{\ell_2^2}{2nt}} \theta_\Omega \hausdBoundary.
    \end{align*}
    The general statement now follows by adding the rescaled bound
    \begin{align*}
        2n \int_{\Omega_{bulk}} \d{x}^\alpha e^{-\frac{\d{x}^2}{nt}} \dd{x} &\leq 2C(\alpha) t^{\frac{\alpha+1}{2}}n^{\frac{\alpha+3}{2}} \bigg(\sqrt{\frac{\pi}{2}} + \frac{\ell_2}{\sqrt{nt}}\bigg)e^{-\frac{\ell_2^2}{2nt}} \theta_\Omega \hausdBoundary \\
        &\leq 2C(\alpha) t^{\frac{\alpha+1}{2}}n^{\frac{\alpha+3}{2}} \bigg(\sqrt{\frac{\pi}{2}} + \frac{\ell_2}{\sqrt{t}}\bigg)e^{-\frac{\ell_2^2}{2nt}} \theta_\Omega \hausdBoundary.
    \end{align*}
\end{proof}

\subsection{The contribution of the near boundary region}

In the following, we will show that the contribution of the region closest to the boundary, that is $\Omega_{near}$, is negligible. This statement is straightforward for $\alpha \geq 0$, which we will present first. For $\alpha \in (\alpha^\ast(c_w),0)$, we will then follow the strategy of \cite{MR2480958} to quantify the decay of the heat kernel near the boundary. %

\begin{proposition}
\label{prop:nearBoundaryRegionIsSmallForPosAlpha}
    Let $\alpha \geq 0$. Then, for all $t > 0$, %
    \begin{align*}
        \int_{\Omega_{near}} \d{x}^{\alpha}\abs{(4\pi t)^{n/2}p_\Omega(x,x;t) - 1+e^{-\frac{\d{x}^2}{t}}}\dd{x} \leq 3\ell_1^{\alpha+1} \theta_\Omega \hausdBoundary. 
    \end{align*}
\end{proposition}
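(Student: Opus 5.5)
The plan is to bound the integrand pointwise by a constant and then use the size of the inner tubular region. On $\Omega_{near}$ we have $\d{x}\le \ell_1$ and $\alpha\ge 0$, so $\d{x}^\alpha\le \ell_1^\alpha$. This monotonicity is exactly why the case $\alpha\geq 0$ is straightforward: no singular weight appears, and no Hardy-type control of the heat kernel is needed.

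For the remaining factor, I would use the crude triangle inequality
\begin{align*}
    \abs{(4\pi t)^{n/2}p_\Omega(x,x;t) - 1+e^{-\frac{\d{x}^2}{t}}} \leq (4\pi t)^{n/2}p_\Omega(x,x;t) + 1 + e^{-\frac{\d{x}^2}{t}} \leq 3,
\end{align*}
where the first term is at most $1$ by Proposition \ref{prop:boundednessOfHeatKernel}, and the exponential is trivially at most $1$. A sharper pointwise bound (the quantity lies in $[-1,1]$) is available but not needed for the stated constant $3$. This matches the three-term splitting announced before the section.

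Integrating, the integral is at most $3\ell_1^\alpha\abs{\{\d{x}\le \ell_1\}}$. The only minor point is that Corollary \ref{cor:boundednessOfTubularRegions} is stated for the strict inequality $\{\d{x}<r\}$. To handle this, I would apply it with $r=\ell_1+\delta$ and let $\delta\to 0$; by continuity of measure from above, this gives $\abs{\{\d{x}\le\ell_1\}}\le \ell_1\theta_\Omega\hausdBoundary$. Alternatively, the level set $\{\d{x}=\ell_1\}$ is handled by the same limiting argument.

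Combining gives the bound $3\ell_1^{\alpha+1}\theta_\Omega\hausdBoundary$, valid for all $t>0$. I expect no real obstacle here beyond this boundary-level-set technicality.
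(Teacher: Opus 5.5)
Your proposal is correct and follows the paper's argument: bound $\d{x}^\alpha$ by $\ell_1^\alpha$, bound the integrand by $3$ using Proposition~\ref{prop:boundednessOfHeatKernel}, and bound $\abs{\Omega_{near}}$ with Corollary~\ref{cor:boundednessOfTubularRegions}. Your handling of the closed set $\{\d{x}\le\ell_1\}$ versus the strict inequality in the corollary is valid and a bit more careful than the paper, which skips this point; the inclusion $\{\d{x}\le \ell_1\}\subset\{\d{x}<\ell_1+\delta\}$ for every $\delta>0$, followed by $\delta\to 0$, already suffices, so continuity of measure is not needed.
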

\begin{proof}
    As $\alpha \geq 0$ and each of the terms inside the absolute value is bounded by $1$, we see
    \begin{align*}
        \int_{\Omega_{near}} \d{x}^{\alpha}\abs{(4\pi t)^{n/2}p_\Omega(x,x;t) - 1+e^{-\frac{\d{x}^2}{t}}}\dd{x} \leq 3\ell_1^\alpha \abs{\Omega_{near}}. 
    \end{align*}
    The claim now follows by using Corollary \ref{cor:boundednessOfTubularRegions}.
\end{proof}

For negative $\alpha$, the strategy is to show that, for our purposes, it suffices to bound 
\begin{align}
    \int_{\Omega_{near}}u^2{\d{x}}^{\alpha}\dd{x}, \label{eq:nearRegionuIntegral}
\end{align}
where $u$ is the weak solution to the initial value problem
\begin{align}
\begin{cases}
    (\partial_t - \Delta) u(x;t) &= 0, \quad \forall (x,t)\in \Omega\times(0,\infty), \\
    u(x;t) &= 0, \quad \forall (x,t)\in \partial\Omega\times(0,\infty), \\
    u(x;0) &= 1, \quad \forall x \in \Omega.
\end{cases}
\label{eq:def_HeatContent}
\end{align}
Afterwards, we prove the relevant bound for \eqref{eq:nearRegionuIntegral} under suitable assumptions on the domain $\Omega$ and apply it to the weighted heat trace.

By the initial value condition of $u(x;t)$, we find the relation
    \begin{align}
        u(x;t) = \int_\Omega p_\Omega(x,y;t)\dd{y}. \label{eq:conditionHeatContent}
    \end{align}
Furthermore, the probabilistic interpretation allows us to understand $u$ as 
\begin{align}
    u(x;t) = \Pr\nolimits_x(\tau_\Omega > t), \label{eq:repOfHeatContent}
\end{align}
where $\tau_\Omega$ is the first time that a Brownian motion starting at $x$ exits the domain $\Omega$, see \cite[Equation~(1.6)]{MR1022816}. Using this representation, it is easy to see that $t\mapsto u(\cdot;t)$ is monotonically decreasing and $u(x;t) \in [0,1]$ for all $(x,t)\in \Omega\times[0,\infty)$.

\begin{proposition}
\label{prop:heatTraceBoundByHeatContentBound}
    We have, for all $x\in\Omega$, that
    \begin{align*}
        p_\Omega(x,x;t) \leq \bigg(\frac{3}{4\pi t}\bigg)^\frac{n}{2} u(x;t/3)^2, \quad \forall t\in(0,\infty).
    \end{align*}
\end{proposition}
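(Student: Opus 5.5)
The plan is to split the time interval $[0,t]$ into three thirds and use the semigroup property together with symmetry of the kernel. By the Chapman--Kolmogorov identity,
\begin{align*}
    p_\Omega(x,x;t) = \int_\Omega\int_\Omega p_\Omega(x,y;t/3)\, p_\Omega(y,z;t/3)\, p_\Omega(z,x;t/3)\dd{y}\dd{z}.
\end{align*}
The middle factor will be bounded pointwise, and the two outer factors will then be integrated out separately.

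For the middle factor I want the off-diagonal bound $p_\Omega(y,z;s)\leq (4\pi s)^{-n/2}$. This follows from domain monotonicity: the Dirichlet kernel is nonnegative and dominated by the free heat kernel, $p_\Omega(y,z;s)\le (4\pi s)^{-n/2}e^{-|y-z|^2/(4s)}\le(4\pi s)^{-n/2}$. The paper states Propositions~\ref{prop:boundednessOfHeatKernel} and \ref{prop:heatkernelOfSubsetsAreOrdered} only on the diagonal. So I would either cite the off-diagonal version from \cite{MR1103113}, or derive it from the diagonal bound via Cauchy--Schwarz in the eigenfunction expansion \eqref{eq:heatKernelRepInBasis}:
\begin{align*}
    p_\Omega(y,z;s)\le p_\Omega(y,y;s)^{1/2}p_\Omega(z,z;s)^{1/2}\le (4\pi s)^{-n/2}.
\end{align*}
The latter route is self-contained given Proposition~\ref{prop:boundednessOfHeatKernel}, so I would use it. With $s=t/3$ this gives $(4\pi t/3)^{-n/2}=\big(\tfrac{3}{4\pi t}\big)^{n/2}$.

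For the outer factors, nonnegativity of the kernel gives
\begin{align*}
    p_\Omega(x,x;t)\le \Big(\frac{3}{4\pi t}\Big)^{n/2}\int_\Omega p_\Omega(x,y;t/3)\dd{y}\int_\Omega p_\Omega(z,x;t/3)\dd{z}.
\end{align*}
Each integral equals $u(x;t/3)$ by \eqref{eq:conditionHeatContent}, using the symmetry $p_\Omega(z,x;s)=p_\Omega(x,z;s)$ from \eqref{eq:heatKernelRepInBasis}. This yields the claim.

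I expect no real obstacle. The only step needing care is justifying the pointwise off-diagonal bound and the use of Fubini/Tonelli; since all integrands are nonnegative, Tonelli suffices.
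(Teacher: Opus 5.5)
Your proposal is correct and follows essentially the same route as the paper. The paper also obtains the off-diagonal bound $p_\Omega(y,z;s)\le(4\pi s)^{-n/2}$ by Cauchy--Schwarz in the eigenfunction expansion together with Proposition~\ref{prop:boundednessOfHeatKernel}, then splits $t$ into three thirds via the semigroup property, and uses symmetry and \eqref{eq:conditionHeatContent} to identify both outer integrals with $u(x;t/3)$.
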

\begin{proof}
    The proof closely follows \cite[Prop. 1]{MR2480958}.
    First, note that for all $y,z\in\Omega$ and $t > 0$ we have, by using the representation \eqref{eq:heatKernelRepInBasis}, that
    \begin{align*}
        p_\Omega(y,z;t) = \sum_{k=1}^\infty e^{-\lambda_k \frac{t}{2}} u_k(y) \cdot e^{-\lambda_k \frac{t}{2}}u_k(z) \leq \left(\sum_{k=1}^\infty e^{-\lambda_k t}u_k(y)^2\right)^\frac12\left(\sum_{k=1}^\infty e^{-\lambda_k t}u_k(z)^2\right)^\frac12 \leq (4\pi t)^{-\frac{n}{2}},
    \end{align*}
    where we used the Cauchy-Schwarz inequality and Proposition \ref{prop:boundednessOfHeatKernel}.
    Therefore, by the semigroup property, we have
    \begin{align*}
        p_\Omega(x,x;t) &= \int_\Omega \int_\Omega p_\Omega(x,y;t/3) p_\Omega(y,z;t/3) p_\Omega(z,x;t/3) \dd{z}\dd{y} \\
        &\leq \bigg(4\pi \frac{t}{3}\bigg)^{-\frac{n}{2}} \int_\Omega \int_\Omega p_\Omega(x,y;t/3) p_\Omega(z,x;t/3) \dd{z}\dd{y}\\
        &= \bigg(\frac{3}{4\pi t}\bigg)^\frac{n}{2} \left(\int_\Omega p_\Omega(x,y;t/3) \dd{y}\right)^2,
    \end{align*}
    where we used the symmetry of $p_\Omega$ in the last step. With \eqref{eq:conditionHeatContent}, we obtain
    \begin{align*}
        p_\Omega(x,x;t) \leq \bigg(\frac{3}{4\pi t}\bigg)^\frac{n}{2} u(x;t/3)^2,
    \end{align*}
    which is the desired result.
\end{proof}

To find a bound for the weighted integral over $u$ in \eqref{eq:nearRegionuIntegral}, we use a Hardy inequality, defined as follows.
\begin{definition}
\label{def:weakHardyInequality}
    We say that a Hardy inequality holds for $\Omega$ or $\Omega$ has the Hardy property if there exist nonnegative constants $c$ and $\Lambda$ such that
    \begin{align}
        \int_\Omega \frac{\abs{f(x)}^2}{{\d{x}}^2} \dd{x} \leq c^2 \int_\Omega \abs{\nabla f(x)}^2 \dd{x} + \Lambda \int_\Omega \abs{f(x)}^2 \dd{x}, \label{eq:weakHardyInequality}
    \end{align}
    for all $f\in C_c^\infty(\Omega)$. If \eqref{eq:weakHardyInequality} holds with $\Lambda = 0$, then the infimum over all possible $c$ is called the strong Hardy constant. The infimum over all possible $c$ such that there exists a $\Lambda < \infty$ such that \eqref{eq:weakHardyInequality} holds is called the weak Hardy constant.
\end{definition}
\begin{remark}
\label{rem:weakHardyConstForLipschitz}
    It is evident from the definition that the weak Hardy constant is well defined if the strong Hardy constant is. In that case, the weak Hardy constant is always less than or equal to the strong Hardy constant. It is worth noting that any bounded Lipschitz domain has the Hardy property. This follows from the existence of a strong Hardy constant, see \cite[Theorem 8.4]{MR802206} with $\varepsilon = 0$. A more extensive treatment of Hardy inequalities and Hardy constants is given, for example, in \cite{MR1366614}.
\end{remark}

\begin{remark}
    Note that in the definition of the Hardy inequality we demand $f\in C_c^\infty(\Omega)$. Due to a density argument, if $\Omega$ has the Hardy property, \eqref{eq:weakHardyInequality} also holds for $f\in H^1_0(\Omega)$.
\end{remark}

In the following theorem, we will prove a similar result as \cite[Theorem 2]{MR2480958} under the assumption of a weak Hardy inequality instead of a strong Hardy inequality. This allows a larger range of parameters, with the downside of picking up two extra terms. We will see later that these additional terms are of lower order in $t$. For convenience, from now on, we write $\gamma = -\alpha > 0$ in the relevant theorems concerning \eqref{eq:nearRegionuIntegral}.

\begin{theorem}
\label{thm:weightedIntegralOverHeatContent}
    Let $\Omega\subset \R^n$ be open and assume that $\Omega$ has the Hardy property with weak Hardy constant $c_w$. 
    If 
    \begin{align*}
        2 < \gamma < \min{\left(2(1+c_w^{-1}), 3\right)} = -\alpha^\ast(c_w),
    \end{align*}
    then, for any pair of constants $c\geq c_w$ and $\Lambda\geq0$ such that \eqref{eq:weakHardyInequality} is true and $c < (\gamma/2-1)^{-1}$, it holds for every $t>0$, that
    \begin{align*}
        \int_\Omega u(x;t)^2{\d{x}}^{-\gamma}\dd{x} &\leq 4c^\gamma \left(1-c^2(1-\gamma/2)^2\right)^{-1} t^{-\gamma/2}\int_\Omega (1-u)\dd{x} \\
        &\hspace{3em}+ 4c^2\left(1-c^2(1-\gamma/2)^2\right)^{-1}\Lambda^\frac{\gamma-2}{2}\abs{\Omega}^\frac{\gamma-2}{2}t^{-1}\Big(\int_\Omega (1-u)\dd{x}\Big)^\frac{4-\gamma}{2} \\
        &\hspace{3em}+ \left(1-c^2(1-\gamma/2)^2\right)^{-1} \Lambda \int_\Omega \d{x}^{2-\gamma}\dd{x}.%
    \end{align*}
\end{theorem}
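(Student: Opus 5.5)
We follow the strategy of van den Berg's proof of \cite[Theorem 2]{MR2480958}, substituting the weak Hardy inequality for the strong one. Write $\beta = 1-\gamma/2 <0$ and set $w = 1-u$, so that $w(\cdot;t)\in[0,1]$ and $\int_\Omega (1-u)\dd{x} = \int_\Omega w\dd{x}$ is the heat loss. The function $u\,\dOmega^{\beta}$ will be tested in the Hardy inequality. To do so rigorously, we first truncate, replacing $\dOmega$ by $\max(\dOmega,\delta)$ or cutting off near the boundary. We then let the truncation go, using monotone convergence for the left-hand side. A priori finiteness is not needed, because the estimate is absorbed at the level of the truncation. Here $u\in H^1_0$ is used, and the admissibility of the test function rests on the Lipschitz property of $\dOmega$, $\abs{\nabla \dOmega}\le 1$.

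\textbf{Step 1 (Hardy with weight).} Apply \eqref{eq:weakHardyInequality} to $f = u\,\dOmega^{\beta}$. Then
$\abs{\nabla f}^2 \le \dOmega^{2\beta}\abs{\nabla u}^2 + 2\beta u\,\dOmega^{2\beta-1}\nabla u\cdot\nabla\dOmega + \beta^2 u^2\dOmega^{2\beta-2}$.
The cross term is handled by integrating by parts, writing $u\nabla u = \tfrac12\nabla u^2$ and expanding $\mathrm{div}(\dOmega^{2\beta-1}\nabla \dOmega)$. This is the delicate point, since $\Delta\dOmega$ is only a measure with a favorable sign in the sense of distributions for Lipschitz domains. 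An alternative is Young's inequality. Van den Berg instead uses the trick of writing everything in terms of $\nabla(u\dOmega^\beta)$ and $\int \abs{\nabla u}^2\dOmega^{2\beta}$. Combining, one aims at
\begin{align*}
\big(1-c^2\beta^2\big)\int_\Omega u^2\dOmega^{-\gamma}\dd{x} \le c^2\int_\Omega \dOmega^{2-\gamma}\abs{\nabla u}^2\dd{x} + \Lambda\int_\Omega u^2\dOmega^{2-\gamma}\dd{x}.
\end{align*}
Since $u\le 1$, the last term is bounded by $\Lambda\int\dOmega^{2-\gamma}$, which is the third term of the claim. The factor $(1-c^2(1-\gamma/2)^2)^{-1}$ is positive precisely because $c<(\gamma/2-1)^{-1}$.

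\textbf{Step 2 (gradient term).} Split $\Omega$ at a level $\dOmega = s$ to be optimized. Where $\dOmega\ge s$, use $\dOmega^{2-\gamma}\le s^{2-\gamma}$ together with the energy identity $\int_0^t\!\int\abs{\nabla u}^2 = \tfrac12\int(1-u(t)^2)\le\int w$. Monotonicity in time (via the semigroup, $\int\abs{\nabla u(t)}^2$ is nonincreasing) then gives $\int\abs{\nabla u(t)}^2\le t^{-1}\int w$. Where $\dOmega<s$, reabsorb by the same Hardy argument, or bound using $u^2\dOmega^{-\gamma}$ itself. Choosing $s\sim c\sqrt t$ produces the term $c^\gamma t^{-\gamma/2}\int w$. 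The $\Lambda$-dependent middle term $\Lambda^{(\gamma-2)/2}\abs{\Omega}^{(\gamma-2)/2}t^{-1}(\int w)^{(4-\gamma)/2}$ comes from optimizing the weak Hardy remainder $\Lambda\int u^2\dOmega^{2-\gamma}$ over the bad region. This is done by Hölder interpolation between $\int w$ and $\abs{\Omega}$, using $u^2\le u$ and $1-u=w$ near the boundary.

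I expect the main obstacle to be the integration by parts of the cross term together with the bookkeeping that yields the exact constants. Its justification for Lipschitz $\Omega$, where $\dOmega$ is not $C^2$, needs care. I would first try to reproduce van den Berg's computation verbatim with the extra $\Lambda\int f^2$ carried along. Only afterwards would I adjust the optimization in $s$ to make the two new terms appear as stated.
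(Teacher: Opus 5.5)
There is a genuine gap. It is the one you flag as the main obstacle, but it is not a matter of justification or bookkeeping: neither step can be closed as stated.

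In Step 1 the cross term $2\beta\int_\Omega u\,\dOmega^{2\beta-1}\nabla u\cdot\nabla\dOmega$ has no sign. Young's inequality only allows absorption when $c^2\beta^2(1+\epsilon^{-1})<1$, which means a smaller range of $\gamma$ and a worse constant. Integrating by parts needs a one-sided bound on $\Delta\dOmega$. The general one, $\Delta\dOmega\le (n-1)/\dOmega$ from semiconcavity, replaces $\beta^2$ by $\abs{\beta}(n-2-\abs{\beta})$, which is larger for $n\ge 3$. So you do not get the factor $1-c^2(1-\gamma/2)^2$.

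More seriously, Step 2 gives no way to bound $\int_{\{\dOmega<s\}}\dOmega^{2-\gamma}\abs{\nabla u}^2$. The weight is singular at $\partial\Omega$, and the energy identity only controls the unweighted Dirichlet integral. Bounding a weighted gradient energy by $\int u^2\dOmega^{-\gamma}$ is the reverse of a Hardy inequality, which fails for general $H^1_0$ functions. For the heat content it would need boundary regularity of $u$ that the Hardy assumption does not give. Your account of the middle term via ``optimizing over the bad region'' also does not correspond to any concrete estimate.

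The missing idea is never to split $\int v^2\abs{\nabla u}^2$ from the cross term. Instead, use the heat equation to identify their sum as a time derivative: for bounded $v\ge 0$ with bounded gradient,
\[
-\frac{\mathrm{d}}{\mathrm{d}t}\int_\Omega (uv)^2 = 2\int_\Omega \abs{\nabla(uv)}^2 - 2\int_\Omega u^2\abs{\nabla v}^2 .
\]
Apply the weak Hardy inequality to $uv$ with $v=\min(k,\dOmega^{\beta})$, use $\abs{\nabla v}^2\le \beta^2 v^2\dOmega^{-2}$, and let $k\to\infty$. This gives exactly your Step 1 with $c^2\int\dOmega^{2-\gamma}\abs{\nabla u}^2$ replaced by $-\frac{c^2}{2}\frac{\mathrm{d}}{\mathrm{d}t}\int u^2\dOmega^{2-\gamma}$. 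That is where the exact factor $1-c^2\beta^2$ and the third term come from, and the cross term is never estimated.

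Integrating over $[t,2t]$ and using that $u$ decreases in time leaves $\int (u(t)^2-u(2t)^2)\dOmega^{2-\gamma}\le 2\int u(t)(1-u(2t))\dOmega^{2-\gamma}$. The singular weight is then handled by H\"older's inequality with exponents $\frac{2}{\gamma-2}$ and $\frac{2}{4-\gamma}$, not by splitting at a level set. Write $\dOmega^{2-\gamma}=(\dOmega^{-2})^{(\gamma-2)/2}$ and attach it to $u(t)$.

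The resulting factor $\int u^p\dOmega^{-2}$, with $p=\frac{2}{\gamma-2}$, is controlled by a second application of the weak Hardy inequality, now to $u^{p/2}$. This uses $-\frac{\mathrm{d}}{\mathrm{d}t}\int u^p=\frac{4(p-1)}{p}\int\abs{\nabla u^{p/2}}^2$, and after integrating over $[0,t]$ gives $t\int u(t)^p\dOmega^{-2}\le \frac{p^2c^2}{4(p-1)}\int(1-u(t))+\Lambda\abs{\Omega}t$. The $\Lambda\abs{\Omega}t$ piece, raised to the power $\frac{\gamma-2}{2}$ and multiplied by the H\"older factor $\big(\int(1-u)\big)^{(4-\gamma)/2}$, is exactly the middle term of the statement. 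The first term comes from the other piece, after renaming $2t$ as $t$.
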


\begin{remark}
    If $\Omega$ has Lipschitz boundary, then the integral $\int_\Omega \d{x}^{2-\gamma}\dd{x}$ is finite for $\gamma < 3$, see Lemma \ref{lem:mainTermAlphaLess1ExtraTerm}. The explicit restriction to $\gamma < 3$ is necessary, as, to our knowledge, there is no known universal lower bound for the weak Hardy constant $c_w$, apart from $c_w \geq 0$ by the definition.
\end{remark}

\begin{proof}
    We will essentially follow the proof of \cite[Theorem 2]{MR2480958}, but we will obtain two additional terms, which arise due to the application of the weak Hardy inequality. 
    
    Note that, if the integral $\int_\Omega \d{x}^{2-\gamma}\dd{x}$ is not finite, then the statement is trivial. Therefore, we can restrict to the case in which the integral is finite. Let $c$ and $\Lambda$ be as in the assumption. Then $\gamma < 2(1+c^{-1})$.
    Let $v\colon \Omega \to [0,\infty)$ be a bounded function with bounded weak derivative $\nabla v$. Then
    \begin{align*}
        -\dv{t}\int_\Omega(uv)^2\dd{x} &= -2\int_\Omega uv^2\pdv{u}{t}\dd{x} = -2\int_\Omega uv^2\Delta u \dd{x} \\
        &= 2\int_\Omega \big(v^2\abs{\nabla u}^2 + 2uv\nabla u\cdot\nabla v\big)\dd{x},
 \end{align*}
 which follows from integration by parts and using \eqref{eq:def_HeatContent}. With
 \begin{align*}
     \int_\Omega \abs{\nabla(uv)}^2\dd{x} = \int_\Omega \big(v^2\abs{\nabla u}^2+2uv\nabla u\cdot\nabla v + u^2\abs{\nabla v}^2\big)\dd{x}
 \end{align*}
 we rewrite this as
 \begin{align*}
     -\dv{t}\int_\Omega(uv)^2\dd{x} = 2\int_\Omega\abs{\nabla(uv)}^2\dd{x} - 2\int_\Omega u^2\abs{\nabla v}^2\dd{x}.
 \end{align*}
 In the first term, we apply the weak Hardy inequality \eqref{eq:weakHardyInequality} to obtain
 \begin{align}
     -\dv{t}\int_\Omega(uv)^2\dd{x} \geq \frac{2}{c^2}\int_\Omega \frac{(uv)^2}{{\d{x}}^2}\dd{x} - \frac{2\Lambda}{c^2}\int_\Omega (uv)^2\dd{x} - 2\int_\Omega u^2\abs{\nabla v}^2\dd{x} \label{eq:proofWeakHardyStep1}
 \end{align}
 Let $v_n\colon \Omega \to [0,\infty)$ with $ v_n(x) = \min(n, \d{x}^{\frac{2-\gamma}{2}})$. Clearly, $v_n \nearrow {\d{x}}^{\frac{2-\gamma}{2}}$ and each $v_n$ is bounded. Furthermore, $\nabla v_n$ exists and 
 \begin{align*}
     \abs{\nabla v_n(x)}^2  
     =\begin{cases}
         \left(\frac{2-\gamma}{2}\right)^2 \d{x}^{-\gamma}= \left(\frac{2-\gamma}{2}\right)^2 v_n^2\d{x}^{-2}, & \d{x}^{\frac{2-\gamma}{2}}<n,\\
          0, & \d{x}^{\frac{2-\gamma}{2}}>n,
     \end{cases}
 \end{align*}
 holds almost everywhere.
 In particular, $\abs{\nabla v_n}$ is bounded by $\frac{\gamma-2}{2}n^{\frac{\gamma}{\gamma-2}}$. Thus, we can apply \eqref{eq:proofWeakHardyStep1} with $v_n$ instead of $v$. Hence,
 \begin{align*}
     -\dv{t}\int_\Omega(uv_n)^2\dd{x} &\geq \frac{2}{c^2}\int_\Omega \frac{(uv_n)^2}{{\d{x}}^2}\dd{x} - \frac{2\Lambda}{c^2}\int_\Omega (uv_n)^2\dd{x} - 2\int_\Omega u^2\abs{\nabla v_n}^2\dd{x} \\
     &\geq 2\left(c^{-2} - \left(1-\frac{\gamma}{2}\right)^2\right)\int_\Omega \frac{(uv_n)^2}{{\d{x}}^2}\dd{x} - \frac{2\Lambda}{c^2}\int_\Omega (uv_n)^2\dd{x}.
 \end{align*}
 Note that $t\mapsto u(\cdot;t)$ is monotonically decreasing and $u \geq 0$ by the probabilistic representation \eqref{eq:repOfHeatContent}, so the left-hand side is positive and increasing in $n$, which means we can write
 \begin{align*}
     -\dv{t}\int_\Omega u^2{\d{x}}^{2-\gamma}\dd{x} \geq -\dv{t}\int_\Omega(uv_n)^2\dd{x} \geq 2\left(c^{-2} - \left(1-\frac{\gamma}{2}\right)^2\right)\int_\Omega \frac{(uv_n)^2}{{\d{x}}^2}\dd{x} - \frac{2\Lambda}{c^2}\int_\Omega (uv_n)^2\dd{x}
 \end{align*}
 for any $n\in \N$. Taking the limit $n\to\infty$, we conclude by monotone convergence
 \begin{align*}
     -\dv{t}\int_\Omega u^2{\d{x}}^{2-\gamma}\dd{x} \geq 2\left(c^{-2} - \left(1-\frac{\gamma}{2}\right)^2\right)\int_\Omega \frac{u^2}{{\d{x}}^\gamma}\dd{x} - \frac{2\Lambda}{c^2}\int_\Omega u^2 {\d{x}}^{2-\gamma}\dd{x},
 \end{align*}
 which implies the upper bound
 \begin{align}
 \begin{aligned}
     \int_\Omega \frac{u^2}{{\d{x}}^\gamma}\dd{x} &\leq -2^{-1}\left(c^{-2} - \left(1-\frac{\gamma}{2}\right)^2\right)^{-1}\dv{t}\int_\Omega u^2{\d{x}}^{2-\gamma}\dd{x} \\
     &\hspace{3em} + \frac{\Lambda}{c^2}\left(c^{-2} - \left(1-\frac{\gamma}{2}\right)^2\right)^{-1}\int_\Omega u^2{\d{x}}^{2-\gamma}\dd{x}. 
 \end{aligned}
    \label{eq:proofWeakHardyUpperBoundBeforeIntegration}
 \end{align}
 Now we integrate in $t$ over the interval $[t,2t]$ and use that $u$ is decreasing on the left hand side of~\eqref{eq:proofWeakHardyUpperBoundBeforeIntegration} to get
 \begin{align}
 \begin{aligned}
     t\int_\Omega \frac{u(x;2t)^2}{{\d{x}}^\gamma}\dd{x} &\leq 2^{-1}\left(c^{-2} - \left(1-\frac{\gamma}{2}\right)^2\right)^{-1}\int_\Omega \left(u(x;t)^2-u(x;2t)^2\right){\d{x}}^{2-\gamma}\dd{x} \\
     &\hspace{3em} + \frac{\Lambda}{c^2}\left(c^{-2} - \left(1-\frac{\gamma}{2}\right)^2\right)^{-1}\int_t^{2t}\int_\Omega u(x;\tau)^2{\d{x}}^{2-\gamma}\dd{x}\dd{\tau}.
 \end{aligned}
     \label{eq:proofweakHardyNearSol}
 \end{align}
 
 For the second term, we again use the monotonicity of $t\mapsto u(\cdot;t)$ to get
 \begin{align}
     \int_t^{2t}\int_\Omega u(x;\tau)^2{\d{x}}^{2-\gamma}\dd{x}\dd{\tau} \leq t \int_\Omega u(x;t)^2{\d{x}}^{2-\gamma}\dd{x} \leq t\int_\Omega {\d{x}}^{2-\gamma}\dd{x} = C_{\Omega,\gamma} t, \label{eq:proofweakHardyLastTerm}
 \end{align}
 with constant $C_{\Omega,\gamma} = \int_\Omega \d{x}^{2-\gamma}\dd{x}$ being finite.

For the first term in \eqref{eq:proofweakHardyNearSol}, observe that
\begin{align*}
    u(x;t)^2-u(x;2t)^2 = \big(u(x;t)+u(x;2t)\big) \big(u(x;t)-u(x;2t)\big) \leq 2 u(x;t) \big(1-u(x;2t)\big),
\end{align*}
where we used that $u$ is decreasing in $t$ and $u\leq 1$. Hence,
\begin{align*}
    &\int_\Omega \big(u(x;t)^2-u(x;2t)^2\big) \d{x}^{2-\gamma}\dd{x} \\
    &\hspace{8em}\leq 2\int_\Omega u(x;t)\big(1-u(x;2t)\big)\d{x}^{2-\gamma}\dd{x} \\
    &\hspace{8em}\leq 2\Big(\int_\Omega u(x;t)^\frac{2}{\gamma-2}\d{x}^{-2}\dd{x}\Big)^\frac{\gamma-2}{2}\Big(\int_\Omega \big(1-u(x;2t)\big)^\frac{2}{4-\gamma}\dd{x}\Big)^\frac{4-\gamma}{2},
\end{align*}
by Hölder's inequality. Applying Bernoulli's inequality in the last factor, which is possible because $0\leq u \leq 1$ and $\frac{2}{4-\gamma} > 1$, yields
\begin{align}
\begin{aligned}
    &\int_\Omega \big(u(x;t)^2-u(x;2t)^2\big) \d{x}^{2-\gamma}\dd{x} \\
    &\hspace{5em}\leq 2^{(6-\gamma)/2} (4-\gamma)^\frac{\gamma-4}{2}\Big(\int_\Omega u(x;t)^\frac{2}{\gamma-2}\d{x}^{-2}\dd{x}\Big)^\frac{\gamma-2}{2}\Big(\int_\Omega \big(1-u(x;2t)\big)\dd{x}\Big)^\frac{4-\gamma}{2}.
\end{aligned}
     \label{eq:proofWeakHardyStep2}
\end{align}
To bound the first factor, set $p=\frac{2}{\gamma-2}$. Then
\begin{align*}
    -\dv{t} \int_\Omega u(x;t)^p\dd{x} = -p\int_\Omega u(x;t)^{p-1} \pdv{u(x;t)}{t} \dd{x} &= -p\int_\Omega u(x;t)^{p-1} \Delta u(x;t) \dd{x} \\
    &= \frac{4(p-1)}{p} \int_\Omega \abs{\nabla(u^\frac{p}{2})}^2 \dd{x}.
\end{align*}
Applying the weak Hardy inequality \eqref{eq:weakHardyInequality}, we obtain
\begin{align*}
    -\dv{t} \int_\Omega u(x;t)^p\dd{x} \geq \frac{4(p-1)}{pc^2}\left[\int_\Omega \frac{u^p}{\d{x}^2}\dd{x} - \Lambda \int_\Omega u^p\dd{x}\right].
\end{align*}
Finally, rearranging and integrating in time from $0$ to $t$ yields
\begin{align}
    t\int_\Omega u(x;t)^p\d{x}^{-2}\dd{x} &\leq \int_0^t \int_\Omega u(x;s)^p\d{x}^{-2}\dd{x} \nonumber\\
    &\leq \frac{pc^2}{4(p-1)} \int_\Omega \big(1-u(x;t)^p\big)\dd{x} + \Lambda \int_0^t \int_\Omega u(x;s)^p\dd{x}\dd{s} \nonumber \\
    &\leq \frac{p^2c^2}{4(p-1)} \int_\Omega\big(1-u(x;t)\big)\dd{x} + \Lambda \abs{\Omega}t, \label{eq:generOfvdBLemma4}
\end{align}
where we used again that $0\leq u\leq 1$ and Bernoulli's inequality. Therefore, we obtain for the first integral on the right-hand side of \eqref{eq:proofWeakHardyStep2} that
\begin{align}
    \Big(\int_\Omega u(x;t)^\frac{2}{\gamma-2}\d{x}^{-2}\dd{x}\Big)^\frac{\gamma-2}{2} &\leq \Big(t^{-1}\frac{p^2c^2}{4(p-1)} \int_\Omega\big(1-u(x;t)\big)\dd{x} + \Lambda \abs{\Omega}\Big)^\frac{\gamma-2}{2} \nonumber\\
    &\leq \Big(t^{-1}\frac{p^2c^2}{4(p-1)} \int_\Omega\big(1-u(x;2t)\big)\dd{x}\Big)^\frac{\gamma-2}{2} + \Lambda^\frac{\gamma-2}{2} \abs{\Omega}^\frac{\gamma-2}{2}, \label{eq:proofWeakHardyStep3}
\end{align}
where the last inequality holds as $0\leq \frac{\gamma-2}{2}\leq 1$ and $u$ is decreasing in time. Note that $$\frac{p^2}{4(p-1)} = \big((4-\gamma)(\gamma-2)\big)^{-1}.$$

Therefore, plugging \eqref{eq:proofWeakHardyStep3} into \eqref{eq:proofWeakHardyStep2}, and combining this result with \eqref{eq:proofweakHardyLastTerm} in \eqref{eq:proofweakHardyNearSol}, we obtain
\begin{align*}
     t\int_\Omega \frac{u(x;2t)^2}{{\d{x}}^\gamma}\dd{x} &\leq \left(c^{-2} - \left(1-\frac{\gamma}{2}\right)^2\right)^{-1}2^\frac{4-\gamma}{2} (4-\gamma)^{-1}t^\frac{2-\gamma}{2}c^{\gamma-2}(\gamma-2)^\frac{2-\gamma}{2} \int_\Omega \big(1-u(x;2t)\big)\dd{x} \\
     &\hspace{2em}+ \left(c^{-2} - \left(1-\frac{\gamma}{2}\right)^2\right)^{-1} \bigg(\frac{2}{4-\gamma}\bigg)^\frac{4-\gamma}{2} \Lambda^\frac{\gamma-2}{2} \abs{\Omega}^\frac{\gamma-2}{2}\bigg(\int_\Omega \big(1-u(x;2t)\big)\dd{x}\bigg)^\frac{4-\gamma}{2} \\
     &\hspace{2em} + \frac{\Lambda}{c^2}\left(c^{-2} - \left(1-\frac{\gamma}{2}\right)^2\right)^{-1}C_{\Omega,\gamma} t.
\end{align*}
Finally, using $(4-\gamma)^{-1}(\gamma-2)^{\frac{2-\gamma}{2}} \leq 1$ in the first term and $2^{\frac{4-\gamma}{2}}(4-\gamma)^{\frac{\gamma-4}{2}} \leq 2$ in the second term, as $2<\gamma<3$, the claim follows.

\end{proof}

\begin{remark}
    Equation \eqref{eq:generOfvdBLemma4} is a generalization of Equation (39) in \cite[Lemma 4]{MR2480958} to accommodate a weak Hardy inequality instead of a strong Hardy inequality. 
\end{remark}

As a corollary of the preceding theorem, we obtain an upper bound for the weighted integral of the solution of \eqref{eq:def_HeatContent}, which is independent of the solution itself.
\begin{corollary}
\label{cor:boundForWeightedIntegralHeatContent}
   Suppose the assumptions of Theorem \ref{thm:weightedIntegralOverHeatContent} are satisfied. Then for any pair of constants $c, \Lambda\geq0$, such that \eqref{eq:weakHardyInequality} is true and $c<(\gamma/2-1)^{-1}$, we have for all $t > 0$, that
    \begin{align*}
        \int_\Omega \frac{u(x;t)^2}{\d{x}^{\gamma}} \dd{x} \leq \Big(1-c^2(1-\gamma/2)^2\Big)^{-1}&\Bigg(2^\frac{n+7}{2}\sqrt{\pi} c^\gamma t^{\frac{1-\gamma}{2}} \theta_\Omega \hausdBoundary \\
        &\hspace{2em}+ 2^\frac{n+7}{2}\sqrt{\pi}c^2t^{-\frac{\gamma}{4}}\Lambda^{\frac{\gamma-2}{2}}\varrho^{\frac{\gamma-2}{2}}\theta_\Omega\hausdBoundary  + C_{\Omega,\gamma} \Lambda \Bigg),
    \end{align*}
    where $C_{\Omega,\gamma} = \int_\Omega \d{x}^{2-\gamma}\dd{x}$ and $\varrho=\sup_{x\in\Omega}\d{x}$.
\end{corollary}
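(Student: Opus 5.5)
The plan is to start from Theorem \ref{thm:weightedIntegralOverHeatContent} and bound the quantity $\int_\Omega (1-u(x;t))\dd{x}$, the heat loss of $\Omega$, explicitly in terms of $\sqrt{t}$ and $\theta_\Omega\hausdBoundary$. Since $1-u(x;t) = \Pr\nolimits_x(\tau_\Omega \le t)$ is the probability that Brownian motion started at $x$ leaves $\Omega$ before time $t$, I would bound it via the reflection principle for the one-dimensional projection onto the direction of the nearest boundary point. This gives, for each coordinate, $\Pr(\sup_{s\le t}\abs{B^{(i)}_s}\ge \d{x}/\sqrt{n})\le 4e^{-\d{x}^2/(4nt)}$ up to harmless constants, hence
\begin{align*}
1-u(x;t) \leq C_n\, e^{-\frac{\d{x}^2}{c_n t}}.
\end{align*}
A cleaner alternative, which yields a $2^{n/2}$-type constant matching the factor $2^{\frac{n+7}{2}}$ in the statement, is to write $1-u(x;t)=\int_\Omega (\Phi_t(x-y)-p_\Omega(x,y;t))\dd{y}+\int_{\Omega^c}\Phi_t(x-y)\dd{y}$. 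One then bounds the Gaussian tail mass outside $B_{\d{x}}(x)$ by something like $2^{n/2}e^{-\d{x}^2/(8t)}$, using $e^{-\abs{z}^2/4t}\le e^{-\d{x}^2/8t}e^{-\abs{z}^2/8t}$. Either way, the pointwise estimate is $1-u(x;t)\lesssim_n e^{-\d{x}^2/(8t)}$.

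Next I would integrate this over $\Omega$ exactly as in the proof of Proposition \ref{prop:boundForBulkPartNegAlpha}: write the Gaussian as $2\int_{\d{x}/\sqrt{8t}}^\infty w e^{-w^2}\dd{w}$, apply Fubini, and use Corollary \ref{cor:boundednessOfTubularRegions}. This gives
\begin{align*}
\int_\Omega(1-u(x;t))\dd{x}\le C 2^{\frac{n}{2}}\sqrt{8t}\,\theta_\Omega\hausdBoundary\int_0^\infty w^2e^{-w^2}\dd{w}\lesssim 2^{\frac{n+7}{2}}\sqrt{\pi t}\,\theta_\Omega\hausdBoundary,
\end{align*}
where I would track constants only loosely, aiming at the displayed form.

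With this in hand, I would substitute into the three terms of Theorem \ref{thm:weightedIntegralOverHeatContent}. The first term becomes $c^\gamma t^{-\gamma/2}\cdot\sqrt{t}$, which is the $t^{\frac{1-\gamma}{2}}$ term. The third term is unchanged. For the second term, the factor $(\int_\Omega(1-u)\dd{x})^{\frac{4-\gamma}{2}}$ is a fractional power. I would bound it using $\int_\Omega(1-u)\dd{x}\le \abs{\Omega}$ together with the $\sqrt t$ bound, or simply interpolate. Since $\frac{4-\gamma}{2}\in(\frac12,1)$, the quantity $X^{\frac{4-\gamma}{2}}$ is at most $\max(X,X^{1/2})$ times constants, and $t^{-1}(\sqrt{t})^{\frac{4-\gamma}{2}}=t^{-\gamma/4}$, which is exactly the claimed power. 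To make the constants come out in the displayed form, I would exchange $\abs{\Omega}^{\frac{\gamma-2}{2}}$ for $\varrho^{\frac{\gamma-2}{2}}$ and absorb the leftover powers of $\theta_\Omega\hausdBoundary$. This uses $\abs{\Omega}=\abs{\{\d{x}<\varrho\}}\le\varrho\theta_\Omega\hausdBoundary$, which gives $\abs{\Omega}^{\frac{\gamma-2}{2}}X^{\frac{4-\gamma}{2}}\le \varrho^{\frac{\gamma-2}{2}}(\theta_\Omega\hausdBoundary)^{\frac{\gamma-2}{2}}(C\sqrt t\,\theta_\Omega\hausdBoundary)^{\frac{4-\gamma}{2}}$. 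The resulting total power of $\theta_\Omega\hausdBoundary$ is one, and the resulting power of $t$ is $t^{\frac{4-\gamma}{4}}$, so multiplying by $t^{-1}$ gives $t^{-\gamma/4}$.

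I expect the main obstacle to be the first step, namely a clean pointwise heat-loss bound with constant of order $2^{n/2}$. This step holds on arbitrary open sets, since it only uses $\Omega\supset B_{\d{x}}(x)$ and domain monotonicity (Proposition \ref{prop:heatkernelOfSubsetsAreOrdered} applied to $u$). The remaining steps are bookkeeping with Corollary \ref{cor:boundednessOfTubularRegions}.
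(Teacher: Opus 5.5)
Your skeleton matches the paper's proof. You bound the heat loss $1-u$ pointwise by a Gaussian and integrate with Corollary~\ref{cor:boundednessOfTubularRegions} to get $\int_\Omega (1-u(x;t))\,\mathrm{d}x \le 2^{\frac{n+3}{2}}\sqrt{\pi t}\,\theta_\Omega\hausdBoundary$. You then insert this into Theorem~\ref{thm:weightedIntegralOverHeatContent}, where the prefactor $4$ turns $2^{\frac{n+3}{2}}$ into $2^{\frac{n+7}{2}}$. The middle term is handled by $\abs{\Omega}\le\varrho\,\theta_\Omega\hausdBoundary$ together with $(2^{\frac{n+3}{2}}\sqrt{\pi})^{\frac{4-\gamma}{2}}\le 2^{\frac{n+3}{2}}\sqrt{\pi}$. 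Your power counting there is correct, and the detour via $\max(X,X^{1/2})$ is unnecessary. The one difference is that the paper does not prove the pointwise bound. It imports it from \cite[Lemma 5]{MR2214584} in the precise form $1-u(x;t)\le 2^{\frac{n+2}{2}}e^{-\mathrm{d}_\Omega(x)^2/(8t)}$, and that precise constant is what the displayed inequality requires.

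Your attempt to supply this bound yourself does not close as written. Set $B=B_{\mathrm{d}_\Omega(x)}(x)$. Monotonicity reduces the problem to $1-u(x;t)\le \Pr_x(\tau_B\le t)$; for $u$ this is just $\tau_B\le\tau_\Omega$ in \eqref{eq:repOfHeatContent}, since Proposition~\ref{prop:heatkernelOfSubsetsAreOrdered} is only stated on the diagonal. But the Gaussian tail mass outside $B$ equals $\Pr_x(\abs{W_t-x}\ge \mathrm{d}_\Omega(x))$, the probability of being outside $B$ \emph{at time $t$}. In your decomposition it bounds $\int_{\Omega^c}\Phi_t(x-y)\,\mathrm{d}y$. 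It says nothing about $\int_\Omega(\Phi_t(x-y)-p_\Omega(x,y;t))\,\mathrm{d}y$, which is the mass of paths that leave and come back.

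The missing step is a reflection at the exit time. By the strong Markov property, on $\{\tau_B\le t\}$ the restarted motion lies at time $t$ in the half-space $\{y : (y-W_{\tau_B})\cdot(W_{\tau_B}-x)\ge 0\}\subset\R^n\setminus B$ with conditional probability at least $\frac12$. Hence $\Pr_x(\tau_B\le t)\le 2\Pr_x(\abs{W_t-x}\ge\mathrm{d}_\Omega(x))\le 2\cdot 2^{\frac n2}e^{-\mathrm{d}_\Omega(x)^2/(8t)}$, and this factor $2$ is exactly where $2^{\frac{n+2}{2}}$ comes from.

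Your coordinate-projection route is a complete argument, but contrary to your ``either way'' it does not give exponent $1/8$. Even with the sharp one-dimensional reflection constant it gives $1-u\le 2n\,e^{-\mathrm{d}_\Omega(x)^2/(4nt)}$, hence $\int_\Omega(1-u)\,\mathrm{d}x\le 2n^{3/2}\sqrt{\pi t}\,\theta_\Omega\hausdBoundary$. This is larger than $2^{\frac{n+3}{2}}\sqrt{\pi t}\,\theta_\Omega\hausdBoundary$ for $3\le n\le 7$. So that route proves the corollary only with a different $n$-dependent constant, not the stated one.
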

\begin{proof}
    By \cite[Lemma 5]{MR2214584} we have 
    \begin{align*}
        u(x;t) \geq 1 - 2^{\frac{n+2}{2}} e^{-\frac{\d{x}^2}{8t}}.
    \end{align*}
    Therefore, Corollary \ref{cor:boundednessOfTubularRegions} gives
    \begin{align*}
        \int_\Omega\big(1-u(x;t)\big) \dd{x} &\leq 2^{\frac{n+2}{2}} \int_\Omega e^{-\frac{\d{x}^2}{8t}}\dd{x} \\
        &= 2^{\frac{n+4}{2}}\int_\Omega \int_0^\infty \1_{\{\d{x}<s\sqrt{8t}\}} se^{-s^2}\dd{s}\dd{x} \\
        &\leq  2^{\frac{n+4}{2}}\sqrt{8t} \theta_\Omega\hausdBoundary \int_0^\infty s^2e^{-s^2}\dd{s} \\
        &= 2^{\frac{n+3}{2}}\sqrt{\pi t} \theta_\Omega\hausdBoundary.
    \end{align*}
    Hence, applying Theorem \ref{thm:weightedIntegralOverHeatContent},
    \begin{align*}
        \int_\Omega u(x;t)^2{\d{x}}^{-\gamma}\dd{x} &\leq 4c^\gamma \left(1-c^2(1-\gamma/2)^2\right)^{-1} t^{\frac{1-\gamma}{2}}2^\frac{n+3}{2}\sqrt{\pi}\theta_\Omega\hausdBoundary \\
        &\hspace{3em}+ 4c^2\left(1-c^2(1-\gamma/2)^2\right)^{-1}\Lambda^\frac{\gamma-2}{2}\abs{\Omega}^\frac{\gamma-2}{2}t^{-\frac{\gamma}{4}}\Big(2^\frac{n+3}{2}\sqrt{\pi}\theta_\Omega\hausdBoundary\Big)^\frac{4-\gamma}{2} \\
        &\hspace{3em}+ \left(1-c^2(1-\gamma/2)^2\right)^{-1} \Lambda \int_\Omega \d{x}^{2-\gamma}\dd{x}.
    \end{align*}
    Note that, from Corollary \ref{cor:boundednessOfTubularRegions} with $\varrho = \sup_{x\in\Omega}\d{x}$, 
    \begin{align}
        \abs{\Omega} \leq \varrho \theta_\Omega \hausdBoundary,
    \end{align}
    and therefore the claim follows by using $2<\gamma<3$.

\end{proof}

Using the preceding theorem and corollary, we find in the following proposition the relevant bound for the first integral in \eqref{eq:errorDecompNearPart2}. To follow the notation in that expression, we state the following proposition again with the parameter $\alpha$.

\begin{proposition}
\label{prop:boundForNearPartKernelOnly}
    Let $c_w$ be the weak Hardy constant of $\Omega$ and let $\alpha \in (\alpha^\ast(c_w),0)$. Then, for all $\alpha^\prime < \alpha$ such that $\alpha^\prime \in (\alpha^\ast(c_w),-2)$, there are corresponding non-negative constants $c$ and $\Lambda$ such that \eqref{eq:weakHardyInequality} is true and, for all $t > 0$, 
    \begin{align*}
        \int_{\{\d{x}\leq \ell_1\}} \d{x}^{\alpha}p_\Omega(x,x;t)\dd{x} &\leq C  t^{-\frac{n}{2}}\ell_1^{\alpha - \alpha^\prime} \left(t^{\frac{1+\alpha^\prime}{2}} \theta_\Omega\hausdBoundary+ t^\frac{\alpha^\prime}{4}\theta_\Omega\hausdBoundary +C_{\Omega,-\alpha^\prime}\Lambda\right),
    \end{align*}
    with an explicit constant $C = C\left(n, \alpha^\prime, c, \Lambda,\varrho\right) > 0$ and $C_{\Omega,-\alpha^\prime}$ as before.
\end{proposition}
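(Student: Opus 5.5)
The plan is to reduce the weighted integral of the diagonal heat kernel to the weighted integral of the squared heat content $u$, and then apply Corollary \ref{cor:boundForWeightedIntegralHeatContent} with $\gamma=-\alpha^\prime$.

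First, fix $\alpha^\prime<\alpha$ with $\alpha^\prime\in(\alpha^\ast(c_w),-2)$ and set $\gamma=-\alpha^\prime\in\bigl(2,\min(2(1+c_w^{-1}),3)\bigr)$. Since $\gamma<2(1+c_w^{-1})$ is equivalent to $c_w<(\gamma/2-1)^{-1}$ and the inequality is strict, the definition of the weak Hardy constant as an infimum gives some $c\in[c_w,(\gamma/2-1)^{-1})$ and a $\Lambda\ge 0$ for which \eqref{eq:weakHardyInequality} holds. These are the constants $c,\Lambda$ in the statement.

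Second, on $\{\d{x}\le\ell_1\}$ we have $\alpha-\alpha^\prime>0$, so
\begin{align*}
\d{x}^{\alpha}=\d{x}^{\alpha-\alpha^\prime}\,\d{x}^{\alpha^\prime}\le \ell_1^{\alpha-\alpha^\prime}\d{x}^{-\gamma}.
\end{align*}
Combining this with Proposition \ref{prop:heatTraceBoundByHeatContentBound} gives
\begin{align*}
\int_{\{\d{x}\le\ell_1\}}\d{x}^{\alpha}p_\Omega(x,x;t)\dd{x}\le \Bigl(\frac{3}{4\pi t}\Bigr)^{\frac n2}\ell_1^{\alpha-\alpha^\prime}\int_\Omega \frac{u(x;t/3)^2}{\d{x}^{\gamma}}\dd{x},
\end{align*}
where the domain of integration has been enlarged to all of $\Omega$ because the integrand is nonnegative.

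Third, apply Corollary \ref{cor:boundForWeightedIntegralHeatContent} at time $t/3$. Its three terms are
\begin{align*}
(t/3)^{\frac{1-\gamma}{2}}=3^{\frac{\gamma-1}{2}}t^{\frac{1+\alpha^\prime}{2}},\qquad (t/3)^{-\frac{\gamma}{4}}=3^{\frac{\gamma}{4}}t^{\frac{\alpha^\prime}{4}},\qquad C_{\Omega,\gamma}\Lambda=C_{\Omega,-\alpha^\prime}\Lambda.
\end{align*}
All prefactors can then be collected into one constant
\begin{align*}
C=(3/(4\pi))^{n/2}\bigl(1-c^2(1-\gamma/2)^2\bigr)^{-1}\max\Bigl(2^{\frac{n+7}{2}}\sqrt{\pi}\,c^\gamma 3^{\frac{\gamma-1}{2}},\;2^{\frac{n+7}{2}}\sqrt{\pi}\,c^2\Lambda^{\frac{\gamma-2}{2}}\varrho^{\frac{\gamma-2}{2}}3^{\frac{\gamma}{4}},\;1\Bigr),
\end{align*}
which depends only on $n,\alpha^\prime,c,\Lambda,\varrho$. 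It is finite and positive because $c<(\gamma/2-1)^{-1}$.

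There is no real obstacle. The only points needing care are the choice of $c$, which is possible exactly because $\alpha^\prime>\alpha^\ast(c_w)$ is strict, and checking that the hypotheses of Theorem \ref{thm:weightedIntegralOverHeatContent} hold, namely $2<\gamma<3$, which follows from $\alpha^\prime<-2$ and $\alpha^\prime>-3$.
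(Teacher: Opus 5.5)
Your proposal is correct and follows the paper's proof: you pull out $\ell_1^{\alpha-\alpha^\prime}$ on $\{\d{x}\le\ell_1\}$, enlarge the domain of integration to $\Omega$, apply the pointwise bound $p_\Omega(x,x;t)\le (3/(4\pi t))^{n/2}u(x;t/3)^2$ from Proposition~\ref{prop:heatTraceBoundByHeatContentBound}, and invoke Corollary~\ref{cor:boundForWeightedIntegralHeatContent} with $\gamma=-\alpha^\prime$. You are slightly more explicit than the paper on two points the paper leaves implicit: why an admissible $c\in[c_w,(\gamma/2-1)^{-1})$ exists (the strict inequality $\alpha^\prime>\alpha^\ast(c_w)$), and the factors $3^{(\gamma-1)/2}$, $3^{\gamma/4}$ that come from evaluating at time $t/3$ and are absorbed into $C$.
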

\begin{proof}
Since $\alpha^\prime < \alpha$, Corollary \ref{cor:boundForWeightedIntegralHeatContent} with $\gamma = -\alpha^\prime$, implies that there exist $c \geq c_w$ and $\Lambda \geq 0$ such that %
\begin{align*}
    \int_{\{\d{x} \leq \ell_1\}}u^2{\d{x}}^{\alpha}\dd{x} &\leq \ell_1^{\alpha - \alpha^\prime} \int_{\{\d{x} \leq \ell_1\}}u^2{\d{x}}^{\alpha^\prime}\dd{x} \\
    &\leq \ell_1^{\alpha - \alpha^\prime} \int_{\Omega}u^2{\d{x}}^{\alpha^\prime}\dd{x} \\
    &\lesssim_n \ell_1^{\alpha - \alpha^\prime} \Big(1-{c}^2(1+\alpha^\prime/2)^2\Big)^{-1}\Bigg({c}^{-\alpha^\prime}  t^{\frac{1+\alpha^\prime}{2}}  \theta_\Omega\hausdBoundary \\
    & \hspace{7em}+c^2t^\frac{\alpha^\prime}{4} \Lambda^{-\frac{\alpha^\prime+2}{2}}\varrho^{-\frac{\alpha^\prime+2}{2}}\theta_\Omega\hausdBoundary + C_{\Omega,-\alpha^\prime} \Lambda\Bigg).
\end{align*}
Applying the pointwise bound from Proposition \ref{prop:heatTraceBoundByHeatContentBound}, we obtain for the weighted heat trace near the boundary that
\begin{align*}
    \int_{\{\d{x}\leq \ell_1\}} \d{x}^{\alpha}p_\Omega(x,x;t)\dd{x}&\leq \frac{3^{n/2}}{(4\pi t)^{n/2}} \int_{\{\d{x} \leq \ell_1\}}u(x;t/3)^2{\d{x}}^{\alpha}\dd{x} \\
    &\leq C(n,\alpha^\prime,c,\Lambda,\varrho) t^{-\frac{n}{2}}\ell_1^{\alpha - \alpha^\prime} \Bigg(t^{\frac{1+\alpha^\prime}{2}} \theta_\Omega\hausdBoundary \\
    & \hspace{10em}+ t^\frac{\alpha^\prime}{4}\theta_\Omega\hausdBoundary +C_{\Omega,-\alpha^\prime}\Lambda\Bigg). %
\end{align*}
\end{proof}

The moral of the preceding proof is that we have enough room to pull out a decaying factor because we are integrating near the boundary, and then use an asymptotic estimate that holds for the integration on the whole domain.

That the second term in \eqref{eq:errorDecompNearPart2} can be suitably bounded, is shown in the following proposition.
\begin{proposition}
\label{prop:boundForNearPartExpTerm}
    Let $\Omega \subset \R^n$ be an open and bounded set with Lipschitz boundary and let $\alpha \in (-3,0)$. Then, for all $t > 0$,
    \begin{align*}
        \int_{\{\d{x} \leq \ell_1\}}\d{x}^{\alpha} \bigg(1-e^{-\frac{\d{x}^2}{t}}\bigg)\dd{x} \leq  C(\alpha)t^{-1}\ell_1^{3+\alpha}\theta_\Omega\hausdBoundary,
    \end{align*}
    where $C(\alpha)= \max\big(1,\left(3+\alpha\right)^{-1}\big)$.
\end{proposition}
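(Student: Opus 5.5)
The plan is to use the elementary inequality $1-e^{-s}\le s$ for $s\ge 0$, which gives the pointwise bound
\begin{align*}
    \d{x}^{\alpha}\bigg(1-e^{-\frac{\d{x}^2}{t}}\bigg) \leq t^{-1}\d{x}^{2+\alpha}.
\end{align*}
This reduces the claim to showing
\begin{align*}
    \int_{\{\d{x}\leq \ell_1\}} \d{x}^{2+\alpha}\dd{x} \leq C(\alpha)\,\ell_1^{3+\alpha}\theta_\Omega\hausdBoundary .
\end{align*}
Since $\alpha\in(-3,0)$, we have $2+\alpha\in(-1,2)$. The exponent may therefore be negative, and this is why the constant involves $(3+\alpha)^{-1}$.

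For $2+\alpha\ge 0$ the bound is immediate. The integrand is at most $\ell_1^{2+\alpha}$, and Corollary \ref{cor:boundednessOfTubularRegions} gives $\abs{\{\d{x}\le \ell_1\}}\le \ell_1\theta_\Omega\hausdBoundary$. Note that $\{\d{x}\le\ell_1\}\subset\{\d{x}<\ell_1'\}$ for every $\ell_1'>\ell_1$, so we let $\ell_1'\downarrow\ell_1$. This yields the claim with constant $1$.

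For $2+\alpha\in(-1,0)$, write $\beta = 2+\alpha$. I would use the layer-cake formula exactly as in the proof of Lemma \ref{lem:mainTermAlphaLess1ExtraTerm}, with the representation
\begin{align*}
    s^{\beta} = \ell_1^{\beta} - \beta\int_s^{\ell_1} w^{\beta-1}\dd{w}, \qquad s\le \ell_1 .
\end{align*}
Integrating over $\{\d{x}\le\ell_1\}$ and applying Fubini (valid since everything is nonnegative) gives
\begin{align*}
    \int_{\{\d{x}\le\ell_1\}}\d{x}^{\beta}\dd{x} = \ell_1^{\beta}\abs{\{\d{x}\le\ell_1\}} + (-\beta)\int_0^{\ell_1} w^{\beta-1}\abs{\{\d{x}<w\}}\dd{w}.
\end{align*}
Corollary \ref{cor:boundednessOfTubularRegions} bounds the first term by $\ell_1^{1+\beta}\theta_\Omega\hausdBoundary$. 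It bounds the second by
\begin{align*}
    (-\beta)\theta_\Omega\hausdBoundary\int_0^{\ell_1}w^{\beta}\dd{w} = \frac{-\beta}{1+\beta}\,\ell_1^{1+\beta}\theta_\Omega\hausdBoundary ,
\end{align*}
which converges because $\beta>-1$. The sum of the two constants is $1+\frac{-\beta}{1+\beta}=\frac{1}{1+\beta}=(3+\alpha)^{-1}$, which matches $C(\alpha)$. Multiplying back by $t^{-1}$ gives the stated bound.

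No step is a real obstacle. The only care needed is handling the negative exponent through the layer-cake representation instead of a crude supremum bound, and checking that the constants combine to $\max(1,(3+\alpha)^{-1})$.
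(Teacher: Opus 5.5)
Your proof is correct and is essentially the paper's argument: the same bound $1-e^{-s}\le s$, the same case split at $\alpha=-2$, and the same layer-cake computation combined with Corollary~\ref{cor:boundednessOfTubularRegions}, where your representation truncated at $\ell_1$ produces exactly the paper's two terms (the paper's tail term $-(2+\alpha)\abs{\{\d{x}\le\ell_1\}}\int_{\ell_1}^\infty s^{1+\alpha}\,\mathrm{d}s$ equals your $\ell_1^{2+\alpha}\abs{\{\d{x}\le\ell_1\}}$). Your limiting argument $\ell_1'\downarrow\ell_1$, which passes from the strict inequality in the corollary to the set $\{\d{x}\le\ell_1\}$, is a small point of care that the paper omits.
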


\begin{proof}
Using
\begin{align*}
    1-e^{-\frac{\d{x}^2}{t}} = \int_0^\frac{\d{x}^2}{t} e^{-u}\dd{u} \leq \frac{\d{x}^2}{t}
\end{align*}
as $e^{-u} \leq 1$ for $u\in [0,\infty)$, we bound
\begin{align*}
    \int_{\{{\d{x}} \leq \ell_1\}}{\d{x}}^{\alpha} \bigg(1-e^{-\frac{{\d{x}}^2}{t}}\bigg)\dd{x} \leq \int_{\{{\d{x}} \leq \ell_1\}}{\d{x}}^{\alpha} \frac{{\d{x}}^2}{t}\dd{x}.
\end{align*}
For $\alpha\in [-2,0)$, we apply Corollary \ref{cor:boundednessOfTubularRegions} to obtain
\begin{align*}
    \int_{\{\d{x}\leq \ell_1\}} \d{x}^{2+\alpha}\dd{x} \leq \ell_1^{2+\alpha} \abs{\{\d{x}\leq \ell_1\}} \leq \ell_1^{3+\alpha}\theta_\Omega\hausdBoundary.
\end{align*}
For $\alpha \in (-3,-2)$, we rewrite
\begin{align*}
    \d{x}^{2+\alpha} = -(2+\alpha) \int_{\d{x}}^\infty s^{1+\alpha}\dd{s}.
\end{align*}
Hence,
\begin{align*}
    \int_{\{\d{x}\leq \ell_1\}} \d{x}^{2+\alpha}\dd{x} & = -(2+\alpha)\int_\Omega \int_0^\infty s^{1+\alpha} \1_{\{\d{x} \leq \ell_1\}} \1_{\{\d{x}\leq s\}}\dd{s}\dd{x} \\
    &= - (2+\alpha)\int_\Omega \int_0^{\ell_1}s^{1+\alpha} \1_{\{\d{x}\leq s\}}\dd{s}\dd{x} \\
    &\hspace{4em}- (2+\alpha) \int_{\Omega}\int_{\ell_1}^\infty s^{1+\alpha} \1_{\{\d{x}\leq \ell_1\}}\dd{s}\dd{x} \\
    &= - (2+\alpha) \int_0^{\ell_1}s^{1+\alpha} \abs{\{\d{x}\leq s\}}\dd{s} \\
    &\hspace{4em}- (2+\alpha) \int_{\ell_1}^\infty s^{1+\alpha} \abs{\{\d{x}\leq \ell_1\}}\dd{s}.
\end{align*}
Applying Corollary \ref{cor:boundednessOfTubularRegions} to both integrals, we obtain
\begin{align*}
    \int_0^{\ell_1}s^{1+\alpha} \abs{\{\d{x}\leq s\}}\dd{s} \leq \theta_\Omega\hausdBoundary \int_0^{\ell_1}s^{2+\alpha}\dd{s} = \ell_1^{3+\alpha}\frac{ \theta_\Omega\hausdBoundary}{3+\alpha} ,
\end{align*}
and 
\begin{align*}
    \int_{\ell_1}^\infty s^{1+\alpha} \abs{\{\d{x}\leq \ell_1\}}\dd{s} \leq\theta_\Omega \hausdBoundary \ell_1 \int_{\ell_1}^\infty s^{1+\alpha}\dd{s} = -\ell_1^{3+\alpha}\frac{\theta_\Omega\hausdBoundary}{2+\alpha } .
\end{align*}
Hence,
\begin{align*}
    \int_{\{\d{x}\leq \ell_1\}} \d{x}^{2+\alpha}\dd{x} \leq  \ell_1^{3+\alpha}\left(-\frac{2+\alpha}{3+\alpha} + 1\right) \theta_\Omega\hausdBoundary = \ell_1^{3+\alpha}\frac{\theta_\Omega\hausdBoundary}{3+\alpha} .
\end{align*}
\end{proof}

\subsection{The contribution of the good region}

As discussed in Section \ref{sec:geomConstruction}, we can approximate the heat kernel in the good part by the heat kernel of appropriate half-spaces. Using the approximation of the heat kernel, the following approximation result for the weighted integral holds.

\begin{proposition}
\label{prop:boundForGoodPart}
    Assume that $4\ell_2 < r$, $ 8\ell_2\varepsilon < \ell_1$, and $32\ell_2^2\varepsilon < t$. Then, for any $\alpha \in \R$,
    \begin{align*}
        &\int_{\Omega_{good}}{\d{x}}^{\alpha}\abs{1-(4\pi t)^{n/2}p_\Omega(x,x;t) -e^{-\frac{{\d{x}}^2}{t}}}\dd{x} \\
        &\hspace{4em}\leq   \ell_2 \ell_i^{\alpha} \bigg(2ne^{-\frac{4\ell_2^2}{nt}} +  16et^{-1} e^{-\frac{\ell_1^2}{t}}\ell_2^2\varepsilon(1+4\varepsilon)\bigg)\theta_\Omega\hausdBoundary, %
    \end{align*}
    where $i = 1$ if $\alpha < 0$ and $i= 2$ if $\alpha \geq 0$.
\end{proposition}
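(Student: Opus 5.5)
The plan is to apply Corollary \ref{cor:boundsForGoodPartOfBoundaryKernelOnly} pointwise on $\Omega_{good}$, with $\ell = 2\ell_2$ and $\rho = 2\ell\varepsilon = 4\ell_2\varepsilon$.

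\textbf{Checking the hypotheses.} We need $\ell=2\ell_2 < r/2$, which is exactly $4\ell_2<r$. Every $x\in\Omega_{good}$ satisfies $\d{x}<\ell_2=\ell/2$. Also $\varepsilon\le\frac14\le\frac12$. Hence for $x\in\Omega_{good}$,
\begin{align*}
    e^{-\frac{(\d{x}+2\rho)^2}{t}} - 2ne^{-\frac{4\ell_2^2}{nt}} \le 1-(4\pi t)^{n/2}p_\Omega(x,x;t) \le e^{-\frac{(\d{x}-2\rho)_+^2}{t}} + 2ne^{-\frac{4\ell_2^2}{nt}}.
\end{align*}
Subtracting $e^{-\d{x}^2/t}$ gives
\begin{align*}
    \abs{1-(4\pi t)^{n/2}p_\Omega(x,x;t) -e^{-\frac{\d{x}^2}{t}}} \le 2ne^{-\frac{4\ell_2^2}{nt}} + \max\Big(e^{-\frac{(\d{x}-2\rho)^2}{t}}-e^{-\frac{\d{x}^2}{t}},\, e^{-\frac{\d{x}^2}{t}}-e^{-\frac{(\d{x}+2\rho)^2}{t}}\Big).
\end{align*}
The condition $8\ell_2\varepsilon<\ell_1$ means $2\rho<\ell_1<\d{x}$, so the positive part is not needed.

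\textbf{Bounding the exponential differences.} This is the main computational step. Apply the mean value theorem to $s\mapsto e^{-s^2/t}$ on $[\d{x}-2\rho,\d{x}]$ or $[\d{x},\d{x}+2\rho]$. The derivative is bounded by $\frac{2s}{t}e^{-s^2/t}$, and on either interval:
\begin{itemize}
    \item $s\le \d{x}+2\rho\le \ell_2(1+8\varepsilon)$;
    \item $s\ge \d{x}-2\rho$, so $s^2\ge \d{x}^2-4\rho\,\d{x}\ge \ell_1^2-16\ell_2^2\varepsilon$.
\end{itemize}
Therefore $e^{-s^2/t}\le e^{-\ell_1^2/t}e^{16\ell_2^2\varepsilon/t}\le e\, e^{-\ell_1^2/t}$, using $32\ell_2^2\varepsilon<t$, which even gives the exponent $\le \frac12$. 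Each difference is then at most
\begin{align*}
    2\rho\cdot\frac{2\ell_2(1+8\varepsilon)}{t}\, e\, e^{-\ell_1^2/t} = 16e\,t^{-1}\ell_2^2\varepsilon(1+8\varepsilon)\,e^{-\ell_1^2/t}.
\end{align*}
This has the stated form, but with $1+8\varepsilon$ instead of $1+4\varepsilon$. Recovering the factor $1+4\varepsilon$ needs sharper bookkeeping: for instance, bounding $s\le\d{x}+\rho$ via a midpoint or symmetrised argument, or using that $1+8\varepsilon\le 2(1+4\varepsilon)$ and absorbing the $2$ elsewhere. I expect this constant-chasing to be the only delicate point; the structure of the bound is clear.

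\textbf{Integrating over $\Omega_{good}$.} The pointwise bound is independent of $x$. The weight satisfies $\d{x}^\alpha\le\ell_1^\alpha$ if $\alpha<0$ and $\d{x}^\alpha\le\ell_2^\alpha$ if $\alpha\ge0$, since $\ell_1<\d{x}<\ell_2$ on $\Omega_{good}$. Finally $\abs{\Omega_{good}}\le\abs{\{\d{x}<\ell_2\}}\le\ell_2\theta_\Omega\hausdBoundary$ by Corollary \ref{cor:boundednessOfTubularRegions}. Multiplying these three factors yields the claimed estimate.
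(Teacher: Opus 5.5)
Your argument follows the paper's proof step for step, and all of it is correct: Corollary~\ref{cor:boundsForGoodPartOfBoundaryKernelOnly} with $\ell=2\ell_2$ and $\rho=4\ell_2\varepsilon$, removing the positive part via $2\rho<\ell_1<\mathrm{d}_\Omega(x)$, a pointwise bound on the exponential difference, and finally $\mathrm{d}_\Omega(x)^\alpha\le\ell_i^\alpha$ together with $\abs{\Omega_{good}}\le\ell_2\theta_\Omega\hausdBoundary$. The one unfinished point is the one you flagged. As written, you obtain $16e\,(1+8\varepsilon)$ rather than $16e\,(1+4\varepsilon)$, so the stated inequality is not yet proved. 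Neither of your suggested repairs works as stated:
\begin{itemize}
\item The mean value theorem for $s\mapsto e^{-s^2/t}$ does not put the intermediate point at the midpoint.
\item There is nowhere to absorb a factor $2$. The constant $16e$ is explicit, and $1+8\varepsilon\le 2(1+4\varepsilon)$ only gives $32e$ (or $32e^{1/2}$), both larger than $16e$.
\end{itemize}

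The gap closes with what you already wrote down. You noted that $32\ell_2^2\varepsilon<t$ makes the exponent at most $\frac12$. So your bound is in fact $16e^{1/2}t^{-1}\ell_2^2\varepsilon(1+8\varepsilon)e^{-\ell_1^2/t}$. Since $(1+8\varepsilon)/(1+4\varepsilon)\le\frac32<e^{1/2}$ for $\varepsilon\le\frac14$, this is at most $16e\,t^{-1}\ell_2^2\varepsilon(1+4\varepsilon)e^{-\ell_1^2/t}$, as required.

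Your midpoint intuition is also sound if you apply the mean value theorem in the variable $u=s^2$, i.e.\ to $u\mapsto e^{-u/t}$. The increment $(\mathrm{d}_\Omega(x)\pm2\rho)^2-\mathrm{d}_\Omega(x)^2=\pm4\rho\,\mathrm{d}_\Omega(x)+4\rho^2$ is then exact. In absolute value it is at most $4\rho\ell_2+4\rho^2=16\ell_2^2\varepsilon(1+4\varepsilon)$. This is essentially the paper's argument: it factors out $e^{-\mathrm{d}_\Omega(x)^2/t}$ and uses $\abs{e^\xi-1}\le e\abs{\xi}$ for $\abs{\xi}<1$, with $\xi=(\mp4\rho\,\mathrm{d}_\Omega(x)-4\rho^2)/t$. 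By contrast, bounding $2s/t$ at the endpoint $\mathrm{d}_\Omega(x)+2\rho$, as you did, doubles the $\rho^2$ contribution. That is exactly where your $8\varepsilon$ comes from.
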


\begin{proof}

By Corollary \ref{cor:boundsForGoodPartOfBoundaryKernelOnly} with $\ell = 2\ell_2$, we have the pointwise bounds for $x\in\Omega_{good}$ as follows
\begin{align*}
    e^{-\frac{({\d{x}}+2\rho)^2}{t}}-2ne^{-\frac{4\ell_2^2}{nt}} -e^{-\frac{{\d{x}}^2}{t}} &\leq 1-(4\pi t)^{n/2}p_\Omega(x,x;t) -e^{-\frac{{\d{x}}^2}{t}} \\
    &\hspace{4em}\leq  e^{-\frac{({\d{x}}-2\rho)^2}{t}}+2ne^{-\frac{4\ell_2^2}{nt}} -e^{-\frac{{\d{x}}^2}{t}}, 
\end{align*}
where $\rho = 4\ell_2\varepsilon$ and the upper bound holds as $\d{x} > \ell_1 > 8\ell_2\varepsilon = 2\rho$. This implies
\begin{align*}
    \abs{1-(4\pi t)^{n/2}p_\Omega(x,x;t) -e^{-\frac{{\d{x}}^2}{t}}} \leq 2ne^{-\frac{4\ell_2^2}{nt}} + \max_{\pm}\abs{e^{-\frac{({\d{x}}\pm2\rho)^2}{t}}-e^{-\frac{{\d{x}}^2}{t}}}.
\end{align*}

Therefore, we will consider the two terms
\begin{align*}
   \int_{\Omega_{good}}{\d{x}}^{\alpha}e^{-\frac{4\ell_2^2}{nt}}\dd{x} \qand \int_{\Omega_{good}}{\d{x}}^{\alpha}\abs{e^{-\frac{({\d{x}}\pm2\rho)^2}{t}} -e^{-\frac{{\d{x}}^2}{t}}}\dd{x}
\end{align*}
separately. For the first term, we obtain
\begin{align*}
     \int_{\Omega_{good}}{\d{x}}^{\alpha}e^{-\frac{4\ell_2^2}{nt}}\dd{x} \leq \ell_i^{\alpha} \abs{\Omega_{good}} e^{-\frac{4\ell_2^2}{nt}} \leq  \ell_2 \ell_i^{\alpha} \theta_\Omega\hausdBoundary e^{-\frac{4\ell_2^2}{nt}}, %
\end{align*}
where we used
\begin{align*}
    \abs{\Omega_{good}} \leq \abs{\{x\in\Omega \mid {\d{x}} \leq \ell_2\}} \leq  \ell_2 \theta_\Omega\hausdBoundary
\end{align*}
by Corollary \ref{cor:boundednessOfTubularRegions}. 

For the second term, note that the first exponential is a small perturbation in $\rho$ of the second exponential. The goal is to show that the perturbation is also small for the weighted integral. The difference of the exponential terms can be written as
\begin{align*}
    \abs{e^{-\frac{({\d{x}}\pm2\rho)^2}{t}} -e^{-\frac{{\d{x}}^2}{t}}} = e^{-\frac{{\d{x}}^2}{t}}\abs{e^{\frac{\mp4\rho {\d{x}} - 4\rho^2}{t}} -1} %
    \leq e\cdot e^{-\frac{{\d{x}}^2}{t}} \frac{4\rho\ell_2 + 4\rho^2}{t},
\end{align*}
where the inequality $\abs{e^\xi - 1} \leq e\abs{\xi}$ for $\xi \in (-1,1)$ holds as 
\begin{align*}
    \abs{\xi} = \abs{\frac{\mp4\rho {\d{x}} - 4\rho^2}{t}}  \leq \frac{4\rho\ell_2+4\rho^2}{t} = \frac{16\ell_2^2\varepsilon(1+4\varepsilon)}{t} \leq \frac{32\ell_2^2\varepsilon}{t} < 1,
\end{align*}
by assumption. 
Therefore, we get
\begin{align*}
    \int_{\Omega_{good}}{\d{x}}^{\alpha}\abs{e^{-\frac{({\d{x}}\pm2\rho)^2}{t}} -e^{-\frac{{\d{x}}^2}{t}}}\dd{x} \leq 4e\bigg(\frac{\rho\ell_2 + \rho^2}{t}\bigg)\int_{\Omega_{good}}{\d{x}}^{\alpha}e^{-\frac{{\d{x}}^2}{t}}\dd{x} .
\end{align*}
The resulting integral is roughly bounded by
\begin{align*}
    \int_{\Omega_{good}}{\d{x}}^{\alpha}e^{-\frac{{\d{x}}^2}{t}}\dd{x} \leq \ell_i^\alpha \abs{\Omega_{good}} e^{-\frac{\ell_1^2}{t}}\leq \ell_i^{\alpha}\ell_2 e^{-\frac{\ell_1^2}{t}} \theta_\Omega \hausdBoundary,
\end{align*}
so we get 
\begin{align*}
    \int_{\Omega_{good}}{\d{x}}^{\alpha}\abs{e^{-\frac{({\d{x}}\pm2\rho)^2}{t}} -e^{-\frac{{\d{x}}^2}{t}}}\dd{x} \leq 4e t^{-1}\ell_2\ell_i^{\alpha} e^{-\frac{\ell_1^2}{t}}\left(\rho\ell_2 + \rho^2\right)\theta_\Omega \hausdBoundary.
\end{align*}
Thus, we have shown
\begin{align*}
    &\int_{\Omega_{good}}{\d{x}}^{\alpha}\abs{1-(4\pi t)^{n/2}p_\Omega(x,x;t) -e^{-\frac{{\d{x}}^2}{t}}}\dd{x} \\
    &\hspace{4em}\leq  2n\int_{\Omega_{good}}{\d{x}}^{\alpha}e^{-\frac{4\ell_2^2}{nt}}\dd{x} + \int_{\Omega_{good}}{\d{x}}^{\alpha}\abs{e^{-\frac{({\d{x}}\pm2\rho)^2}{t}} -e^{-\frac{{\d{x}}^2}{t}}}\dd{x} \\
    &\hspace{4em}\leq   \ell_2 \ell_i^{\alpha} \bigg(2ne^{-\frac{4\ell_2^2}{nt}} +  4et^{-1} e^{-\frac{\ell_1^2}{t}}\Big(\rho\ell_2 + \rho^2\Big)\bigg)\theta_\Omega\hausdBoundary.
\end{align*} 
\end{proof}

\subsection{The contribution of the bad region}
\label{sec:badRegion}

In this subsection, we prove that the contribution of the bad region of the domain to the weighted heat trace is small. To do so, we rely on the construction of good sets in \cite[Section 7.1]{frank2026uniformboundsneumannheat}. The authors proved that the family $\{\mathcal{G}_{\varepsilon,r}\}_{\varepsilon,r}$ of good sets can be chosen so that, for fixed $\varepsilon > 0$, the measure of the bad points near the boundary is small. In particular, we use the following result, which we cite for convenience:
\begin{lemma}[Lemma 7.3 in \cite{frank2026uniformboundsneumannheat}]
\label{lemma:boundForMeasureOfBadSet}
    Let $\Omega\subset \R^n$ be an open and bounded Lipschitz set. There exists a collection of $(\varepsilon,r)$-good sets $\{G_{\varepsilon,r}\}_{\varepsilon,r}$ such that, for every $\varepsilon \in \Big(0, \frac14\Big]$, the associated sawtooth region $\mathcal{G}_{\varepsilon,r}$ satisfies
    \begin{align*}
        \lim_{r\to 0^+} \limsup_{s\to 0^+} \frac{\abs{\{x\in \Omega \mid \d{x}<s\}\setminus \mathcal{G}_{\varepsilon,r}}}{s\hausdBoundary} = 0.
    \end{align*}
\end{lemma}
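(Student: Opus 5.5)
The plan is to take the natural choice $G_{\varepsilon,r}\coloneq\{p\in\partial\Omega \mid p \text{ is } (\varepsilon,r)\text{-good}\}$, or a suitable compact subset of it. The proof then splits into a measure-theoretic step on the boundary and a geometric step that transfers smallness of the bad boundary set to smallness of the bad tube $\{\d{x}<s\}\setminus\mathcal{G}_{\varepsilon,r}$.

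\emph{Step 1 (the boundary bad set is small).} Cover $\partial\Omega$ by finitely many Lipschitz charts in which $\Omega=\{(y',y_n) \mid y_n>\varphi(y')\}$ with $\varphi$ Lipschitz with constant $L$. By Rademacher's theorem $\varphi$ is differentiable at a.e.\ $y'$, so the inner normal exists $\mathcal{H}^{n-1}$-a.e. At a differentiability point $p$, the first-order Taylor expansion of $\varphi$ shows that $\partial\Omega\cap B_r(p)$ lies outside the double cone $\{\abs{(x-p)\cdot\nu(p)}\ge\varepsilon\abs{x-p}\}$ once $r$ is small, depending on $p$ and $\varepsilon$. The good sets increase as $r\downarrow0$, so monotone convergence gives $\mathcal{H}^{n-1}(\partial\Omega\setminus G_{\varepsilon,r})\to0$ as $r\to0$, for each fixed $\varepsilon$. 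By Lusin's theorem I may also shrink $G_{\varepsilon,r}$ by an arbitrarily small measure so that, within each small boundary patch, $\nu$ is uniformly close to a fixed direction $\nu_0$ and $e_n$ is a good transversal direction.

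\emph{Step 2 (transfer to the tube).} Work in one chart and write $x=(y',\varphi(y')+h)$ with $0<h\lesssim_L s$, which covers $\{\d{x}<s\}$ in that chart. For a good $p=(z',\varphi(z'))$, the point $x$ lies in $\Gamma_{\varepsilon,r}(p)$ provided $h<r/C$ and $z'$ lies in a ball of radius $c\,\varepsilon h$ around $y'-h\,w$. Here $w$ is a tilt vector determined by $\nu_0$ with $\abs{w}\le C(L)$; this uses the uniform normal control from Step 1 together with the cone condition and Lemma~\ref{lem:distOfGoodPointToItsSaddle}. Consequently, if $x$ is bad, the projection of the bad boundary set $B\coloneq\partial\Omega\setminus G_{\varepsilon,r}$ fills an entire ball of radius $c\varepsilon h$ inside the ball $B_{C h}(y')$. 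This gives the density bound
\[
M\1_{\pi(B)}(y')\ \ge\ c\,\varepsilon^{n-1},
\]
where $M$ is the Hardy--Littlewood maximal function on $\R^{n-1}$.

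By the weak-type $(1,1)$ maximal inequality, for each height $h$ the set of such $y'$ has measure at most $C\varepsilon^{1-n}\abs{\pi(B)}$. Integrating over $h\in(0,Cs)$ and summing over the charts gives
\[
\abs{\{\d{x}<s\}\setminus\mathcal{G}_{\varepsilon,r}}\ \lesssim_{L}\ s\,\varepsilon^{1-n}\,\mathcal{H}^{n-1}(\partial\Omega\setminus G_{\varepsilon,r}) + o(s),
\]
where the $o(s)$ accounts for chart overlaps and for the constraint $h<r/C$, which is automatic once $s\ll r$. Taking $\limsup_{s\to0}$, dividing by $s\,\hausdBoundary$, and then letting $r\to0$ with Step 1 yields the claim, since $\varepsilon$ is fixed throughout.

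The main obstacle is the geometric inclusion in Step 2. The cone $\Gamma_{\varepsilon,r}(p)$ points along $\nu(p)$, which varies with $p$, whereas the chart parametrizes $x$ vertically. This mismatch is why I restrict to compact patches on which $\nu$ is nearly constant via Lusin, so that a single tilt $w$ works uniformly. The price is a worse constant in the density bound, possibly $\varepsilon^{1-n}$ replaced by a larger power of $1/\varepsilon$. This is harmless, because $\varepsilon$ is fixed while $r\to0$.
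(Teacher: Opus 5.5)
The paper does not prove this lemma; it quotes it from Frank--Larson, so your sketch has to stand on its own. The architecture is sound: Rademacher plus monotonicity in $r$ on the boundary, then a weak-$(1,1)$ maximal-function bound slice by slice in $h$. However, the geometric inclusion at the heart of Step 2 is false as stated.

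\emph{Why the inclusion fails.} The tolerance in the definition of $(\varepsilon,r)$-good and the aperture $\arcsin\varepsilon$ of $\Gamma_{\varepsilon,r}(p)$ are the \emph{same} $\varepsilon$. Write $p=(z',\varphi(z'))$, $v=y'-z'$, $u=\nabla\varphi(z')/(1+|\nabla\varphi(z')|^2)$ and $\delta=\varphi(y')-\varphi(z')-\nabla\varphi(z')\cdot v$. The part of $x-p$ orthogonal to $\nu(p)$ then has length $\asymp_L |v+(h+\delta)u|$, and the normal part is $(h+\delta)/\sqrt{1+|\nabla\varphi(z')|^2}$. Goodness of $p$, applied to the boundary point $(y',\varphi(y'))$, only gives $|\delta|\lesssim_L \varepsilon h$. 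So $\delta u$ is an uncontrolled tangential error of the same order as the entire tangential size the cone allows, with a constant exceeding one once $|\nabla\varphi|\gtrsim 1$.

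\emph{A concrete counterexample.} Take $n=2$ and let the boundary near $p$ be a line of slope $a$, except for a thin dip of depth $\delta_0=\varepsilon a h/\bigl(2(1+\varepsilon\sqrt{1+a^2})\bigr)$ directly below $x=(y,\varphi(y)+h)$. Every point of the line within $c\varepsilon h$ of your centre $y+ah/(1+a^2)$ is $(\varepsilon,r)$-good, with exactly the same normal. Yet at the centre the tangential-to-normal ratio of $x-p$ is $a\delta_0/(h-\delta_0)$. For $\varepsilon=\tfrac14$, $a=2$ this is about $0.38>\tan(\arcsin\tfrac14)\approx 0.26$. So the whole ball consists of good points whose cones miss $x$, and the contrapositive ``$x$ bad $\Rightarrow$ the ball lies in $\pi(B)$'' is not proved. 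Shrinking the ball, sharpening the Lusin control of $\nu$, or paying more powers of $1/\varepsilon$ does not help: the failure occurs at the centre with a constant normal. You also cannot remove the tilt by taking $e_n=\nu_0$, since $\nu_0$ need not be a transversal direction for the Lipschitz graph.

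\emph{The repair.} Use the freedom in the statement, which only asks for \emph{some} family of $(\varepsilon,r)$-good sets.
\begin{enumerate}
\item Let $G_{\varepsilon,r}$ be the $(\varepsilon/K,r)$-good points of a compact $K_r\subset\partial\Omega$ on which $\nu$ is continuous. Here $K=K(L)\gtrsim 1+L^2$, and $K_r$ is chosen with $\mathcal{H}^{n-1}(\partial\Omega\setminus K_r)\to 0$ as $r\to0$.
\item These points are $(\varepsilon,r)$-good, and your Step 1 applies verbatim with $\varepsilon/K$ in place of $\varepsilon$.
\item Now $|\delta u|\le c\varepsilon h$, and $h\,|u(z')-u(z_0')|\le c\varepsilon h$ for good $z',z_0'\in B_{Ch}(y')$ once $s$ is small. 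Hence the inclusion holds with ball radius $c\varepsilon h$, where $c\sim(1+L^2)^{-1}$.
\item Your maximal-function argument then yields $\limsup_{s\to0} s^{-1}|\{\mathrm{d}_\Omega(x)<s\}\setminus\mathcal{G}_{\varepsilon,r}| \lesssim_{n,L}\varepsilon^{1-n}\mathcal{H}^{n-1}(\partial\Omega\setminus G_{\varepsilon,r})$, which tends to $0$ as $r\to 0$.
\end{enumerate}
The $r$-dependence of the Lusin set is needed for this last limit. It is harmless, because $s\to0$ is taken first.
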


Using that lemma and the boundedness of the integrands, we show in the following proposition that the contribution of $\Omega_{bad}$ to the error term \eqref{eq:weightedIntegralErrorTerm} is small.

\begin{proposition}
\label{prop:boundForBadPart}
    There exists a collection of $(\varepsilon,r)$-good sets $\{G_{\varepsilon,r}\}_{\varepsilon,r}$ and a family of nonnegative functions $M_\varepsilon(\ell, r)$, such that for $\varepsilon\in\Big(0,\frac14\Big]$ and all $\alpha \in \R$, we have
    \begin{align*}
        \int_{\Omega_{bad}}  {\d{x}}^{\alpha }\abs{(4\pi t)^{n/2}p_\Omega(x,x;t) - 1+e^{-\frac{{\d{x}}^2}{t}}}\dd{x}  &\leq 3M_{\varepsilon}(\ell_2,r) \ell_i^{\alpha}\ell_2\hausdBoundary, \,\forall t > 0, %
    \end{align*}
    and $\lim_{r\to 0} \limsup_{\ell\to 0} M_\varepsilon(\ell,r) = 0$, where $i=1$ if $\alpha <0$ and $i=2$ if $\alpha \geq 0$.
\end{proposition}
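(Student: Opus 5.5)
We take the collection $\{G_{\varepsilon,r}\}_{\varepsilon,r}$ from Lemma~\ref{lemma:boundForMeasureOfBadSet}, with associated sawtooth regions $\mathcal{G}_{\varepsilon,r}$, and use these in Definition~\ref{def:decompositionOfDomain}. We then define
\begin{align*}
    M_\varepsilon(\ell,r) \coloneq \frac{\abs{\{x\in\Omega \mid \d{x}<\ell\}\setminus \mathcal{G}_{\varepsilon,r}}}{\ell\,\hausdBoundary},
\end{align*}
which is nonnegative and finite because $\hausdBoundary>0$ for a bounded Lipschitz set. With this definition, Lemma~\ref{lemma:boundForMeasureOfBadSet} gives exactly $\lim_{r\to 0}\limsup_{\ell\to 0} M_\varepsilon(\ell,r)=0$ for each $\varepsilon\in(0,\frac14]$. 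Since the lemma holds for all $\varepsilon$ in this range, the same collection can be used uniformly in $\varepsilon$ and $\alpha$.

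Next we bound the integrand pointwise. By Proposition~\ref{prop:boundednessOfHeatKernel}, $(4\pi t)^{n/2}p_\Omega(x,x;t)\in[0,1]$. Also $e^{-\d{x}^2/t}\in(0,1]$ and the constant term is $1$. The triangle inequality therefore gives
\begin{align*}
    \abs{(4\pi t)^{n/2}p_\Omega(x,x;t) - 1+e^{-\frac{\d{x}^2}{t}}}\leq 3
\end{align*}
for all $x\in\Omega$ and all $t>0$. This matches the three-term splitting written for $\Omega_{bad}$ at the start of Section~\ref{sec:boundsForErrorTerms}.

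For the weight, on $\Omega_{bad}$ we have $\ell_1<\d{x}<\ell_2$. Hence $\d{x}^\alpha\le \ell_1^\alpha$ when $\alpha<0$, and $\d{x}^\alpha\le\ell_2^\alpha$ when $\alpha\ge 0$; in both cases $\d{x}^\alpha\le\ell_i^\alpha$ with $i$ as in the statement. Finally, $\Omega_{bad}\subset\{\d{x}<\ell_2\}\setminus\mathcal{G}_{\varepsilon,r}$, so $\abs{\Omega_{bad}}\le M_\varepsilon(\ell_2,r)\,\ell_2\,\hausdBoundary$. Multiplying the three bounds gives
\begin{align*}
    \int_{\Omega_{bad}}  {\d{x}}^{\alpha }\abs{(4\pi t)^{n/2}p_\Omega(x,x;t) - 1+e^{-\frac{{\d{x}}^2}{t}}}\dd{x}\le 3\,\ell_i^\alpha\,M_\varepsilon(\ell_2,r)\,\ell_2\,\hausdBoundary
\end{align*}
for every $t>0$.

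There is no real obstacle here, since all the difficulty is in Lemma~\ref{lemma:boundForMeasureOfBadSet}, which is assumed. The only points that need care are that $M_\varepsilon$ is defined from the same family of good sets used to build $\Omega_{bad}$, and that the limit in $\ell$ in the statement corresponds to the limit in $s$ in the lemma.
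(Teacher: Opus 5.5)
Your proposal is correct and follows essentially the same argument as the paper. You take the good sets from Lemma~\ref{lemma:boundForMeasureOfBadSet}, define $M_\varepsilon(\ell,r)$ as the same normalized measure of the bad set, bound the integrand by $3$ using Proposition~\ref{prop:boundednessOfHeatKernel}, pull out $\ell_i^\alpha$, and use $\Omega_{bad}\subset\{\d{x}<\ell_2\}\setminus\mathcal{G}_{\varepsilon,r}$.
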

\begin{proof}
    As $\d{x} \in (\ell_1, \ell_2)$ for all $x\in\Omega_{bad}$, we pull a factor of $\ell_1^{\alpha}$, if $\alpha < 0$, or a factor of $\ell_2^\alpha$, if $\alpha \geq 0$, out of the integral. Then, by using Proposition \ref{prop:boundednessOfHeatKernel}, we bound the remaining integrand by $3$ to obtain
\begin{align*}
    &\int_{\Omega_{bad}}  {\d{x}}^{\alpha }\abs{(4\pi t)^{n/2}p_\Omega(x,x;t) - 1+e^{-\frac{{\d{x}}^2}{t}}}\dd{x}  \\
    &\hspace{8em}\leq \ell_i^{\alpha}  \int_{\Omega_{bad}}\abs{(4\pi t)^{n/2}p_\Omega(x,x;t) - 1+e^{-\frac{{\d{x}}^2}{t}}}\dd{x} \\
    &\hspace{8em}\leq 3 \ell_i^{\alpha} \abs{\Omega_{bad} } .
\end{align*}
Thus, using
\begin{align*}
    \abs{\Omega_{bad}} \leq \abs{\{x\in\Omega \mid \d{x} < \ell_2\}\setminus\mathcal{G}_{\varepsilon,r}} = M_\varepsilon(\ell_2,r) \ell_2 \hausdBoundary
\end{align*} with 
\begin{align*}
    M_\varepsilon(\ell,r) = \frac{\abs{\{x\in\Omega \mid \d{x} < \ell\}\setminus \mathcal{G}_{\varepsilon,r}}}{\ell \hausdBoundary},
\end{align*}
the claim follows by choosing $\{G_{\varepsilon,r}\}_{\varepsilon,r}$ according to Lemma \ref{lemma:boundForMeasureOfBadSet}.

\end{proof}

\section{Proof of the main theorems}
\label{sec:proofSection}

Having established the behavior of the main terms in Section \ref{sec:mainContribution} and bounds for the error term \eqref{eq:weightedIntegralErrorTerm} in different parts of the domain $\Omega$ in Section \ref{sec:boundsForErrorTerms}, we are ready to prove the main theorems of this paper, which are stated in Section \ref{sec:introduction}.
We first prove that the second integral in \eqref{eq:decompInMainContrAndError} is of lower order, as this is a key estimate for the main theorems.
\begin{proposition}
\label{prop:errorTermIsSmall}
    Let $\Omega\subset\R^n$ be an open and bounded set with Lipschitz boundary and $g\in L^\infty(\overline\Omega)$. Let $c_w$ be the weak Hardy constant of $\Omega$ and let $\alpha > \alpha^\ast(c_w)$. Then
    \begin{align}
        \int_\Omega g(x){\d{x}}^{\alpha}\bigg((4\pi t)^{n/2}p_\Omega(x,x;t) - 1+e^{-\frac{{\d{x}}^2}{t}}\bigg)\dd{x} = o\Big(t^{\frac{1+\alpha}{2}}\Big), \qas t \to 0. \label{eq:errorTermIsSmall}
    \end{align}
\end{proposition}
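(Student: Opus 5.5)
First I bound $|g|\le\|g\|_\infty$ and pull it out, so it suffices to show that
\[
E(t):=\int_\Omega \d{x}^{\alpha}\bigl|(4\pi t)^{n/2}p_\Omega(x,x;t)-1+e^{-\d{x}^2/t}\bigr|\dd{x}=o\bigl(t^{\frac{1+\alpha}{2}}\bigr).
\]
I fix $\varepsilon\in(0,\tfrac14]$, $r>0$ and small $\varepsilon_1,\varepsilon_2>0$ independent of $t$. I then take $\ell_1=\varepsilon_1\sqrt t$ and $\ell_2=\sqrt t/\varepsilon_2$, and use the good sets $G_{\varepsilon,r}$ of Proposition~\ref{prop:boundForBadPart} in the decomposition of Definition~\ref{def:decompositionOfDomain}. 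For $t$ small the hypotheses of Proposition~\ref{prop:boundForGoodPart} hold: $4\ell_2<r$ is clear. The conditions $8\ell_2\varepsilon<\ell_1$ and $32\ell_2^2\varepsilon<t$ read $8\varepsilon<\varepsilon_1\varepsilon_2$ and $32\varepsilon<\varepsilon_2^2$. So the parameters must be chosen in the order $\varepsilon_1,\varepsilon_2$ first, then $\varepsilon$ small depending on them, then $r$ small depending on $\varepsilon$. Finally I let $t\to0$ and compute $\limsup_{t\to0}t^{-\frac{1+\alpha}{2}}E(t)$.

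Region by region, the bounds are as follows.

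\textbf{Bulk.} For $\alpha<0$, Proposition~\ref{prop:boundForBulkPartNegAlpha} gives $\lesssim \ell_2^\alpha\sqrt t=\varepsilon_2^{-\alpha}t^{\frac{1+\alpha}{2}}$, which is small once $\varepsilon_2$ is small. For $\alpha\ge0$, Proposition~\ref{prop:boundForBulkPartPositiveAlpha} gives $\lesssim t^{\frac{1+\alpha}{2}}(1+\varepsilon_2^{-1})e^{-1/(2n\varepsilon_2^2)}$, also small.

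\textbf{Good region.} Proposition~\ref{prop:boundForGoodPart} gives $\ell_2\ell_i^\alpha\bigl(2ne^{-4/(n\varepsilon_2^2)}+16e\,\varepsilon\varepsilon_2^{-2}(1+4\varepsilon)\bigr)$. Here $\ell_2\ell_i^\alpha=t^{\frac{1+\alpha}{2}}\varepsilon_2^{-1}\varepsilon_1^{\alpha}$ or $t^{\frac{1+\alpha}{2}}\varepsilon_2^{-1-\alpha}$. The first summand is small for $\varepsilon_2$ small; the second becomes small when $\varepsilon$ is chosen after $\varepsilon_1,\varepsilon_2$.

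\textbf{Bad region.} Proposition~\ref{prop:boundForBadPart} gives $3M_\varepsilon(\ell_2,r)\ell_2\ell_i^\alpha\hausdBoundary$. Since $\ell_2\to0$ as $t\to0$, taking $\limsup_{t\to0}$ and then $r\to0$ kills it, with $\varepsilon_1,\varepsilon_2,\varepsilon$ fixed.

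\textbf{Near region, $\alpha\ge0$.} Proposition~\ref{prop:nearBoundaryRegionIsSmallForPosAlpha} gives $3\varepsilon_1^{1+\alpha}t^{\frac{1+\alpha}{2}}\theta_\Omega\hausdBoundary$, which is small for $\varepsilon_1$ small.

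\textbf{Near region, $\alpha<0$.} I use the splitting \eqref{eq:errorDecompNearPart2}. Proposition~\ref{prop:boundForNearPartExpTerm} gives $C t^{-1}\ell_1^{3+\alpha}\lesssim\varepsilon_1^{3+\alpha}t^{\frac{1+\alpha}{2}}$, which is fine because $\alpha>-3$. For the kernel term I apply Proposition~\ref{prop:boundForNearPartKernelOnly} with some fixed $\alpha'\in(\alpha^\ast(c_w),\min(\alpha,-2))$; this interval is nonempty since $\alpha>\alpha^\ast(c_w)$ and $\alpha^\ast\le-2$. Multiplying by $(4\pi t)^{n/2}$ yields
\[
\lesssim \ell_1^{\alpha-\alpha'}\Bigl(t^{\frac{1+\alpha'}{2}}+t^{\frac{\alpha'}{4}}+C\Lambda\Bigr)=\varepsilon_1^{\alpha-\alpha'}t^{\frac{1+\alpha}{2}}+\varepsilon_1^{\alpha-\alpha'}t^{\frac{\alpha-\alpha'}{2}+\frac{\alpha'}{4}}+O\bigl(t^{\frac{\alpha-\alpha'}{2}}\bigr).
\]
The leading piece is small since $\alpha-\alpha'>0$ and $\varepsilon_1$ is small. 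The other two pieces must be $o(t^{\frac{1+\alpha}{2}})$. The last one requires $\tfrac{\alpha-\alpha'}{2}>\tfrac{1+\alpha}{2}$, i.e.\ $\alpha'<-1$, which holds. The middle one requires $\tfrac{\alpha-\alpha'}{2}+\tfrac{\alpha'}{4}>\tfrac{1+\alpha}{2}$, i.e.\ $-\alpha'/4>1/2$, i.e.\ $\alpha'<-2$, which also holds.

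This bookkeeping of the extra weak-Hardy terms, together with the correct ordering of the limits in $\varepsilon_1,\varepsilon_2,\varepsilon,r,t$, is the main obstacle. The near-region case $\alpha\in[-2,0)$ needs the care of choosing $\alpha'<-2$ strictly below $\alpha$.

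Collecting the regions gives
\[
\limsup_{t\to0}t^{-\frac{1+\alpha}{2}}E(t)\le \eta(\varepsilon_1,\varepsilon_2,\varepsilon,r),
\]
where $\eta$ can be made arbitrarily small by taking $\varepsilon_1,\varepsilon_2$ small, then $\varepsilon$, then $r$. This proves \eqref{eq:errorTermIsSmall}.
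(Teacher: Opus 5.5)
Your proposal is correct and follows the paper's proof essentially step for step. It uses the same decomposition with $\ell_1=\varepsilon_1\sqrt t$, $\ell_2=\sqrt t/\varepsilon_2$, the same regional propositions, and the same order of limits ($t$, then $r$, $\varepsilon$, $\varepsilon_2$, $\varepsilon_1$); your exponent check for the two extra weak-Hardy terms reproduces the paper's factors $t^{-(\alpha'+2)/4}$ and $t^{-(\alpha'+1)/2}$, and the only small correction is that nonemptiness of $(\alpha^\ast(c_w),\min(\alpha,-2))$ for $\alpha\ge-2$ needs $\alpha^\ast(c_w)<-2$ strictly, which holds since $c_w<\infty$.
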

\begin{proof}
    
    By the boundedness of $g$, we have
    \begin{align*}
        &\abs{\int_\Omega g(x){\d{x}}^{\alpha}\bigg((4\pi t)^{n/2}p_\Omega(x,x;t) - 1+e^{-\frac{{\d{x}}^2}{t}}\bigg)\dd{x}} \\
        &\hspace{5em}\leq \norm{g}_\infty \int_\Omega {\d{x}}^{\alpha}\abs{(4\pi t)^{n/2}p_\Omega(x,x;t) - 1+e^{-\frac{{\d{x}}^2}{t}}}\dd{x}.
    \end{align*}
    Therefore, we can restrict ourselves to the case $g= 1$ in the following.

    Let $t > 0, \varepsilon\in\Big(0,\frac14\Big]$, $r > 0$ and $\{G_{\varepsilon,r}\}_{\varepsilon,r}$ be the collection of $(\varepsilon,r)$-good sets according to Proposition \ref{prop:boundForBadPart}. Let $\varepsilon_1, \varepsilon_2 > 0$ be such that
    \begin{align}
    \label{eq:assumptionsForParametersSuchThatErrorTermIsSmall}
    \begin{split}
        \varepsilon_1\varepsilon_2 <1, \\ %
        4\frac{\sqrt{t}}{\varepsilon_2} &< r, \\ %
         8\frac{\varepsilon}{\varepsilon_2} &< \varepsilon_1 , \qand \\ %
        32 \frac{\varepsilon}{\varepsilon_2^2} &< 1. %
    \end{split}
    \end{align} 
    Consider the decomposition of $\Omega$ according to Definition \ref{def:decompositionOfDomain} with the sets $\mathcal{G}_{\varepsilon,r}$ associated with $G_{\varepsilon,r}$ and the length scales $\ell_1 = \varepsilon_1\sqrt{t}$ and $\ell_2 = \frac{\sqrt{t}}{\varepsilon_2}$.
    
    We consider the case $\alpha \in (\alpha^\ast(c_w),0)$ first. With the preceding choice of parameters, the assumptions of the Propositions \ref{prop:boundForBulkPartNegAlpha}, \ref{prop:boundForNearPartKernelOnly}, \ref{prop:boundForNearPartExpTerm}, \ref{prop:boundForGoodPart} and \ref{prop:boundForBadPart} are satisfied and the corresponding bounds may be applied to the left-hand side of \eqref{eq:errorTermIsSmall}. Thus, there exist $c > c_w$, $\alpha^\prime < \min(\alpha,-2)$, $\Lambda \geq 0$, $C(\alpha) > 0$ and $C_{\Omega,-\alpha^\prime}>0$ such that, in the bulk,
    \begin{align*}
        \int_{\Omega_{bulk}} {\d{x}}^{\alpha}\abs{(4\pi t)^{n/2}p_\Omega(x,x;t) - 1 +e^{-\frac{{\d{x}}^2}{t}}}\dd{x} &\lesssim_n \ell_2^{\alpha}\sqrt{t} \theta_\Omega \hausdBoundary \\
        &\lesssim_n \varepsilon_2^{-\alpha}  t^{\frac{1+\alpha}{2}}\theta_\Omega \hausdBoundary , 
    \end{align*}
    in the near-boundary region,
    \begin{align*}
        &\int_{\Omega_{near}} {\d{x}}^{\alpha}\abs{(4\pi t)^{n/2}p_\Omega(x,x;t) - 1 +e^{-\frac{{\d{x}}^2}{t}}}\dd{x}  \\
        &\hspace{2em}\lesssim_{n,\alpha^\prime,c,\Lambda,\varrho} \ell_1^{\alpha - \alpha^\prime} \Big(t^{\frac{1+\alpha^\prime}{2}} \theta_\Omega\hausdBoundary + t^\frac{\alpha^\prime}{4}\theta_\Omega\hausdBoundary+C_{\Omega,-\alpha^\prime}\Lambda\Big) +  C(\alpha)t^{-1}\ell_1^{3+\alpha}\theta_\Omega\hausdBoundary \\
        &\hspace{2em} \lesssim_{n,\alpha,\alpha^\prime,c,\Lambda,\varrho}    (\varepsilon_1^{\alpha-\alpha^\prime} + \varepsilon_1^{3+\alpha}) t^{\frac{1+\alpha}{2}}\theta_\Omega\hausdBoundary + \varepsilon_1^{\alpha-\alpha^\prime}t^{\frac{2\alpha-\alpha^\prime}{4}}\theta_\Omega\hausdBoundary +   \varepsilon_1^{\alpha-\alpha^\prime} t^{\frac{\alpha-\alpha^\prime}{2}}C_{\Omega,-\alpha^\prime}\Lambda,
    \end{align*}
    in the good region,
    \begin{align*}
        &\int_{\Omega_{good}}{\d{x}}^{\alpha}\abs{(4\pi t)^{n/2}p_\Omega(x,x;t) - 1+e^{-\frac{{\d{x}}^2}{t}}}\dd{x} \\
        &\hspace{4em}\leq \ell_2 \ell_1^{\alpha} \bigg(2ne^{-\frac{4\ell_2^2}{nt}} +  16et^{-1} e^{-\frac{\ell_1^2}{t}} \ell_2^2\varepsilon\big(1 + 4\varepsilon\big)\bigg)\theta_\Omega\hausdBoundary \\
        &\hspace{4em}\leq \varepsilon_2^{-1} \varepsilon_1^{\alpha} \bigg(2ne^{-\frac{4}{n\varepsilon_2^2}} +  16e\cdot e^{-\varepsilon_1^2}\varepsilon_2^{-2}\varepsilon\big(1 + 4\varepsilon\big)\bigg) t^{\frac{1+\alpha}{2}}\theta_\Omega\hausdBoundary,
    \end{align*}
    and in the bad region,
    \begin{align*}
        \int_{\Omega_{bad}}  {\d{x}}^{\alpha }\abs{(4\pi t)^{n/2}p_\Omega(x,x;t) - 1+e^{-\frac{{\d{x}}^2}{t}}}\dd{x}  &\leq 3M_{\varepsilon}(\ell_2,r) \ell_1^{\alpha}\ell_2\hausdBoundary \\
        &\leq 3M_{\varepsilon}(\sqrt{t}\cdot\varepsilon_2^{-1},r)  \varepsilon_1^\alpha \varepsilon_2^{-1} t^{\frac{1+\alpha}{2}}\hausdBoundary.
    \end{align*}
    Note that $C_{\Omega,-\alpha^\prime} = \int_\Omega \d{x}^{2+\alpha^\prime}\dd{x}$ is finite by Lemma \ref{lem:mainTermAlphaLess1ExtraTerm}.
    Combining the above four bounds yields
    \begin{align*}
        \frac{1}{t^{\frac{1+\alpha}{2}}} &\int_\Omega {\d{x}}^{\alpha}\abs{(4\pi t)^{n/2}p_\Omega(x,x;t) - 1 +e^{-\frac{{\d{x}}^2}{t}}}\dd{x} \\ &\hspace{1em}\lesssim_{n,\alpha, \alpha^\prime, c, \Omega}  \varepsilon_2^{-\alpha} + \varepsilon_1^{\alpha-\alpha^\prime} +  \varepsilon_1^{3+\alpha} + M_{\varepsilon}(\sqrt{t}\cdot \varepsilon_2^{-1},r) \varepsilon_1^{\alpha}\varepsilon_2^{-1}  \\&\hspace{6em}+  \varepsilon_2^{-1} \varepsilon_1^{\alpha} \bigg(e^{-\frac{4}{n\varepsilon_2^2}} + e^{-\varepsilon_1^2}\varepsilon_2^{-2}\varepsilon\big(1 + 4\varepsilon\big)\bigg) + \varepsilon_1^{\alpha-\alpha^\prime}t^{-\frac{\alpha^\prime+2}{4}} + \Lambda \varepsilon_1^{\alpha-\alpha^\prime} t^{-\frac{\alpha^\prime+1}{2}} .
    \end{align*}
    The right-hand side goes to zero as we send $t$, $r$, $\varepsilon$, $\varepsilon_2$, and $\varepsilon_1$ to zero in that order, by using the properties of $M_\varepsilon(\sqrt{t}\cdot \varepsilon_2^{-1},r)$ according to Proposition \ref{prop:boundForBadPart} and that $\alpha^\prime < -2$. By respecting the order, we can ensure that the assumptions in \eqref{eq:assumptionsForParametersSuchThatErrorTermIsSmall} stay satisfied.

    Next, we consider the case $\alpha \geq 0$. With the choice of parameters in \eqref{eq:assumptionsForParametersSuchThatErrorTermIsSmall}, the assumptions of the Propositions \ref{prop:boundForBulkPartPositiveAlpha}, \ref{prop:nearBoundaryRegionIsSmallForPosAlpha}, \ref{prop:boundForGoodPart} and \ref{prop:boundForBadPart} are satisfied and the corresponding bounds may be applied to the left-hand side of \eqref{eq:errorTermIsSmall}. Thus, we have, in the bulk,
    \begin{align*}
        \int_{\Omega_{bulk}} \d{x}^{\alpha}\abs{(4\pi t)^{n/2}p_\Omega(x,x;t) - 1 +e^{-\frac{\d{x}^2}{t}}}\dd{x} &\lesssim_{n,\alpha}  t^\frac{\alpha}{2}(t^{\frac{1}{2}}+\ell_2) e^{-\frac{\ell_2^2}{2nt}} \theta_\Omega \hausdBoundary, \\
        &\lesssim_{n,\alpha}  (1+\varepsilon_2^{-1})e^{-\frac{1}{2n\varepsilon_2^2}} t^{\frac{1+\alpha}{2}} \theta_\Omega \hausdBoundary,
    \end{align*}
    in the near-boundary region,
    \begin{align*}
        \int_{\Omega_{near}} {\d{x}}^{\alpha}\abs{(4\pi t)^{n/2}p_\Omega(x,x;t) - 1 +e^{-\frac{{\d{x}}^2}{t}}}\dd{x}  &\leq 3\ell_1^{\alpha+1} \theta_\Omega \hausdBoundary \\
        &= 3\varepsilon_1^{\alpha + 1} t^{\frac{1+\alpha}{2}}\theta_\Omega \hausdBoundary, 
    \end{align*}
    in the good region,
    \begin{align*}
        &\int_{\Omega_{good}}{\d{x}}^{\alpha}\abs{(4\pi t)^{n/2}p_\Omega(x,x;t) - 1+e^{-\frac{{\d{x}}^2}{t}}}\dd{x} \\
        &\hspace{4em}\leq \ell_2^{\alpha+1} \bigg(2ne^{-\frac{4\ell_2^2}{nt}} +  16et^{-1} e^{-\frac{\ell_1^2}{t}} \ell_2^2\varepsilon\big(1 + 4\varepsilon\big)\bigg)\theta_\Omega\hausdBoundary \\
        &\hspace{4em}\leq  \varepsilon_2^{-\alpha-1} \bigg(2ne^{-\frac{4}{n\varepsilon_2^2}} +  16e\cdot e^{-\varepsilon_1^2}\varepsilon_2^{-2}\varepsilon\big(1 + 4\varepsilon\big)\bigg) t^{\frac{1+\alpha}{2}}\theta_\Omega\hausdBoundary,
    \end{align*}
    and in the bad region,
    \begin{align*}
        \int_{\Omega_{bad}}  {\d{x}}^{\alpha }\abs{(4\pi t)^{n/2}p_\Omega(x,x;t) - 1+e^{-\frac{{\d{x}}^2}{t}}}\dd{x}  &\leq 3M_{\varepsilon}(\ell_2,r) \ell_2^{\alpha+1}\hausdBoundary \\
        &\leq 3M_{\varepsilon}(\sqrt{t}\cdot\varepsilon_2^{-1},r) \varepsilon_2^{-\alpha-1}  t^{\frac{1+\alpha}{2}} \hausdBoundary.
    \end{align*}
    Hence, we obtain
    \begin{align*}
        &\frac{1}{t^{\frac{1+\alpha}{2}}}\int_\Omega {\d{x}}^{\alpha}\abs{(4\pi t)^{n/2}p_\Omega(x,x;t) - 1 +e^{-\frac{{\d{x}}^2}{t}}}\dd{x}  \\ 
        &\hspace{3em}\lesssim_{n,\alpha,\Omega}  (1+\varepsilon_2^{-1}+\varepsilon_2^{-\alpha-1}) e^{-\frac{1}{2n\varepsilon_2^2}} + \varepsilon_1^{\alpha+1}  \\
        &\hspace{3em}\hspace{5em}+\varepsilon_2^{-\alpha-3} \varepsilon\big(1 + 4\varepsilon\big)e^{-\varepsilon_1^2} + M_{\varepsilon}(\sqrt{t}\cdot \varepsilon_2^{-1},r) \varepsilon_2^{-\alpha-1}.
    \end{align*}
    As before, sending $t, r, \varepsilon, \varepsilon_2$ and $\varepsilon_1$ to zero, in that order, the right-hand side goes to zero, which proves the claim.
\end{proof}

\begin{remark}
    The order in which the different parameters are sent to zero is important. Heuristically, this corresponds to the following:
    \begin{itemize}
        \item $t\to 0$: Localize the length scales such that all relevant phenomena are localized.
        \item $r \to 0$: Shrink the size of the sawtooth regions.
        \item $\varepsilon\to 0$: Shrink the angle of the cones enclosing the boundary around each point.
        \item $\varepsilon_2\to 0$: Shrink the bulk region.
        \item $\varepsilon_1\to 0$: Shrink the near-boundary region.
    \end{itemize}
\end{remark}

\begin{proof}[Proof of Theorem \ref{thm:mainThmWithG}]
    The existence of $Z_{g,\alpha}(t)$ for all $t>0$ follows from Corollary \ref{cor:boundForWeightedIntegralHeatContent} and Proposition \ref{prop:heatTraceBoundByHeatContentBound}.
    
    For the asymptotics, we employ the decomposition \eqref{eq:decompInMainContrAndError}, namely
    \begin{align*}
        (4\pi t)^{n/2}\int_\Omega g(x) {\d{x}}^{\alpha}p_\Omega(x,x;t)\dd{x} &= \int_\Omega g(x) {\d{x}}^{\alpha} \bigg(1-e^{-\frac{{\d{x}}^2}{t}}\bigg)\dd{x} \\
    &\hspace{2em}+ \int_\Omega g(x){\d{x}}^{\alpha}\bigg((4\pi t)^{n/2}p_\Omega(x,x;t) - 1+e^{-\frac{{\d{x}}^2}{t}}\bigg) \dd{x}. %
    \end{align*}
    The second term is $o\Big(t^{\frac{1+\alpha}{2}}\Big)$ by Proposition \ref{prop:errorTermIsSmall}.
    
    If $\alpha <-1$, the first term is, by applying Theorem \ref{thm:mainTermAlphaLarger1}, 
    \begin{align*}
        \int_\Omega  g(x) {\d{x}}^{\alpha} \bigg(1-e^{-\frac{{\d{x}}^2}{t}}\bigg)\dd{x} = -t^{\frac{1+\alpha}{2}} \frac{1}{2}\Gamma\bigg(\frac{1+\alpha}{2}\bigg) \int_{\partial\Omega} g(x) \dd\mathcal{H}^{n-1}(x) + o\Big(t^{\frac{1+\alpha}{2}}\Big), \qas t\to 0.
    \end{align*}
    
    If $\alpha > -1$, we obtain by Theorem \ref{thm:mainTermAlphaLess1}
    \begin{align}
        &\int_\Omega  g(x) {\d{x}}^{\alpha} \bigg(1-e^{-\frac{{\d{x}}^2}{t}}\bigg)\dd{x} = \int_\Omega  g(x) {\d{x}}^{\alpha}\dd{x} - \int_\Omega  g(x) {\d{x}}^{\alpha} e^{-\frac{{\d{x}}^2}{t}}\dd{x} \nonumber \\
        &\hspace{3em}= \int_\Omega  g(x) {\d{x}}^{\alpha}\dd{x} -t^{\frac{1+\alpha}{2}} \frac{1}{2}\Gamma\bigg(\frac{1+\alpha}{2}\bigg) \int_{\partial\Omega} g(x) \dd\mathcal{H}^{n-1}(x) + o\Big(t^{\frac{1+\alpha}{2}}\Big), \qas t\to 0. \label{eq:proofOfMainThmForAlphaLarger1Decomp}
    \end{align}
    Furthermore, by Lemma \ref{lem:mainTermAlphaLess1ExtraTerm}, we see that the first term in \eqref{eq:proofOfMainThmForAlphaLarger1Decomp} is finite.
    
    Lastly, if $\alpha = -1$, by Theorem \ref{thm:mainTermAlphaEquals1},
    \begin{align*}
        \int_\Omega g(x)\d{x}^{-1}\bigg(1-e^{-\frac{\d{x}^2}{t}}\bigg)\dd{x} =  -\frac{\log{t}}{2} \int_{\partial\Omega} g(x) \dd\mathcal{H}^{n-1}(x) + o\big(\abs{\log t}\big), \qas t\to 0.
    \end{align*}
    These are precisely the asymptotic terms claimed in the theorem.
\end{proof}

\begin{proof}[Proof of Corollary \ref{cor:introRelaxationOng}]
    The strategy is to decompose $g=g_1+g_2$ in the integral early on, that is, we write $Z_{g,\alpha} = Z_{g_1,\alpha} + Z_{g_2,\alpha}$ and apply Theorem \ref{thm:mainThmWithG} to $Z_{g_1,\alpha}$. To complete the proof, we need to estimate $Z_{g_2,\alpha}$.
    
    Let $\eta > 0$ such that $\mathrm{supp}(g_2) \subset \{x\in \Omega \mid \d{x} > \eta\}$. The idea now is that, as $t\to 0$, the support of $g_2$ is contained in the bulk and the integrals can be bounded by similar estimates as we have used before.
    
    If $\alpha \leq -1$, Proposition \ref{prop:boundednessOfHeatKernel} gives
    \begin{align*}
        \abs{Z_{g_2,\alpha}(t)} \leq  \int_{\{\d{x}>\eta\}} \abs{g_2(x)} \d{x}^\alpha \dd{x} \leq \norm{g_2}_1 \eta^\alpha, \text{ for all } t\in (0,\infty),
    \end{align*}
    which is constant in $t$, hence $o\Big(t^{\frac{1+\alpha}{2}}+\abs{\log t}\Big)$. Therefore, the claim follows in this case by noting that
    \begin{align*}
        \int_{\partial\Omega} g_1(x)\dd{\mathcal{H}^{n-1}(x)} = \int_{\partial\Omega} g(x)\dd{\mathcal{H}^{n-1}(x)}.
    \end{align*}

    If $\alpha > -1$, we split the integral into
    \begin{align*}
        Z_{g_2,\alpha}(t) = \int_\Omega g_2(x) \d{x}^\alpha \dd{x} + \int_\Omega g_2(x) \d{x}^\alpha \big((4\pi t)^\frac{n}{2} p_\Omega(x,x;t) - 1\big) \dd{x}.
    \end{align*}
    The first integral is finite as 
    \begin{align*}
        \abs{\int_\Omega g_2(x) \d{x}^\alpha \dd{x}} \leq \int_{\{\d{x}> \eta\}} \abs{g_2(x)} \d{x}^\alpha \dd{x} \\
        \leq
        \begin{cases}
            \norm{g_2}_1 \eta^\alpha, &\qfor \alpha < 0 , \\
            \norm{g_2}_1 \varrho^\alpha, &\qfor \alpha \geq 0 ,
        \end{cases}
    \end{align*}
    with $\varrho = \sup_{x\in\Omega} \d{x}$ as before. For the second integral we apply Proposition \ref{prop:boundBulkKernel} and Hölder's inequality to obtain
    \begin{align*}
        \abs{\int_\Omega g_2(x) \d{x}^\alpha \Big((4\pi t)^\frac{n}{2} p_\Omega(x,x;t) - 1\Big) \dd{x}} &\leq 2n\int_{\{\d{x}> \eta\}}\abs{g_2(x)}\d{x}^\alpha e^{-\frac{\d{x}^2}{nt}} \dd{x} \\
        &\leq 2n\norm{g_2}_1 \sup_{x\in\{\d{x}>\eta\}} \abs{\d{x}^\alpha e^{-\frac{\d{x}^2}{nt}}}.
    \end{align*}
    To analyze the supremum, denote $f(s) = s^\alpha e^{-\frac{s^2}{nt}}$. If $\alpha > 0$, $f$ attains its supremum over $[0,\infty)$ at $s^\ast = \sqrt{\frac{\alpha n t}{2}}$. By choosing $t$ small enough, we achieve $s^\ast < \eta$ and we can assume that the supremum over the relevant range is attained at $\eta$. If $\alpha \leq 0$, the supremum is always attained at $\eta$. Thus, we see
    \begin{align*}
        \abs{\int_\Omega g_2(x) \d{x}^\alpha \Big((4\pi t)^\frac{n}{2} p_\Omega(x,x;t) - 1\Big) \dd{x}} \leq 2n\norm{g_2}_1 \eta^\alpha e^{-\frac{\eta^2}{nt}},
    \end{align*}
    which is exponentially small as $t\to 0$, hence it is $o(t^k)$ for any $k> 0$. This shows the claim as
    \begin{align*}
        Z_{g,\alpha}(t) &= \int_{\Omega} g_1(x)\d{x}^{\alpha}\dd{x}- t^{\frac{1+\alpha}{2}} \frac{1}{2}\Gamma\bigg(\frac{1+\alpha}{2}\bigg) \int_{\partial\Omega}g_1\dd{\mathcal{H}^{n-1}} + \int_{\Omega} g_2(x)\d{x}^{\alpha}\dd{x} + o\Big(t^\frac{1+\alpha}{2}\Big) \\
        &= \int_{\Omega} g(x)\d{x}^{\alpha}\dd{x} - t^{\frac{1+\alpha}{2}} \frac{1}{2}\Gamma\bigg(\frac{1+\alpha}{2}\bigg) \int_{\partial\Omega}g\dd{\mathcal{H}^{n-1}} + o\Big(t^\frac{1+\alpha}{2}\Big),
    \end{align*}
    as $t\to 0$.
\end{proof}

\begin{proof}[Proof of Theorem \ref{thm:mainThmWithIndicatorFunc}]
    Again, the existence of $Z_{A,\alpha}(t)$ for all $t>0$ follows from Corollary \ref{cor:boundForWeightedIntegralHeatContent} and Proposition \ref{prop:heatTraceBoundByHeatContentBound}. For the asymptotics, we use the decomposition 
    \begin{align*}
        (4\pi t)^{\frac{n}{2}}\int_{\Omega\cap A}\d{x}^{\alpha}p_\Omega(x,x;t)\dd{x} &= \int_{\Omega\cap A} \d{x}^{\alpha} \bigg(1-e^{-\frac{\d{x}^2}{t}}\bigg)\dd{x} \\
    &\hspace{2em}+ \int_{\Omega\cap A} \d{x}^{\alpha}\bigg((4\pi t)^{n/2}p_\Omega(x,x;t) - 1+e^{-\frac{\d{x}^2}{t}}\bigg) \dd{x}.
    \end{align*}
    The second term is $o\Big(t^{\frac{1+\alpha}{2}}\Big)$ due to Proposition \ref{prop:errorTermIsSmall} with $g=\1_A$. For the first term, we apply, depending on the value of $\alpha$, the Theorems \ref{thm:mainTermAlphaLarger1}, \ref{thm:mainTermAlphaLess1} or \ref{thm:mainTermAlphaEquals1} with $g = \1_A$ and observe that
    \begin{align*}
        \int_{\partial\Omega} \1_A \dd\mathcal{H}^{n-1} = \hausdBoundarySet{\partial\Omega \cap A}.
    \end{align*}
    Furthermore, for $\alpha > -1$, the term
    \begin{align*}
        \int_{\Omega} \1_A \d{x}^{\alpha} \dd{x} = \int_{\Omega\cap A} \d{x}^{\alpha} \dd{x}
    \end{align*}
    is finite by Lemma \ref{lem:mainTermAlphaLess1ExtraTerm}.
\end{proof}

\begin{proof}[Proof of Corollary \ref{cor:weightedRieszMean}]
    Assume first that $g$ is nonnegative. Let
    \begin{align*}
        R_{g,\alpha}^\gamma(\lambda) &= \int_\Omega g(x) \d{x}^\alpha(-\Delta-\lambda)^\gamma_- (x,x)\dd{x} \\
        &= \sum_{k=1}^\infty (\lambda_k-\lambda)_-^\gamma C_k(\Omega), 
    \end{align*}
    where $C_k(\Omega) = \int_\Omega g(x) \d{x}^\alpha  u_k(x)^2\dd{x}$. Note that, with \eqref{eq:heatKernelRepInBasis},
    \begin{align*}
        (4\pi t)^{-\frac{n}{2}}Z_{g,\alpha}(t) = \int_\Omega g(x)\d{x}^\alpha p_\Omega(x,x;t)  \dd{x} = \sum_{k=1}^\infty e^{-t\lambda_k} \int_\Omega g(x)\d{x}^\alpha u_k(x)^2\dd{x},
    \end{align*}
    by Tonelli's theorem. Thus, $C_k(\Omega) < \infty$ for all $k\in \N$ since $Z_{g,\alpha}(t) < \infty$ for all $t>0$, which follows from Theorem \ref{thm:mainThmWithG}. Hence, $R_{g,\alpha}^\gamma(\lambda)$ is finite for all $\lambda > 0$. Furthermore, it is monotone increasing, as $g$ is non-negative. 
    First, we show that 
    \begin{align*}
        \int_0^\infty e^{-t\lambda} \dd R_{g,\alpha}^\gamma(\lambda) = t^{-\gamma} \Gamma(\gamma+1) \int_\Omega g(x)\d{x}^\alpha p_\Omega(x,x;t) \dd{x}.
    \end{align*}
    If $\gamma = 0$, we have
    \begin{align*}
        R_{g,\alpha}^0(\lambda) = \sum_{k=1}^\infty \1_{(\lambda_k,\infty)}(\lambda)C_k(\Omega),
    \end{align*}
    and therefore,
    \begin{align*}
        \int_0^\infty e^{-t \lambda} \dd R_{g,\alpha}^0(\lambda) &= \int_0^\infty e^{-t\lambda} \sum_{k=1}^\infty C_k(\Omega) \dd{\delta_{\lambda_k}(\lambda)} \\
        &= \sum_{k=1}^\infty e^{-t\lambda_k} C_k(\Omega) \\
        &= \int_\Omega g(x)\d{x}^\alpha p_\Omega(x,x;t)  \dd{x}.
    \end{align*}
    If $\gamma > 0$, then
    \begin{align*}
        \dd R_{g,\alpha}^\gamma(\lambda) 
        = \gamma\sum_{k=1}^\infty C_k(\Omega) (\lambda-\lambda_k)^{\gamma-1} \1_{\{\lambda > \lambda_k\}} \dd\lambda,
    \end{align*}
    for almost every $\lambda>0$. Note that for $0<\gamma<1$, the density has integrable singularities at the eigenvalues.
    The claim now follows by the straightforward calculation
    \begin{align*}
        \int_0^\infty e^{-t\lambda}\dd R_{g,\alpha}^\gamma(\lambda) &= \gamma\sum_{k=1}^\infty C_k(\Omega) \int_{\lambda_k}^\infty e^{-t\lambda}(\lambda-\lambda_k)^{\gamma-1}\dd\lambda \\
        &= \gamma\sum_{k=1}^\infty  C_k(\Omega)\int_0^\infty e^{-t(u+\lambda_k)}u^{\gamma-1}\dd{u} \\
        &= \gamma\sum_{k=1}^\infty  C_k(\Omega)e^{-t\lambda_k} \int_0^\infty e^{-tu}u^{\gamma-1}\dd{u} \\
        &= \sum_{k=1}^\infty  C_k(\Omega)e^{-t\lambda_k} t^{-\gamma}\Gamma(\gamma+1) \\
        &= t^{-\gamma}\Gamma(\gamma+1) \int_\Omega g(x)\d{x}^\alpha p_\Omega(x,x;t) \dd{x}.
    \end{align*}
    If $\alpha < -1$, we obtain by Theorem \ref{thm:mainThmWithG},
    \begin{align*}
        \int_0^\infty e^{-t\lambda} \dd R_{g,\alpha}^\gamma(\lambda) = -t^{\frac{1+\alpha-n}{2}-\gamma}\frac{\Gamma(\gamma+1) \Gamma(\frac{1+\alpha}{2})}{2(4\pi)^{\frac{n}{2}}} \int_{\partial\Omega}g\dd\mathcal{H}^{n-1} + o\Big(t^{\frac{1+\alpha-n}{2}-\gamma}\Big), \qas t\to 0.
    \end{align*}
    Hence, by Karamata's Tauberian Theorem, see \cite[Chapter IV, Theorem 8.1]{MR2073637},
    \begin{align*}
        R_{g,\alpha}^\gamma(\lambda) = -\lambda^{\gamma -\frac{1+\alpha-n}{2}}\frac{\Gamma(\gamma+1) \Gamma(\frac{1+\alpha}{2})}{2(4\pi)^{\frac{n}{2}}\Gamma(\gamma-\frac{1+\alpha-n}{2}+1)} \int_{\partial\Omega}g\dd\mathcal{H}^{n-1}  + o\Big(\lambda^{\gamma -\frac{1+\alpha-n}{2}}\Big), \qas \lambda \to \infty.
    \end{align*}
    If $\alpha > -1$, we obtain again by Theorem \ref{thm:mainThmWithG},
    \begin{align*}
        \int_0^\infty e^{-t\lambda} \dd R_{g,\alpha}^\gamma(\lambda) = t^{-\frac{n}{2}-\gamma}\frac{\Gamma(\gamma+1)}{(4\pi)^{\frac{n}{2}}} \int_{\Omega}g(x)\d{x}^\alpha\dd{x} + o\Big(t^{-\frac{n}{2}-\gamma}\Big), \qas t\to 0,
    \end{align*}
    and therefore, by Karamata's Tauberian Theorem,
    \begin{align*}
        R_{g,\alpha}^\gamma(\lambda) = \lambda^{\gamma +\frac{n}{2}}\frac{\Gamma(\gamma+1)}{(4\pi)^{\frac{n}{2}}\Gamma(\gamma+\frac{n}{2}+1)} \int_{\Omega}g(x)\d{x}^\alpha\dd{x}  + o\Big(\lambda^{\gamma+ \frac{n}{2}}\Big), \qas \lambda \to \infty.
    \end{align*}
    Finally, if $\alpha = -1$, we have by Theorem \ref{thm:mainThmWithG},
    \begin{align*}
        \int_0^\infty e^{-t\lambda}\dd{R_{g,-1}^\gamma(\lambda)} = -t^{-\frac{n}{2}-\gamma}\log(t)\cdot \frac{\Gamma(\gamma+1)}{2(4\pi)^\frac{n}{2}} \int_{\partial\Omega}g\dd\mathcal{H}^{n-1} + o\Big(\abs{t^{-\frac{n}{2}-\gamma} \log t}\Big), \qas t\to 0.
    \end{align*}
    We use again Karamata's Tauberian Theorem, \cite[Chapter IV, Theorem 8.1]{MR2073637}, now with slowly varying function $L(s) = \log s$, so that $L(1/t)=-\log{t}$. Hence,
    \begin{align*}
        R_{g,-1}^\gamma(\lambda) = \lambda^{\gamma+\frac{n}{2}}\log\lambda \cdot \frac{\Gamma(\gamma+1)}{2(4\pi)^\frac{n}{2} \Gamma(\gamma+\frac{n}{2}+1)} \int_{\partial\Omega} g\dd\mathcal{H}^{n-1} + o\Big(\lambda^{\gamma+\frac{n}{2}}\log\lambda\Big), \qas \lambda \to \infty.
    \end{align*}

    For general $g$ satisfying the assumptions of the statement, we can write $ g= g_+ - g_-$ with nonnegative functions $g_+ = \max(g,0)$ and $g_- = \max(-g,0)$, and both of them also satisfy the assumptions of Theorem \ref{thm:mainThmWithG}. Thus, we can apply the preceding calculation separately and use the linearity of all terms with respect to $g$ to deduce the corollary.
\end{proof}

\begin{proof}[Proof of Corollary \ref{cor:ConvergenceOfMeasures}]
    The corollary immediately follows from Theorem \ref{thm:mainThmWithG} and the definition of weak convergence of measures.
\end{proof}

\printbibliography

\end{document}